\newtheorem{theorem}{Theorem}
\newtheorem{lemma}{Lemma}
\theoremstyle{definition}
\newtheorem{definition}{Definition}
\renewenvironment{proof}{
\par\noindent{\it Proof.}} { \mbox{}\hfill $\blacksquare$ \par }
\newcommand\fracstyle{}
\strutbox\hbox{\vrule width 0pt height 6.4pt
depth 1.6pt}
\newcommand\GCD{\mathop{\rm gcd}}
\newcommand\sign{\mathop{\rm sign}\nolimits }
\renewcommand\mod{\allowbreak\,\mathord{\rm mod}\allowbreak\, }
\newcommand\SSym{\mathop{\rm Sym}\nolimits }
\newcommand\per{\mathop{\rm per}\nolimits }
\newcommand\Tr{\mathop{\rm Tr}\nolimits }
\newcommand\Inj{\mathop{\rm Inj}\nolimits }
\newcommand\Coef{\mathop{\rm Coef}}
\newcommand{\oalp}{\overline\alpha}
\newcommand{\obet}{\overline\beta}
\newcommand{\ogam}{\overline\gamma}
\newcommand{\ls}{\left(}
\newcommand{\rs}{\right)}
\DeclareMathOperator{\dif}{d}
\newcommand{\bbZ}{\mathbb{Z}}
\newcommand{\vep}{\varepsilon}
\newcommand{\suml}[2]{\sum\limits_{#1}^{#2}}
\DeclareMathOperator{\Card}{Card}
\newcommand\rows[1]{\vcenter{\baselineskip=7pt\lineskip=1pt\lineskiplimit1pt
\halign{\hfil$\scriptstyle##$\hfil\cr #1}}}
\newcommand\vt[2]{\vtop{\hsize=#1\hsize \hangafter1\hangindent\parindent
\noindent #2}}
\newcommand\Vt[1]{\vtop{\halign{##\hfil\cr #1}}}
\newcommand\spusk[1]{\vbox to 120pt{#1}}
\newcommand\Binom[2]{\LR(){\displaystyle{\hbox{\fracstyle$#1$\vrule
width 0pt height \ht\strutbox}\atop\hbox{\fracstyle$#2$\vrule
width 0pt depth \dp\strutbox}}}}
\mathchardef\R"71B3
\mathchardef\Z"71B4
\mathchardef\C"71B2
\mathchardef\Q"71B1
\mathchardef\N"71B0
\mathchardef\k"71B9
\newcommand\syscase[1]{\left\{\,\vcenter{\halign{$##$\hfil\cr
#1}}\right.}
\renewcommand\matrix[1]{\null\,\vcenter{\fns\normalbaselines\mathsurround0pt
    \ialign{\hfil$##$\hfil&&\enspace\hfil$##$\hfil\crcr
      \mathstrut\crcr\noalign{\kern-\baselineskip}
      #1\crcr\mathstrut\crcr\noalign{\kern-\baselineskip}}}\,}
\newcommand\dfill{\cleaders\hbox to 10pt{\hss.\hss}\hfill }
\newcommand\LR[3]{\setbox0\hbox{$#3$}\setbox1\hbox{$\left#1\vcenter{\copy0}
\right#2$}\dimen200\ht1\advance\dimen200 by -\ht0
\dimen201\ht1\advance\dimen201 by \dp1
\mathord{\mathopen{\lower\dimen200\hbox{$\left#1\vcenter to
\dimen201{}\right.$}}\copy0\mathclose{\lower\dimen200
\hbox{$\left#2\vcenter to \dimen201{}\right.$}}}}
\begin{document}


\vspace{30mm} \spusk{\leftskip0pt plus 1fil\parindent0pt
\rightskip0pt plus 1fil\parfillskip0pt \bf\boldmath Rationality of
 generating functions of rook polynomials and \\
permanents of Kronecker products of Toeplitz matrices and \\
circulants with the matrix~$J_k$ and their evaluation. I.
\bigskip\centerline{\it A.M. Kamenetsky} \vss}

\begin{abstract}
In this paper we give a generalization created by author theory of
the rook polynomials \break and permanents of circulants, Toeplits
matrices and their submatrices. Let
$$
T_n^k = (b_{i,j})_{1 \leqslant i,j \leqslant n}, \qquad P_n =
(c_{i,j})_{1 \leqslant i,j \leqslant n} ,
$$
$$
b_{i,j}= \begin{cases} 1, &\text{ if $j-i=k,$} \\
0, &\text{if $j-i \ne k,$}
\end{cases} \quad
c_{i,j}=\begin{cases} 1, &\text{if $j-i \equiv 1 (\mod n) $} \\
0,&\text{$j-i \not\equiv 1 (\mod n).$}
\end{cases}
$$
Let $i^{<l>} = i,i,\ldots,i$ ($l$~times), let
$$
G_t^{[k]} = \{ (1^{l_1}, \ldots, t^{l_t})| \, 0 \leqslant l_i
\leqslant k, 1 \leqslant i \leqslant t \}, \; G_{r,t}^{[k]} = \{
(1^{<l_1>}, \ldots, t^{<l_t>} \in G_t^{[k]})| \sum_{i=1}^t l_i = r
\},
$$
let $R(x;A)$~ be the rook polynomial of a rectangular matrix $A$
over a commutative ring with unity. On the sets $G_t^{[k]}$ and
$G_{r,t}^{[k]}$ we define the square matrices $K_t^{[k]}(a_0, a_1,
\ldots, a_t)$ and $\prod_{r,t}^{[k]}(a_0, a_1, \ldots , a_t)$,
respectively. If $X$~is a finite set of cardinality $n$ then we
denote by $\mathbf{L}$ (in the context of the concrete set~$X$ under
consideration) a fixed bijection from $X$ onto $N_n = \{1, 2,
\ldots, n \}.$ Let $J_k$~denote the $k \times k$ matrix of $1'$s.
Let $\{ \overline \alpha \}$~be a multiset composed from components
of a vector $\overline \alpha.$

The basic results obtained in this paper are following. Let $0
\leqslant r \leqslant t,$ $a_{-r}, a_{-r+1}, \ldots \break \ldots,
a_{-r+t} $ be elements in a commutative ring with unity. Then for all
$n \geqslant 1$
\begin{multline*}
R \ls x;\ls \sum_{i=0}^{t} a_{-r+i} T_n^{(-r+i)} \rs \otimes J_k  \rs =\\
= \sum_{\substack {\overline \gamma \in \bigcup_{l=rk}^{kt}
G_{l,t}^{[k]}
\\ \{ \overline \gamma \} \supseteq \{ 1^{<k>}, 2^{<k>}, \ldots,
r^{<k>}\}}} (K_t^{[k]}( a_{-r}x, a_{-r+1}x, \ldots, a_{-r+t}x))^n [
\mathbf{L}(1^{<k>}, \ldots, r^{<k>})|
\mathbf{L}(\overline \gamma) ], \\
R \ls x;\ls \sum_{i=0}^{t} a_{-r+i} P_n^{-r+i} \rs \otimes J_k \rs =
\Tr((K_t^{[k]}( a_{-r}x, a_{-r+1}x, \ldots, a_{-r+t}x))^n), \\
 \per \ls \ls \sum_{i=0}^{t} a_{-r+i} T_n^{(-r+i)} \rs \otimes J_k \rs =
\\=
 \ls \Pi_{rk,t}^{[k]}( a_{-r}, a_{-r+1}, \ldots, a_{-r+t})\rs^n
[\mathbf{L}(1^{<k>}, \ldots, r^{<k>})| \mathbf{L}(1^{<k>}, \ldots, r^{<k>})],\\
\per \ls \ls \sum_{i=0}^{t} a_{-r+i} x^i P_n^{-r+i} \rs \otimes J_k
\rs = \sum_{l=0}^{kt} \ls \Tr \ls \ls \Pi_{l,t}^{[k]}( a_{-r},
a_{-r+1}, \ldots, a_{-r+t}) \rs^n \rs \rs x^{ln}.
\end{multline*}
\bf{Key words}: rook polynomials, permanents, circulants, Toeplitz
matrices.
\end{abstract}

It is well known that entire classical theory of enumeration of
permutations with restricted positions, ascending still to L.~Euler
(1708), P.~R.~Montmort (1713), P.~G.~Tait, A.~Cayley, T.~Muir
(1878), E.~Lucas (1891), J.~Touchard (1934) and I.~Kaplansky (1943),
to reduce to evalution of rook polynomials and permanents of $(0,1)$
Toeplitz matrices and (0,1)~circulant matrices.

Really, let $\mathbb {Z}$ be the ring of rational integers, $S$ be a
finite subset of the set  $\bbZ$, $\SSym(n)$ be the symmetric group
of substitutions of the set ${1,2,\dots,n}$,
\begin{gather*}
A=\{\sigma\in\SSym(n)\mid \sigma(i)-i\in S \mbox{ for all } i, \, 1\leqslant i\leqslant
n\}, \\
B=\{\sigma\in\break\SSym(n)\mid \sigma(i)-i \notin S \mbox{ for
all } i, \, 1\leqslant i\leqslant n\}, \\
C=\{\sigma\in\SSym(n)\mid \sigma(i)-i\in S+n\bbZ \mbox{ for all }
i, \, 1\leqslant i\leqslant n\},\\
D=\{\sigma\in\SSym(n)\mid \sigma(i)-i\notin S+n\bbZ \mbox{ for all
} i, \, 1\leqslant i\leqslant n\}.
\end{gather*}
Then $ |A|=\per \Bigl(\, \sum\limits_{i\in S} T_n^{(i)} \Bigr), \:
|B|=\per \Bigl(J_n-\sum\limits_{i\in S}T_n^{(i)} \Bigr)$ for all $n
\geqslant 1$ and $ |C|=\per \Bigl(\,\sum\limits_{i\in
S}P_n^{i}\Bigr),$  $|D|= \break =\per\Bigl( J_n-\sum\limits_{i\in
S}P_n^{i}\Bigr), $ for all $n\geqslant(\max S-\min S)+1$, where
$T_n^{(i)}$ and $P_n$ defined down. Further, if $A$ is an $m\times
n$ matrix with $m\leqslant n$ and the rook polynomial of $A$ is
$R(x;A) =1+\sum\limits_{i=1}^m r_i(A) x^i$, then $\per(y J_{m,n} -
A)=\sum\limits_{i=0}^m(-1)^i\binom{n-i}{m-i}(m-i)!\,r_i(A) \cdot
y^{m-s}$, where $r_0(A)=1$ and $J_{m,n}$ denote the $m\times n$
matrix of 1's.

The present paper is a generalization being created by the author
theory ([1], [2], [3]) of the permanents and the rook polynomials of
general circulants, Toeplitz matrices and their submatrices.

Let $\Inj(S,X)$~ be the set of injections of a set $S$ into a
set~$X$. Let $A=(a_{i,j})_{\rows{1\leqslant i\leqslant m\cr
1\leqslant j\leqslant n\cr}} $ be a rectangular matrix over a
commutative ring with unity, let $m\leqslant n$, let
\begin{equation}
R(x;z;A)=1+\sum\limits_{k=1}^mx^k\sum\limits_{\rows{S\subseteq
N_m\cr |S|=k\cr}}\sum\limits_{\sigma\in \Inj(S,N_n)}z^{|\sigma|}
\prod\limits_{i\in S}a_{i,\sigma(i)}
\end{equation}
be the cycle rook polynomial of the matrix~$A$, let
$\per(z;A)=\sum\limits_{\sigma\in\Inj(N_m,N_n)}z^{|\sigma|}
\prod\limits_{i\in N_m}a_{i,\sigma(i)}$ be the cycle permanent of
the matrix~$A$, let $R(x;A)=R(x;1;A)$ and $\per(A)=\per(1;A)$, where
$|\sigma|$ is the number of cycles of the injective
mapping~$\sigma$, and let $N_m=\{1,2,\dots,m\}$. Let
$T_n^{(k)}=(b_{i,j})_{1\leqslant i,j\leqslant n}$, and
$P_n=(c_{i,j})_{1\leqslant i,j\leqslant n}$, where
$$
b_{i,j}=\begin{cases}
1,&\text{ if $j-i=k$,} \\
0,&\text{if $j-i\not=k$,}
\end{cases}
\qquad c_{i,j}=\begin{cases}
1,& \text{ if $j-i\equiv1\pmod n$,}\\
0,& \text{ if $j-i\not\equiv1\pmod n$.}
\end{cases}
$$

Let $m_{\overline\alpha}(i)$ be the number of components of a vector
$\overline\alpha$ equal to~$i$, let $i^{\langle
l\rangle}=i,i,\dots,i$ ($l$ times), and let
\begin{gather*}
G_{\smash t}^{[k]} = \{(1^{\langle l_1\rangle},2^{\langle
l_2\rangle},\dots, t^{\langle l_t\rangle})\mid 0\leqslant
l_i\leqslant k,\allowbreak 1\leqslant i\leqslant t\}, \\
G_{\smash{r,t}}^{[k]} =\Bigl\{(1^{\langle l_1\rangle},2^{\langle
l_2\rangle}, \dots,t^{\langle l_t\rangle})\in G_{\smash t}^{[k]}
~\Big|~ \sum\limits_{i=1}^tl_i=r\Bigr\}, \, 0\leqslant r\leqslant
kt.
\end{gather*}
Then
$$
\displaylines{
|G_{\smash{r,t}}^{[k]}|=\Coef\limits_{x^r}(1+x+\dots+x^k)^t = \hfill\cr
\hfill =\sum\limits_{\rows{\lambda_0+\lambda_1+\dots+\lambda_k=t\cr
\lambda_1+2\lambda_2+\dots+k\lambda_k=r\cr}}\frac{t!}{\lambda_0!\,
\lambda_1!\dots\lambda_k!} = \smash{\sum\limits_{l=0}^{\textstyle\left[{r\over
k+1}\right]}(-1)^l\binom tl\binom{t+r-l(k+1)-1}{t-1},}\cr
\noalign{\bigskip}
|G_{\smash t}^{[k]}|=(k+1)^t.\cr}
$$
On the set $G_{\smash{r,t}}^{[k]}$ we define the square matrix
$\Pi_{\smash{r,t}}^{[k]}(a_0,a_1,\dots,a_t)=(a_{\overline\alpha,
\overline\beta})_{\overline\alpha,\overline\beta\in G_{\smash{r,t}}
^{[k]}}$ as follows: $\Pi_{\smash{0,0}}^{[k]}=k!\,a_0^k$; if
$t\geqslant1$, $\overline\alpha=(1^{\langle l_1\rangle},2^{\langle
l_2\rangle},\dots, t^{\langle l_t\rangle})\in
G_{\smash{r,t}}^{[k]}$, then
$$
a_{\overline\alpha,\overline\beta}=\syscase{\Vt{$k!\Binom k
{l_t+\mathop{\Sigma}
\limits_{i=1}^{t-1}p_i}\binom{l_1}{p_1}\binom{l_2}{p_2}\dots
\binom{l_{t-1}}{p_{t-1}}a_{\smash0}^{k-l_t-\mathop{\Sigma}
\limits_{i=1}^{t-1} p_i}a_{\smash1}^{p_1}a_{\smash2}^{p_2}\dots
a_{\smash{t-1}}^{p_{t-1}} a_{\smash t}^{l_t}$,\cr\indent if
$\overline\beta=\LR(){1^{\left\langle
l_t+\mathop{\Sigma}\limits_{i=1}^{t-1}p_i\right\rangle}, 2^{\langle
l_1-p_1\rangle},3^{\langle l_2-p_2\rangle},\dots, t^{\langle
l_{t-1}-p_{t-1}\rangle}}$, $0\leqslant p_i\leqslant l_i$,\cr\indent
$1\leqslant i\leqslant t-1$, $\sum\limits_{i=1}^{t-1}p_i\leqslant
k-l_t$,\cr}\cr 0 \quad \mbox{otherwise.}\cr}
$$
On the set $G_{\smash{t}}^{[k]}$ we  define the square matrix
$K_{\smash{t}}^{[k]}(a_0,a_1,\dots,a_t)=(a_{\overline\alpha,
\overline\beta})_{\overline\alpha,\overline\beta\in G_{\smash{t}}
^{[k]}}$ as follows: $K_{\smash0}^{[k]}(a_0)=\sum\limits_{d=0}^kd!
\binom kd^2a_0^d$.  If $t\geqslant 1$, $\overline\alpha=(1^{\langle
l_1\rangle},2^{\langle l_2\rangle},\dots, t^{\langle l_t\rangle})\in
G_{\smash{t}}^{[k]}$, then
$$
a_{\overline\alpha,\overline\beta}=\syscase{\Vt{$\binom{l_1}{p_1}
\binom{l_2}{p_2}\dots\binom{l_{t-1}}{p_{t-1}}\Binom k{v-\mathop{
\Sigma}\limits_{i=1}^{t-1}p_i}a_{\smash0}^{v-\mathop{\Sigma}\limits
_{i=1}^{t-1}p_i}a_{\smash1}^{p_1}a_{\smash2}^{p_2}\dots
a_{\smash{t-1}} ^{p_{t-1}}\times$\cr\indent$ \times
\sum\limits_{d=0}^{\min(l_t,k-v)}(d+v)!\binom k{d+v}\binom
{l_t}da_t^d$,\cr\indent if $\overline\beta=\LR(){1^{\left\langle
k-v+ \mathop{\Sigma}\limits_{i=1}^{t-1}p_i\right\rangle},2^{\langle
l_1-p_1 \rangle},3^{\langle l_2-p_2\rangle},\dots,t^{\langle
l_{t-1}-p_{t-1} \rangle}}$,\cr\indent $0\leqslant p_i\leqslant l_i$,
$1\leqslant i\leqslant t-1$, $\sum\limits_{i=1}^{t-1}p_i\leqslant
v\leqslant k$,\cr}\cr \noalign{\medskip}
\rlap0\indent\hbox{otherwise.}\cr}
$$

Let $\{\overline\alpha\}$ be a multiset, consisting from the
components of a~vector~$\overline\alpha$. If $X$ is a finite set of
cardinality~$n$ then we denote by symbol~$\mathbf{ L }$ (in the
context of the concrete set~$X$ under consideration) a fixed
bijection from~$X$ onto~$N_n$. Let $J_k$ be the square matrix of
order~$k$, with all elements equal to~1.
If $\overline\alpha=(x_1,x_2,\dots,x_m) \in \bbZ^m$, $k,n
\in\bbZ,$ then, by definition,
$k\overline\alpha=(kx_1,kx_2,\dots,kx_m)$,
$n+\overline\alpha=(n+x_1,n+x_2,\dots,n+x_m)$.

\begin{theorem} Let $t\geqslant 0$, let $a_0$, $a_1$,~\dots,~$a_t$ be elements
in a commutative ring with unity, let $n\geqslant1$ and
$k\geqslant1$, let $\overline\beta\in G_{\smash{\min(n,t)}}^{[k]}$,
let $m=\min(n,t)$. Then for all $\overline\alpha\in G_{\smash
t}^{[k]}$
$$
\displaylines{ \textstyle
R\left(x;\left(\left(\sum\limits_{i=0}^ta_iT_{\smash{n+t}}^{(i)}
\right)\otimes J_k\right)[N_{nk}\mid k( 1^{\langle
k-m_{\overline\beta}(1)\rangle}, 2^{\langle
k-m_{\overline\beta}(2)\rangle},\dots, m^{\langle
k-m_{\overline\beta}(m)\rangle}),\right.\hfill\cr \hfill
mk+1,mk+2,\dots,nk,nk+k\overline\alpha]\bigg)= \cr \hfill
=\displaystyle \left(\prod\limits_{i=1}^{\min(n,t)}\binom
k{m_{\overline\beta}(i)}^{-1}\right)
\sum\limits_{\rows{\overline\gamma \in G_{\smash t}^{[k]}\cr
\{\overline\gamma\}\supseteq\{\overline
\beta\}\cr}}\prod\limits_{i=1}^{\min(n,t)}\binom{m_{\overline
\gamma}(i)}{m_{\overline\beta}(i)}\cdot(K_{\smash t}^{[k]}(a_0x,
a_1x,\dots a_tx))^n[\mathbf{ L }(\overline\alpha)\mid \mathbf{ L
}(\overline \gamma)].\cr}
$$
\end{theorem}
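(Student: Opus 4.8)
The plan is to read the left-hand side through a \emph{transfer-matrix} picture, in which $K_t^{[k]}$ governs the passage of a rook placement from one column-block to the next, and then to prove the identity by induction on $n$ with $\overline\alpha$ held fixed; throughout write $M_N=\sum_{i=0}^{t}a_iT_N^{(i)}$, so $(M_N)_{r,c}=a_{c-r}$ when $0\leqslant c-r\leqslant t$ and $=0$ otherwise. Since $M_{n+t}$ is banded with offsets $0,1,\dots,t$, a rook placement on $M_{n+t}\otimes J_k$ can be built up by scanning the column-blocks from left to right, a ``block'' being a run of $k$ consecutive columns of the Kronecker product; a rook in column-block $c$ and row-block $r$ forces $0\leqslant c-r\leqslant t$ and contributes the weight $a_{c-r}x$, so once column-block $c$ has been treated the only row-blocks whose residual occupancy can still matter are $c-t+1,\dots,c$, a sliding window of $t$ row-blocks, and the state is the multiset recording how many of the $k$ rows of each of these blocks are still free, i.e.\ an element of $G_t^{[k]}$. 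I would then check straight from the displayed definition that $a_{\overline\alpha,\overline\beta}$ is exactly the weighted number of ways to fill one fresh column-block so as to carry the window from state $\overline\alpha$ to state $\overline\beta$: $\binom{l_i}{p_i}a_i^{p_i}$ picks the hit rows of the window-block at offset $i$, $\binom{k}{v-\sum p_i}a_0^{v-\sum p_i}$ handles the newly exposed row-block at offset $0$, the inner sum $\sum_d\binom{l_t}{d}a_t^d$ handles the row-block at offset $t$ that leaves the window, and $(d+v)!\binom{k}{d+v}$ matches the $d+v$ chosen rows to $d+v$ of the $k$ columns of the block. Thus a product of $N$ copies of $K_t^{[k]}(a_0x,\dots,a_tx)$ enumerates, with weights, the rook placements on $N$ consecutive column-blocks, its two outer indices being the incoming and outgoing window states.

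\textbf{The boundary columns.} The idiosyncratic column list on the left should then be read as ``$n$ blocks of transfer together with prescribed boundary data''. Because all $k$ columns of one block of $M_{n+t}\otimes J_k$ coincide, listing the column index $ik$ with multiplicity $k-m_{\overline\beta}(i)$ just means keeping $k-m_{\overline\beta}(i)$ columns of block $i$; this caps how many rooks can land in the first $\min(n,t)$ blocks, which couples the boundary window state there to $\overline\beta$ and is the source both of the sum over $\overline\gamma$ with $\{\overline\gamma\}\supseteq\{\overline\beta\}$ and of the factors $\binom{m_{\overline\gamma}(i)}{m_{\overline\beta}(i)}$ (placements of the free rows compatible with the deleted columns), the division by $\binom{k}{m_{\overline\beta}(i)}$ compensating for the over-counting built into that column selection. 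The columns $nk+k\overline\alpha$ play the symmetric role at the other boundary and account for the index $\overline\alpha$. The precise dictionary I would pin down through the induction rather than from the scan directly.

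\textbf{The induction.} The base case $n=0$ is immediate: the submatrix has no rows, so its rook polynomial is $1$, while $(K_t^{[k]})^0$ is the identity, and both sides equal $1$. For $n-1\to n$ I would peel off row-block $1$ — which is always present, the rows being all of $N_{nk}$ — by applying the standard rook-polynomial deletion recursion $R(x;A)=R(x;A\setminus c)+x\sum_{r}a_{r,c}\,R(x;A\setminus\{r,c\})$ to each of the $k$ columns that meet row-block $1$, one after another. Summing over the ways to seat rooks in row-block $1$ (choosing their rows in this block, choosing their columns among column-blocks $1,\dots,1+t$), and invoking the Toeplitz identity $M_{n+t}[r,c]=M_{n+t-1}[r-1,c-1]$ to identify the leftover board — column-block $1$ having become inert — with the $(n-1)$-instance of the theorem for a suitably shifted boundary multiset $\overline\beta'$, turns the inductive step into a finite identity relating $K_t^{[k]}(a_0x,\dots,a_tx)$, the binomial normalisation, and the ``row-block-$1$ deletion transfer'' read off from that sum; this identity I would prove by a direct, if lengthy, binomial computation. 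The small cases $1\leqslant n\leqslant t$ are covered by the same argument, the window there never reaching full size $t$, so that some coordinates of $G_t^{[k]}$ stay frozen at $0$ and the $\min(n,t)=n$ truncation of $\overline\beta$ is automatic.

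\textbf{Where the difficulty lies.} The combinatorics of the transfer step — a rook placement on a banded board as a walk in the transfer graph — is conceptually routine; the real work is the boundary bookkeeping. One must reconcile the $n+t$ genuine column-blocks of $T_{n+t}$ with the exponent $n$, handle the window that grows over the first $t$ steps and shrinks over the last $t$, treat the degenerate steps correctly when $n<t$, and, above all, verify that the peculiar column list — resting as it does on the equality of columns inside a $J_k$-block and on the scaling $k\overline\alpha$ — translates into exactly the stated incoming state $\overline\gamma$, outgoing state $\overline\alpha$, and product $\prod_i\binom{k}{m_{\overline\beta}(i)}^{-1}\binom{m_{\overline\gamma}(i)}{m_{\overline\beta}(i)}$, rather than something off by an index shift or a binomial factor. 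Getting that normalisation exactly right is where I would expect the bulk of the effort to go.
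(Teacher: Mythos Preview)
Your transfer-matrix reading of $K_t^{[k]}$ and the plan to induct on $n$ by stripping off one block of $k$ rows are the right picture, and they match the paper's intended route. Note, though, that the paper does not write out a self-contained proof of this theorem either: it says only that ``the basis for proof of the theorems 1--3 are the following fundamental characteristics of the matrices $K_t^{[k]}$ and $\Pi_{r,t}^{[k]}$'' and then states and proves Theorems~8 and~9. Theorem~8 is a vanishing lemma for $(K_t^{[k]})^n[\mathbf L(\overline\alpha)\mid\mathbf L(\overline\beta)]$ when a certain multiset containment fails, and Theorem~9 is the reduction
\[
(K_t^{[k]})^n[\mathbf L(\overline\alpha)\mid\mathbf L(\overline\beta)]
=\Bigl(\prod_{i=1}^t\tbinom{m_{\overline\alpha}(i)}{m_{\overline\gamma-n}(i)}\tbinom{m_{\overline\beta}(i)}{m_{\overline\gamma}(i)}^{-1}\Bigr)\,
(K_t^{[k]})^n[\mathbf L(\overline\alpha\setminus(\overline\gamma-n))\mid\mathbf L(\overline\beta\setminus\overline\gamma)].
\]
That binomial product is exactly the normalisation showing up in the statement you are trying to prove, so Theorem~9 is precisely the identity that settles the boundary bookkeeping you flag as the hard part, and Theorem~8 is what justifies restricting the sum to those $\overline\gamma$ with $\{\overline\gamma\}\supseteq\{\overline\beta\}$.

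In short, the step you leave as ``a finite identity \dots\ I would prove by a direct, if lengthy, binomial computation'' is the content the paper isolates as Theorems~8 and~9 (whose proofs together run about two pages and use the binomial identity $\binom ab\binom bc=\binom ac\binom{a-c}{a-b}$ repeatedly). Without those lemmas or an equivalent, your inductive step is only a sketch. One smaller point: reconsider whether the scan is by column-blocks or by row-blocks. There are $n$ row-blocks and the exponent on $K_t^{[k]}$ is $n$, so a row-block transfer (as in the paper's proof of the special case Theorem~33 via Lemma~5 with $H_i=(i-1)k+N_k$) matches directly, whereas a column-block scan naturally has $n+t$ steps and forces you to absorb $t$ of them into the boundary data --- which is possible, but makes the already-delicate boundary accounting harder.
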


\begin{theorem}
Let $t\geqslant0$, let $a_0$, $a_1$,~\dots,~$a_t$ be elements in a
commutative ring with unity, let $n\geqslant1$ and $k\geqslant1$,
let $0\leqslant r\leqslant k\min(n,t)$, let $\overline\beta\in
G_{\smash{r,\min(n,t)}}^{[k]}$, let $m=\min(n,t)$. Then for all
$\overline\alpha\in G_{\smash{r,t}}^{[k]}$
$$
\displaylines{ \textstyle
\per\left(\left(\left(\sum\limits_{i=0}^ta_iT_{\smash{n+t}}
^{(i)}\right)\otimes J_k\right)\bigl[N_{nk}\mid k( 1^{\langle
k-m_{\overline\beta}(1)\rangle}, 2^{\langle
k-m_{\overline\beta}(2)\rangle},\dots, m^{\langle
k-m_{\overline\beta}(m)\rangle}),\right.\hfill\cr \hfill
mk+1,mk+2,\dots,nk,nk+k\overline\alpha\bigr]\biggr)=\cr
\hfill\displaystyle=\left(\prod\limits_{i=1}^{\min(n,t)}\binom k
{m_{\overline\beta}(i)}^{-1}\right)(\Pi_{\smash{r,t}}^{[k]}(a_0,a_1,
\dots,a_t))^n[ \mathbf{ L } (\overline\alpha)\mid \mathbf{ L }
(\overline\beta)].\cr}
$$
\end{theorem}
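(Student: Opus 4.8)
The plan is to obtain the assertion from Theorem~1 by extracting a single coefficient in~$x$. Apply Theorem~1 with $\overline\alpha$ chosen in the graded piece $G_{\smash{r,t}}^{[k]}\subseteq G_{\smash t}^{[k]}$ and $\overline\beta$ chosen in $G_{\smash{r,\min(n,t)}}^{[k]}\subseteq G_{\smash{\min(n,t)}}^{[k]}$. Then $\overline\beta$ has exactly $\sum_i m_{\overline\beta}(i)=r$ components and $\overline\alpha$ has exactly $r$ components, so with $m=\min(n,t)$ the matrix on the left-hand side of Theorem~1 has $nk$ rows and $(mk-r)+(nk-mk)+r=nk$ columns, hence is square; consequently the coefficient of $x^{nk}$ in its cycle rook polynomial $R(x;\,\cdot\,)=R(x;1;\,\cdot\,)$ equals its permanent $\per(\,\cdot\,)=\per(1;\,\cdot\,)$, which is exactly the left-hand side of Theorem~2. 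So it suffices to compute the coefficient of $x^{nk}$ on the right-hand side of Theorem~1.

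For $\overline\mu=(1^{\langle l_1\rangle},\dots,t^{\langle l_t\rangle})\in G_{\smash t}^{[k]}$ put $w(\overline\mu)=\sum_i l_i=\sum_i m_{\overline\mu}(i)$, the number of components of~$\overline\mu$, and for a matrix $M(x)$ over $R[x]$ indexed by $G_{\smash t}^{[k]}$ call $\deg_x M[\overline\mu\mid\overline\nu]+w(\overline\nu)-w(\overline\mu)$ the \emph{twisted degree} of the entry $M[\overline\mu\mid\overline\nu]$. A direct inspection of the definition of $K_{\smash t}^{[k]}(a_0x,\dots,a_tx)$ (the case $t=0$ being immediate) gives the following: if $\overline\nu=\bigl(1^{\langle k-v+P\rangle},2^{\langle l_1-p_1\rangle},\dots,t^{\langle l_{t-1}-p_{t-1}\rangle}\bigr)$ with $P=p_1+\dots+p_{t-1}$ is the target of a non-zero entry with source $\overline\mu$, then $w(\overline\nu)=k-v+w(\overline\mu)-l_t$ while the powers of~$x$ occurring in that entry are $v,v+1,\dots,v+\min(l_t,k-v)$; hence its twisted degree equals $v+l_t\leqslant k$, with value exactly~$k$ if and only if $v=k-l_t$, and then $w(\overline\nu)=w(\overline\mu)$. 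Therefore the ``top part'' $K^{\top}$, whose $(\overline\mu,\overline\nu)$-entry is the coefficient of $x^{\,w(\overline\mu)+k-w(\overline\nu)}$ in $K_{\smash t}^{[k]}(a_0x,\dots,a_tx)[\overline\mu\mid\overline\nu]$, is supported on weight-preserving pairs; and substituting $v=k-l_t$ and $d=l_t$ into the formula, so that $(l_t+v)!=k!$ and $\binom k{l_t+v}=\binom{l_t}{l_t}=1$, and comparing with the definition of $\Pi_{\smash{r,t}}^{[k]}$, one finds that the restriction of $K^{\top}$ to the weight-$r$ block of $G_{\smash t}^{[k]}$ is precisely $\Pi_{\smash{r,t}}^{[k]}(a_0,a_1,\dots,a_t)$.

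Twisted degree is subadditive under matrix multiplication: the telescoping identity $\sum_{i=1}^n\bigl(w(\overline\mu_{i-1})+k-w(\overline\mu_i)\bigr)=w(\overline\mu_0)+nk-w(\overline\mu_n)$ bounds the $x$-degree of every term $M[\overline\mu_0\mid\overline\mu_1]\cdots M[\overline\mu_{n-1}\mid\overline\mu_n]$ in an expansion of $\bigl(K_{\smash t}^{[k]}(a_0x,\dots,a_tx)\bigr)^n[\overline\mu_0\mid\overline\mu_n]$ by $nk$ whenever $w(\overline\mu_0)=w(\overline\mu_n)=r$, and shows that the coefficient of $x^{nk}$ there is obtained by replacing each factor by its top coefficient; so that coefficient equals the $(\overline\mu_0,\overline\mu_n)$-entry of $(K^{\top})^n$, which, since $K^{\top}$ preserves weight, equals the $(\overline\mu_0,\overline\mu_n)$-entry of $\bigl(\Pi_{\smash{r,t}}^{[k]}(a_0,\dots,a_t)\bigr)^n$. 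On the other hand, if $w(\overline\gamma)>r=w(\overline\alpha)$ then $\bigl(K_{\smash t}^{[k]}(a_0x,\dots)\bigr)^n[\mathbf L(\overline\alpha)\mid\mathbf L(\overline\gamma)]$ has $x$-degree $<nk$, so it contributes nothing to the coefficient of $x^{nk}$; since the sum on the right of Theorem~1 runs over $\overline\gamma$ with $\{\overline\gamma\}\supseteq\{\overline\beta\}$, which forces $w(\overline\gamma)\geqslant w(\overline\beta)=r$, only the term $\overline\gamma=\overline\beta$ survives, and for it $\prod_i\binom{m_{\overline\gamma}(i)}{m_{\overline\beta}(i)}=1$. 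Taking the coefficient of $x^{nk}$ on both sides of the identity of Theorem~1 now yields precisely the identity of Theorem~2.

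The step I expect to be the main obstacle is the bookkeeping in the second paragraph: verifying, from the explicit and rather intricate formula for $K_{\smash t}^{[k]}$, both the bound ``twisted degree $\leqslant k$'' and the exact agreement of its top part with $\Pi_{\smash{r,t}}^{[k]}$ on every weight-$r$ block, while keeping track of the fact that ``$\mathbf L$'' always denotes the fixed bijection of whichever finite index set is currently under consideration, so that restricting the $G_{\smash t}^{[k]}$-indexed matrix $K^{\top}$ to its $G_{\smash{r,t}}^{[k]}$-indexed sub-block is literally the passage to $\Pi_{\smash{r,t}}^{[k]}$. Granting this, the matrix-power statement used above is a purely formal consequence with no cancellation to control, because extracting $x^{nk}$ from a product of $n$ polynomial factors whose combined $x$-degree is at most $nk$ simply selects the product of their leading coefficients.
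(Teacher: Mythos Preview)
Your approach—deriving Theorem~2 from Theorem~1 by extracting the coefficient of $x^{nk}$—is different from the paper's route. The paper does not reduce Theorem~2 to Theorem~1; rather it treats the two statements in parallel, establishing for $\Pi_{r,t}^{[k]}$ the direct analogues (Theorems~12 and~13) of the structural properties of $K_t^{[k]}$ proved in Theorems~8 and~9, and then running the same induction on $n$ separately for $K$ and for $\Pi$. Your reduction is more economical: once Theorem~1 is in hand, Theorem~2 becomes a corollary, with no need to redo the induction or reprove Theorems~12--13.

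There is one slip in your second paragraph, though it does not damage the conclusion. The twisted degree of the $(\overline\mu,\overline\nu)$-entry of $K_t^{[k]}(a_0x,\dots,a_tx)$ is not $v+l_t$. Computing per monomial, the term with parameter $d$ has $x$-degree $v+d$, and since $w(\overline\nu)-w(\overline\mu)=k-v-l_t$ its twisted degree is $d+k-l_t\leqslant k$, with equality iff $d=l_t$. That equality is attainable whenever $l_t\leqslant k-v$, i.e.\ whenever $w(\overline\nu)\geqslant w(\overline\mu)$, not only when $v=k-l_t$. Thus $K^{\top}$ is \emph{not} supported solely on weight-preserving pairs: it is block upper-triangular with respect to weight. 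Its weight-$r$ diagonal block, corresponding to $v=k-l_t$, is indeed $\Pi_{r,t}^{[k]}$ (your computation there is correct), and that is all you need: since $K^{\top}$ can only increase weight, any path from $\overline\alpha$ to $\overline\gamma$ with $w(\overline\alpha)=w(\overline\gamma)=r$ stays entirely in weight $r$, so $(K^{\top})^n$ restricted to the weight-$r$ block is still $(\Pi_{r,t}^{[k]})^n$. With this correction the rest of your argument goes through unchanged.
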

\begin{theorem}
Let $0\leqslant r\leqslant t$, let $a_{-r}$,
$a_{-r+1}$,~\dots,~$a_{-r+t}$ be  elements  in a commutative
ring with unity. Then for all $n\geqslant1$
$$
\displaylines{ \textstyle
R\left(x;\left(\sum\limits_{i=0}^ta_{-r+i}T_n^{(-r+i)}
\right)\otimes J_k\right)= \hfill\cr\hfill\textstyle
=\sum\limits_{\rows{\overline\gamma\in
\mathop{\cup}\limits_{l=rk}^{kt}G_{\smash{l,t}}^{[k]}\cr
\{\overline\gamma\}\supseteq\{1^{\langle k\rangle},2^{\langle
k\rangle},\dots,r^{\langle k\rangle}\}\cr}}(K_{\smash t}^{[k]}
(a_{-r}x,a_{-r+1}x,\dots,a_{-r+t}x))^n[ \mathbf{ L } (1^{\langle
k\rangle}, 2^{\langle k\rangle},\dots,r^{\langle k\rangle})\mid
\mathbf{ L } (\overline \gamma)],\cr \noalign{\bigskip}
\textstyle\per\left(\left(\sum\limits_{i=0}^ta_{-r+i}T_n^{(-r+i)}
\right)\otimes J_k\right) = \hfill\cr\hfill =
(\Pi_{\smash{rk,t}}^{[k]}(a_{-r},a_{-r+1},\dots, a_{-r+t}))^n[
\mathbf{ L } (1^{\langle k\rangle},2^{\langle k\rangle},\dots,
r^{\langle k\rangle})\mid \mathbf{ L } (1^{\langle
k\rangle},2^{\langle k \rangle},\dots,r^{\langle k\rangle})],\cr
\noalign{\bigskip} \textstyle
R\left(x;\left(\sum\limits_{i=0}^ta_{-r+i}P_n^{-r+i} \right)\otimes
J_k\right)=\Tr((K_{\smash t}^{[k]}(a_{-r}x,a_{-r+1}x,
\dots,a_{-r+t}x))^n),\cr \noalign{\bigskip} \textstyle
\per\left(\left(\sum\limits_{i=0}^ta_{-r+i}x^iP_n^{-r+i}
\right)\otimes
J_k\right)=\sum\limits_{l=0}^{kt}(\Tr((\Pi_{\smash{l,t}}
^{[k]}(a_{-r},a_{-r+1},\dots,a_{-r+t}))^n))x^{ln}.\cr}
$$
\end{theorem}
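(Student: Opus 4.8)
\emph{Proof plan.} The four identities fall into a ``Toeplitz pair'' (the first two) and a ``circulant pair'' (the last two). I would deduce the Toeplitz pair directly from Theorems~1 and~2 by a shift-and-specialize reduction, and obtain the circulant pair by carrying the transfer-matrix mechanism behind Theorems~1 and~2 over from a linear sliding window to the cyclic word $\bbZ/n$.

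For the Toeplitz pair, the starting point is the elementary entrywise identity
\[ \textstyle\sum_{i=0}^{t}a_{-r+i}T_n^{(-r+i)}=\bigl(\sum_{i=0}^{t}a_{-r+i}T_{n+t}^{(i)}\bigr)[\{1,\dots,n\}\mid\{r+1,\dots,r+n\}], \]
valid because the $(p,q+r)$-entry of the band matrix of order $n+t$ equals $a_{q-p}$ precisely when $-r\le q-p\le t-r$ (and $r\le t$ makes $q+r\le n+t$). Tensoring with $J_k$, and using that the columns inside a block of $A\otimes J_k$ are identical, this exhibits $(\sum_{i=0}^t a_{-r+i}T_n^{(-r+i)})\otimes J_k$ as exactly the submatrix occurring in Theorems~1 and~2 for the choice $\overline\beta=(1^{\langle k\rangle},\dots,r^{\langle k\rangle})$, $\overline\alpha=(1^{\langle k\rangle},\dots,r^{\langle k\rangle})$: here $k-m_{\overline\beta}(i)=0$ for $i\le r$ suppresses the blocks $1,\dots,r$, the blocks $r+1,\dots,\min(n,t)$ enter through the $\overline\beta$-part, the blocks $\min(n,t)+1,\dots,n$ through the middle segment $mk+1,\dots,nk$, and the blocks $n+1,\dots,n+r$ through $nk+k\overline\alpha$, so the column set is exactly the blocks $r+1,\dots,r+n$. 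Substituting into Theorems~1 and~2 (with $a_i$ relabelled $a_{-r+i}$, and with the lower index called $r$ in Theorem~2 set equal to $rk$), the prefactor $\prod\binom{k}{m_{\overline\beta}(i)}^{-1}$ equals $1$ since every $m_{\overline\beta}(i)\in\{0,k\}$, the sum of Theorem~1 runs over $\overline\gamma$ with $\{\overline\gamma\}\supseteq\{1^{\langle k\rangle},\dots,r^{\langle k\rangle}\}$ (which forces $\overline\gamma\in\bigcup_{l=rk}^{kt}G_{l,t}^{[k]}$), and under that condition the weights $\prod\binom{m_{\overline\gamma}(i)}{m_{\overline\beta}(i)}$ all equal $1$; recalling $R(x;A)=R(x;1;A)$ and $\per(A)=\per(1;A)$, Theorem~1 becomes the first identity and Theorem~2 the second. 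This is complete when $n\ge r$; for $n<r$ one has $\min(n,t)=n$, the submatrix must instead be produced with $\overline\beta=(1^{\langle k\rangle},\dots,n^{\langle k\rangle})$ and a shifted $\overline\alpha$, and the output must be matched to the stated form using a shift identity for powers of $K_t^{[k]}$ --- a short separate check.

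For the circulant pair the plan is to replay the transfer-matrix proof of Theorems~1 and~2 on the cycle. Scanning the columns of $(\sum_{i=0}^t a_{-r+i}P_n^{-r+i})\otimes J_k$ in $n$ blocks of width $k$ and sliding a window of $t$ consecutive blocks, a placement of non-attacking rooks is coded by a cyclic sequence of $n$ local transitions, each given by the matrix $M=K_t^{[k]}(a_{-r}x,\dots,a_{-r+t}x)$ on the state set $G_t^{[k]}$, exactly as behind Theorem~1 (with the $x$-weighting already built in); because the word is cyclic the state sequence must close up, so the fixed-endpoint entries of $M^n$ appearing in Theorems~1 and~2 are replaced by $\sum_{\overline\gamma}M^n[\mathbf L(\overline\gamma)\mid\mathbf L(\overline\gamma)]=\Tr(M^n)$, which is the third identity. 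For the fourth identity one inserts the weights $x^i$: in a full placement $\sigma$ of $(\sum_{i=0}^t a_{-r+i}x^iP_n^{-r+i})\otimes J_k$ the total $x$-exponent is $\sum_p i_p$, where $\sigma$ carries the block $p'_p$ of row $p$ to a block $q'_p$ with $q'_p-p'_p\equiv -r+i_p\pmod n$; since $\sigma$ is a permutation of $N_{nk}$ one gets $\sum_p(q'_p-p'_p)=0$, so writing $q'_p-p'_p=-r+i_p+nw_p$ gives $\sum_p i_p=n(kr-\sum_p w_p)=nl$ with $0\le l\le kt$ --- the $x$-exponent is always a multiple of $n$ --- and for fixed winding number $l$ the closed configurations live on the states $G_{l,t}^{[k]}$ with transfer matrix $\Pi_{l,t}^{[k]}(a_{-r},\dots,a_{-r+t})$, contributing $x^{ln}\Tr((\Pi_{l,t}^{[k]})^n)$; summing over $l$ gives the fourth identity. (A leaner variant avoids re-deriving the walk interpretation: cut the cyclic permanent at one block, sum over the boundary state to write it as a sum over states of a product $\per(\text{Toeplitz block}_1)\cdot\per(\text{Toeplitz block}_2)$, apply Theorem~2 to each factor, and recognize the result as a trace of a product of powers of $\Pi_{l,t}^{[k]}$; that route needs its own splitting lemma for permanents of banded circulants.)

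The routine parts are the index bookkeeping and the collapse of binomial coefficients. I expect the genuine obstacle to be the cyclic transfer-matrix argument for the third and fourth identities: one must show that ``cutting'' the cycle at an arbitrary block and reclosing it recovers the rook polynomial and the permanent with neither overcounting nor omission --- i.e.\ that Theorems~1 and~2 really are generating functions for walks of length $n$ and not a numerical coincidence --- and carry out the winding-number bookkeeping (control of the $w_p$) that pins down which state set $G_{l,t}^{[k]}$ each layer uses. The case $n<t$, where the window self-overlaps, needs a small extra argument mirroring the role of $\min(n,t)$ in Theorems~1 and~2, and, as noted, so does the case $n<r$ in the Toeplitz pair.
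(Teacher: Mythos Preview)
Your reduction of the Toeplitz pair to Theorems~1 and~2 by the specialization $\overline\alpha=\overline\beta=(1^{\langle k\rangle},\dots,r^{\langle k\rangle})$ is correct and is what the paper's architecture anticipates: Theorems~1 and~2 are stated in that generality precisely so that the Toeplitz half of Theorem~3 drops out as the corner case, with the binomial prefactors collapsing to~$1$ exactly as you observe. The paper does not spell this deduction out, but the order of presentation makes the intention clear. Your caveat about $n<r$ is appropriate; there one simply notes that $T_n^{(s)}=0$ for $|s|\ge n$, so superfluous terms on both sides vanish and the identity reduces to one with a smaller effective~$r$.

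For the circulant pair your route diverges from the paper's. The paper does not give an explicit proof of Theorem~3 either, but it names Theorems~8--15 as ``the basis'' for Theorems~1--3, and among these the tools aimed at the circulant identities are the trace/partial-derivative identities of Theorems~10, 11, 14 and the $x$-homogeneity of Theorem~15 (supported by Lemmas~3--4 and Theorem~31 on PDE characterizations of power series). This points to an \emph{algebraic} verification: one checks that the candidate trace expression $\Tr((K_t^{[k]})^n)$ satisfies the same differential constraints in the~$a_i$ that the circulant rook polynomial inherits from the shift-invariance $P_n^{-1}(\sum a_iP_n^i)P_n=\sum a_iP_n^i$, and then appeals to a uniqueness statement. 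Your route is instead a direct \emph{combinatorial} transfer-matrix argument on the cycle: cut, recognize the sum over boundary states as a trace, and (for the permanent) control the winding number via $\sum_p(q'_p-p'_p)=0$ to stratify by $G_{l,t}^{[k]}$. Both are viable. Yours is conceptually cleaner but, as you correctly flag, it obliges you to establish the walk interpretation of $K_t^{[k]}$ and $\Pi_{l,t}^{[k]}$ from scratch rather than quoting Theorems~1--2 as black boxes; the paper's route avoids that by leaning on the long bijective computations in the proofs of Theorems~10 and~11. Your winding-number bookkeeping for the fourth identity is essentially the same computation the paper carries out in the proof of Theorem~15 (the identity $\sum_i\bigl(tl_t^{(i)}+\sum_j jp_j^{(i)}\bigr)=rn$ there), so that piece is already on firm ground.
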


\begin{theorem}
 $\per((a_0I_n+a_1P_n)\otimes J_k)=(k!)^n\sum\limits
_{l=0}^k\binom kl^n(a_0^{k-l}a_1^l)^n$ for all $n\geqslant1$.
\end{theorem}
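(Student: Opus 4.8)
The statement is the degenerate case $t=1$, $r=0$ of the fourth identity in Theorem~3, and the plan is to specialize that identity and then evaluate the $1\times1$ matrices that appear. With $t=1$ and $r=0$ the expression $\sum_{i=0}^{t}a_{-r+i}x^{i}P_n^{-r+i}$ becomes $a_0I_n+a_1xP_n$, and the right-hand side of that identity becomes $\sum_{l=0}^{k}\Tr\bigl((\Pi_{l,1}^{[k]}(a_0,a_1))^{n}\bigr)x^{ln}$. Setting $x=1$ yields
\[
\per\bigl((a_0I_n+a_1P_n)\otimes J_k\bigr)=\sum_{l=0}^{k}\Tr\bigl((\Pi_{l,1}^{[k]}(a_0,a_1))^{n}\bigr),
\]
so it remains only to compute $\Pi_{l,1}^{[k]}(a_0,a_1)$ for $0\leqslant l\leqslant k$.

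I would then unwind the definition of $\Pi_{r,t}^{[k]}$ in the case $t=1$. For $t=1$ the set $G_{l,1}^{[k]}$ consists of the single vector $1^{\langle l\rangle}$ for each $l$ with $0\leqslant l\leqslant k$, so $\Pi_{l,1}^{[k]}(a_0,a_1)$ is a $1\times1$ matrix. In its defining formula the upper index $t-1$ equals $0$: there are no variables $p_i$, the product $\binom{l_1}{p_1}\cdots\binom{l_{t-1}}{p_{t-1}}$ is empty (hence $1$), the condition $\sum_{i=1}^{t-1}p_i\leqslant k-l_t$ is vacuous, and the target vector $\overline\beta=1^{\langle l_t\rangle}$ coincides with $\overline\alpha=1^{\langle l\rangle}$ (here $l_t=l_1=l$). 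Thus the single entry equals $k!\binom{k}{l}a_0^{k-l}a_1^{l}$, i.e. $\Pi_{l,1}^{[k]}(a_0,a_1)=\bigl(k!\binom{k}{l}a_0^{k-l}a_1^{l}\bigr)$. Therefore $(\Pi_{l,1}^{[k]}(a_0,a_1))^{n}$ is the $1\times1$ matrix with entry $\bigl(k!\binom{k}{l}a_0^{k-l}a_1^{l}\bigr)^{n}$, whose trace is $(k!)^{n}\binom{k}{l}^{n}(a_0^{k-l}a_1^{l})^{n}$. Substituting this into the displayed identity and summing over $l$ gives $(k!)^{n}\sum_{l=0}^{k}\binom{k}{l}^{n}(a_0^{k-l}a_1^{l})^{n}$, which is the claim.

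The only real work is the bookkeeping in the second step, verifying that the notation-heavy definition of $\Pi_{r,t}^{[k]}$ genuinely collapses as described when $t=1$; there is no essential obstacle. As an independent check I would re-derive the identity combinatorially: writing the $nk\times nk$ matrix $(a_0I_n+a_1P_n)\otimes J_k$ in $k\times k$ blocks, in each nonzero term of the permanent some $l_a$ of the $k$ rows of block-row $a$ are mapped into block-column $a+1$ and the other $k-l_a$ into block-column $a$; comparing the numbers of preimages of the block-columns forces $l_a$ to be a constant $l$ independent of $a$, and for fixed $l$ the number of valid row-to-column matchings, weighted by powers of $a_0$ and $a_1$, is exactly $\bigl(k!\binom{k}{l}a_0^{k-l}a_1^{l}\bigr)^{n}$. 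Summing over $l=0,\dots,k$ recovers the formula; for $n=1$ it reduces to $k!(a_0+a_1)^{k}$, the binomial theorem.
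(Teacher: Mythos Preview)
Your proposal is correct and follows exactly the paper's approach: the paper's proof of Theorem~4 is the two-line argument that $\Pi_{l,1}^{[k]}(a_0,a_1)=k!\binom{k}{l}a_0^{k-l}a_1^{l}$ for $0\leqslant l\leqslant k$ and that Theorem~3 (the fourth identity with $t=1$, $r=0$, $x=1$) then yields the result. Your additional combinatorial check is also present in the paper, where it appears later as an independent proof of Theorem~4 via Lemma~5 (partitioning the rows of $(a_0I_n+a_1P_n)\otimes J_k$ into the $n$ block-rows $H_i=(i-1)k+N_k$ and observing that the sizes $|K_i^{(1)}|$ must all coincide).
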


\begin{proof}
Since $\Pi_{\smash{l,1}}^{[k]}(a_0,a_1)=k!\binom kla_0^{k-l}a_1^l$,
$0\leqslant l\leqslant k$,
 then from the theorem 3 it follows that
$$
\per((a_0I_n+a_1P_n)\otimes J_k)=\sum\limits_{l=0}^k\left(k!\binom
kl a_0^{k-1}a_1^l\right)^n.
$$
\end{proof}
Let $t\geqslant1$ and $0\leqslant r\leqslant t$, let $a_{-r}$,
$a_{-r+1}$,~\dots,~$a_{-r+t}$ be elements in a commutative ring with
unity, let $a_{{1\over2k}}=1$, let
$W_{\smash{r,t}}^{[k]}=\Bigl\{(\overline
\alpha,\overline\beta)\in\Bigl\{-r,-r+1,\dots,-r+t,\frac1{2k}\Bigr\}
^{kt}\times\{0,1,\dots,k-1\}^{kt}~|\break
\overline\alpha=(\alpha_0,\alpha_1, \dots,\alpha_{kt-1})$,
$\overline\beta=(l_0,l_1,\dots,l_{kt-1})$, the mapping $ks+i\to
k(s+\alpha_{ks+i})+l_{ks+i}$, $0\leqslant s\leqslant t-1$,
$0\leqslant i\leqslant k-1$, of the set $\{0,1,\dots,kt-1\}$ is
injective; if $\alpha_{ks+i}=\frac1{2k}$, then $l_{ks+i}=i\Bigr\}$,
let $ V_{\smash{r,t}}^{[k]}
=\Bigl\{(\overline\alpha,\overline\beta)\in\{-r,-r+1,
\dots,-r+t\}^{kt}\times\{0,1,\dots,k-1\}^{kt} \mid
\overline\alpha=(\alpha_0,\alpha_1,\dots,\alpha_{kt-1}),
\overline\beta= \break =(l_0,l_1,\dots, l_{kt-1}),$ the mapping $
ks+i\to k(s+\alpha_{ks+i})+l_{ks+i}, 0\leqslant s\leqslant t-1,
0\leqslant i\leqslant k-1,$ of the set $\{0,1,\dots,kt-1\}$ is
injective $\Bigr\}. $
Let
$A_{\smash{r,t}}^{[[k]]}(a_{-r},a_{-r+1},\dots,a_{-r+t})=(a_{(\overline\alpha,\overline\beta),(\overline\varepsilon,\overline\rho)})
_{(\overline\alpha,\overline\beta),(\overline\varepsilon,\overline\rho)
\in W_{\smash{r,t}}^{[k]}}$   be the square matrix of order
$|W_{\smash{r,t}}^{[k]}|$ with the elements
$a_{(\overline\alpha,\overline\beta),(\overline\varepsilon,\overline\rho)}$,
where $\overline\alpha=(\alpha_0,\alpha_1,\dots,\alpha_{kt-1})$,
$\overline\beta=(l_0,l_1,\dots,l_{kt-1})$, $\overline\varepsilon =
(\varepsilon_0,\varepsilon_1,\dots,\varepsilon_{kt-1})$,
$\overline\rho = (p_0,p_1,\dots,p_{kt-1})$ defined as follows:
$$
a_{(\overline\alpha,\overline\beta),(\overline\varepsilon,\overline\rho)}=
\syscase{\vt{.8}{$a_{\alpha_0}a_{\alpha_1}\dots a_{\alpha_{k-1}}$,
if $(\alpha_k,\alpha_{k+1},\dots,\alpha_{kt-1})=(\varepsilon_0,
\varepsilon_1,\dots,\varepsilon_{k(t-1)-1})$, \\ $(l_k,
l_{k+1},\dots,l_{kt-1})=(p_0,p_1,\dots,p_{k(t-1)-1})$ and the
mapping \\
$ks+i\to k(s+\alpha_{\smash {ks+i}}')+l_{\smash{ks+i}}'$,
$0\leqslant s\leqslant t$, $0\leqslant i\leqslant k-1$, of the set
$\{0,1,2,\dots,k(t+1)-1\}$ is injective, where $\alpha_{\smash
{ks+i}}'=\alpha_{ks+i}$, $l_{\smash{ks+i}}'=l_{ks+i}$, if
$0\leqslant s \leqslant t-1$, $0\leqslant i\leqslant k-1$, and \\
$(\alpha_{\smash{kt}}',\alpha_{\smash
{kt+1}}',\dots,\alpha_{\smash{k(t+1)-1}}')= (\varepsilon_{k(t-1)},
\varepsilon_{k(t-1)+1},\dots,\varepsilon_{kt-1}), \\
(l_{\smash{kt}}',
l_{\smash{kt+1}}',\dots,l_{\smash{k(t+1)-1}}')=(p_{k(t-1)},
p_{k(t-1)+1},\dots,p_{kt-1})$,}\cr \noalign{\medskip}
\rlap0\indent\hbox{ otherwise.}\cr}
$$
Matrix $A_{\smash{r,t}}^{[k]}(a_{-r},a_{-r+1},\dots,a_{-r+t})$
defined absolutely identically with help of the set
$V_{\smash{r,t}}^{[k]}$.

\begin{theorem} 
$$
\displaylines{\textstyle
R\left(x;\left(\sum\limits_{i=0}^ta_{-r+i}P_n^{-r+i}\right)\otimes
J_k\right)
=\Tr((A_{\smash{r,t}}^{[[k]]}(a_{-r}x,a_{-r+1}x,\dots,a_{-r+t}x))^n)=
\hfill\cr\hfill
=\Tr((A_{\smash{0,t}}^{[[k]]}(a_{-r}x,a_{-r+1}x,\dots,a_{-r+t}x))^n),\cr
\noalign{\bigskip}
\textstyle\per\left(\left(\sum\limits_{i=0}^ta_{-r+i}P_n^{-r+i}\right)
\otimes J_k\right) =
\Tr((A_{\smash{r,t}}^{[k]}(a_{-r},a_{-r+1},\dots,a_{-r+t}))^n)=
\hfill\cr\hfill=
\Tr((A_{\smash{0,t}}^{[k]}(a_{-r},a_{-r+1},\dots,a_{-r+t}))^n),
\quad n\geqslant1.\cr}
$$
\end{theorem}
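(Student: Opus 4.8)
The four displayed identities split into two groups. The two \emph{main} identities are
$R(x;(\sum_{i=0}^{t}a_{-r+i}P_n^{-r+i})\otimes J_k)=\Tr((A_{r,t}^{[[k]]}(a_{-r}x,\dots,a_{-r+t}x))^{n})$
and
$\per((\sum_{i=0}^{t}a_{-r+i}P_n^{-r+i})\otimes J_k)=\Tr((A_{r,t}^{[k]}(a_{-r},\dots,a_{-r+t}))^{n})$,
which I would prove first, by a transfer-matrix (closed-walk) argument. The remaining halves, with $A_{r,t}^{[[k]]}$ and $A_{r,t}^{[k]}$ replaced by $A_{0,t}^{[[k]]}$ and $A_{0,t}^{[k]}$, then come for free: since $P_n^{-r+i}=P_n^{i}P_n^{-r}$, one has $(\sum_{i=0}^{t}a_{-r+i}P_n^{-r+i})\otimes J_k=((\sum_{i=0}^{t}a_{-r+i}P_n^{i})\otimes J_k)(P_n^{-r}\otimes I_k)$, so the matrix differs from its $r=0$ analogue (with $a_{-r},\dots,a_{-r+t}$ placed in slots $0,\dots,t$) only by right multiplication by the permutation matrix $P_n^{-r}\otimes I_k$; as $\per$ and $R(x;\cdot)=R(x;1;\cdot)$ are invariant under permutations of rows and of columns (with $z=1$ there is no cycle-counting), the main identity at $r=0$ yields exactly the $A_{0,t}^{[[k]]}$ and $A_{0,t}^{[k]}$ expressions.

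\textbf{The permanent identity.} Expand $\per((\sum_{i}a_{-r+i}P_n^{-r+i})\otimes J_k)$ over bijections of $N_n\times\{0,\dots,k-1\}$ and refine each matrix entry (a sum $\sum_{i}a_{-r+i}$) to a choice of summand: a term of the expansion is a pair of functions $(u,a)\mapsto\alpha_{u,a}\in\{-r,\dots,-r+t\}$ and $(u,a)\mapsto b_{u,a}\in\{0,\dots,k-1\}$ such that $(u,a)\mapsto(u+\alpha_{u,a}\bmod n,\,b_{u,a})$ is a bijection, carrying the weight $\prod_{(u,a)}a_{\alpha_{u,a}}$. The crucial locality is that super-column $v$ can receive only from super-rows $v+r-t,\dots,v+r$, so bijectivity is the conjunction of ``local'' conditions on sliding windows of $t+1$ consecutive super-rows. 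For $u=0,\dots,n-1$ let $\mathbf{s}_u$ record $(\alpha,b)$ on super-rows $u,u+1,\dots,u+t-1$ (indices mod $n$); the injectivity of the map $ks+i\mapsto k(s+\alpha_{ks+i})+l_{ks+i}$ built into $V_{r,t}^{[k]}$ --- computed in $\bbZ$, not modulo $n$ --- is precisely what such a window of a genuine $(\alpha,b)$ satisfies, so $\mathbf{s}_u\in V_{r,t}^{[k]}$. Consecutive states $\mathbf{s}_u,\mathbf{s}_{u+1}$ overlap on super-rows $u+1,\dots,u+t-1$, and by inspection the entry of $A_{r,t}^{[k]}$ between them is nonzero exactly when this overlap is respected and the combined window of $t+1$ super-rows $u,\dots,u+t$ is injective, i.e. exactly when the placements of the ``retiring'' super-row $u$ are compatible with every super-row they could clash with, in which case that entry equals $a_{\alpha_0}\cdots a_{\alpha_{k-1}}=\prod_a a_{\alpha_{u,a}}$, the weight of super-row $u$. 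Hence $(\alpha,b)\mapsto(\mathbf{s}_0,\dots,\mathbf{s}_{n-1})$ is a weight-preserving bijection onto closed walks of length $n$ in the weighted digraph of $A_{r,t}^{[k]}$, and summing over walks gives $\per=\Tr((A_{r,t}^{[k]}(a_{-r},\dots,a_{-r+t}))^{n})$.

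\textbf{The rook polynomial.} Repeat the argument with partial injections in place of bijections: an unmatched row $(u,a)$ is recorded by the placeholder $\alpha_{u,a}=\frac{1}{2k}$ with $b_{u,a}=a$, whose formal target $k(u+\frac{1}{2k})+a$ is a half-integer and hence automatically distinct from all other targets --- which is exactly why placeholders impose no constraint and why $W_{r,t}^{[k]}$, not $V_{r,t}^{[k]}$, is the right state space here. After the substitution $a_{-r+i}\mapsto a_{-r+i}x$ (with $a_{\frac{1}{2k}}=1$ unchanged) each active row contributes one factor $x$, so the $x$-degree of a walk equals the number of active rows; summing over walks gives $\sum_{\sigma}x^{|\sigma|}\prod a=R(x;(\sum_i a_{-r+i}P_n^{-r+i})\otimes J_k)$, the unique all-placeholder (fixed) state accounting for the constant term $1$. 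Thus $R(x;\cdot)=\Tr((A_{r,t}^{[[k]]}(a_{-r}x,\dots,a_{-r+t}x))^{n})$.

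\textbf{Main obstacle.} The difficult part is the bookkeeping in the two preceding paragraphs: verifying that $(\alpha,b)\leftrightarrow$ closed walk is genuinely a bijection preserving weights, reconciling the ``mod $n$'' bijectivity of the refined term with the ``in $\bbZ$'' injectivity defining $V_{r,t}^{[k]}$ and $W_{r,t}^{[k]}$, and checking that this holds uniformly for all $n\geqslant 1$ --- in particular for small $n\leqslant t$, where a single window wraps round the whole cycle and a state carries repeated super-row blocks; there the non-modular targets and the half-integer device still make all the injectivity checks mutually consistent, but this must be done with care. Matching the precise indexing in the definitions of $A_{r,t}^{[[k]]}$ and $A_{r,t}^{[k]}$ (which block is retired, the order of the summation indices $p_i$, and so on) is routine but has to be spelled out. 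By contrast, the $r$-independence is immediate from the permutation identity of the first paragraph; alternatively the first two identities could be derived from Theorem~3 by exhibiting $A_{r,t}^{[[k]]}$ and $A_{r,t}^{[k]}$ as refinements of $K_t^{[k]}$ and of $\bigoplus_{l}\Pi_{l,t}^{[k]}$ with the same nonzero spectrum, but the direct walk count is cleaner.
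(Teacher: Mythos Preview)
The paper, as excerpted, states Theorem~5 without proof: the definitions of $W_{r,t}^{[k]}$, $V_{r,t}^{[k]}$, $A_{r,t}^{[[k]]}$, $A_{r,t}^{[k]}$ are given and the theorem is announced, but no argument follows. So there is no ``paper's own proof'' to compare against. Your transfer-matrix reading of those definitions is exactly what they are set up for, and your outline is correct: the windowed state records $t$ consecutive super-rows, the transition condition is precisely the overlap-plus-injectivity clause in the definition of $A_{r,t}^{[k]}$, the weight $a_{\alpha_0}\cdots a_{\alpha_{k-1}}$ is the contribution of the retiring super-row, and the half-integer placeholder $\tfrac{1}{2k}$ is there exactly so that unmatched rows never collide, making $W_{r,t}^{[k]}$ the right state space for the rook polynomial. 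Your deduction of the $A_{0,t}$ identities from the $A_{r,t}$ ones via the column permutation $P_n^{-r}\otimes I_k$, together with the remark that $R(x;A)=R(x;1;A)$ is column-permutation invariant because at $z=1$ the cycle count disappears, is clean and valid.

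The one genuine gap is the one you flag yourself: $n\leqslant t$. There a single state window wraps around the cycle, so you must argue that the ``over $\mathbb{Z}$'' injectivity baked into $V_{r,t}^{[k]}$ and into the transition condition is equivalent, for period-$n$ data, to the required bijection modulo $n$. This is true, and the reason is short enough that you should write it out rather than defer it: if two actual super-rows $u,u'\in\{0,\dots,n-1\}$ collide modulo $n$, then because $\alpha\in\{-r,\dots,-r+t\}$ the $\mathbb{Z}$-targets of $u$ and of a suitable translate $u'+mn$ coincide, and those two lie within $t$ of each other, hence within some $(t{+}1)$-window where the transition condition forbids the collision; conversely, a bijection mod $n$ lifts to a period-$n$ assignment whose distinct-in-$\mathbb{Z}$ super-rows in any window have distinct $\mathbb{Z}$-targets (same row repeated a period apart has targets differing by $kn\neq 0$). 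Adding that paragraph would make the argument complete.
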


\begin{theorem}
Let $k \geqslant 1$ and $0 \leqslant r \leqslant t$, let $a_{-r},
a_{-r+1}, \dots, a_{-r+t}$ be elements in a commutative ring with
unity, let $(\overline{ \vep}, \overline{ \rho}) \in V_{r,t},$ let
$\overline{\vep} = (\vep_0, \vep_1, \dots, \vep_{kt-1})$ and
$\overline{\rho} = (\rho_0, \rho_1, \dots, \rho_{kt-1}),$ let the
mapping $ks+i \to k(s + \vep_{ks+i})+ \rho_{ks+i},$ $0 \leqslant s
\leqslant t-1,$ $0 \leqslant i \leqslant k-1$ be a bijection of the
set $\{0,1,\dots,kt-1 \}$ onto itself. Then for all $n \geqslant 1$
\begin{multline*}
R \biggl( x; \biggl(\sum_{i=0}^k a_{-r+i} T_n^{(-r+i)} \biggr)
\otimes J_k \biggr) = \\
=\sum_{\substack{(\overline{\alpha}, \overline{\beta}) \in
W_{r,t}^{[k]} \\ \overline{\alpha}=(\alpha_0, \alpha_1, \dots,
\alpha_{kt-1}) \\ s+ \alpha_{ks+i} \geqslant 0 \text{ for all } s,
\\ 0 \leqslant s \leqslant t-1, \text{ and } i, 0 \leqslant i
\leqslant k-1.}} \Bigl( A_{r,t}^{[[k]]} (a_{-r} x, a_{-r+1}x, \dots,
a_{-r+t} x)\Bigr)^n [\mathbf{L}((\overline{\alpha},
\overline{\beta})) \mid
\mathbf{L}((\overline{\vep}, \overline{\rho})) ]. \\
\shoveleft{per\biggl(\biggl( \sum_{i=0}^t a_{-r+i} T_n^{(-r+i)}
\biggr) \otimes J_k \biggr) = }\\
\sum_{\substack{(\overline{\alpha}, \overline{\beta}) \in
V_{r,t}^{[k]} \\ \overline{\alpha}=(\alpha_0, \alpha_1, \dots,
\alpha_{kt-1}) \\ s+ \alpha_{ks+i} \geqslant 0 \text{ for all } s,
\\ 0 \leqslant s \leqslant t-1, \text{ and } i, 0 \leqslant i
\leqslant k-1.}} \Bigl( A_{r,t}^{[[k]]} (a_{-r} x, a_{-r+1}x, \dots,
a_{-r+t} x)\Bigr)^n [\mathbf{L}((\overline{\alpha},
\overline{\beta})) \mid \mathbf{L}((\overline{\vep},
\overline{\rho})) ].
\end{multline*}
\end{theorem}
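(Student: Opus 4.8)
\medskip
\noindent{\it Proof plan.}
The plan is to read both identities off a transfer-matrix decomposition, in complete analogy with the circulant case established in Theorem~5, the only change being that the band of $T_n^{(l)}$ is cut at the two corners instead of wrapping around as in $P_n$. Write $M=\bigl(\sum_{i=0}^t a_{-r+i}T_n^{(-r+i)}\bigr)\otimes J_k$. By definition $R(x;M)$ is the weighted sum over partial injections $\phi$ from the rows of $M$ to its columns and $\per(M)$ the same sum over bijections; since every nonzero entry of $M$ equals one of the $a_l$ (the $J_k$-factor contributing $1$), replacing each $a_l$ by $a_l x$ turns the weight of $\phi$ into $x^{|\mathrm{dom}\,\phi|}\prod a_l$. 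Group the $nk$ rows of $M$ into $n$ super-rows of $k$ rows, and likewise the columns. Because the $(i,j)$ block of $M$ equals $a_{j-i}J_k$ exactly for $-r\le j-i\le t-r$, one can scan the super-rows left to right while carrying as state exactly an element of $W_{r,t}^{[k]}$ (resp.\ $V_{r,t}^{[k]}$): namely, for each of the $tk$ sub-rows in the current window of $t$ consecutive super-rows, the super-column offset $\alpha$ and the sub-column $l$ of its image, with $\frac1{2k}$ standing for an unmatched sub-row; the injectivity built into $W_{r,t}^{[k]}$ is just injectivity of $\phi$ restricted to the window, and the step that advances the window by one super-row, discarding the first super-row of the window, carries the weight $a_{\alpha_0}x\cdots a_{\alpha_{k-1}}x$ --- that is, the corresponding entry of $A_{r,t}^{[[k]]}(a_{-r}x,\dots,a_{-r+t}x)$ (resp.\ of $A_{r,t}^{[k]}(a_{-r}x,\dots,a_{-r+t}x)$). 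This local mechanism is precisely the one already worked out in the proof of Theorem~5.

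What distinguishes $T_n$ from the circulant is the behaviour at the two ends. For $P_n$ the $n$ transitions close into a cycle, which is why Theorem~5 gives $\Tr((A_{r,t}^{[[k]]})^n)$; for $T_n$ the chain is open. The initial state $\sigma_0$ describes super-rows $1,\dots,t$, and the only extra requirement is that none of their sub-rows be matched to a column of index $<1$, i.e.\ that the target super-column $1+s+\alpha_{ks+i}$ be $\ge1$ --- exactly the restriction $s+\alpha_{ks+i}\ge0$ attached to the outer sum. After $n$ transitions the window has slid onto the non-existent super-rows $n+1,\dots,n+t$; treating these, together with the non-existent super-columns $n+1,\dots,n+t$, as present but matched bijectively among themselves turns the right end into the single fixed state $(\overline\vep,\overline\rho)$, which is why $(\overline\vep,\overline\rho)$ must lie in $V_{r,t}^{[k]}$ with $ks+i\mapsto k(s+\vep_{ks+i})+\rho_{ks+i}$ a bijection of $\{0,\dots,kt-1\}$. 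This constrains $M$ in no way, the virtual columns being disjoint from $N_{nk}$, while the injectivity tests built into the last $t$ transitions --- together with the lower bound $\alpha_{ks+i}\ge-r$ on offsets in $V_{r,t}^{[k]}$, which keeps an uncovered virtual super-column always out of reach of a genuine super-row --- force every genuine super-row to land in a real column. Hence each partial injection (resp.\ bijection) $\phi$ of $M$ corresponds, weight by weight, to a length-$n$ walk in the transfer graph from an admissible $\sigma_0$ to $(\overline\vep,\overline\rho)$, and summing over admissible $\sigma_0$ produces the asserted entry of $(A_{r,t}^{[[k]]}(a_{-r}x,\dots,a_{-r+t}x))^n$. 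The same correspondence applies verbatim to every admissible choice of $(\overline\vep,\overline\rho)$, so the right-hand side is independent of it --- as it must be, the left-hand side not mentioning it.

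I expect essentially all the work to be in the boundary bookkeeping of the previous paragraph: one must check that the admissibility condition on $\sigma_0$ is exactly $s+\alpha_{ks+i}\ge0$ and nothing more, that the factor $a_{\alpha_0}x\cdots a_{\alpha_{k-1}}x$ attached to super-row $p$ is counted once and only once for each $p=1,\dots,n$ --- including the super-rows near both corners and the degenerate cases $n<t$ --- and that the injectivity tests occurring in the first and last $t$ transitions, which formally involve the virtual super-rows and super-columns, neither discard a genuine configuration nor create a spurious one. Once the state spaces and transition weights are identified with $W_{r,t}^{[k]}$, $V_{r,t}^{[k]}$, $A_{r,t}^{[[k]]}$ and $A_{r,t}^{[k]}$ exactly as in Theorem~5, both identities follow formally.
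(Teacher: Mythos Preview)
The paper does not contain a proof of this statement: Theorem~6 is stated alongside Theorems~5 and~7 and then the text moves straight on to ``The basis for proof of the theorems 1 --- 3 are the following fundamental characteristics of the matrices $K_t^{[k]}(a_0,\dots,a_t)$ and $\Pi_{r,t}^{[k]}(a_0,\dots,a_t)$''. No argument for Theorems~5, 6 or~7 appears anywhere in the paper (the ``I.'' in the title suggests they are deferred to a sequel). There is therefore no paper proof to compare against.

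On its own merits your plan is the natural one and matches the way the objects $W_{r,t}^{[k]}$, $V_{r,t}^{[k]}$, $A_{r,t}^{[[k]]}$, $A_{r,t}^{[k]}$ are set up: the state records the offsets and sub-column targets of $t$ consecutive super-rows, the transition weight $a_{\alpha_0}\cdots a_{\alpha_{k-1}}$ pays for the super-row leaving the window, so $n$ transitions pay for super-rows $1,\dots,n$; the window-size-$(t{+}1)$ injectivity test in the definition of $A_{r,t}^{[[k]]}$ is exactly enough because two super-rows more than $t$ apart can never collide. Your reading of the boundary conditions is correct: on the left, $s+\alpha_{ks+i}\ge0$ forbids real sub-rows from hitting columns of non-positive index; on the right, pinning the terminal state to a bijection of $\{0,\dots,kt-1\}$ makes the $kt$ virtual sub-rows cover all $kt$ virtual sub-columns, and then the injectivity tests in the last $t$ transitions forbid any real sub-row from hitting a virtual sub-column.

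Two cautions. First, you lean repeatedly on ``the proof of Theorem~5'' for the local mechanism, but Theorem~5 is itself stated without proof here; in a self-contained write-up you must actually set up the sliding-window bijection rather than import it. Second, your sketch does not yet handle the overlap regime $n<t$, where the initial window already contains virtual super-rows $n{+}1,\dots,t$ and the left and right boundary devices interact; you flag this but do not say how the sum over admissible initial states and the fixed terminal state are reconciled on the common block. That is where the real bookkeeping will be.
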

\begin{theorem}
 Let $t \geqslant 1$, $k \geqslant 1$, and $0\leqslant r\leqslant t$. Then
\begin{multline*}
|W_{r,t}^{[k]}| = \sum_{s=1}^{k t} (-1)^{kt-s}
 \sum_{ \bigl\{\{1^{\langle k\rangle},2^{\langle k\rangle},
  \dots, t^{\langle k\rangle}\}=\bigcup\limits_{i=1}^sK_i\bigr\}
 \in \Pi(\{1^{\langle k\rangle},2^{\langle k\rangle},
  \dots, t^{\langle k\rangle}\})}
  \biggl( \prod_{i=1}^s
(|K_i|-1)! \biggr) \times \\ \shoveright{ \times \biggl(
\prod_{i=1}^s \bigl(k(t + 1) + \delta_{\min K_i, \max
K_i } + \min K_i - \max K_i \bigr) \biggr),} \\
\shoveleft{|V_{r,t}^{[k]}| = \sum_{s=1}^{kt} (-1)^{kt-s}
 \sum_{ \bigl\{\{1^{\langle k\rangle},2^{\langle k\rangle},
  \dots, t^{\langle k\rangle}\}=\bigcup\limits_{i=1}^sK_i\bigr\}
 \in \Pi(\{1^{\langle k\rangle},2^{\langle k\rangle},
  \dots, t^{\langle k\rangle}\})}
  \biggl( \prod_{i=1}^s
(|K_i|-1)! \biggr) \times }\\
 \times \biggl( \prod_{i=1}^s \bigl(k(t + 1) + \min K_i - \max
K_i \bigr) \biggr).
\end{multline*}
\end{theorem}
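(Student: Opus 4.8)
The plan is to read $|V_{r,t}^{[k]}|$ and $|W_{r,t}^{[k]}|$ as counting, respectively, injections and partial injections of a combinatorial type, and to evaluate them by Möbius inversion over the partition lattice. The outer factor in the asserted formula, $(-1)^{kt-s}\prod_i(|K_i|-1)!$, is exactly the product $\prod_i(-1)^{|K_i|-1}(|K_i|-1)!$ of block Möbius numbers (using $\sum_i(|K_i|-1)=kt-s$), so the only real content is to identify the per‑block weight and, for $W$, to handle the partiality.

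First I would reformulate the defining condition. For a source index $j=ks+i$ ($0\le s\le t-1$, $0\le i\le k-1$) the map $(\alpha_{ks+i},l_{ks+i})\mapsto k(s+\alpha_{ks+i})+l_{ks+i}$ is a bijection from $\{-r,\dots,-r+t\}\times\{0,\dots,k-1\}$ onto the integer interval $I_j$ of length $k(t+1)$ with least element $k(s-r)$; hence $|V_{r,t}^{[k]}|$ is the number of injective $\varphi$ on $\{0,\dots,kt-1\}$ with $\varphi(j)\in I_j$ for all $j$. For $W_{r,t}^{[k]}$ the extra admissible value $\alpha_{ks+i}=\tfrac1{2k}$, which forces $l_{ks+i}=i$, sends $j$ to the half‑integer $j+\tfrac12$, a position disjoint from every $I_{j'}$ and from the other half‑integer positions; so $|W_{r,t}^{[k]}|$ is the number of partial injections, i.e.\ $\sum_{S\subseteq\{0,\dots,kt-1\}}N(S)$, where $N(S)$ counts the injective $\varphi\colon S\to\bbZ$ with $\varphi(j)\in I_j$. (In particular both counts are visibly independent of $r$, as the right‑hand sides are.)

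Next I would apply inclusion--exclusion. For fixed $S$, $N(S)=\sum_{\pi\in\Pi(S)}\bigl(\prod_{K\in\pi}(-1)^{|K|-1}(|K|-1)!\bigr)\prod_{K\in\pi}w(K)$, with $w(K)=\bigl|\bigcap_{j\in K}I_j\bigr|$; since the $I_j$ are intervals of equal length $k(t+1)$ whose left endpoints depend only on the level $\lfloor j/k\rfloor$, one gets $w(K)=k(t+1)-k\bigl(\max_{j\in K}\lfloor j/k\rfloor-\min_{j\in K}\lfloor j/k\rfloor\bigr)$, i.e.\ $k$ times the number of levels reachable from every element of $K$. Grouping the $kt$ indices into the $k$ copies of each level $1,\dots,t$, a set partition of $\{0,\dots,kt-1\}$ projects to a partition of the multiset $\{1^{\langle k\rangle},\dots,t^{\langle k\rangle}\}$, and both $w(K)$ and the Möbius factor depend only on this projection; collecting terms by the number $s$ of blocks gives the asserted formula for $|V_{r,t}^{[k]}|$. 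For $|W_{r,t}^{[k]}|$ I would rewrite $\sum_{S}\sum_{\pi\in\Pi(S)}$ as a sum over partitions of the \emph{full} set $\{0,\dots,kt-1\}$ in which each block is declared active (weight $(-1)^{|K|-1}(|K|-1)!\,w(K)$) or passive (weight $1$, permitted only for singletons, recording the elements outside $S$); this declaration factorizes over blocks, so each block carries the weight $(-1)^{|K|-1}(|K|-1)!\,w(K)+[|K|=1]$, and the extra $[|K|=1]$ is the source of the $\delta_{\min K_i,\max K_i}$ term.

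The step I expect to be the main obstacle is this last bookkeeping: passing rigorously from set partitions to partitions of the multiset $\{1^{\langle k\rangle},\dots,t^{\langle k\rangle}\}$ while tracking the multiplicities with which a level is split among blocks, verifying that the per‑block factor takes exactly the stated closed form, and — in the $W$ case — checking that the singleton correction $[|K|=1]$ assembles into precisely the claimed $\delta$‑term. Confirming the summation range $1\le s\le kt$ (nonempty partitions of a $kt$‑element set) and the vanishing of $w(K)$ once the level‑span of $K$ would exceed $t$ (which never actually happens, since levels lie in $\{0,\dots,t-1\}$) are routine once the setup is in place.
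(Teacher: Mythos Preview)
The paper does not supply a proof of this theorem; it is stated and then the text moves on to Theorems~8--10, so there is nothing in the paper to compare your argument against.

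Your strategy is the natural one and is essentially correct. Rewriting $|V_{r,t}^{[k]}|$ as the number of injective maps $\varphi$ on $\{0,\dots,kt-1\}$ with $\varphi(j)$ in a length-$k(t+1)$ interval $I_j$ whose left endpoint depends only on the level $\lfloor j/k\rfloor$, and then applying M\"obius inversion on the partition lattice, gives exactly
\[
|V_{r,t}^{[k]}|=\sum_{\pi\in\Pi(\{0,\dots,kt-1\})}\ \prod_{K\in\pi}(-1)^{|K|-1}(|K|-1)!\,w(K),\qquad w(K)=\Bigl|\bigcap_{j\in K}I_j\Bigr|,
\]
and the partial-injection reading of $W$ via the half-integer targets is also right.

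Two concrete points to watch when you carry this out. First, the per-block intersection length is
\[
w(K)=k\bigl(t+1+\min_{j\in K}\ell(j)-\max_{j\in K}\ell(j)\bigr),\qquad \ell(j)=\lfloor j/k\rfloor+1,
\]
i.e.\ the level-span enters multiplied by $k$, whereas the displayed formula has $k(t+1)+\min K_i-\max K_i$ without that factor; for $k=1$ the two agree, but for $k\ge2$ a small test case (e.g.\ $t=2$, $k=2$, where the direct count is $524$) shows they do not. So either the intended reading of $\min K_i,\max K_i$ is different from the level values, or there is a misprint in the stated factor; your computation will decide which. Second, when you ``collect terms'' by projecting a set partition of the $kt$ labeled elements to a partition of the multiset $\{1^{\langle k\rangle},\dots,t^{\langle k\rangle}\}$, several set partitions project to the same multiset partition, so the passage from set partitions to multiset partitions introduces a multiplicity that must be tracked explicitly; the stated formula, read literally as a single sum over multiset partitions, does not contain this multiplicity. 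You have already flagged this bookkeeping as the main obstacle, and it is: resolving both issues will either yield the formula exactly as stated (under a reading of $\Pi(\cdot)$ and of $\min,\max$ that the paper does not make explicit) or a corrected version of it.
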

The basis for proof of the theorems 1 --- 3 are the following fundamental
characteristics of the matrices $K_{\smash
t}^{[k]}(a_0,a_1,\dots,a_t)$ and
$\Pi_{r,t}^{[k]}(a_0,a_1,\dots,a_t),$ $0 \leqslant r \leqslant kt$.
\begin{theorem}
 Let $k\geqslant1$ and $t\geqslant2$, let $a_0$, $a_1$,~\dots,~$a_t$ be
 elements in a commutative ring with unity, let $\overline\alpha$,
$\overline\beta\in G_{\smash t}^{[k]}$, let $1\leqslant n\leqslant
t-1$. Assume that $\max\{\overline\beta\}\geqslant n+1$,
$\overline\gamma\in G_{\smash t}^{[k]}$,
$\{\overline\gamma\}\subseteq\{\overline\beta\}$,
$\min\{\overline\gamma\}\geqslant n+1$
and~$\{\overline\alpha\}\not\supseteq\{\overline\gamma-n\}$. Then $
(K_{\smash t}^{[k]}(a_0,a_1,\dots,a_t))^n[ \mathbf{ L }
(\overline\alpha) \mid \mathbf{ L } (\overline\beta)]=0. $
\end{theorem}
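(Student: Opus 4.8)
The plan is to expand $(K_t^{[k]}(a_0,a_1,\dots,a_t))^n[\mathbf{ L }(\overline\alpha)\mid\mathbf{ L }(\overline\beta)]$, via the definition of matrix multiplication, as the sum $\sum a_{\overline\delta_0,\overline\delta_1}a_{\overline\delta_1,\overline\delta_2}\cdots a_{\overline\delta_{n-1},\overline\delta_n}$ taken over all sequences $\overline\alpha=\overline\delta_0,\overline\delta_1,\dots,\overline\delta_n=\overline\beta$ of elements of $G_t^{[k]}$, where $a_{\overline\delta,\overline\delta'}$ denotes the $(\overline\delta,\overline\delta')$-entry of $K_t^{[k]}(a_0,\dots,a_t)$ as in its definition, and then to show that under the hypotheses every summand is $0$. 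The mechanism is that the support of $K_t^{[k]}$ is very rigid: one application can only shift the multiplicity vector one step to the right, with losses, so $n$ applications force a constraint between the multiplicities of $\overline\alpha$ and $\overline\beta$ that contradicts the assumptions built around $\overline\gamma$. Throughout I would adopt the convention $m_{\overline\varepsilon}(i)=0$ whenever $i\notin\{1,2,\dots,t\}$.

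The first step is to read the one-step relation off the definition of $K_t^{[k]}$: if $\overline\delta=(1^{\langle l_1\rangle},2^{\langle l_2\rangle},\dots,t^{\langle l_t\rangle})\in G_t^{[k]}$ and $a_{\overline\delta,\overline\delta'}\ne0$, then $\overline\delta'$ is necessarily of the form $(1^{\langle\,\cdot\,\rangle},2^{\langle l_1-p_1\rangle},3^{\langle l_2-p_2\rangle},\dots,t^{\langle l_{t-1}-p_{t-1}\rangle})$ with $0\leqslant p_i\leqslant l_i$. Hence $m_{\overline\delta'}(i)=m_{\overline\delta}(i-1)-p_{i-1}\leqslant m_{\overline\delta}(i-1)$ for $2\leqslant i\leqslant t$, while $m_{\overline\delta'}(i)=0\leqslant m_{\overline\delta}(i-1)$ for $i>t$; so $m_{\overline\delta'}(i)\leqslant m_{\overline\delta}(i-1)$ for every integer $i\geqslant2$. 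The second step is to iterate this along a sequence $\overline\delta_0,\dots,\overline\delta_n$ all of whose factors are nonzero: for each $i\geqslant n+1$ the indices $i,i-1,\dots,i-n+1$ all lie in $\{2,3,\dots\}$, so $m_{\overline\beta}(i)=m_{\overline\delta_n}(i)\leqslant m_{\overline\delta_{n-1}}(i-1)\leqslant\cdots\leqslant m_{\overline\delta_0}(i-n)=m_{\overline\alpha}(i-n)$.

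The final step is to derive the contradiction. From $\{\overline\alpha\}\not\supseteq\{\overline\gamma-n\}$ I would pick an index $j$ with $m_{\overline\alpha}(j)<m_{\overline\gamma-n}(j)$; since $m_{\overline\gamma-n}(j)=m_{\overline\gamma}(j+n)$ and, because $\overline\gamma\in G_t^{[k]}$ with $\min\{\overline\gamma\}\geqslant n+1$, every component of $\overline\gamma-n$ lies in $\{1,\dots,t-n\}$, we get $1\leqslant j\leqslant t-n$, so $i:=j+n$ satisfies $n+1\leqslant i\leqslant t$. Now the iterated inequality of the second step applies at this $i$, and together with $\{\overline\gamma\}\subseteq\{\overline\beta\}$ it gives $m_{\overline\beta}(i)\leqslant m_{\overline\alpha}(j)<m_{\overline\gamma}(i)\leqslant m_{\overline\beta}(i)$, which is impossible. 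Hence no sequence from $\overline\alpha$ to $\overline\beta$ of length $n$ can have all its factors nonzero, so every summand of the expansion vanishes and the asserted entry is $0$. I expect no real obstacle here: once the one-step shift relation is isolated, the rest is forced. The only point needing a little care is the index bookkeeping --- ensuring the iterated inequality is invoked only on the range $i\geqslant n+1$ where both sides are genuine multiplicities, and noting that the defect index $j$ is automatically confined to $\{1,\dots,t-n\}$ (the hypothesis $\max\{\overline\beta\}\geqslant n+1$, incidentally, follows from the remaining ones and is never used).
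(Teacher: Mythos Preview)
Your proof is correct and follows essentially the same idea as the paper's: both hinge on the one-step support relation $m_{\overline\delta'}(i)\leqslant m_{\overline\delta}(i-1)$ for $i\geqslant2$ read off from the definition of $K_t^{[k]}$, and both derive the contradiction $m_{\overline\beta}(i)<m_{\overline\beta}(i)$ at some $i\geqslant n+1$ coming from $\overline\gamma$. The only difference is presentational: the paper packages the iteration as an induction on $n$ (base case $n=1$, then split the $n$-th power as $K_t^{[k]}\cdot(K_t^{[k]})^{n-1}$ and apply the induction hypothesis to the intermediate $\overline\varepsilon$), whereas you expand the full path $\overline\alpha=\overline\delta_0,\dots,\overline\delta_n=\overline\beta$ at once and chain the inequality directly --- this is the same argument unrolled, and your observation that the hypothesis $\max\{\overline\beta\}\geqslant n+1$ is implied by the others is accurate.
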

\begin{proof}
Use induction on~$n$. Without loss of generality, we can assume that
$a_0$, $a_1$,~\dots,~$a_t$   be independent variables. Let $n=1$.
Assume that
$$
(K_{\smash t}^{[k]}(a_0,a_1,\dots,a_t))[ \mathbf{ L }
(\overline\alpha)~|\break \mathbf{ L } (\overline\beta)]\ne0.
$$
Let $\overline\alpha=(1^{\langle l_1\rangle}, 2^{\langle
l_2\rangle},\dots,t^{\langle l_t\rangle})$. Then from definition of
the matrix $K_{\smash t}^{[k]}(a_0,a_1,\dots,a_t)$ it follows that
there exists $p_1$, $p_2$,~\dots,~$p_{t-1}$,~$v$, such that
$0\leqslant p_i\leqslant l_i$, $1\leqslant i\leqslant t-1$,
$\sum\limits_{i=1}^{t-1}p_i\leqslant v\leqslant k$, and
$$
\overline\beta=\LR(){1^{\left\langle k-v+
\mathop{\Sigma}\limits_{i=1}^{t-1}p_i\right\rangle},2^{\langle
l_1-p_1 \rangle},3^{\langle l_2-p_2\rangle},\dots,t^{\langle
l_{t-1}-p_{t-1} \rangle}}.
$$
Let $\overline\gamma=(2^{\langle q_2\rangle},3^{\langle
q_3\rangle},\dots,t^{\langle q_t\rangle})$. Then $q_i\leqslant
l_{i-1}-p_{i-1}$, $2\leqslant i\leqslant t$,
$\overline\gamma-1=(1^{\langle q_2\rangle},2^{\langle
q_3\rangle},\dots,(t-1)^{\langle q_t\rangle})$. But $q_i\leqslant
l_{i-1}-p_{i-1}\leqslant l_{i-1}$, $2\leqslant i\leqslant t$,
and~therefore $\{{\overline\gamma-1}\}\subseteq\{\overline\alpha\}$.
Obtained contradiction prove validity equality (1) for $n=1$. Let
$2\leqslant n\leqslant t-1$ and $t\ge3$. Assume that the theorem~6
is valid for $n-1$. We show that then
$$
((K_{\smash t}^{[k]}(a_0,a_1,\dots,a_t)[ \mathbf{ L }
(\overline\alpha)\mid \mathbf{ L } (\overline\varepsilon)])\cdot
(K_{\smash t}^{[k]}(a_0,a_1,\dots,a_t))^{n-1}[ \mathbf{ L }
(\overline\varepsilon)\mid \mathbf{ L } (\overline\beta)]=0
$$
for all $\overline\varepsilon\in G_{\smash t}^{[k]}$. Really, let
$\overline\varepsilon\in G_{\smash t}^{[k]}$ and {$(K_{\smash
t}^{[k]}(\thinmuskip5mu minus3mu a_0,a_1,\dots,a_t))^{n-1}[ \mathbf{
L } (\overline\varepsilon)\!\mid\! \mathbf{ L }
(\overline\beta)]\ne0$.} Since $\max\{\overline\beta\}\geqslant
n+1>(n-1)+1$, $\{\overline\gamma\}\subseteq\{\overline\beta\}$,
$\min\{\overline\gamma\}\geqslant n+1>(n-1)+1$ then from validity of
the theorem~6 for $n-1$ it follows that
$\{\overline\varepsilon\}\supseteq\{{\overline\gamma-(n-1)}\}$. But
$\{\overline\alpha\}\not\supseteq\{\overline\gamma-n\} =
\{(\overline \gamma-(n-1))-1\}$ and from proved above validity of
theorem 6 for $n=1$ it follows that $(K_{\smash
t}^{[k]}(a_0,a_1,\dots,a_t)[ \mathbf{ L } (\overline\alpha)~|\break
\mathbf{ L } (\overline\varepsilon)]=0$. Even if $(K_{\smash
t}^{[k]}(a_0,a_1,\dots,a_t))^{n-1}[ \mathbf{ L }
(\overline\varepsilon)\!\mid\! \mathbf{ L } (\overline\beta)]=0$,
then the more so,
$$
((K_{\smash t}^{[k]}(a_0,a_1,\dots,a_t)[ \mathbf{ L }
(\overline\alpha)\mid \mathbf{ L } (\overline\varepsilon)])\cdot
(K_{\smash t}^{[k]}(a_0,a_1,\dots,a_t))^{n-1}[ \mathbf{ L }
(\overline\varepsilon)\mid \mathbf{ L } (\overline\beta)]=0.
$$
Therefore
$$
\displaylines{ (K_{\smash t}^{[k]}(a_0,a_1,\dots,a_t))^n[
\mathbf{ L } (\overline\alpha)\mid \mathbf{ L } (\overline\beta)]=
\hfill\cr \hfill =\textstyle \sum\limits_{\overline\varepsilon\in
G_{\smash t}^{[k]}} ((K_{\smash t}^{[k]}(a_0,a_1,\dots,a_t)[
\mathbf{ L } (\overline\alpha)\mid \mathbf{ L }
(\overline\varepsilon)])\cdot (K_{\smash
t}^{[k]}(a_0,a_1,\dots,a_t))^{n-1}[ \mathbf{ L }
(\overline\varepsilon)\mid \mathbf{ L } (\overline\beta)]=0.\cr}
$$
\end{proof}
\begin{theorem}
Let $k\geqslant 1$, and $t\geqslant 2$, let $a_0$, $a_1$,~\dots,~$a_t$ be
elements in a commutative ring with unity let $\overline\alpha$,
$\overline\beta\in G_{\smash t}^{[k]}$, let $1\leqslant n\leqslant
t-1$. Assume that $\max\{\overline\beta\}\geqslant n+1$,
$\overline\gamma\in G_{\smash t}^{[k]}$,
$\{\overline\gamma\}\subseteq\{\overline\beta\}$,
$\min\{\overline\gamma\}\geqslant n+1$,
and~$\{\overline\alpha\}\supseteq\{\overline\gamma-n\}$. Then
\begin{multline}
(K_{\smash t}^{[k]}(a_0,a_1,\dots,a_t))^n[ \mathbf{ L }
(\overline\alpha)\mid
 \mathbf{ L } (\overline\beta)]= \\ \textstyle
=\left(\prod\limits_{i=1}^t\binom{m_{
\overline\alpha}(i)}{m_{\overline\gamma-n}(i)}\binom{m_{\overline
\beta}(i)}{m_{\overline\gamma}(i)}^{-1}\right)\cdot (K_{\smash
t}^{[k]}(a_0,a_1,\dots,a_t))^n[ \mathbf{ L } (\overline\alpha
\setminus(\overline\gamma-n))\mid \mathbf{ L }
(\overline\beta\setminus \overline\gamma)].
\end{multline}
\end{theorem}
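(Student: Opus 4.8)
The plan is to prove the asserted identity by induction on $n$, following the scheme used for the preceding theorem. First I would discard the case where $\overline\gamma$ is the empty vector: then $\overline\gamma-n$ is empty, $\overline\alpha\setminus(\overline\gamma-n)=\overline\alpha$, $\overline\beta\setminus\overline\gamma=\overline\beta$, every factor $\binom{m_{\overline\alpha}(i)}{0}\binom{m_{\overline\beta}(i)}{0}^{-1}$ equals $1$, and the identity is trivial; hence one may assume $\overline\gamma$ nonempty, so in particular $\max\{\overline\gamma\}\geqslant\min\{\overline\gamma\}\geqslant n+1$. I would also record once and for all the structural fact driving the whole argument: directly from the definition of $K_t^{[k]}$, if $K_t^{[k]}(a_0,\dots,a_t)[\mathbf{L}(\overline\alpha)\mid\mathbf{L}(\overline\varepsilon)]\neq0$ then $m_{\overline\varepsilon}(j)\leqslant m_{\overline\alpha}(j-1)$ for $2\leqslant j\leqslant t$ (and $m_{\overline\varepsilon}(j)=0$ for $j>t$), while $m_{\overline\varepsilon}(1)\leqslant k$ always; also $\binom{m_{\overline\beta}(i)}{m_{\overline\gamma}(i)}\neq0$ because $\{\overline\gamma\}\subseteq\{\overline\beta\}$, so the right-hand side is well defined.

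For the base case $n=1$ I would evaluate the two entries directly from the definition of $K_t^{[k]}$. Write $\overline\alpha=(1^{\langle l_1\rangle},\dots,t^{\langle l_t\rangle})$. If $K_t^{[k]}[\mathbf{L}(\overline\alpha)\mid\mathbf{L}(\overline\beta)]\neq0$, it is realized by a unique $(p_1,\dots,p_{t-1},v)$, and since $\min\{\overline\gamma\}\geqslant2$ one writes $\overline\gamma=(2^{\langle q_2\rangle},\dots,t^{\langle q_t\rangle})$, with $\{\overline\gamma\}\subseteq\{\overline\beta\}$ meaning $q_j\leqslant l_{j-1}-p_{j-1}$. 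Then $\overline\alpha\setminus(\overline\gamma-1)$ has multiplicities $l_i-q_{i+1}$ for $i<t$ and $l_t$, and $K_t^{[k]}[\mathbf{L}(\overline\alpha\setminus(\overline\gamma-1))\mid\mathbf{L}(\overline\beta\setminus\overline\gamma)]$ is again nonzero, realized by the \emph{same} $(p_1,\dots,p_{t-1},v)$ --- the $a_t$-summation is unchanged since the last multiplicity $l_t$ is unchanged. Dividing the two explicit values leaves only $\prod_{i=1}^{t-1}\binom{l_i}{p_i}\big/\binom{l_i-q_{i+1}}{p_i}$, which by the trinomial symmetry $\binom{l}{p}\binom{l-p}{q}=\binom{l}{q}\binom{l-q}{p}$ equals $\prod_{i=1}^{t-1}\binom{l_i}{q_{i+1}}\big/\binom{l_i-p_i}{q_{i+1}}$; and this is exactly $\prod_{i=1}^{t}\binom{m_{\overline\alpha}(i)}{m_{\overline\gamma-1}(i)}\binom{m_{\overline\beta}(i)}{m_{\overline\gamma}(i)}^{-1}$ after substituting $m_{\overline\gamma-1}(i)=m_{\overline\gamma}(i+1)$, $m_{\overline\beta}(j)=l_{j-1}-p_{j-1}$, $m_{\overline\gamma}(j)=q_j$ for $j\geqslant2$, and $m_{\overline\gamma}(1)=0$. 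If instead $K_t^{[k]}[\mathbf{L}(\overline\alpha)\mid\mathbf{L}(\overline\beta)]=0$, the same parameter correspondence shows a nonzero reduced entry would force a nonzero original one, so the reduced entry vanishes too and both sides are $0$.

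For the inductive step, fix $2\leqslant n\leqslant t-1$, assume the identity for $n-1$, and set $\overline\delta=\overline\gamma-(n-1)$, so $\overline\delta-1=\overline\gamma-n$ and $\min\{\overline\delta\}\geqslant2$. I would expand $(K_t^{[k]})^n=K_t^{[k]}\cdot(K_t^{[k]})^{n-1}$ as a sum over an intermediate index $\overline\varepsilon\in G_t^{[k]}$. By the preceding theorem applied with parameter $n-1$ (its hypotheses hold since $\max\{\overline\beta\}\geqslant n+1>(n-1)+1$ and $\min\{\overline\gamma\}\geqslant n+1>(n-1)+1$), every term with $\{\overline\varepsilon\}\not\supseteq\{\overline\delta\}$ vanishes. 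On each surviving term I would apply the already-established case $n=1$ to the factor $K_t^{[k]}[\mathbf{L}(\overline\alpha)\mid\mathbf{L}(\overline\varepsilon)]$ with intermediate vector $\overline\delta$ (its hypotheses hold: $\{\overline\delta\}\subseteq\{\overline\varepsilon\}$, $\min\{\overline\delta\}\geqslant2$, $\max\{\overline\varepsilon\}\geqslant\max\{\overline\delta\}\geqslant2$, and $\{\overline\alpha\}\supseteq\{\overline\delta-1\}=\{\overline\gamma-n\}$), and the inductive hypothesis to $(K_t^{[k]})^{n-1}[\mathbf{L}(\overline\varepsilon)\mid\mathbf{L}(\overline\beta)]$ with intermediate vector $\overline\gamma$. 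The factors $\binom{m_{\overline\varepsilon}(i)}{m_{\overline\delta}(i)}$ produced by the two applications are mutually reciprocal (and nonzero, as $m_{\overline\delta}(i)\leqslant m_{\overline\varepsilon}(i)$) and cancel, and since $m_{\overline\delta-1}(i)=m_{\overline\gamma-n}(i)$ the surviving constant is exactly $\prod_{i=1}^{t}\binom{m_{\overline\alpha}(i)}{m_{\overline\gamma-n}(i)}\binom{m_{\overline\beta}(i)}{m_{\overline\gamma}(i)}^{-1}$, which is independent of $\overline\varepsilon$ and factors out; what remains is $\sum_{\{\overline\varepsilon\}\supseteq\{\overline\delta\}}K_t^{[k]}[\mathbf{L}(\overline\alpha')\mid\mathbf{L}(\overline\varepsilon')]\,(K_t^{[k]})^{n-1}[\mathbf{L}(\overline\varepsilon')\mid\mathbf{L}(\overline\beta\setminus\overline\gamma)]$, where $\overline\alpha'=\overline\alpha\setminus(\overline\gamma-n)$ and $\overline\varepsilon'=\overline\varepsilon\setminus\overline\delta$.

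The last step --- which I expect to be the main obstacle --- is to identify this restricted sum with the full matrix product $(K_t^{[k]})^n[\mathbf{L}(\overline\alpha')\mid\mathbf{L}(\overline\beta\setminus\overline\gamma)]$. The correspondence $\overline\varepsilon\leftrightarrow\overline\varepsilon'$, $\overline\varepsilon'=\overline\varepsilon\setminus\overline\delta$, is a bijection from $\{\overline\varepsilon\in G_t^{[k]}:\{\overline\varepsilon\}\supseteq\{\overline\delta\}\}$ onto the set of $\overline\varepsilon'\in G_t^{[k]}$ with $m_{\overline\varepsilon'}(j)+m_{\overline\delta}(j)\leqslant k$ for all $j$, which is in general a proper subset of $G_t^{[k]}$; I would close the gap by showing the omitted summands are automatically zero. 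Indeed, if $K_t^{[k]}[\mathbf{L}(\overline\alpha')\mid\mathbf{L}(\overline\varepsilon')]\neq0$, then by the structural fact above $m_{\overline\varepsilon'}(j)\leqslant m_{\overline\alpha'}(j-1)=m_{\overline\alpha}(j-1)-m_{\overline\gamma}(j+n-1)$ for $2\leqslant j\leqslant t$, hence $m_{\overline\varepsilon'}(j)+m_{\overline\delta}(j)\leqslant m_{\overline\alpha}(j-1)-m_{\overline\gamma}(j+n-1)+m_{\overline\gamma}(j+n-1)=m_{\overline\alpha}(j-1)\leqslant k$; and for $j=1$ one has $m_{\overline\delta}(1)=m_{\overline\gamma}(n)=0$ because $\min\{\overline\gamma\}\geqslant n+1$. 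So a nonzero entry already lies in the admissible range, the sum may be extended over all of $G_t^{[k]}$ without change, and it equals $(K_t^{[k]}\cdot(K_t^{[k]})^{n-1})[\mathbf{L}(\overline\alpha')\mid\mathbf{L}(\overline\beta\setminus\overline\gamma)]=(K_t^{[k]})^n[\mathbf{L}(\overline\alpha')\mid\mathbf{L}(\overline\beta\setminus\overline\gamma)]$, completing the induction. This support argument is precisely where the hypothesis $\min\{\overline\gamma\}\geqslant n+1$ enters in an essential way: it is what forces both $m_{\overline\delta}(1)=0$ and the cancellation of the two $m_{\overline\gamma}$-contributions.
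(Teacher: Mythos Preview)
Your proof is correct and follows essentially the same route as the paper's: induction on $n$, with the base case handled by writing out both entries explicitly via the same parameters $(p_1,\dots,p_{t-1},v)$ and reducing the ratio of binomials through the trinomial revision identity, and the inductive step handled by expanding $(K_t^{[k]})^n=K_t^{[k]}\cdot(K_t^{[k]})^{n-1}$, discarding the terms with $\{\overline\varepsilon\}\not\supseteq\{\overline\gamma-(n-1)\}$ via the preceding vanishing theorem, and then cancelling the reciprocal $\binom{m_{\overline\varepsilon}(i)}{m_{\overline\gamma-n+1}(i)}$ factors. The only stylistic difference is in the ``extension'' step: where you invoke the structural bound $m_{\overline\varepsilon'}(j)\leqslant m_{\overline\alpha'}(j-1)$ read directly from the definition of $K_t^{[k]}$ to show the missing summands vanish, the paper instead phrases the same observation as another application of the vanishing theorem with the singleton witness $\overline\gamma_1=i^{\langle m_{\overline\rho}(i)\rangle}$; the content is identical.
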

{\it Proof } by induction on~$n$. Let $n=1$,
$\overline\alpha=(1^{\langle l_1\rangle}, 2^{\langle
l_2\rangle},\dots,t^{\langle l_t\rangle})$, $\overline\gamma=(
2^{\langle q_2\rangle},3^{\langle q_3\rangle},\dots,t^{\langle
q_t\rangle})$.

Case 1. $(K_{\smash
t}^{[k]}(a_0,a_1,\dots,a_t))[\mathbf{L}(\overline\alpha)\mid
\mathbf{L}(\overline\beta)]\not=0$. Then there exists $p_1$,
$p_2$,~\dots,~$p_{t-1}$,~$v$ such that $0\leqslant p_i\leqslant
l_i$, $1\leqslant i\leqslant t-1$,
$\sum\limits_{i=1}^{t-1}p_i\leqslant v\leqslant k$
and~$\overline\beta=\Bigl(1^{\left\langle k-v+
\mathop{\Sigma}\limits_{i=1}^{t-1}p_i\right\rangle},2^{\langle
l_1-p_1 \rangle},3^{\langle l_2-p_2\rangle},\dots \break
\dots,t^{\langle l_{t-1}-p_{t-1} \rangle} \Bigr)$. Then from
definition of the matrix $K_{\smash t}^{[k]}(a_0,a_1,\dots,a_t)$ it
follows that
$$
\displaylines{ (K_{\smash t}^{[k]}(a_0,a_1,\dots,a_t))[ \mathbf{ L }
(\overline\alpha)\mid  \mathbf{ L } (\overline\beta)]=
\hfill\cr\hfill
=\binom{l_1}{p_1}\binom{l_2}{p_2}\dots\binom{l_{t-1}}{p_{t-1}}
\Binom k{v-\mathop{\Sigma}\limits_{i=1}^{t-1}p_i}
a_{\smash0}^{v-\mathop{\Sigma}\limits_{i=1}^{t-1}p_i}
a_{\smash1}^{p_1}a_{\smash2}^{p_2}\dots a_{\smash{t-1}}^{p_{t-1}}
\textstyle\sum\limits_{d=0}^{\min(l_t,k-v)}(d+v)!\binom{k}{d+v}\binom
{l_t}da_{\smash t}^d,\cr}
$$
$$
\displaylines{
 (K_{\smash t}^{[k]}(a_0,a_1,\dots,a_t))[ \mathbf{ L }
(\overline\alpha \setminus(\overline\gamma-1))\mid
 \mathbf{ L } (\overline\beta\setminus \overline\gamma)]=
\hfill\cr\qquad = \binom{l_1-q_2}{p_1}\binom{l_2-q_3}{p_2}\dots
\binom{l_{t-1}-q_t}{p_{t-1}} \Binom
k{v-\mathop{\Sigma}\limits_{i=1}^{t-1}p_i}
a_{\smash0}^{v-\mathop{\Sigma}\limits_{i=1}^{t-1}p_i}
a_{\smash1}^{p_1}a_{\smash2}^{p_2}\dots
a_{\smash{t-1}}^{p_{t-1}}\times \hfill\cr\hfill \times
\textstyle\sum\limits_{d=0}^{\min(l_t,k-v)}(d+v)!\binom{k}{d+v}\binom
{l_t}da_{\smash t}^d,\cr}
$$
since $\overline\gamma -1=(1^{\langle q_2\rangle},2^{\langle
q_3\rangle},\dots,(t-1)^{\langle q_t\rangle})$,
$\overline\alpha\setminus(\overline\gamma-1)=(1^{\langle
l_1-q_2\rangle},2^{\langle l_2-q_3\rangle},\dots,(t-1)^{\langle
l_{t-1}-q_t\rangle},t^{\langle l_t\rangle})$,\break
$\overline\beta\setminus \overline\gamma=\LR(){1^{\left\langle k-v+
\mathop{\Sigma}\limits_{i=1}^{t-1}p_i\right\rangle},2^{\langle
l_1-p_1-q_2 \rangle},3^{\langle l_2-p_2-q_3\rangle},\dots,t^{\langle
l_{t-1}-p_{t-1}-q_t\rangle}}$. Therefore
$$
\displaylines{ (K_{\smash t}^{[k]}(a_0,a_1,\dots,a_t))[ \mathbf{
L } (\overline\alpha)\mid  \mathbf{ L } (\overline\beta)]=
\hfill\cr\hfill= \left(\prod\limits_{i=1}^{t-1}
\binom{l_i}{p_i}\binom{l_i-q_{i+1}}{p_i}^{-1}\right) (K_{\smash
t}^{[k]}(a_0,a_1,\dots,a_t))[ \mathbf{ L } (\overline\alpha
\setminus(\overline\gamma-1))\mid  \mathbf{ L }
(\overline\beta\setminus \overline\gamma)].\cr}
$$
From equality $\binom ab\binom bc=\binom ac\binom{a-c}{a-b}$ it
follows that
$\binom{l_i}{p_i}\binom{l_i-p_i}{q_{i+1}}=\binom{l_i}{l_i-p_i}
\binom{l_i-p_i}{q_{i+1}}=
\binom{l_i}{q_{i+1}}
\binom{l_i-q_{i+1}}{p_i}$ and~therefore
$\binom{l_i}{p_i}\binom{l_i-q_{i+1}}{p_i}^{-1}=\binom{l_i}{q_{i+1}}
\binom{l_i-p_i}{q_{i+1}}^{-1}$. Hence
$$
\displaylines{
\prod\limits_{i=1}^{t-1}\binom{l_i}{p_i}\binom{l_i-q_{i+1}}
{p_i}^{-1}=\prod\limits_{i=1}^{t-1}\binom{l_i}{q_{i+1}}\binom{l_i-p_i}
{q_{i+1}}^{-1}=\left(\prod\limits_{i=1}^{t-1}\binom{l_i}{q_{i+1}}\right)
\cdot\prod\limits_{i=1}^{t-1}\binom{l_i-p_i}{q_{i+1}}^{-1}=\hfill\cr
\hfill= \left(
\prod\limits_{i=1}^{t-1}\binom{m_{\overline\alpha}(i)}{m_{\overline\gamma
-1}(i)}\right)\cdot\prod\limits_{i=2}^t\binom{m_{\overline\beta}(i)}
{m_{\overline\gamma}(i)}^{-1} = \left(
\prod\limits_{i=1}^t\binom{m_{\overline\alpha}(i)}{m_{\overline\gamma
-1}(i)}\right)\cdot\prod\limits_{i=1}^t\binom{m_{\overline\beta}(i)}
{m_{\overline\gamma}(i)}^{-1}=\hfill\cr
\hfill=\prod\limits_{i=1}^t\binom{m_{\overline
\alpha}(i)}{m_{\overline\gamma-1}(i)}\binom{m_{\overline\beta}(i)}
{m_{\overline\gamma}(i)}^{-1}.\cr}
$$

Case 2. $(K_{\smash
t}^{[k]}(a_0,a_1,\dots,a_t))[\mathbf{L}(\overline\alpha)\mid
\mathbf{L}(\overline\beta)]=0$. Then there exists no $p_1$,
$p_2$,~\dots,~$p_{t-1}$,~$v$ such that $0\leqslant p_i\leqslant
l_i-q_{i+1}$, $1\leqslant i\leqslant t-1$,
$\sum\limits_{i=1}^{t-1}p_i\leqslant v\leqslant k$,
and~$\overline\beta=\Bigl(1^{\left\langle k-v+
\mathop{\Sigma}\limits_{i=1}^{t-1}p_i\right\rangle},2^{\langle
l_1-p_1 \rangle},3^{\langle l_2-p_2\rangle},\dots \break
\dots,t^{\langle l_{t-1}-p_{t-1} \rangle} \Bigr)$. Therefore for all
$p_1, p_2,\dots,p_{t-1},v$, such that $0\leqslant p_i\leqslant l_i -
q_{i+1}$, $1\leqslant i\leqslant t-1$,
$\sum\limits_{i=1}^{t-1}p_i\leqslant v\leqslant k$, this inequality
$$
\overline\beta\setminus\overline\gamma\ne\LR(.{1^{\left\langle
k-v+\mathop{\Sigma}\limits_{i=1}^{t-1}p_i\right\rangle},2^{\langle
l_1-p_1-q_2 \rangle},}\break 3^{\langle
l_2-p_2-q_3\rangle},\dots,t^{\langle l_{t-1}-p_{t-1}-q_t
\rangle})
$$
is correct. Since
$\overline\alpha\setminus(\overline\gamma-1)=(1^{\langle
l_1-q_2\rangle},2^{\langle l_2-q_3\rangle},\dots,(t-1)^{\langle
l_{t-1}-q_t\rangle},t^{\langle l_t\rangle})$, then from definition
of the matrix $K_{\smash t}^{[k]}(a_0,a_1,\dots,a_t)$ it follows
that
\begin{equation*}
(K_{\smash t}^{[k]}(a_0,a_1,\dots,a_t)) [ \mathbf{ L }
(\overline\alpha\setminus(\overline\gamma-1))\mid  \mathbf{ L }
(\overline\beta\setminus\overline\gamma)] =0.
\end{equation*}
 So, for $n=1$
theorem~7 is proved. Let $t-1\geqslant n\geqslant 2$. We show at
first that if $\overline\rho\in G_{\smash t}^{[k]}$\break and there
exists no $\overline\varepsilon\in G_{\smash t}^{[k]}$ such that
$\{\overline\varepsilon\}\supseteq\{\overline\gamma-n+1\}$
and~$\overline\rho=\overline\varepsilon\setminus({\overline\gamma-n+1})$,
then
$$
(K_{\smash t}^{[k]}(a_0,\break a_1,\dots,a_t)) [ \mathbf{ L }
(\overline\alpha\setminus(\overline\gamma-n))\mid  \mathbf{ L }
(\overline\rho)]=0.
$$
Really, then for certain $i \! \in \!\{\overline\gamma-n+1\}$ \,
$m_{\overline\rho}(i)+m_{\overline\gamma-n+1}(i)\!>k$ and~therefore
$m_{\overline\rho}(i)\!>k-m_{\overline\gamma-n+1}(i)$. Since
$i-1\in\overline\gamma-n$ and~$m_{\overline\gamma-n}(i-1)=
m_{\overline\gamma-n+1}(i)$, then
$$
m_{\overline\alpha\setminus(\overline\gamma-n)}(i-1)\leqslant
k-m_{\overline\gamma-n}(i-1)=k-m_{\overline\gamma-n+1}(i)<m_{\overline
\rho}(i),
$$
i.~e.~$m_{\overline\alpha\setminus(\overline\gamma-n)}(i-1)<$
$m_{\overline\rho}(i)$. Let $\overline\gamma_1=i^{\langle
m_{\overline \rho}(i)\rangle}$. Then
$\{\overline\gamma_1\}\subseteq\{\overline\rho\}$, $i\geqslant
(n+1)-n+1=2$,
$\{\overline\alpha\setminus(\overline\gamma-n)\}\not\supseteq\{\overline
\gamma_1-1\}$ and~from theorem 6 it follows that
$$
(K_{\smash t}^{[k]}(a_0,a_1,\dots,a_t)) [ \mathbf{ L }
(\overline\alpha\setminus(\overline\gamma-n))\mid  \mathbf{ L }
(\overline\rho)]=0.
$$
From the induction hypothesis about validity of theorem 7 for $n-1$
and~from theorem 6 it follows that
$$
\displaylines{ (K_{\smash t}^{[k]}(a_0,a_1,\dots,a_t))^n [
\mathbf{ L } (\overline\alpha)\mid \mathbf{ L } (\overline\beta)]=
\hfill\cr \textstyle =\sum\limits_{\overline\varepsilon\in
G_{\smash t}^{[k]}} (K_{\smash t}^{[k]}(a_0,a_1,\dots,a_t)) [
\mathbf{ L } (\overline\alpha)\mid \mathbf{ L }
(\overline\varepsilon)]\cdot (K_{\smash
t}^{[k]}(a_0,a_1,\dots,a_t))^{n-1} [ \mathbf{ L }
(\overline\varepsilon)\mid \mathbf{ L } (\overline\beta)]= \cr
=\textstyle\sum\limits_{\substack{\overline\varepsilon\in
G_{\smash t}^{[k]},\cr
\{\overline\varepsilon\}\supseteq\{\overline\gamma-n+1\}\cr}}
(K_{\smash t}^{[k]}(a_0,a_1,\dots,a_t)) [ \mathbf{ L }
(\overline\alpha)\mid \mathbf{ L } (\overline\varepsilon)]\cdot
(K_{\smash t}^{[k]}(a_0,a_1,\dots,a_t))^{n-1} [ \mathbf{ L }
(\overline\varepsilon))\mid \mathbf{ L } (\overline\beta)]=\cr
\shoveleft{ =\sum\limits_{\substack{\overline\varepsilon\in
G_{\smash t}^{[k]},\cr
\{\overline\varepsilon\}\supseteq\{\overline\gamma-n+1\}\cr}}
\kern-9pt \left(\prod\limits_{i=1}^t\binom{m_{\overline\alpha}(i)}
{m_{\overline\gamma-n}(i)}\!\binom{m_{\overline\varepsilon}(i)}
{m_{\overline\gamma-n+1}(i)}^{-1}\right)\kern-1pt (K_{\smash
t}^{[k]}(a_0,a_1,\dots,a_t)) } \times \cr\hfill\times [ \mathbf{ L }
(\overline\alpha\setminus(\overline\gamma-n))|  \mathbf{ L }
(\overline\varepsilon\setminus(\overline\gamma-n+1))] \times \cr
\times \left(\prod\limits_{i=1}^t\binom{m_{\overline\varepsilon}(i)}
{m_{\overline\gamma-n+1}(i)}\binom{m_{\overline\beta}(i)}
{m_{\overline\gamma}(i)}^{-1}\right) (K_{\smash
t}^{[k]}(a_0,a_1,\dots,a_t))^{n-1} [ \mathbf{ L }
(\overline\varepsilon\setminus(\overline\gamma-n+1))\mid \mathbf{ L
} (\overline\beta\setminus\overline\gamma)]= \cr \medmuskip0mu minus
1mu\thickmuskip0mu minus 1mu
=\left(\prod\limits_{i=1}^t\binom{m_{\overline\alpha}(i)}
{m_{\overline\gamma-n}(i)}\binom{m_{\overline\beta}(i)}
{m_{\overline\gamma}(i)}^{-1}\right) \!
\sum\limits_{\rows{\overline\varepsilon\in G_{\smash t}^{[k]},\cr
\{\overline\varepsilon\}\supseteq\{\overline\gamma-n+1\}\cr}}\hss
(K_{\smash t}^{[k]}(a_0,a_1,\dots,a_t)) [ \mathbf{ L }
(\overline\alpha\setminus(\overline\gamma-n))|  \mathbf{ L }
(\overline\varepsilon\setminus(\overline\gamma-n+1))] \times
\cr\hfill\times (K_{\smash t}^{[k]}(a_0,a_1,\dots,a_t))^{n-1} [
\mathbf{ L }
(\overline\varepsilon\setminus(\overline\gamma-n+1))\mid \mathbf{ L
} (\overline\beta\setminus\overline\gamma)]=\cr
=\left(\prod\limits_{i=1}^t\binom{m_{\overline\alpha}(i)}
{m_{\overline\gamma-n}(i)}\binom{m_{\overline\beta}(i)}
{m_{\overline\gamma}(i)}^{-1}\right) \sum\limits_{\overline\rho\in
G_{\smash t}^{[k]}} (K_{\smash t}^{[k]}(a_0,a_1,\dots,a_t)) [
\mathbf{ L } (\overline\alpha\setminus(\overline\gamma-n))\mid
\mathbf{ L } (\overline\rho)]\times \hfill\cr\noalign{\vskip-6pt}
\hfill\times(K_{\smash t}^{[k]}(a_0,a_1,\dots,a_t))^{n-1} [ \mathbf{
L } (\overline\rho)\mid  \mathbf{ L }
(\overline\beta\setminus\overline\gamma)] = \cr \noalign{\medskip}
\hfill=\left(\prod\limits_{i=1}^t\binom{m_{\overline\alpha}(i)}
{m_{\overline\gamma-n}(i)}\binom{m_{\overline\beta}(i)}
{m_{\overline\gamma}(i)}^{-1}\right) (K_{\smash
t}^{[k]}(a_0,a_1,\dots,a_t))^n [ \mathbf{ L }
(\overline\alpha\setminus(\overline\gamma-n))\mid  \mathbf{ L }
(\overline\beta\setminus\overline\gamma)]. \cr}
$$
\begin{theorem}
Let $k\geqslant 1$ and $t\geqslant 1$, let $a_0$,
$a_1$,~\dots,~$a_t$ be independent variables, let $1\leqslant
n\leqslant t$. Then
\begin{multline}
 \Tr\biggl(\biggl(\frac\partial{\partial a_0}K_{\smash
t}^{[k]}(a_0,a_1, \dots,a_t)\biggr) \cdot(K_{\smash
t}^{[k]}(a_0,a_1,\dots,a_t))^{n-1} \biggr)= \\ = \Tr\biggl( \biggl(
\frac\partial{\partial a_n}K_{\smash t}^{[k]}(a_0,a_1,
\dots,a_t)\biggr)\cdot(K_{\smash t}^{[k]}(a_0,a_1,\dots,a_t))^{n-1}
\biggr).
\end{multline}
\end{theorem}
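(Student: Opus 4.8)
\medskip
\noindent\emph{Sketch of proof.} Write $K=K_{\smash t}^{[k]}(a_0,\dots,a_t)$. As $a_0,\dots,a_t$ are independent variables the formal partial derivatives are available, and by the product rule together with $\Tr(AB)=\Tr(BA)$ one gets, for every $j$,
$$
\Tr\bigl((\partial K/\partial a_j)\,K^{n-1}\bigr)=\tfrac1n\,\frac{\partial}{\partial a_j}\,\Tr\bigl(K^{n}\bigr).
$$
Thus the asserted identity is equivalent to $\frac{\partial}{\partial a_0}\Tr(K^{n})=\frac{\partial}{\partial a_n}\Tr(K^{n})$, i.e.\ to the statement that the polynomial $\Tr(K^{n})$ involves $a_0$ and $a_n$ only through the combination $a_0+a_n$. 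The hypothesis $1\leqslant n\leqslant t$ is exactly what makes this plausible: $0$ and $n$ are then both admissible block-shifts and $P_n^n=P_n^0=I_n$.

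One transparent way to finish uses Theorem~3: taken with $r=0$ and $x=1$ it says $\Tr(K^{n})=R\bigl(1;(\sum_{i=0}^{t}a_iP_n^i)\otimes J_k\bigr)$, and collecting the powers of $P_n$ by the residue of $i$ modulo $n$ gives $\sum_{i=0}^{t}a_iP_n^i=b_0I_n+b_1P_n+\cdots+b_{n-1}P_n^{n-1}$, where $b_j$ is the sum of the coefficients $a_i$ with $i\equiv j$ modulo $n$; since $a_0$ and $a_n$ occur only inside $b_0=a_0+a_n+a_{2n}+\cdots$, the matrix $(\sum_i a_iP_n^i)\otimes J_k$, and hence its rook polynomial, depends on $a_0$ and $a_n$ only through $a_0+a_n$, as wanted. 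In the development of the paper Theorem~3 itself relies on the present statement, so I would rather argue directly, as follows.

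Expand $\Tr(K^{n})=\sum\prod_{s=0}^{n-1}K[\mathbf{L}(\overline\alpha^{(s)})\mid\mathbf{L}(\overline\alpha^{(s+1)})]$ over closed sequences $\overline\alpha^{(0)}\to\cdots\to\overline\alpha^{(n)}=\overline\alpha^{(0)}$ in $G_{\smash t}^{[k]}$. From the definition of $K_{\smash t}^{[k]}$, the factor at step $s$ is a product of binomial coefficients times a monomial $a_0^{w}a_1^{p_1}\cdots a_{t-1}^{p_{t-1}}a_t^{d}$ (the exponents depending on $s$); interpreted on the board $(\sum_{i=0}^{t}a_iP_n^i)\otimes J_k$ it records the placement, in block column $s$, of a family of non-attacking rooks, a factor $a_j$ at step $s$ marking a rook of block-shift $j$ — one occupying a row of the block row ``opened'' $j$ steps earlier, step indices read mod $n$. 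Because $n\leqslant t$, steps $s$ and $s-n$ open the same block row of $P_n$, so a shift-$0$ rook and a shift-$n$ rook marked at step $s$ sit in the same block cell. Theorems~8 and~9 determine exactly which closed sequences contribute: a component of $\overline\beta$ exceeding $n$ forces a matching component $n$ smaller in $\overline\alpha$, and the corresponding entry then equals the one on the shortened data multiplied by the explicit product of binomials of Theorem~9, while a component of $\overline\beta$ exceeding $n$ with no such predecessor in $\overline\alpha$ annihilates the entry (Theorem~8). Iterating this, one brings every surviving term to a normal form involving only state-components $\leqslant n$, in which the shift-$0$ and shift-$n$ placements land in the same block column; the resulting expression is manifestly a function of $a_0+a_n$ (equivalently, there is a weight-preserving bijection between the pairs $(\text{closed sequence},\text{marked occurrence of }a_0)$ and $(\text{closed sequence},\text{marked occurrence of }a_n)$ obtained by reinterpreting the marked shift-$0$ rook as the shift-$n$ rook on the same cell and conversely), whence $\partial\Tr(K^{n})/\partial a_0=\partial\Tr(K^{n})/\partial a_n$.

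The hard part is precisely this reduction and the verification of the symmetry, and the reason it is delicate is that here $t\geqslant n$ always, so a block row remains ``alive'' in the state vector for more than one full turn around the $n$ block columns and therefore occupies several slots of $\overline\alpha^{(s)}$ at once; reconciling these slots and, above all, carrying along and cancelling the binomial factors generated by repeated application of Theorem~9 on the two sides of the correspondence is where essentially all the work lies. The boundary value $n=t$ is not covered by Theorems~8 and~9 as stated, and needs the same analysis with the single extra wrap handled by hand (or deduced by specialising the case $n<t$); the remaining steps are routine.
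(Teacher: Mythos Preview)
Your opening reduction is correct and tidy: over a ring containing $\mathbb{Q}$, the identity
$\Tr\bigl((\partial K/\partial a_j)K^{n-1}\bigr)=\tfrac{1}{n}\,\partial_{a_j}\Tr(K^{n})$
indeed makes the claim equivalent to $\partial_{a_0}\Tr(K^{n})=\partial_{a_n}\Tr(K^{n})$. You are also right that the appeal to Theorem~3 would be circular.

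The bijective idea you describe --- pair each marked occurrence of $a_0$ in a closed walk with a marked occurrence of $a_n$ in a companion closed walk --- is exactly the paper's strategy. The gap is in the execution, and your proposed route through Theorems~8 and~9 does not close it. Those theorems act on a single matrix entry $(K^{n})[\mathbf L(\overline\alpha)\mid\mathbf L(\overline\beta)]$; when you apply Theorem~9 to a diagonal entry $(K^{n})[\mathbf L(\overline\alpha)\mid\mathbf L(\overline\alpha)]$ with $\overline\gamma$ the part of $\overline\alpha$ with components $\geqslant n+1$, you land on the \emph{off-diagonal} entry $(K^{n})[\mathbf L(\overline\alpha\setminus(\overline\gamma-n))\mid\mathbf L(\overline\alpha\setminus\overline\gamma)]$, not on a trace, and there is no evident reason this quantity (with its binomial prefactor) is symmetric in $a_0$ and $a_n$. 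So the ``normal form with state-components $\leqslant n$'' does not by itself exhibit the claimed $a_0\leftrightarrow a_n$ symmetry; you still owe the weight-preserving bijection and the cancellation of the binomial factors that you flag as ``where essentially all the work lies.''

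The paper does not take the Theorem~8/9 detour at all. It works directly at the level of closed walks $\overline\alpha_1\to\cdots\to\overline\alpha_{n+1}=\overline\alpha_1$ with transition data $(p^{(i)}_{1},\dots,p^{(i)}_{t-1},v_i)$. A term contributing to the $\partial_{a_0}$ side has $v_1>\sum_j p^{(1)}_j$; the bijection sends it to the closed walk $\overline\beta_1\to\cdots\to\overline\beta_{n+1}$ obtained by setting $\overline\beta_1=\overline\alpha_1$ with its $n$-th multiplicity increased by $1$, replacing the first transition by $(p^{(1)}_1,\dots,p^{(1)}_{n-1},p^{(1)}_n+1,p^{(1)}_{n+1},\dots,p^{(1)}_{t-1},v_1)$ (or $(p^{(1)}_1,\dots,p^{(1)}_{t-1},v_1-1)$ when $n=t$), and keeping the remaining transitions unchanged. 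One checks that each $\overline\beta_i$ differs from $\overline\alpha_i$ only in the $(i-1)$-st multiplicity (increased by $1$), so the walk closes. The weight comparison then reduces to a single concrete identity of products of binomial coefficients: with $s=v_1-\sum_j p^{(1)}_j$ and using $l^{(i)}_{i-1}=k-s-\sum_{j=1}^{i-2}p^{(j+1)}_j$, one shows
\[
s\binom{k}{s}\prod_{i=2}^{n}\binom{l^{(i)}_{i-1}}{p^{(i)}_{i-1}}
=(l_n+1)\binom{k}{s-1}\prod_{i=2}^{n}\binom{l^{(i)}_{i-1}+1}{p^{(i)}_{i-1}}
=\binom{k}{s-1}\frac{(k-s+1)!}{\,l_n!\prod_{i=2}^{n}p^{(i)}_{i-1}!\,},
\]
which is precisely the cancellation you anticipate but do not carry out. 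The inverse construction (decrementing $l_n$ and undoing the shift) is checked separately. This explicit computation is the substance of the proof; your sketch points at it but does not supply it.
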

\begin{proof}
We will be speak, that passage from a vector
$\overline\alpha=(1^{\langle l_1\rangle},2^{\langle
l_2\rangle},\dots ,t^{\langle l_t\rangle})\in G_{\smash t}^{[k]}$
  to a vector $\overline\beta\in G_{\smash t}^{[k]}$ there
exists and be realized with help a vector $(p_1,p_2,\dots,p_{t-1},v)$, if
$$
\overline\beta=\LR(){1^{\left\langle
k-v+\mathop{\Sigma}\limits_{i=1}^{t-1}p_i\right\rangle}},\break
2^{\langle l_1-p_1\rangle},3^{\langle l_2-p_2\rangle},\dots,
t^{\langle l_{t-1}-p_{t-1}\rangle}), \; 0\leqslant p_i\leqslant
l_i, \; 1\leqslant i\leqslant t-1, \;
\sum\limits_{i=1}^{t-1}p_i\leqslant v\leqslant k.
$$
Let $\overline\alpha_1= (1^{\langle l_1\rangle},2^{\langle
l_2\rangle},\dots,t^{\langle l_t\rangle})$, $\overline\alpha_2,
\overline\alpha_3,\dots,\overline\alpha_{n+1}\in G_{\smash t}^{[k]}$
{such that passage from the vector} $\overline\alpha_i$
  {to the vector} $\overline\alpha_{i+1}$ there exist and be
  {realized with help a vector}
$(p_{\smash1}^{(i)},p_{\smash2}^{(i)},\dots,p_{\smash{t-1}}^{(i)},v_i)$,
$1\leqslant i\leqslant n$. Assume that $v_1\geqslant 1$
{and}~$l_n\leqslant k-1$. Let
\begin{align*}
\overline\beta_1& =(1^{\langle l_1\rangle},2^{\langle
l_2\rangle},\dots, (n-1)^{\langle l_{n-1}\rangle},n^{\langle
l_n+1\rangle},(n+1)^{\langle l_{n+1}\rangle},\dots,t^{\langle
l_t\rangle}), \\
\overline\beta_2& = \LR(.{1^{\left\langle
k-v_1+1+\mathop{\Sigma}\limits_{i=1}^{t-1}p_{\smash
i}^{(1)}\right\rangle}},\break2^{\langle
l_1-p_{\smash1}^{(1)}\rangle},3^{\langle
l_2-p_{\smash2}^{(1)}\rangle},\dots, t^{\langle
l_{t-1}-p_{\smash{t-1}}^{(1)}\rangle}).
\end{align*}
Then passage from the vector~$\overline\beta_1$ to~the
vector~$\overline\beta_2$ there exist and realized with help vector
$(p_{\smash1}^{(1)}, \break
p_{\smash2}^{(1)},\dots,p_{\smash{n-1}}^{(1)},
p_{\smash{n}}^{(1)}+1,p_{\smash{n+1}}^{(1)},\dots,p_{\smash{t-1}}^{(1)},
v_1)$ if $1\leqslant n\leqslant t-1$ and
$(p_{\smash1}^{(1)},p_{\smash2}^{(1)},
\dots,p_{\smash{t-1}}^{(1)},v_1-1)$ if $n=t$. Let
$\overline\beta_3$,
$\overline\beta_4$,~\dots,~$\overline\beta_{n+1}\in G_{\smash
t}^{[k]}$ such that passage from the vector~$\overline\beta_i$
to~$\overline\beta_{i+1}$ there exists and be realized with help the
vector
$(p_{\smash1}^{(i)},p_{\smash2}^{(i)},\dots,p_{\smash{t-1}}^{(i)},
v_i)$, $2\leqslant i\leqslant n$. Definition $\overline\beta_3$,
$\overline\beta_4$,~\dots,~$\overline\beta_{n+1}$ is correctly,
since $m_{\overline\beta_2}(i)\geqslant m_{\overline\alpha_2}(i)$
for all~$i$, $1\leqslant i\leqslant t$. Let
$\overline\alpha_i=(1^{\langle l_{\smash1}^{(i)}\rangle}, 2^{\langle
l_{\smash2}^{(i)}\rangle},\dots,t^{\langle l_{\smash
t}^{(i)}\rangle})$, $2\leqslant i\leqslant n+1$. We shall show by
induction on~$n$, $2\leqslant i\leqslant n+1$, that
$$
\overline\beta_i=(1^{\langle l_{\smash1}^{(i)}\rangle},
\dots,(i-2)^{\langle l_{\smash{i-2}}^{(i)}\rangle}, (i-1)^{\langle
l_{\smash{i-1}}^{(i)}+1\rangle}, i^{\langle l_{\smash
i}^{(i)}\rangle},\dots, t^{\langle l_{\smash t}^{(i)}\rangle}).
$$
If $i=2$, then proposition follows from definition
$\overline\alpha_2$ and~$\overline\beta_2$. Let $2\leqslant
i\leqslant n$ and~for~$i$ proposition is proved. Then, by
definition, $\overline\alpha_{i+1} = \LR(){1^{\left\langle
k-v_i+\mathop{\Sigma}\limits _{j=1}^{t-1}p_{\smash
j}^{(i)}\right\rangle}, 2^{\langle
l_{\smash1}^{(i)}-p_{\smash1}^{(i)}\rangle}, 3^{\langle
l_{\smash2}^{(i)}-p_{\smash2}^{(i)}\rangle},\dots, t^{\langle
l_{\smash{t-1}}^{(i)}-p_{\smash{t-1}}^{(i)}\rangle}}$,
$\overline\beta_{i+1} = \LR(.{1^{\left\langle
k-v_i+\mathop{\Sigma}\limits _{j=1}^{t-1}p_{\smash
j}^{(i)}\right\rangle}}\!, 2^{\langle
l_{\smash1}^{(i)}-p_{\smash1}^{(i)}\rangle}, 3^{\langle
l_{\smash2}^{(i)}-p_{\smash2}^{(i)}\rangle},\dots, (i-1)^{\langle
l_{\smash{i-2}}^{(i)}-p_{\smash{i-2}}^{(i)}\rangle}, i^{\langle
l_{\smash{i-1}}^{(i)}+1-p_{\smash{i-1}}^{(i)}\rangle},
(i+1)^{\langle l_{\smash i}^{(i)}-p_{\smash
i}^{(i)}\rangle},\dots,\break t^{\langle
l_{\smash{t-1}}^{(i)}-p_{\smash{t-1}}^{(i)}\rangle})$. We shall show
now that if $v_1>\sum\limits_{j=1}^{t-1}p_{\smash j}^{(1)}$ and
$\overline\alpha_{n+1}=\overline\alpha_1$, then
$$
\displaylines{ \left(\frac\partial{\partial a_0} (K_{\smash
t}^{[k]}(a_0,a_1,\dots,a_t)) [ \mathbf{ L }
(\overline\alpha_1)\mid \mathbf{ L } (\overline\alpha_2)]\right)
\cdot\prod\limits_{i=2}^n (K_{\smash t}^{[k]}(a_0,a_1,\dots,a_t))
[ \mathbf{ L } (\overline\alpha_i)\mid \mathbf{ L }
(\overline\alpha_{i+1})]=
\hfill\cr\hfill=\left(\frac\partial{\partial a_0} (K_{\smash
t}^{[k]}(a_0,a_1,\dots,a_t)) [ \mathbf{ L }
(\overline\beta_1)\mid \mathbf{ L } (\overline\beta_2)]\right)
\cdot\prod\limits_{i=2}^n (K_{\smash t}^{[k]}(a_0,a_1,\dots,a_t))
[ \mathbf{ L } (\overline\beta_i)\mid \mathbf{ L }
(\overline\beta_{i+1})].\cr}
$$
Since $\overline\alpha_{n+1}=\overline\alpha_1$, then
$l_n=\left(k-v_1+\sum\limits_{i=1}^{t-1}p_{\smash i}^{(1)}\right)-
\sum\limits_{j=1}^{n-1}p_{\smash j}^{(j+1)}$ and,~therefore,
$v_1\geqslant 1$ and~$l_n\leqslant k-1$, since
$v_1>\sum\limits_{i=1}^{t-1}p_{\smash i}^{(1)}$. From definition of
matrix
$K_{\smash t}^{[k]}(a_0,a_1,\dots,a_t)$ it follows that 
\begin{multline*}
\left(\frac\partial{\partial a_0} (K_{\smash
t}^{[k]}(a_0,a_1,\dots,a_t)) [ \mathbf{ L }
(\overline\alpha_1)\mid \mathbf{ L } (\overline\alpha_2)]\right)
\cdot\prod\limits_{i=2}^n (K_{\smash t}^{[k]}(a_0,a_1,\dots,a_t))
[ \mathbf{ L } (\overline\alpha_i)\mid \mathbf{ L }
(\overline\alpha_{i+1})]= \hfill\cr \shoveleft{ =\Biggl(
\frac\partial{\partial a_0} \Binom{k\mathstrut}
{\!v_1-\mathop{\Sigma}\limits_{j=1}^{t-1}p_{\smash j}^{(1)}\!}\!
\left(\displaystyle \prod\limits_{j=1}^{t-1}\binom{l_j}{p_{\smash
j}^{(1)}}\right) a_{\smash
0}^{v_1-\mathop{\Sigma}\limits_{j=1}^{t-1} p_{\smash
j}^{(1)}}a_{\smash1}^{p_{\smash1}^{(1)}}a_{\smash2}^
{p_{\smash2}^{(1)}}\!\dots
a_{\smash{t-1}}^{p_{\smash{t-1}}^{(1)}} \times } \\ \shoveright{
\times \displaystyle\sum\limits_{d=0}^{\min(l_{\smash
t}^{(i)},k-v_1)}\!\!\! \!(d{+}v_1)!\!\binom
k{d{+}v_1}\!\binom{l_t}d\!a_{\smash t}^d\mkern-2mu \Biggr)
\prod\limits_{i=2}^n\Binom{k\mathstrut}
{v_i-\mathop{\Sigma}\limits_{j=1}^{t-1}p_{\smash j}^{(i)}}
\left(\displaystyle \prod\limits_{j=1}^{t-1}\binom{l_{\smash
j}^{(i)}}{p_{\smash j}^{(i)}}\right) \times } \\ \times a_{\smash
0}^{v_i-\mathop{\Sigma}\limits_{j=1}^{t-1} p_{\smash
j}^{(i)}}a_{\smash1}^{p_{\smash1}^{(i)}}a_{\smash2}^
{p_{\smash2}^{(i)}}\!\dots
a_{\smash{t-1}}^{p_{\smash{t-1}}^{(i)}} \displaystyle
\sum\limits_{d=0}^{\min(l_{\smash t}^{(i)},k-v_i)}
(d{+}v_i)!\binom k{d{+}v_i}\!\binom{l_{\smash t}^{(i)}}da_{\smash
t}^d.
\end{multline*}
As we proved above,
$\overline\beta_i=(1^{\langle l_{\smash1}^{(i)}\rangle},
\dots,(i-2)^{\langle l_{\smash{i-2}}^{(i)}\rangle}, (i-1)^{\langle
l_{\smash{i-1}}^{(i)}+1\rangle}, i^{\langle l_{\smash
i}^{(i)}\rangle},\dots, t^{\langle l_{\smash t}^{(i)}\rangle})$,
$2\leqslant i\leqslant n+1$.\break Therefore,
$\overline\beta_{n+1}=\overline\beta_1$~and
$$
\displaylines{ \left(\frac\partial{\partial a_n} (K_{\smash
t}^{[k]}(a_0,a_1,\dots,a_t)) [ \mathbf{ L }
(\overline\beta_1)\mid \mathbf{ L } (\overline\beta_2)]\right)
\cdot\prod\limits_{i=2}^n (K_{\smash t}^{[k]}(a_0,a_1,\dots,a_t))
[ \mathbf{ L } (\overline\beta_i)\mid \mathbf{ L }
(\overline\beta_{i+1})]= \hfill\cr =\LR(.{\frac\partial{\partial
a_n}\LR(.{ \Binom{k\mathstrut} {v_1-1-\mathop
{\Sigma}\limits_{j=1}^{t-1}p_{\smash j}^{(1)}}
\binom{l_1}{p_{\smash{1}}^{(1)}}
\binom{l_2}{p_{\smash{2}}^{(1)}}\dots
\binom{l_{n-1}}{p_{\smash{n-1}}^{(1)}}
\binom{l_n+1}{p_{\smash{n}}^{(1)}+1}
\binom{l_{n+1}}{p_{\smash{n+1}}^{(1)}}\dots
\binom{l_{t-1}}{p_{\smash{t-1}}^{(1)}}
a_{\smash0}^{v_1-1-\mathop{\Sigma}\limits_{j=1}^{t-1}p_{\smash
j}^{(1)}} }} \times \hfill\cr\times\left.\left.
a_{\smash{1}}^{p_{\smash{1}}^{(1)}}
a_{\smash{2}}^{p_{\smash{2}}^{(1)}}\dots
a_{\smash{n-1}}^{p_{\smash{n-1}}^{(1)}}
a_{\smash{n}}^{p_{\smash{n}}^{(1)}+1}
a_{\smash{n+1}}^{p_{\smash{n+1}}^{(1)}}\dots
a_{\smash{t-1}}^{p_{\smash{t-1}}^{(1)}}
\sum\limits_{d=0}^{\min(l_t,k-v_1)}(d+v_1)!\binom
k{d+v_1}\binom{l_t}d a_t^d\right)\right) \times \cr
\times\prod\limits_{i=2}^n \Binom{k\mathstrut} {v_i-\mathop
{\Sigma}\limits_{j=1}^{t-1}p_{\smash j}^{(i)}}
\binom{l_{\smash1}^{(i)}}{p_{\smash{1}}^{(i)}}
\binom{l_{\smash2}^{(i)}}{p_{\smash{2}}^{(i)}}\dots
\binom{l_{\smash{i-2}}^{(i)}}{p_{\smash{i-2}}^{(i)}}
\binom{l_{\smash{i-1}}^{(i)}+1}{p_{\smash{i-1}}^{(i)}+1}
\binom{l_{\smash{i}}^{(i)}}{p_{\smash{i}}^{(i)}}\dots
\binom{l_{\smash{t-1}}^{(i)}}{p_{\smash{t-1}}^{(i)}} \times
\cr\hfill\times
a_{\smash0}^{v_i-\mathop{\Sigma}\limits_{j=1}^{t-1}p_{\smash
j}^{(i)}} a_{\smash{1}}^{p_{\smash{1}}^{(i)}}
a_{\smash{2}}^{p_{\smash{2}}^{(i)}}\dots
a_{\smash{t-1}}^{p_{\smash{t-1}}^{(i)}}
\sum\limits_{d=0}^{\min(l_{\smash t}^{(i)}, k-v_i)}(d+v_i)!\binom
k{d+v_i}\binom {l_{\smash t}^{(i)}}da_t^d,\cr}
$$
if $1\leqslant n\leqslant t-1$ and
$$
\displaylines{ \left(\frac\partial{\partial a_t} (K_{\smash
t}^{[k]}(a_0,a_1,\dots,a_t)) [ \mathbf{ L } (\overline\beta_1)\mid
\mathbf{ L } (\overline\beta_2)]\right) \cdot\prod\limits_{i=2}^t
(K_{\smash t}^{[k]}(a_0,a_1,\dots,a_t)) [ \mathbf{ L }
(\overline\beta_i)\mid \mathbf{ L } (\overline\beta_{i+1})]=
\hfill\cr =\LR(.{\frac\partial{\partial a_t}\LR(.{
\Binom{k\mathstrut} {v_1-1-\mathop
{\Sigma}\limits_{j=1}^{t-1}p_{\smash j}^{(1)}}
\left(\displaystyle\prod\limits_{j=1}^{t-1}\binom{l_j}
{p_{\smash{j}}^{(1)}}\right)
a_{\smash0}^{v_1-1-\mathop{\Sigma}\limits_{j=1}^{t-1}p_{\smash
j}^{(1)}} a_{\smash{1}}^{p_{\smash{1}}^{(1)}}
a_{\smash{2}}^{p_{\smash{2}}^{(1)}}\dots
a_{\smash{t-1}}^{p_{\smash{t-1}}^{(1)}} }} \times
\hfill\cr\times\left.\left.
\sum\limits_{d=0}^{\min(l_t+1,k-v_1+1)}(d+v_1-1)!\binom
k{d+v_1-1}\binom{l_t+1}d a_t^d\right)\right) \prod\limits_{i=2}^t
\Binom{k\mathstrut} {v_i -\mathop
{\Sigma}\limits_{j=1}^{t-1}p_{\smash j}^{(i)}} \times \cr
\times \binom{l_{\smash1}^{(i)}}{p_{\smash{1}}^{(i)}}\dots
\binom{l_{\smash{i-2}}^{(i)}}{p_{\smash{i-2}}^{(i)}}
\binom{l_{\smash{i-1}}^{(i)}+1}{p_{\smash{i-1}}^{(i)}}
\binom{l_{\smash{i}}^{(i)}}{p_{\smash{i}}^{(i)}}\dots
\binom{l_{\smash{t-1}}^{(i)}}{p_{\smash{t-1}}^{(i)}}
a_{\smash0}^{v_i-\mathop{\Sigma}\limits_{j=1}^{t-1}p_{\smash
j}^{(i)}} a_{\smash{1}}^{p_{\smash{1}}^{(i)}}
a_{\smash{2}}^{p_{\smash{2}}^{(i)}}\dots
a_{\smash{t-1}}^{p_{\smash{t-1}}^{(1)}} \times \cr\hfill \times
\sum\limits_{d=0}^{\min(l_{\smash t}^{(i)},k-v_i)}(d+v_i)!\binom
k{d+v_i}\binom {l_{\smash t}^{(i)}}da_t^d.\cr}
$$
Let $s=v_1-\sum\limits_{j=1}^{t-1}p_{\smash j}^{(1)}$. Then
$$
\displaylines{ \left(\frac\partial{\partial a_0} (K_{\smash
t}^{[k]}(a_0,a_1,\dots,a_t)) [ \mathbf{ L } (\overline\alpha_1)\mid
\mathbf{ L } (\overline\alpha_2)]\right)
\prod\limits_{i=2}^n(K_{\smash t}^{[k]}(a_0,a_1,\dots,a_t)) [
\mathbf{ L } (\overline\alpha_i)\mid \mathbf{ L }
(\overline\alpha_{i+1})]\hfill\cr =s\binom
ks\left(\prod\limits_{j=1}^{t-1}\binom{l_j}{p_{\smash j}^{(1)}}
a_{\smash j}^{p_{\smash j}^{(1)}}\right)a_0^{s-1}\left(\sum\limits
_{d=0}^{\min(l_t,k-v_1)}(d+v_1)!\binom
k{d+v_1}\binom{l_t}da_t^d\right) \times \hfill\cr\hfill\times
\prod\limits_{i=2}^n\Binom{k\mathstrut}{v_i-\mathop{\Sigma}\limits_
{j=1}^{t-1}p_{\smash j}^{(i)}}\left(\prod\limits_{j=1}^{t-1}\binom
{l_{\smash j}^{(i)}}{p_{\smash j}^{(i)}}a_{\smash j}^{p_{\smash j}
^{(i)}}\right)a_{\smash0}^{v_i-\mathop{\Sigma}\limits_{j=1}^{t-1}
p_{\smash j}^{(i)}}\sum\limits_{d=0}^{\min(l_{\smash
t}^{(i)},k-v_i)} (d+v_i)!\binom k{d+v_i}\binom{l_{\smash
t}^{(i)}}da_t^d,\cr
\left(\frac\partial{\partial a_n} (K_{\smash
t}^{[k]}(a_0,a_1,\dots,a_t)) [ \mathbf{ L } (\overline\beta_1)\mid
\mathbf{ L } (\overline\beta_2)]\right)
\prod\limits_{i=2}^n(K_{\smash t}^{[k]}(a_0,a_1,\dots,a_t)) [
\mathbf{ L } (\overline\beta_i)\mid \mathbf{ L }
(\overline\beta_{i+1})]= \hfill\cr
=(p_n^{(1)}+1)\cdot\frac{l_n+1}{p_n^{(1)}+1}\binom k{s-1}\left(\prod
\limits_{j=1}^{t-1}\binom{l_j}{p_{\smash j}^{(1)}}a_{\smash j}^{p_{
\smash j}^{(1)}}\right)a_0^{s-1}\left(\sum\limits_{d=0}^{\min(l_t,
k-v_1)}(d+v_1)!\binom k{d+v_1}\binom{l_t}da_t^d\right)\times
\hfill\cr
\times\left(\prod\limits_{i=2}^n\binom{l_{\smash{i-1}}^{(i)}+1}
{p_{\smash{i-1}}^{(i)}}\right)\prod\limits_{i=2}^n
\Binom{k\mathstrut}{v_i-\mathop{\Sigma}\limits_{j=1} ^{t-1}p_{\smash
j}^{(i)}}\LR(){\displaystyle \prod\limits_{j\in N_{t-1}\setminus\{
i-1\}}\binom{l_{\smash j}^{(i)}}{p_{\smash j}^{(i)}}}
a_{\smash0}^{v_i-\mathop{\Sigma}\limits_{j=1}^{t-1}p_{\smash
j}^{(i)}}
a_{\smash1}^{p_{\smash1}^{(i)}}a_{\smash2}^{p_{\smash2}^{(i)}}\dots
a_{\smash{t-1}}^{p_{\smash{t-1}}^{(i)}} \times \cr\hfill\times
\sum\limits_{d=0}^{\min(l_{\smash t}^{(i)},k-v_i)}(d+v_i)!\binom
k{d+v_i}\binom{l_{\smash t}^{(i)}}da_t^d\cr}
$$
if $1\leqslant n\leqslant t-1$ and
$$
\displaylines{ \left(\frac\partial{\partial a_t} (K_{\smash
t}^{[k]}(a_0,a_1,\dots,a_t)) [ \mathbf{ L }
(\overline\beta_1)\mid \mathbf{ L } (\overline\beta_2)]\right)
\prod\limits_{i=2}^t(K_{\smash t}^{[k]}(a_0,a_1,\dots,a_t)) [
\mathbf{ L } (\overline\beta_i)\mid \mathbf{ L }
(\overline\beta_{i+1})]= \hfill\cr
=\binom k{s-1}a_0^{s-1}\left(\prod\limits_{j=1}^{t-1}\binom{l_j}
{p_{\smash j}^{(1)}}a_{\smash j}^{p_{\smash j}^{(1)}}\right)
\left(\sum\limits_{d=1}^{\min(l_t+1,k-v_1+1)}(d+v_1-1)!\binom k
{d+v_1-1}\binom{l_t+1}dda_t^{d-1}\right)\times \hfill\cr
\times\left(\prod\limits
_{i=2}^t\binom{l_{\smash{i-1}}^{(i)}+1}{p_{\smash{i-1}}^{(i)}}\right)
\prod\limits_{i=2}^t\Binom{k\mathstrut}{v_i-\mathop{\Sigma}\limits_{j=1}
^{t-1}p_{\smash j}^{(i)}}\LR(){\displaystyle \prod\limits_{j\in
N_{t-1}\setminus\{ i-1\}}\binom{l_{\smash j}^{(i)}}{p_{\smash
j}^{(i)}}}
a_{\smash0}^{v_i-\mathop{\Sigma}\limits_{j=1}^{t-1}p_{\smash
j}^{(i)}} \left(\prod\limits_{j=1}^{t-1}a_{\smash j}^{p_{\smash
j}^{(i)}}\right)\times \cr\hfill\times
\sum\limits_{d=0}^{\min(l_{\smash t}^{(i)},k-v_i)}(d+v_i)! \binom k
{d+v_i} \binom{l_{\smash t}^{(i)}}da_t^d = \cr = 
(l_t+1)\binom k{s-1}a_0^{s-1}\left(\prod
\limits_{j=1}^{t-1}\binom{l_j}{p_{\smash j}^{(1)}}a_{\smash j}^{p_{
\smash j}^{(1)}}\right)\left(\sum\limits_{d=0}^{\min(l_t,
k-v_1)}(d+v_1)!\binom k{d+v_1}\binom{l_t}da_t^d\right)\times
\hfill\cr \hfill \times
\left(\prod\limits_{i=2}^n\binom{l_{\smash{i-1}}^{(i)}+1}{p_{\smash
{i-1}}^{(i)}}\right)\prod
\limits_{i=2}^n\Binom{k\mathstrut}{v_i-\mathop{\Sigma}\limits_{j=1}
^{t-1}p_{\smash j}^{(i)}}\LR(){\displaystyle \prod\limits_{j\in
N_{t-1}\setminus\{ i-1\}}\binom{l_{\smash j}^{(i)}}{p_{\smash
j}^{(i)}}}
a_{\smash0}^{v_i-\mathop{\Sigma}\limits_{j=1}^{t-1}p_{\smash
j}^{(i)}} \left(\prod\limits_{j=1}^{t-1}a_{\smash j}^{p_{\smash
j}^{(i)}}\right) \times \hfill\cr \hfill \times
\sum\limits_{d=0}^{\min(l_{\smash t}^{(i)},k-v_i)}(d+v_i)!\binom
k{d+v_i}\binom{l_{\smash t}^{(i)}}da_t^d.\cr}
$$
Therefore, for all $n$, $1\leqslant n\leqslant t$, this equality
$$
\displaylines{\left(\frac\partial{\partial a_n} (K_{\smash
t}^{[k]}(a_0,a_1,\dots,a_t)) [ \mathbf{ L }
(\overline\beta_1)\mid \mathbf{ L } (\overline\beta_2)]\right)
\prod\limits_{i=2}^n(K_{\smash t}^{[k]}(a_0,a_1,\dots,a_t)) [
\mathbf{ L } (\overline\beta_i)\mid \mathbf{ L }
(\overline\beta_{i+1})] = \hfill\cr =(l_n+1)\binom
k{s-1}a_0^{s-1}\left(\prod\limits_{j=1}^{t-1}\binom{l_j}
{p_{\smash j}^{(1)}}a_{\smash j}^{p_{\smash
j}^{(1)}}\right)\left(\sum
\limits_{d=0}^{\min(l_t,k-v_1)}(d+v_1)!\binom
k{d+v_1}\binom{l_t}da_t^d \right) \times \hfill\cr\hfill\times
\left(\prod\limits_{i=2}^n\binom{l_{\smash{i-1}}^{(i)}+1}
{p_{\smash{i-1}}^{(i)}}\right)
\prod\limits_{i=2}^n\Binom{k\mathstrut}
{v_i-\mathop{\Sigma}\limits_{j=1}^{t-1}p_{\smash
j}^{(i)}}\LR(){\prod \limits_{j\in
N_{t-1}\setminus\{i-1\}}\binom{l_{\smash j}^{(i)}} {p_{\smash
j}^{(i)}}}
a_{\smash0}^{v_i-\mathop{\Sigma}\limits_{j=1}^{t-1}p_{\smash
j}^{(i)}} \left(\prod\limits_{j=1}^{t-1}a_{\smash j}^{p_{\smash
j}^{(i)}}\right) \times \hfill\cr\hfill\times
\sum\limits_{d=0}^{\min(l_{\smash t}^{(i)},k-v_i)}(d+v_i)!\binom
k {d+v_i}\binom{l_{\smash t}^{(i)}}da_t^d.\cr}
$$
is correct. From the equality $l_{\smash
j}^{(i+1)}=l_{\smash{j-1}}^{(i)}-p_{\smash{j-1}}^{(i)}$, $2\leqslant
j\leqslant t$, $1\leqslant i\leqslant n-1$, it follows that
$l_{\smash i}^{(i+1)}=l_{\smash{i-1}}^{(i)}-p_{\smash{i-1}}^{(i)}$,
$2\leqslant i\leqslant n-1$. Since $l_{\smash1}^{(2)}=k-s$, then
induction on~$i$ give that
$l_{\smash{i-1}}^{(i)}=k-s-\sum\limits_{j=1}^{i-2}p_{\smash
j}^{(j+1)}$, $2\leqslant i\leqslant n$. Therefore
$$
\displaylines{ s\binom
ks\prod\limits_{i=2}^n\binom{l_{\smash{i-1}}^{(i)}}
{p_{\smash{i-1}}^{(i)}}=k\binom{k-1}{s-1}\prod\limits_{i=2}^n\binom
{l_{\smash{i-1}}^{(i)}}{p_{\smash{i-1}}^{(i)}}=(k-s+1)\binom
k{s-1}
\prod\limits_{i=2}^n\binom{l_{\smash{i-1}}^{(i)}}{p_{\smash{i-1}}^{(i)}}=
\hfill\cr =(k-s+1)\binom
k{s-1}\prod\limits_{i=2}^n\Binom{k-s-\mathop{\Sigma}\limits
_{j=1}^{i-2}p_{\smash j}^{(j+1)}}{p_{\smash{i-1}}^{(i)}} = \hfill \cr \hfill =
(k-s+1)\binom k{s-1}\cdot\frac{(k-s)!}
{\textstyle\left(k-s-\sum\limits_{i=2}^np_{\smash{i-1}}^{(i)}\right)!
\prod\limits_{i=2}^n p_{\smash{i-1}}^{(i)}!} =\cr\hfill =\binom
k{s-1}\cdot\frac{(k-s+1)!}{\textstyle\left(k-s-\sum\limits
_{i=2}^np_{\smash{i-1}}^{(i)}\right)!\prod\limits_{i=2}^n
p_{\smash{i-1}}^{(i)}!} = \binom
k{s-1}\cdot\frac{(k-s+1)!}{\textstyle l_n!\prod\limits_{i=2}^n
p_{\smash{i-1}}^{(i)}!}, \cr\noalign{\bigskip} (l_n+1)\binom
k{s-1}\prod\limits_{i=2}^n\binom{l_{\smash{i-1}}^{(i)}+1}
{p_{\smash{i-1}}^{(i)}} = (l_n+1)\binom
k{s-1}\prod\limits_{i=2}^n\Binom{k-s+1-\mathop{\Sigma}
\limits_{j=1}^{i-2}p_{\smash j}^{(j+1)}}{p_{\smash{i-1}}^{(i)}}
=\hfill\cr\hfill = (l_n+1)\binom k{s-1} \times
\frac{(k-s+1)!}{\left(k-s+1-\sum\limits_{i=2}^n
p_{\smash{i-1}}^{(i)}\right)!\prod\limits_{i=2}^np_{\smash{i-1}}^{(i)}!}=(l_n+1)\binom
k{s-1}\cdot\frac{(k-s+1)!}{(l_n+1)!\prod\limits_{i=2}^n
p_{\smash{i-1}}^{(i)}!} = \hfill\cr\hfill=\binom
k{s-1}\cdot\frac{(k-s+1)!}{l_n!\prod\limits_{i=2}^n
p_{\smash{i-1}}^{(i)}!},\cr}
$$
$1\leqslant n\leqslant t$. Therefore, if
$\overline\alpha_{n+1}=\overline\alpha_1$, then
$\overline\beta_{n+1}=\overline\beta_1$~and
$$
\displaylines{\left(\frac\partial{\partial a_0} (K_{\smash
t}^{[k]}(a_0,a_1,\dots,a_t)) [ \mathbf{ L }
(\overline\alpha_1)\mid \mathbf{ L } (\overline\alpha_2)]\right)
\prod\limits_{i=2}^n(K_{\smash t}^{[k]}(a_0,a_1,\dots,a_t)) [
\mathbf{ L } (\overline\alpha_i)\mid \mathbf{ L }
(\overline\alpha_{i+1})]= \hfill\cr
\hfill=\left(\frac\partial{\partial a_n} (K_{\smash
t}^{[k]}(a_0,a_1,\dots,a_t)) [ \mathbf{ L }
(\overline\beta_1)\mid \mathbf{ L } (\overline\beta_2)]\right)
\prod\limits_{i=2}^n(K_{\smash t}^{[k]}(a_0,a_1,\dots,a_t)) [
\mathbf{ L } (\overline\beta_i)\mid \mathbf{ L }
(\overline\beta_{i+1})].\cr}
$$
  And invercely, let $\overline\beta=(1^{\langle l_1\rangle},
2^{\langle l_2\rangle},\dots,t^{\langle l_t\rangle})$,
$\overline\beta_2$,~\dots,~$\overline\beta_{n+1}\in G_{\smash t}^{[k]}$
  {be such that passage from the vector}
$\overline\beta_i$ to the~vector $\overline\beta_{i+1}$ be realized
with help the vector $(p_{\smash1}^{\langle
i\rangle},p_{\smash2}^{\langle i\rangle},\dots
p_{\smash{t-1}}^{\langle i\rangle},v_i)$, $1\leqslant i\leqslant n$.
Assume that $v_1\leqslant k- 1$ and~$p_n^{(1)}\geqslant 1$,
$l_n\geqslant 1$. Let
\begin{align*}
\overline\alpha_1 &=(1^{\langle l_1\rangle},2^{\langle
l_2\rangle},\dots, (n-1)^{\langle l_{n-1}\rangle},n^{\langle
l_n-1\rangle},(n+1)^{\langle l_{n+1}\rangle},\dots,\break
t^{\langle l_t\rangle}), \\
\overline\alpha_2 &= \LR(){1^{\left\langle
k-v_1-1+\mathop{\Sigma}\limits _{j=1}^{t-1}p_{\smash
j}^{(1)}\right\rangle}, 2^{\langle l_1-p_{\smash1}^{(1)}\rangle},
3^{\langle l_2-p_{\smash2}^{(1)}\rangle},\dots, t^{\langle
l_{t-1}-p_{\smash{t-1}}^{(1)}\rangle}}.
\end{align*}
Then passage from the vector $\overline\alpha_1$
to~$\overline\alpha_2$ be realized with help the vector
$$
(p_{\smash{1}}^{(1)},p_{\smash{2}}^{(1)},\dots,
p_{\smash{n-1}}^{(1)},p_{\smash{n}}^{(1)}-1,p_{\smash{n+1}}^{(1)},\dots,
p_{\smash{t-1}}^{(1)},v_1) \mbox{ if } 1\leqslant n\leqslant t-1
$$
and~$(p_{\smash{1}}^{(1)},
p_{\smash{2}}^{(1)},\dots,p_{\smash{t-1}}^{(1)}, v_1+1)$ if
$n=t$.  Let $\overline\alpha_3$,
$\overline\alpha_4$,~\dots,~$\overline\alpha_{n+1}\in G_{\smash
t}^{[k]}$, such that passage from~$\overline\alpha_i$
to~$\overline\alpha_{i+1}$ be realized with help the vector
$(p_{\smash{1}}^{(i)},p_{\smash{2}}^{(i)},\dots,p_{\smash{t-1}}^{(i)},v_i)$,
$2\leqslant i\leqslant n$. If
$\overline\beta_{n+1}=\overline\beta_1$, then definition of
$\overline\alpha_3$,
$\overline\alpha_4$,~\dots,~$\overline\alpha_{n+1}$ is correctly.
Really then $l_n=\Bigl(k-v_1+\sum\limits_{j=1}^{t-1}p_{\smash
j}^{(1)} \Bigr)-\sum\limits_{j=1}^{n-1}p_{\smash j}^{(j+1)}$. Let
$\overline\beta_i=(1^{\langle l_{\smash1}^{(i)}\rangle},
2^{\langle l_{\smash2}^{(i)}\rangle},\dots,t^{\langle l_{\smash
t}^{(i)}\rangle})$, $2\leqslant i\leqslant n+1$. Then
$$
l_{\smash{i-1}}^{(i)}-1=\left(k-v_1-1+\sum\limits_{j=1}^{t-1}p_{\smash
j}^{(1)}\right)-\sum\limits_{j=1}^{i-2}p_{\smash
j}^{(j+1)}\geqslant l_n-1\geqslant 0
$$
and~$\overline\alpha_i=(1^{\langle l_{\smash1}^{(i)}\rangle},
\ldots, (i-2)^{\langle l_{\smash{i-2}}^{(i)}\rangle}, (i-1)^{\langle
l_{\smash{i-1}}^{(i)}-1\rangle}, i^{\langle l_{\smash
i}^{(i)}\rangle},\dots,t^{\langle l_{\smash t}^{(i)}\rangle})$.
Therefore, if $\overline\beta_{n+1}=\overline\beta_1$, then
$\overline\alpha_{n+1}=\overline\alpha_1$ and,~repeating stated
above computations we obtain that
$$
\displaylines{\left(\frac\partial{\partial a_0} (K_{\smash
t}^{[k]}(a_0,a_1,\dots,a_t)) [ \mathbf{ L }
(\overline\alpha_1)\mid \mathbf{ L } (\overline\alpha_2)]\right)
\prod\limits_{i=2}^n(K_{\smash t}^{[k]}(a_0,a_1,\dots,a_t)) [
\mathbf{ L } (\overline\alpha_i)\mid \mathbf{ L }
(\overline\alpha_{i+1})]= \hfill\cr
\hfill=\left(\frac\partial{\partial a_n} (K_{\smash
t}^{[k]}(a_0,a_1,\dots,a_t)) [ \mathbf{ L }
(\overline\beta_1)\mid \mathbf{ L } (\overline\beta_2)]\right)
\prod\limits_{i=2}^n(K_{\smash t}^{[k]}(a_0,a_1,\dots,a_t)) [
\mathbf{ L } (\overline\beta_i)\mid \mathbf{ L }
(\overline\beta_{i+1})].\cr}
$$
Since the maps
$$
(\overline\alpha_1,\overline\alpha_2,\dots,
\overline\alpha_n,\overline\alpha_1) \to
(\overline\beta_1,\overline\beta_2,
\dots,\overline\beta_n,\overline\beta_1) \text{ and }
(\overline\beta_1,\overline
\beta_2,\dots,\overline\beta_n,\overline\beta_1) \to
(\overline\alpha_1,
\overline\alpha_2,\dots,\break\overline\alpha_n,\overline\alpha_1)
$$
are injective, then theorem 10 is proved.
\end{proof}

\begin{theorem}
Let $k \geqslant 1,$ $t \geqslant 1,$ $1 \leqslant n \leqslant t,$
$0 \leqslant r \leqslant kt-1,$ let $a_0, a_1, \dots, a_t$ be
independent variables. Then
\begin{multline}
\Tr\Bigl( \Bigl( \frac{\partial}{\partial a_0} \Pi_{r,t}^{[k]}(a_0,
a_1, \dots, a_t) \Bigr) \cdot \Bigl( \Pi_{r,t}^{[k]}(a_0,a_1, \dots,
a_t) \Bigr)^{n-1} \Bigr) = \\= \Tr \Bigl( \Bigl(
\frac{\partial}{\partial a_n} \Pi_{r+1,t}^{[k]}(a_0, a_1, \dots,
a_t) \Bigr) \cdot \Bigl( \Pi_{r+1,t}^{[k]}(a_0,a_1, \dots, a_t)
\Bigr)^{n-1} \Bigr)
\end{multline}
\end{theorem}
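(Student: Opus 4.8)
This is the $\Pi$-version of Theorem~10, and the plan is to prove it by the same device: a weight-preserving bijection between closed cyclic sequences of matrix entries. Say that the \emph{passage} from $\overline\alpha=(1^{\langle l_1\rangle},\dots,t^{\langle l_t\rangle})\in G_{\smash{r,t}}^{[k]}$ to a vector $\overline\beta$ exists and is \emph{realized} by $(p_1,\dots,p_{t-1})$ if $\overline\beta=(1^{\langle l_t+p_1+\dots+p_{t-1}\rangle},2^{\langle l_1-p_1\rangle},\dots,t^{\langle l_{t-1}-p_{t-1}\rangle})$ with $0\leqslant p_i\leqslant l_i$ and $p_1+\dots+p_{t-1}\leqslant k-l_t$; by the definition of $\Pi_{\smash{r,t}}^{[k]}$ these passages index exactly its nonzero entries (and then automatically $\overline\beta\in G_{\smash{r,t}}^{[k]}$). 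Expanding the trace, the left-hand side of the claimed identity is the sum, over all closed sequences $\overline\alpha_1\to\overline\alpha_2\to\dots\to\overline\alpha_{n+1}=\overline\alpha_1$ of passages in $G_{\smash{r,t}}^{[k]}$ realized by $(p_{\smash1}^{(i)},\dots,p_{\smash{t-1}}^{(i)})$, $1\leqslant i\leqslant n$, of $\bigl((\partial/\partial a_0)\Pi_{\smash{r,t}}^{[k]}[\mathbf{L}(\overline\alpha_1)\mid\mathbf{L}(\overline\alpha_2)]\bigr)\prod_{i=2}^n\Pi_{\smash{r,t}}^{[k]}[\mathbf{L}(\overline\alpha_i)\mid\mathbf{L}(\overline\alpha_{i+1})]$; writing $\overline\alpha_i=(1^{\langle l_{\smash1}^{(i)}\rangle},\dots,t^{\langle l_{\smash t}^{(i)}\rangle})$, $l_j=l_{\smash j}^{(1)}$, and $v_i=l_{\smash t}^{(i)}+\sum_j p_{\smash j}^{(i)}$ (the first exponent of $\overline\alpha_{i+1}$), only the sequences with $v_1\leqslant k-1$ survive the derivative. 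The right-hand side is the analogous sum over closed sequences of passages in $G_{\smash{r+1,t}}^{[k]}$ with $\partial/\partial a_n$ on the first entry.

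The bijection is the one used in the proof of Theorem~10. To an $\overline\alpha$-cycle with $v_1\leqslant k-1$ I would associate the $\overline\beta$-cycle in which $\overline\beta_i$ is $\overline\alpha_i$ with one extra copy of the index $i-1$ adjoined for $2\leqslant i\leqslant n+1$, and one extra copy of the index $n$ adjoined for $i=1$; the passage $\overline\beta_i\to\overline\beta_{i+1}$ is realized by $(p_{\smash1}^{(i)},\dots,p_{\smash{t-1}}^{(i)})$ for $2\leqslant i\leqslant n$, while $\overline\beta_1\to\overline\beta_2$ is realized by $(p_{\smash1}^{(1)},\dots,p_{\smash{n-1}}^{(1)},p_{\smash n}^{(1)}+1,p_{\smash{n+1}}^{(1)},\dots,p_{\smash{t-1}}^{(1)})$ if $n\leqslant t-1$ and by $(p_{\smash1}^{(1)},\dots,p_{\smash{t-1}}^{(1)})$ if $n=t$ (the extra copy of $a_t$ in $\overline\beta_1$ absorbing the shift). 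Exactly as in Theorem~10 one checks that $\overline\beta_1,\dots,\overline\beta_{n+1}\in G_{\smash{r+1,t}}^{[k]}$, that all range constraints on the modified realizing vectors hold, and --- the key point --- that the constraint for $\overline\beta_1\to\overline\beta_2$ is equivalent to $v_1\leqslant k-1$, which is also precisely the condition that the first entry on the right, whose $a_n$-exponent is $p_{\smash n}^{(1)}+1$ (resp.\ whose $a_t$-exponent is $l_{\smash t}^{(1)}+1$ when $n=t$), survive $\partial/\partial a_n$. One then checks $\overline\beta_{n+1}=\overline\beta_1$ and that the map is invertible (delete one copy of the index $n$ from $\overline\beta_1$ and decrement the $n$-th slot of the first realizing vector, resp.\ decrement the $a_t$-multiplicity). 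Thus the surviving summands on the two sides are matched one-to-one.

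It remains to check that matched summands carry equal weights. Differentiation by $a_0$ multiplies the first left entry by $k-v_1$; differentiation by $a_n$ ($n\leqslant t-1$) multiplies the first right entry by $p_{\smash n}^{(1)}+1$, that entry carrying $\binom{l_n+1}{p_{\smash n}^{(1)}+1}$ where the left entry carries $\binom{l_n}{p_{\smash n}^{(1)}}$. By $(k-v_1)\binom k{v_1}=(v_1+1)\binom k{v_1+1}$ and $(p_{\smash n}^{(1)}+1)\binom{l_n+1}{p_{\smash n}^{(1)}+1}=(l_n+1)\binom{l_n}{p_{\smash n}^{(1)}}$, the quotient (right product)$/$(left product) collapses to $\frac{l_n+1}{v_1+1}\prod_{i=2}^n\frac{l_{\smash{i-1}}^{(i)}+1}{l_{\smash{i-1}}^{(i)}+1-p_{\smash{i-1}}^{(i)}}$. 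Since $l_1^{(2)}=v_1$ and $l_{\smash j}^{(i+1)}=l_{\smash{j-1}}^{(i)}-p_{\smash{j-1}}^{(i)}$ for $j\geqslant2$, one has $l_{\smash{i-1}}^{(i)}+1-p_{\smash{i-1}}^{(i)}=l_{\smash i}^{(i+1)}+1$, and $\overline\alpha_{n+1}=\overline\alpha_1$ gives $l_n^{(n+1)}=l_n$; hence the product telescopes to $\frac{v_1+1}{l_n+1}$ and the quotient equals $1$. For $n=t$ the same computation applies with $l_n$ replaced by $l_{\smash t}^{(1)}$ and with the factor $l_{\smash t}^{(1)}+1$ (from $\partial/\partial a_t$) in place of $p_{\smash n}^{(1)}+1$. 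Summing over the bijection then gives the claimed identity.

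The hard part is the bookkeeping of the middle paragraph: specifying the shifted cycle so that it lands precisely on the summands that survive $\partial/\partial a_n$ (those with positive $a_n$-exponent in the first entry), verifying every membership and range inequality for the shifted vectors and their realizing vectors, and treating $n=t$ on the same footing as $n<t$. Once that correspondence is pinned down, the weight identity is a short telescoping, entirely parallel to Theorem~10.
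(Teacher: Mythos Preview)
Your proposal is correct and follows precisely the route the paper intends: the paper states Theorem~11 without proof, immediately after the detailed proof of Theorem~10, and the implicit claim is that the same bijection on closed sequences of passages carries over to the $\Pi$-matrices. Your adaptation --- dropping the $K$-specific parameter $v$ and the inner $d$-sums, keeping $v_i=l_t^{(i)}+\sum_jp_j^{(i)}$ as the first exponent of the next vector, shifting from $G_{r,t}^{[k]}$ to $G_{r+1,t}^{[k]}$ by inserting one extra copy of $n$ (resp.\ $i-1$) into $\overline\alpha_1$ (resp.\ $\overline\alpha_i$), and verifying the telescoping weight identity via $(k-v_1)\binom{k}{v_1}=(v_1+1)\binom{k}{v_1+1}$ and $(p_n^{(1)}+1)\binom{l_n+1}{p_n^{(1)}+1}=(l_n+1)\binom{l_n}{p_n^{(1)}}$ --- is exactly the $\Pi$-analogue of the computation in Theorem~10. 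The one place to be explicit when you write it up is surjectivity of the bijection: starting from an arbitrary closed $\overline\beta$-sequence in $G_{r+1,t}^{[k]}$ whose first entry has positive $a_n$-exponent, you should note that the recursion $m_{i+1}=m_i-q_{i-1}^{(i)}$ together with closure forces $m_i\geqslant m_{n+1}=m_1\geqslant1$ for all $i$, so the deletion of one copy of the relevant index is always legal and the inverse lands on a valid $\overline\alpha$-sequence.
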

\begin{theorem}
Let $k \geqslant 1$ and $t \geqslant 2$, let $a_0, a_1, \dots, a_t$
be elements in a commutative ring with unity, let
$\overline{\alpha}, \overline{\beta} \in G_{r,t}^{[k]},$ let $1
\leqslant n \leqslant t-1$. Assume that $\max \{\overline{\beta} \}
\geqslant n+1,$ $\overline{\gamma} \in G_{l,t}^{[k]},$ $\{
\overline{\gamma} \} \subseteq \{ \overline{\beta} \},$
$\min\{\overline{\gamma}\} \geqslant n+1$ and $\{ \overline{\alpha}
\} \nsupseteq \{ \overline{\gamma}-n \}.$ Then
$$
\Bigl( \Pi_{r,t}^{[k]} (a_0,a_1,\dots, a_t) \Bigr)^n
[\mathbf{L}(\overline{\alpha}) \mid \mathbf{L}(\overline{\beta})]= 0
$$
\end{theorem}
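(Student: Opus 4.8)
The plan is to reproduce, almost word for word, the induction on $n$ that established the corresponding vanishing statement for the matrix $K_{\smash t}^{[k]}(a_0,a_1,\dots,a_t)$ (namely $(K_{\smash t}^{[k]}(a_0,\dots,a_t))^n[\mathbf{L}(\overline\alpha)\mid\mathbf{L}(\overline\beta)]=0$ under the same hypotheses, with $K_{\smash t}^{[k]}$ in place of $\Pi_{r,t}^{[k]}$). That proof used only one structural feature of $K_{\smash t}^{[k]}$: if $\overline\alpha=(1^{\langle l_1\rangle},2^{\langle l_2\rangle},\dots,t^{\langle l_t\rangle})$ and the entry $a_{\overline\alpha,\overline\beta}$ is nonzero, then $\overline\beta=\bigl(1^{\langle\ast\rangle},2^{\langle l_1-p_1\rangle},3^{\langle l_2-p_2\rangle},\dots,t^{\langle l_{t-1}-p_{t-1}\rangle}\bigr)$ for suitable $0\leqslant p_i\leqslant l_i$, and in particular $m_{\overline\beta}(i)\leqslant m_{\overline\alpha}(i-1)$ for $2\leqslant i\leqslant t$. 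By the very definition of $\Pi_{r,t}^{[k]}$ (where a nonzero entry forces $\overline\beta$ to have first component $1^{\langle l_t+\sum_{i=1}^{t-1}p_i\rangle}$ and subsequent components exactly $2^{\langle l_1-p_1\rangle},\dots,t^{\langle l_{t-1}-p_{t-1}\rangle}$) the same property holds, so the whole scheme carries over verbatim with $K_{\smash t}^{[k]}$ replaced by $\Pi_{r,t}^{[k]}$ throughout.

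For the base case $n=1$ I would argue by contradiction. Suppose $\bigl(\Pi_{r,t}^{[k]}\bigr)[\mathbf{L}(\overline\alpha)\mid\mathbf{L}(\overline\beta)]\neq0$ and write $\overline\beta$ in the shape above. Since $\{\overline\gamma\}\subseteq\{\overline\beta\}$ and $\min\{\overline\gamma\}\geqslant2$, necessarily $\overline\gamma=(2^{\langle q_2\rangle},3^{\langle q_3\rangle},\dots,t^{\langle q_t\rangle})$ with $q_i\leqslant m_{\overline\beta}(i)=l_{i-1}-p_{i-1}\leqslant l_{i-1}$ for $2\leqslant i\leqslant t$. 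Then $\overline\gamma-1=(1^{\langle q_2\rangle},2^{\langle q_3\rangle},\dots,(t-1)^{\langle q_t\rangle})$ satisfies $m_{\overline\gamma-1}(i-1)=q_i\leqslant l_{i-1}=m_{\overline\alpha}(i-1)$, so $\{\overline\gamma-1\}\subseteq\{\overline\alpha\}$, contradicting the hypothesis $\{\overline\alpha\}\nsupseteq\{\overline\gamma-n\}$. Hence the entry is $0$.

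For the inductive step take $2\leqslant n\leqslant t-1$ (so $t\geqslant3$) and assume the theorem for $n-1$. I would write
\begin{multline*}
\bigl(\Pi_{r,t}^{[k]}\bigr)^n[\mathbf{L}(\overline\alpha)\mid\mathbf{L}(\overline\beta)]= \\
\sum_{\overline\varepsilon\in G_{r,t}^{[k]}}\bigl(\Pi_{r,t}^{[k]}\bigr)[\mathbf{L}(\overline\alpha)\mid\mathbf{L}(\overline\varepsilon)]\cdot\bigl(\Pi_{r,t}^{[k]}\bigr)^{n-1}[\mathbf{L}(\overline\varepsilon)\mid\mathbf{L}(\overline\beta)]
\end{multline*}
and show every summand is $0$. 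If the second factor is nonzero then, since $\max\{\overline\beta\}\geqslant n+1>(n-1)+1$, $\{\overline\gamma\}\subseteq\{\overline\beta\}$ and $\min\{\overline\gamma\}\geqslant n+1>(n-1)+1$, the contrapositive of the induction hypothesis forces $\{\overline\varepsilon\}\supseteq\{\overline\gamma-(n-1)\}$. Put $\overline\delta=\overline\gamma-(n-1)$; its components lie in $\{2,\dots,t-n+1\}\subseteq\{1,\dots,t\}$ and its multiplicities are those of $\overline\gamma$, so $\overline\delta\in G_t^{[k]}$ (i.e. $\overline\delta\in G_{l',t}^{[k]}$ for $l'$ equal to the sum of its components), and furthermore $\{\overline\delta\}\subseteq\{\overline\varepsilon\}$, $\min\{\overline\delta\}\geqslant2$, $\{\overline\alpha\}\nsupseteq\{\overline\delta-1\}=\{\overline\gamma-n\}$. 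Applying the already-proved case $n=1$ with $\overline\varepsilon$ and $\overline\delta$ in the roles of $\overline\beta$ and $\overline\gamma$ gives $\bigl(\Pi_{r,t}^{[k]}\bigr)[\mathbf{L}(\overline\alpha)\mid\mathbf{L}(\overline\varepsilon)]=0$; if instead the second factor is $0$ the summand is trivially $0$. Therefore $\bigl(\Pi_{r,t}^{[k]}\bigr)^n[\mathbf{L}(\overline\alpha)\mid\mathbf{L}(\overline\beta)]=0$.

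I do not expect a genuine obstacle: the argument is entirely combinatorial and mirrors the $K_{\smash t}^{[k]}$ case line by line. The single point requiring attention — and the only place where the bookkeeping with $\Pi_{r,t}^{[k]}$ differs at all from that with $K_{\smash t}^{[k]}$ — is checking directly from the displayed formula for $a_{\overline\alpha,\overline\beta}$ that a nonzero entry forces $m_{\overline\beta}(i)\leqslant m_{\overline\alpha}(i-1)$ for $2\leqslant i\leqslant t$ (this is what allows the downward shift of $\overline\gamma$ by one to land inside $\{\overline\alpha\}$), together with the routine verification that $\overline\gamma-(n-1)$ is still a legitimate index vector of some $\Pi_{l',t}^{[k]}$ so that the $n=1$ case applies to it. Both are immediate.
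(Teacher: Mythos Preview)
Your proposal is correct and is exactly the approach the paper intends: the paper proves the analogous vanishing statement for $K_{\smash t}^{[k]}$ (its Theorem~8) by this very induction on $n$, and then states the $\Pi_{r,t}^{[k]}$ version without a separate proof, the point being precisely that the only feature used---that a nonzero entry $a_{\overline\alpha,\overline\beta}$ forces $m_{\overline\beta}(i)\leqslant m_{\overline\alpha}(i-1)$ for $2\leqslant i\leqslant t$---holds verbatim for $\Pi_{r,t}^{[k]}$ by its definition. Your write-up reproduces that argument accurately, including the base case contradiction and the splitting of the matrix power in the inductive step.
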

\begin{theorem}
Let $k \geqslant 1$ and $t \geqslant 2$, let $a_0, a_1, \dots, a_t$
be elements in a commutative ring with unity, let
$\overline{\alpha}, \overline{\beta} \in G_{r,t}^{[k]},$ let $1
\leqslant n \leqslant t-1$. Assume that $\max \{\overline{\beta} \}
\geqslant n+1,$ $\overline{\gamma} \in G_{l,t}^{[k]},$ $\{
\overline{\gamma} \} \subseteq \{ \overline{\beta} \},$
$\min\{\overline{\gamma}\} \geqslant n+1$, and $\{ \overline{\alpha}
\} \supseteq \{ \overline{\gamma}-n \}.$ Then
\begin{multline}
\Bigl( \Pi_{r,t}^{[k]} (a_0, a_1, \dots, a_t) \Bigr)^n
[\mathbf{L}(\overline{\alpha}) \mid \mathbf{L}(\overline{\beta})] =
\\ = \biggl( \prod_{i=1}^t
\binom{m_{\overline{\alpha}}(i)}{m_{\overline{\gamma}-n}(i)}
\binom{m_{\overline{\beta}}(i)}{m_{\overline{\gamma}}(i)}^{-1}
\biggr) \Bigl( \Pi_{r-l,t}^{[k]}(a_0,a_1, \dots, a_t) \Bigr)^n
[\mathbf{L}(\overline{\alpha}\setminus(\overline{\gamma}-n)) \mid
\mathbf{L}(\overline{\beta}\setminus \overline{\gamma})].
\end{multline}
\end{theorem}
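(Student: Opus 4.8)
The plan is to transcribe the proof of the $K_{\smash t}^{[k]}$-counterpart of this identity (the reduction formula~(2)) almost verbatim, substituting $\Pi_{\smash{r,t}}^{[k]}(a_0,\dots,a_t)$ for $K_{\smash t}^{[k]}(a_0,\dots,a_t)$, the preceding theorem (the vanishing statement for powers of $\Pi_{\smash{r,t}}^{[k]}$) for the corresponding vanishing statement for $K_{\smash t}^{[k]}$, and the defining formula for the entries of $\Pi_{\smash{r,t}}^{[k]}$ for that of $K_{\smash t}^{[k]}$. The induction is again on~$n$. The one genuinely new piece of bookkeeping is that $\overline\gamma\in G_{\smash{l,t}}^{[k]}$ has exactly $l$ components, so removing $\overline\gamma$ from an index lowers the parameter~$r$ by~$l$; I would therefore first check that $\overline\beta\setminus\overline\gamma$ and $\overline\alpha\setminus(\overline\gamma-n)$ both lie in $G_{\smash{r-l,t}}^{[k]}$, which is immediate — each has $r-l$ components, and (using $\min\{\overline\gamma\}\geqslant n+1$) all their components lie in $\{1,\dots,t\}$ — so that the matrix $\Pi_{\smash{r-l,t}}^{[k]}$ appearing on the right is the correct one.

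For the base case $n=1$ I would write $\overline\alpha=(1^{\langle l_1\rangle},\dots,t^{\langle l_t\rangle})$ and, since $\min\{\overline\gamma\}\geqslant2$, $\overline\gamma=(2^{\langle q_2\rangle},\dots,t^{\langle q_t\rangle})$, so that $\overline\gamma-1=(1^{\langle q_2\rangle},\dots,(t-1)^{\langle q_t\rangle})$ carries no $t$-component. If $\Pi_{\smash{r,t}}^{[k]}[\mathbf{L}(\overline\alpha)\mid\mathbf{L}(\overline\beta)]\neq0$, take the parameters $p_1,\dots,p_{t-1}$ realizing this entry (note that in the definition of $\Pi_{\smash{r,t}}^{[k]}$ there is now no free parameter~$v$: the leading factor is $k!\binom k{l_t+\Sigma p_i}$, followed by $\binom{l_1}{p_1}\cdots\binom{l_{t-1}}{p_{t-1}}$ and a monomial ending in $a_t^{l_t}$). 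Since $\overline\gamma-1$ has no $t$-component, $l_t$ is unchanged in $\overline\alpha\setminus(\overline\gamma-1)$, the same $p_i$ realize the entry $\Pi_{\smash{r-l,t}}^{[k]}[\mathbf{L}(\overline\alpha\setminus(\overline\gamma-1))\mid\mathbf{L}(\overline\beta\setminus\overline\gamma)]$, and on taking the ratio of the two entries everything except the $\binom{l_i}{p_i}$ cancels, leaving $\prod_{i=1}^{t-1}\binom{l_i}{p_i}\binom{l_i-q_{i+1}}{p_i}^{-1}$ — exactly the ratio occurring in the $K_{\smash t}^{[k]}$ argument. I would then invoke $\binom ab\binom bc=\binom ac\binom{a-c}{a-b}$ to rewrite this as $\prod_{i=1}^t\binom{m_{\overline\alpha}(i)}{m_{\overline\gamma-1}(i)}\binom{m_{\overline\beta}(i)}{m_{\overline\gamma}(i)}^{-1}$, word for word as there. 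If instead $\Pi_{\smash{r,t}}^{[k]}[\mathbf{L}(\overline\alpha)\mid\mathbf{L}(\overline\beta)]=0$, the absence of admissible $p_i$ (with $0\leqslant p_i\leqslant l_i$) realizing $\overline\beta$ forbids, a fortiori, admissible $p_i$ (with $0\leqslant p_i\leqslant l_i-q_{i+1}$) realizing $\overline\beta\setminus\overline\gamma$ from $\overline\alpha\setminus(\overline\gamma-1)$, so the right-hand entry vanishes too and the identity holds with both sides zero.

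For the inductive step ($2\leqslant n\leqslant t-1$, assuming the statement for $n-1$) I would first record the auxiliary vanishing fact, proved just as in the $K_{\smash t}^{[k]}$ case: if $\overline\rho\in G_{\smash{r-l,t}}^{[k]}$ and no $\overline\varepsilon\in G_{\smash{r,t}}^{[k]}$ with $\{\overline\varepsilon\}\supseteq\{\overline\gamma-n+1\}$ satisfies $\overline\rho=\overline\varepsilon\setminus(\overline\gamma-n+1)$, then $\Pi_{\smash{r-l,t}}^{[k]}[\mathbf{L}(\overline\alpha\setminus(\overline\gamma-n))\mid\mathbf{L}(\overline\rho)]=0$ — because then some $i\in\{\overline\gamma-n+1\}$ (so $i\geqslant2$) has $m_{\overline\rho}(i)+m_{\overline\gamma-n+1}(i)>k$, whence, since $i-1\in\overline\gamma-n$ and $m_{\overline\gamma-n}(i-1)=m_{\overline\gamma-n+1}(i)$, one gets $m_{\overline\alpha\setminus(\overline\gamma-n)}(i-1)<m_{\overline\rho}(i)$, and taking $\overline\gamma_1=i^{\langle m_{\overline\rho}(i)\rangle}$ the preceding vanishing theorem (applied with $n=1$) kills that entry. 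Then I would expand $(\Pi_{\smash{r,t}}^{[k]})^n=\Pi_{\smash{r,t}}^{[k]}\cdot(\Pi_{\smash{r,t}}^{[k]})^{n-1}$, use the vanishing theorem for $n-1$ to restrict the sum over $\overline\varepsilon\in G_{\smash{r,t}}^{[k]}$ to those with $\{\overline\varepsilon\}\supseteq\{\overline\gamma-(n-1)\}$, apply the already-proved $n=1$ case of the present theorem (with $\overline\gamma-(n-1)\in G_{\smash{l,t}}^{[k]}$ in place of $\overline\gamma$) to $\Pi_{\smash{r,t}}^{[k]}[\mathbf{L}(\overline\alpha)\mid\mathbf{L}(\overline\varepsilon)]$, and apply the induction hypothesis (with $\overline\varepsilon$ in place of $\overline\alpha$) to $(\Pi_{\smash{r,t}}^{[k]})^{n-1}[\mathbf{L}(\overline\varepsilon)\mid\mathbf{L}(\overline\beta)]$. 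The matching factors $\binom{m_{\overline\varepsilon}(i)}{m_{\overline\gamma-n+1}(i)}^{\pm1}$ cancel, the common prefactor $\prod_i\binom{m_{\overline\alpha}(i)}{m_{\overline\gamma-n}(i)}\binom{m_{\overline\beta}(i)}{m_{\overline\gamma}(i)}^{-1}$ factors out of the sum, and the substitution $\overline\rho=\overline\varepsilon\setminus(\overline\gamma-n+1)$ turns what is left into $\sum_{\overline\rho}\Pi_{\smash{r-l,t}}^{[k]}[\mathbf{L}(\overline\alpha\setminus(\overline\gamma-n))\mid\mathbf{L}(\overline\rho)]\cdot(\Pi_{\smash{r-l,t}}^{[k]})^{n-1}[\mathbf{L}(\overline\rho)\mid\mathbf{L}(\overline\beta\setminus\overline\gamma)]$; by the auxiliary fact the range of $\overline\rho$ extends to all of $G_{\smash{r-l,t}}^{[k]}$ without changing the value, and the resulting sum is $(\Pi_{\smash{r-l,t}}^{[k]})^n[\mathbf{L}(\overline\alpha\setminus(\overline\gamma-n))\mid\mathbf{L}(\overline\beta\setminus\overline\gamma)]$, which is the claimed identity.

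I do not expect a real obstacle, since the proof is structurally the same as for $K_{\smash t}^{[k]}$; the part demanding the most attention is the degree bookkeeping — tracking which index set $G_{\smash{r',t}}^{[k]}$ labels each matrix as $\overline\gamma$ is stripped away — together with the verification, in the base case, that the altered shape of the $\Pi_{\smash{r,t}}^{[k]}$-entries (no parameter $v$, one leading binomial $k!\binom k{l_t+\Sigma p_i}$ and a trailing $a_t^{l_t}$ rather than an inner sum over $d$) does not spoil the cancellation producing the binomial ratio. This is routine once one notes that everything in a $\Pi_{\smash{r,t}}^{[k]}$-entry apart from the factors $\binom{l_i}{p_i}$ depends only on $\Sigma p_i$ and on $l_t$, neither of which changes when $\overline\gamma-1$ is removed from $\overline\alpha$, so the cancellation proceeds exactly as for $K_{\smash t}^{[k]}$.
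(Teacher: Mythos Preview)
Your proposal is correct and is exactly the approach the paper intends: the paper does not write out a separate proof of this theorem, but states it as the $\Pi_{r,t}^{[k]}$-analogue of the reduction formula~(2) for $K_t^{[k]}$, whose proof you have correctly transcribed, with the necessary adjustments (no free parameter $v$, the leading factor $k!\binom{k}{l_t+\Sigma p_i}$ in place of the inner $d$-sum, and the degree drop from $G_{r,t}^{[k]}$ to $G_{r-l,t}^{[k]}$). Your observation that in a $\Pi_{r,t}^{[k]}$-entry everything except the factors $\binom{l_i}{p_i}$ depends only on $l_t$ and $\Sigma p_i$, neither of which is affected by removing $\overline\gamma-1$, is precisely what makes the base-case cancellation go through unchanged, and the inductive step and the auxiliary vanishing argument carry over verbatim from the $K_t^{[k]}$ proof using Theorem~12 in place of Theorem~8.
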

\begin{theorem}
Let $k \geqslant 1$ and $t \geqslant 1,$ let $1 \leqslant n
\leqslant t,$ let $a_0,a_1, \dots, a_t$ be independent variables.
Then
\begin{multline}
\sum_{r=0}^{kt}\Tr\Bigl( \Bigl( \frac{\partial}{\partial a_0}
\Pi_{r,t}^{[k]}(a_0, a_1, \dots, a_t) \Bigr) \cdot \Bigl(
\Pi_{r,t}^{[k]}(a_0,a_1, \dots, a_t) \Bigr)^{n-1} \Bigr) = \\=
\sum_{r=0}^{kt} \Tr \Bigl( \Bigl( \frac{\partial}{\partial a_n}
\Pi_{r,t}^{[k]}(a_0, a_1, \dots, a_t) \Bigr) \cdot \Bigl(
\Pi_{r,t}^{[k]}(a_0,a_1, \dots, a_t) \Bigr)^{n-1} \Bigr)
\end{multline}
\end{theorem}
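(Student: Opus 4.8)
The plan is to derive equation~(6) from equation~(4) --- the identity relating the two traces at neighbouring levels $r$ and $r+1$ --- by summing over~$r$ and then reconciling the two unmatched endpoint terms. Abbreviate
$$f(r)=\Tr\bigl((\tfrac{\partial}{\partial a_0}\Pi_{r,t}^{[k]})\cdot(\Pi_{r,t}^{[k]})^{n-1}\bigr),\qquad g(r)=\Tr\bigl((\tfrac{\partial}{\partial a_n}\Pi_{r,t}^{[k]})\cdot(\Pi_{r,t}^{[k]})^{n-1}\bigr),$$
where $\Pi_{r,t}^{[k]}=\Pi_{r,t}^{[k]}(a_0,a_1,\dots,a_t)$ and $0\leqslant r\leqslant kt$; equation~(6) is precisely the statement $\sum_{r=0}^{kt}f(r)=\sum_{r=0}^{kt}g(r)$. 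Equation~(4) says that $f(r)=g(r+1)$ for $0\leqslant r\leqslant kt-1$. Summing that over $r=0,1,\dots,kt-1$ and reindexing the right-hand sum yields $\sum_{r=0}^{kt-1}f(r)=\sum_{r=1}^{kt}g(r)$, so equation~(6) will follow as soon as the two leftover endpoint contributions coincide, i.e.\ $f(kt)=g(0)$; I claim both of them are~$0$.

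To see this I would evaluate $\Pi_{r,t}^{[k]}$ at the extreme indices $r=0$ and $r=kt$ directly from its definition. Since $|G_{r,t}^{[k]}|=\Coef_{x^r}(1+x+\cdots+x^k)^t$, the index sets $G_{0,t}^{[k]}$ and $G_{kt,t}^{[k]}$ are both singletons: $G_{0,t}^{[k]}$ consists only of the empty vector (so $l_1=\cdots=l_t=0$) and $G_{kt,t}^{[k]}$ only of $(1^{\langle k\rangle},2^{\langle k\rangle},\dots,t^{\langle k\rangle})$ (so $l_1=\cdots=l_t=k$). Substituting $l_1=\cdots=l_t=0$ into the defining formula for the entries of $\Pi_{r,t}^{[k]}$ forces $p_1=\cdots=p_{t-1}=0$, whence $\Pi_{0,t}^{[k]}$ is the $1\times1$ matrix $(k!\,a_0^k)$; substituting $l_1=\cdots=l_t=k$ forces, through the constraint $\sum_{i=1}^{t-1}p_i\leqslant k-l_t=0$, again $p_1=\cdots=p_{t-1}=0$, so that the exponent $k-l_t-\sum_i p_i$ of $a_0$ equals $0$ and $\Pi_{kt,t}^{[k]}$ is the $1\times1$ matrix $(k!\,a_t^k)$. (These agree with the evaluations $\Pi_{0,1}^{[k]}=k!a_0^k$ and $\Pi_{k,1}^{[k]}=k!a_1^k$ used in the proof of Theorem~4.)

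Now both endpoint terms vanish for a trivial reason: because $1\leqslant n\leqslant t$, the variable $a_n$ does not occur in the unique entry $k!\,a_0^k$ of $\Pi_{0,t}^{[k]}$, so $\tfrac{\partial}{\partial a_n}\Pi_{0,t}^{[k]}=0$ and hence $g(0)=0$; symmetrically $a_0$ does not occur in the unique entry $k!\,a_t^k$ of $\Pi_{kt,t}^{[k]}$, so $\tfrac{\partial}{\partial a_0}\Pi_{kt,t}^{[k]}=0$ and hence $f(kt)=0$. Adjoining the null term $f(kt)$ to the left side of $\sum_{r=0}^{kt-1}f(r)=\sum_{r=1}^{kt}g(r)$ and the null term $g(0)$ to the right side turns it into $\sum_{r=0}^{kt}f(r)=\sum_{r=0}^{kt}g(r)$, i.e.\ into equation~(6). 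The whole argument is driven by equation~(4); the only step requiring any care is checking that $a_0$ genuinely drops out of the single entry of $\Pi_{kt,t}^{[k]}$, which is exactly what the bound $\sum_{i=1}^{t-1}p_i\leqslant k-l_t$ forces once $l_t=k$ --- the rest is just bookkeeping of summation limits.
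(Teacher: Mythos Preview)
Your proof is correct and follows essentially the same route as the paper: invoke Theorem~11 (equation~(4)) to get $f(r)=g(r+1)$ for $0\leqslant r\leqslant kt-1$, sum, and dispose of the endpoint terms via $\Pi_{0,t}^{[k]}=k!\,a_0^k$ and $\Pi_{kt,t}^{[k]}=k!\,a_t^k$. The paper simply asserts these two evaluations, whereas you derive them from the definition; otherwise the arguments coincide.
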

\begin{proof}
Since $\Pi_{0,t}^{[k]}(a_0,a_1, \dots, a_t)\! = k! a_0^k,$
$\Pi_{kt,t}^{[k]}(a_0,a_1, \dots, a_t)\! = k! a_t^k,$ then
$\frac{\partial}{\partial a_0} \Pi_{kt,t}^{[k]}(a_0,a_1, \dots, a_t)
= \break = 0,$ $\frac{\partial}{\partial a_n} \Pi_{0,t}^{[k]}
(a_0,a_1, \dots, a_t) = 0$ and from the theorem 11 it follows that
\begin{multline*}
\sum_{r=0}^{kt}\Tr\Bigl( \Bigl( \frac{\partial}{\partial a_0}
\Pi_{r,t}^{[k]}(a_0, a_1, \dots, a_t) \Bigr) \cdot \Bigl(
\Pi_{r,t}^{[k]}(a_0,a_1, \dots, a_t) \Bigr)^{n-1} \Bigr) = \\=
\sum_{r=0}^{kt-1} \Tr \Bigl( \Bigl( \frac{\partial}{\partial a_0}
\Pi_{r,t}^{[k]}(a_0, a_1, \dots, a_t) \Bigr) \cdot \Bigl(
\Pi_{r,t}^{[k]}(a_0,a_1, \dots, a_t) \Bigr)^{n-1} \Bigr) = \\=
\sum_{r=0}^{kt-1} \Tr \Bigl( \Bigl( \frac{\partial}{\partial a_n}
\Pi_{r+1,t}^{[k]}(a_0, a_1, \dots, a_t) \Bigr) \cdot \Bigl(
\Pi_{r+1,t}^{[k]}(a_0,a_1, \dots, a_t) \Bigr)^{n-1} \Bigr) = \\ =
\sum_{r=1}^{kt} \Tr \Bigl( \Bigl( \frac{\partial}{\partial a_n}
\Pi_{r,t}^{[k]}(a_0, a_1, \dots, a_t) \Bigr) \cdot \Bigl(
\Pi_{r,t}^{[k]}(a_0,a_1, \dots, a_t) \Bigr)^{n-1} \Bigr) = \\ =
\sum_{r=0}^{kt} \Tr \Bigl( \Bigl( \frac{\partial}{\partial a_n}
\Pi_{r,t}^{[k]}(a_0, a_1, \dots, a_t) \Bigr) \cdot \Bigl(
\Pi_{r,t}^{[k]}(a_0,a_1, \dots, a_t) \Bigr)^{n-1} \Bigr)
\end{multline*}
\end{proof}
\begin{theorem}
Let $k \geqslant 1$ and $n \geqslant 1,$ let $0 \leqslant r
\leqslant kt,$ let $a_0,a_1, \dots, a_t$ be independent variables.
Then
$$
\Tr \Bigl( \Bigl( \Pi_{r,t}^{[k]}(a_0, a_1x, \dots, a_t x^t)
\Bigr)^n \Bigr) = x^{rn} \Tr \Bigl( \Bigl( \Pi_{r,t}^{[k]}(a_0,a_1,
\dots, a_t) \Bigr)^n \Bigr) \eqno(7)
$$
\end{theorem}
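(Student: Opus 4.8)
The plan is to track how the substitution $a_i\mapsto a_ix^i$ rescales each individual entry of $\Pi_{r,t}^{[k]}$ and then to observe that around any closed walk these rescalings telescope. For a vector $\overline\alpha=(1^{\langle l_1\rangle},2^{\langle l_2\rangle},\dots,t^{\langle l_t\rangle})\in G_{r,t}^{[k]}$ I would set $w(\overline\alpha)=\sum_{i=1}^t i\,l_i$, the sum of the components of the multiset $\{\overline\alpha\}$ (so $0\le w(\overline\alpha)\le rt$).

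The key step is the following rescaling lemma: if $a_{\overline\alpha,\overline\beta}\ne 0$ in $\Pi_{r,t}^{[k]}(a_0,a_1,\dots,a_t)$, then the corresponding entry of $\Pi_{r,t}^{[k]}(a_0,a_1x,\dots,a_tx^t)$ equals $x^{\,w(\overline\alpha)-w(\overline\beta)+r}$ times $a_{\overline\alpha,\overline\beta}$, while an entry equal to $0$ stays $0$. To prove this I would read off from the definition of $\Pi_{r,t}^{[k]}$ that a nonzero entry has the shape $c\cdot a_0^{\,k-l_t-\Sigma_{i=1}^{t-1}p_i}a_1^{p_1}\cdots a_{t-1}^{p_{t-1}}a_t^{l_t}$ with $\overline\beta=\bigl(1^{\langle l_t+\Sigma_{i=1}^{t-1}p_i\rangle},2^{\langle l_1-p_1\rangle},\dots,t^{\langle l_{t-1}-p_{t-1}\rangle}\bigr)$, where $c$ is a product of $k!$ and binomial coefficients free of the $a_i$. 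Hence the substitution multiplies the entry by $x^{\,\sum_{i=1}^{t-1}i\,p_i+t\,l_t}$. Expanding $w(\overline\alpha)=\sum_{i=1}^{t-1}i\,l_i+t\,l_t$ and $w(\overline\beta)=(l_t+\Sigma_{i=1}^{t-1}p_i)+\sum_{j=1}^{t-1}(j+1)(l_j-p_j)$ and simplifying using $\sum_{i=1}^t l_i=r$ yields exactly $\sum_{i=1}^{t-1}i\,p_i+t\,l_t=w(\overline\alpha)-w(\overline\beta)+r$, which is the claim. Along the way one also checks $\overline\beta\in G_{r,t}^{[k]}$, since the multiplicities of $\overline\beta$ sum to $\sum_{i=1}^t l_i=r$; this confirms that $\Pi_{r,t}^{[k]}$ is indeed square on that index set.

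Finally I would expand the trace over closed walks. Writing $M=\Pi_{r,t}^{[k]}(a_0,a_1x,\dots,a_tx^t)$ and $M_0=\Pi_{r,t}^{[k]}(a_0,a_1,\dots,a_t)$, one has $\Tr(M^n)=\sum M_{\overline\alpha_0,\overline\alpha_1}M_{\overline\alpha_1,\overline\alpha_2}\cdots M_{\overline\alpha_{n-1},\overline\alpha_0}$, summed over all $(\overline\alpha_0,\dots,\overline\alpha_{n-1})\in (G_{r,t}^{[k]})^n$. By the rescaling lemma each summand equals $x^{\,\sum_{s=0}^{n-1}(w(\overline\alpha_s)-w(\overline\alpha_{s+1})+r)}$ times the corresponding summand of $\Tr(M_0^n)$, where indices are read cyclically ($\overline\alpha_n=\overline\alpha_0$), and the exponent telescopes to $rn$ precisely because the walk is closed. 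Summing over all walks gives $\Tr(M^n)=x^{rn}\Tr(M_0^n)$, which is $(7)$. (For $t=0$, hence $r=0$, the statement is immediate since $\Pi_{0,0}^{[k]}=k!\,a_0^k$ contains no variable $a_i$ with $i\ge 1$.)

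The only genuine computation is the exponent identity inside the rescaling lemma; once it is in place the trace identity is a one-line telescoping argument, so I do not expect any real obstacle beyond careful bookkeeping of the $a$-degrees — the binomial-coefficient factor $c$ plays no role and can be ignored throughout.
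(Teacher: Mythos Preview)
Your proof is correct and takes essentially the same approach as the paper: both expand the trace over closed walks $\overline\alpha_1\to\cdots\to\overline\alpha_{n+1}=\overline\alpha_1$ in $G_{r,t}^{[k]}$ and verify that the accumulated $x$-exponent along any such walk equals $rn$. Your packaging via the potential $w(\overline\alpha)=\sum_i i\,l_i$ and an edge-level rescaling lemma makes the telescoping explicit, whereas the paper reaches the same identity $\sum_i e_i=rn+\sum_j j\,l_j^{(1)}-\sum_j j\,l_j^{(n+1)}$ by a direct manipulation of the double sums over $i$ and $j$; the content is the same.
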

\begin{proof}
Let $\overline{\alpha_i} = (1^{\langle l_1^{(i)} \rangle},
2^{\langle l_2^{(i)} \rangle}, \dots, t^{\langle l_t^{(i)} \rangle})
\! \in \! G_{r,t}^{[k]},$ $1 \leqslant\! i \!\leqslant \! n+1.$
Assume that $\overline{\alpha_1} \to \overline{\alpha_2} \to \dots
\to \overline{\alpha_{n+1}}$ is a closed way in the oriented graph
$\Gamma_{r,t}^{[k]}$ corresponding the matrix $\Pi_{r,t}^{[k]}(a_0,
a_1, \dots, a_t)$. Then $\overline{\alpha_1} =
\overline{\alpha_{n+1}}$ and there exists $p_j^{(i)}$, $1
\leqslant j \leqslant t-1,$ $1 \leqslant i \leqslant n,$ such that $0
\leqslant p_j^{(i)} \leqslant l_j^{i},$ $\sum\limits_{j=1}^{t-1}
p_j^{(i)} \leqslant k - l_t^{(i)},$ and $l_1^{(i+1)}=l_t^{(i)} +
\sum\limits_{j=1}^{t-1} p_j^{(i)}$, $l_j^{(i+1)} =
l_{j-1}^{(i)}-p_{j-1}^{(i)}$, $2 \leqslant j \leqslant t$, $1
\leqslant i \leqslant n,$
\begin{multline*}
\Bigl( \Pi_{r,t}^{[k]}(a_0,a_1 x, \dots, a_t x^t)
\Bigr)[\mathbf{L}(\overline{\alpha_i} \mid
\mathbf{L}(\alpha_{i+1}))] = \\ =k! \Binom{k}{l_t^{(i)} +
\sum\limits_{j=1}^{t-1} p_j^{(i)}} \cdot \Biggl( \,
\prod_{j=1}^{t-1} \Binom{l_j^{(i)}}{p_j^{(i)}} \Biggr)
a_0^{k-l_t^{(i)} - \sum\limits_{j=1}^{t-1} p_j^{(i)}}
a_1^{p_1^{(i)}} a_2^{p_2^{(i)}} \dots
a_{t-1}^{p_{t-1}^{(i)}}a_t^{l_t^{(i)}}.
\end{multline*}
Hence
\begin{multline*}
\prod_{i=1}^{n}\Bigl( \Pi_{r,t}^{[k]}(a_0,a_1 x, \dots, a_t x^t)
\Bigr)[\mathbf{L}(\overline{\alpha_i} \mid
\mathbf{L}(\alpha_{i+1}))] = \\ =(k!)^n \! \left[ \prod_{i=1}^n
\Binom{k}{l_t^{(i)} + \sum\limits_{j=1}^{t-1} p_j^{(i)}} \! \!
\Binom{l_1^{(i)}}{p_1^{(i)}} \! \! \Binom{l_2^{(i)}}{p_2^{(i)}} \!
\! \dots \Binom{l_{t-1}^{(i)}}{p_{t-1}^{(i)}} \! \cdot
a_0^{k-l_t^{(i)} - \sum\limits_{j=1}^{t-1} p_j^{(i)}}
a_1^{p_1^{(i)}} a_2^{p_2^{(i)}} \dots
a_{t-1}^{p_{t-1}^{(i)}}a_t^{l_t^{(i)}} \right] \times \\ \times
x^{\sum\limits_{i=1}^n (t l_t^{(i)}+ \sum\limits_{j=1}^{t-1}j
p_j^{(i)})} = \biggl( \prod_{i=1}^n (\Pi_{r,t}^{[k]}(a_0,a_1,\dots,
a_t))[\mathbf{L}(\overline{\alpha_i}) \mid
\mathbf{L}(\overline{\alpha_{i+1}})] \biggr) x^{\sum\limits_{i=1}^n
(t l_t^{(i)}+ \sum\limits_{j=1}^{t-1}j p_j^{(i)})}.
\end{multline*}
We will show that $\sum\limits_{i=1}^n (t l_t^{(i)}+
\sum\limits_{j=1}^{t-1}j p_j^{(i)}) = rn.$ From the equalities
$l_j^{(i+1)} = l_{j-1}^{(i)}-p_{j-1}^{(i)},$ $2 \leqslant j
\leqslant t,$ $1 \leqslant i \leqslant n,$ it follows that $j
l_j^{i+1} - l_j^{(i+1)} = (j-1) l_j^{(i+1)} = (j-1) l_{j-1}^{(i)} -
(j-1) p_{j-1}^{(i)}$ and therefore \break $\sum\limits_{i=1}^n
\sum\limits_{j=2}^t j l_j^{(i+1)} - \sum\limits_{i=1}^n
\sum\limits_{j=2}^t l_j^{(i+1)} = \sum\limits_{i=1}^n
\sum\limits_{j=2}^t (j-1) l_{j-1}^{(i)} - \sum\limits_{i=1}^n
\sum\limits_{j=2}^t(j-1) p_{j-1}^{(i)},$ i.d.
$\sum\limits_{i=2}^{n+1} \sum\limits_{j=2}^t j l_j^{(i)} -
\sum\limits_{i=2}^{n+1} \sum\limits_{j=2}^t l_j^{(i)} = \break
=\sum\limits_{i=1}^n \sum\limits_{j=1}^{t-1} j l_j^{(i)} -
\sum\limits_{i=1}^n \sum\limits_{j=1}^{t-1}j p_j^{(i)},$ i.d.
$t\sum\limits_{i=2}^{n+1} l_t^{(i)} + \sum\limits_{j=2}^{t-1} j
l_j^{n+1} + \sum\limits_{i=2}^n \sum\limits_{j=1}^{t-1} jl_j^{(i)} -
\sum\limits_{i=2}^{n+1}(r - l_1^{(i)}) = \sum\limits_{j=1}^{t-1}j
l_j^{(1)} + \sum\limits_{i=2}^n l_1^{(i)} + \break +
\sum\limits_{i=2}^n \sum\limits_{j=2}^{t-1} jl_j^{(i)} -
\sum\limits_{i=1}^n \sum\limits_{j=1}^{t-1} j p_j^{(i)},$ i.d. $t
\Bigl( \sum\limits_{i=1}^n l_t^{(i)} \Bigr) + t (l_t^{n+1}-
l_t^{(1)}) + \Bigl( \sum\limits_{j=2}^{t-1} j l_j^{n+1} \Bigr) - rn
+l_1^{(n+1)} = \sum\limits_{j=1}^{t-1} j l_j^{(1)} - \break -
\sum\limits_{i=1}^n \sum\limits_{j=1}^{t-1} j p_j^{(i)}.$ Hence $t
\sum\limits_{i=1}^n l_t^{(i)} + \sum\limits_{i=1}^n
\sum\limits_{j=1}^{t-1} j p_j^{(i)}= rn + \Bigl(\sum\limits_{j=1}^t
j l_j^{(1)} \Bigr) - \sum\limits_{j=1}^t j l_j^{(n+1)}.$ Since
$\overline{\alpha_1} = \overline{\alpha_{n+1}},$ then $l_j^{(1)} =
l_j^{(n+1)}$ for all $j,$ $1 \leqslant j \leqslant t,$ and therefore
$\sum\limits_{i=1}^n (t l_t^{(i)} + \sum\limits_{j=1}^{t-1} j
p_j^{(i)})=rn.$
\end{proof}
\begin{definition}
Let $n \geqslant 1,$ $P_n(x) = (a_{i,j})_{1 \leqslant i,j \leqslant
n}$,
$$
a_{i,j} = \begin{cases} 1, & \text{if $j-i=1$,} \\
x, & \text{if $j-i \ne 1, \, j-i \equiv 1(\mod n), $} \\
0, & \text{ if $j-i \not\equiv 1 (\mod n)$. }
\end{cases} \eqno(8)
$$
\end{definition}
Then
$$
a_{i,j} = \begin{cases} 1, & \text{if $j-i=1$,} \\
x, & \text{if $i=n, \, j=1 $,} \\
0, & \text{ if $j-i \not\equiv 1 (\mod n)$. }
\end{cases} \eqno(9)
$$
\begin{lemma}
let $s \in \bbZ$, $-n \leqslant s \leqslant n$, $(P_n(x))^s =
(b_{i,j})_{1 \leqslant i,j \leqslant n}$. Then
$$
b_{i,j} = \begin{cases} 1, & \text{if $j-i=s$,} \\
x, & \text{if $j-i \ne s, \, j-i \equiv s(\mod n) $} \\
0, & \text{ if $j-i \not\equiv s (\mod n)$ }.
\end{cases} \eqno(10)
$$
if $0 \leqslant s \leqslant n $ and
$$
b_{i,j} = \begin{cases} 1, & \text{if $j-i=s$,} \\
x^{-1}, & \text{if $j-i \ne s, \, j-i \equiv s(\mod n) $} \\
0, & \text{ if $j-i \not\equiv s (\mod n)$ }
\end{cases} \eqno(11)
$$
if $-n \leqslant s \leqslant -1.$ Hence $(P_n(x))^n = x I_n,$
$$
b_{i,j} = \begin{cases} 1, & \text{if $j-i=s$,} \\
x, & \text{if $j-i = s-n $,} \\
0, & \text{ if $j-i \not\equiv s (\mod n)$ }
\end{cases}  \eqno(12)
$$
if $0 \leqslant s \leqslant n$ and
$$
b_{i,j} = \begin{cases} 1, & \text{if $j-i=s$,} \\
x^{-1}, & \text{if $j-i = s+n $} \\
0, & \text{ if $j-i \not\equiv s (\mod n)$ }
\end{cases}  \eqno(13)
$$
if $-n \leqslant s \leqslant -1.$
\end{lemma}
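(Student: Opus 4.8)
The plan is to read $P_n(x)$ as a ``twisted cyclic shift'' acting on column vectors: the explicit entries in $(9)$ show that $P_n(x)$ sends the standard basis vectors by $e_j \mapsto e_{j-1}$ for $2 \leqslant j \leqslant n$ and $e_1 \mapsto x\,e_n$. Since the $(i,j)$ entry of a power $(P_n(x))^s$ is precisely the coefficient of $e_i$ in $(P_n(x))^s e_j$, everything reduces to following the orbit of each $e_j$ under iteration of this map.

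For $0 \leqslant s \leqslant n$ I would argue by induction on $s$; the base case $s=0$ gives $I_n$, which already agrees with $(10)$ because $|j-i|<n$ makes its second alternative vacuous. A single application of $P_n(x)$ lowers the index by one, except that reaching $e_1$ sends it to $x\,e_n$ and the counting resumes from $n$; iterating $s$ times yields $(P_n(x))^s e_j = e_{j-s}$ when $j > s$ and $(P_n(x))^s e_j = x\,e_{j-s+n}$ when $j \leqslant s$. Reading off the coefficient of $e_i$ gives $b_{i,j}=1$ for $j-i=s$, $b_{i,j}=x$ for $j-i=s-n$, and $b_{i,j}=0$ otherwise, which is $(12)$; it implies $(10)$ since, in the range $-(n-1)\leqslant j-i\leqslant n-1$, the congruence $j-i\equiv s\pmod n$ together with $j-i\neq s$ forces $j-i=s-n$. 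Taking $s=n$ makes $j-i=s=n$ impossible, so only the wrap-around term survives and $(P_n(x))^n = x\,I_n$.

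For $-n \leqslant s \leqslant -1$ I would invert. Over any ring in which $x$ is a unit --- which is tacit in the appearance of $x^{-1}$ in $(11)$ and $(13)$, and is automatic here once $(P_n(x))^n = x\,I_n$ is known --- the matrix $P_n(x)$ is invertible, and solving $e_j\mapsto e_{j-1}$, $e_1\mapsto x\,e_n$ gives the mirror rule $(P_n(x))^{-1}\colon e_k\mapsto e_{k+1}$ for $1\leqslant k\leqslant n-1$ and $e_n\mapsto x^{-1}e_1$. Repeating the orbit argument with the index now increasing and $x^{-1}$ sitting in the corner produces $(P_n(x))^s e_j = e_{j-s}$ for $j \leqslant n+s$ and $(P_n(x))^s e_j = x^{-1}e_{j-s-n}$ for $j > n+s$, hence $b_{i,j}=1$ for $j-i=s$, $b_{i,j}=x^{-1}$ for $j-i=s+n$, and $0$ otherwise, which is $(13)$ and therefore $(11)$.

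No step here is genuinely deep: the substance is the elementary fact that a matrix is determined by its action on a basis. The only point demanding care is the wrap-around bookkeeping --- checking that under each case split the shifted index ($j-s$, or $j-s+n$, or $j-s-n$) actually lands in $\{1,\dots,n\}$, and that exactly one power of $x$ is acquired per crossing of the ``seam'' between $e_1$ and $e_n$ --- together with the harmless observation that writing $x^{-1}$ commits one to a ring where $x$ is invertible. A variant that avoids the recursion entirely is to factor $P_n(x) = D\,C$, with $C$ the ordinary cyclic permutation matrix and $D=\mathrm{diag}(1,\dots,1,x)$, and to expand $(DC)^s$ by pushing all the $D$'s to the left past powers of $C$ (each $C^m D C^{-m}$ being again diagonal); but the basis-orbit computation is the shortest route and the one I would actually write out.
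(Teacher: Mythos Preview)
Your argument is correct. Viewing $P_n(x)$ through its action on the standard basis, $e_j\mapsto e_{j-1}$ for $j\geqslant 2$ and $e_1\mapsto x\,e_n$, and then tracking how many times the orbit of $e_j$ crosses the seam between $e_1$ and $e_n$, is exactly the right way to read off the entries of $(P_n(x))^s$. The case split $j>s$ versus $j\leqslant s$ (and its mirror for negative $s$) is handled cleanly, the check that at most one wrap occurs when $|s|\leqslant n$ is implicit but easy, and the passage from the explicit forms $(12)$, $(13)$ to the congruence forms $(10)$, $(11)$ is the routine observation that $j-i$ lies in $[-(n-1),\,n-1]$.

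As for comparison: the paper states this lemma without proof, treating it as an elementary computation. Your write-up supplies precisely the verification that the paper omits, and the alternative factorisation $P_n(x)=DC$ you mention at the end would also work and leads to the same conclusion.
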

Let $n \geqslant 1$ and $x \in \bbZ.$ Then, by definition,
$$
\delta_n(x) = \begin{cases}
1, & \text{if $ x \not \equiv 0 (\mod n)$, } \\
0, & \text{ if $ x \equiv 0 (\mod n)$}.
\end{cases}  \eqno(14)
$$
\setcounter{equation}{14}
\begin{theorem}
Let $k \geqslant 1$ and $t \geqslant 0$, let $a_0,a_1, \dots, a_t$
be independent variables. Then for all $n \geqslant 1$
\begin{multline}
\sum_{\sigma \in \SSym(N_{nk})} \sum_{\substack{(\alpha_1, \alpha_2,
\dots, \alpha_{nk}) \in \{ 0,1, \dots, t\}^{nk} \\
\bigl[\frac{\sigma(i)-1}{k} \bigr]- \bigl[\frac{i-1}{k} \bigr]\equiv
\alpha_i (\mod n)}} \biggl( \prod_{i=1}^{nk} a_{\alpha_i} \biggr)
x^{\Card \{ i \in N_{nk} \mid \sigma(i) < i, \alpha_i \not \equiv 0
(\mod n)\} + \sum\limits_{i=1}^{nk} \left[ \frac{\alpha_i}{n}
\right]} = \\ = \sum_{r=0}^{kt}(\Tr((\Pi_{r,t}^{[k]}
(a_0,a_1,\dots,a_t))^n)) x^r
\end{multline}
\end{theorem}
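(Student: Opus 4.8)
The plan is to recognise that the left-hand side and the left-hand side of the last displayed formula of Theorem~3 (taken with its parameter $r$ equal to $0$, so the coefficients are $a_0,\dots,a_t$) are \emph{the same} sum, over admissible pairs $\bigl(\sigma,(\alpha_1,\dots,\alpha_{nk})\bigr)$, of the monomial $\prod_{i=1}^{nk}a_{\alpha_i}$ weighted by a power of $x$ — the two differing only in what that power is — and then to read off the answer from Theorem~3. First I would make the common combinatorial content explicit. For any $n\times n$ matrix $A$ one has $\per(A\otimes J_k)=\sum_{\sigma\in\SSym(N_{nk})}\prod_{i=1}^{nk}A_{\,[\frac{i-1}{k}]+1,\,[\frac{\sigma(i)-1}{k}]+1}$, because every entry of $J_k$ is $1$. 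Taking $A=\sum_{j=0}^{t}a_jx^jP_n^{j}$ and using $(P_n^{j})_{a,b}=1$ exactly when $b\equiv a+j\pmod n$, the left side of Theorem~3's last formula (with $r=0$) expands as $\sum_{(\sigma,\alpha)}\bigl(\prod_i a_{\alpha_i}\bigr)x^{\sum_i\alpha_i}$, the sum being over all $\sigma$ and all $(\alpha_1,\dots,\alpha_{nk})\in\{0,\dots,t\}^{nk}$ with $[\frac{\sigma(i)-1}{k}]-[\frac{i-1}{k}]\equiv\alpha_i\pmod n$ for every $i$; these are precisely the pairs indexing the left-hand side of the theorem to be proved. (By Lemma~1, extended to all exponents $0\le j\le t$ via $(P_n(x))^n=xI_n$, that left-hand side is itself $\per\bigl(\bigl(\sum_{j=0}^{t}a_j(P_n(x))^j\bigr)\otimes J_k\bigr)$, but this reformulation is not needed.) So everything reduces to comparing the two powers of $x$ carried by a given admissible pair.

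For Theorem~3 that power is $\sum_i\alpha_i$; for the theorem to be proved I claim it equals $r:=\tfrac1n\sum_i\alpha_i$, a non-negative integer with $r\le kt$. Put $\delta_i=[\frac{\sigma(i)-1}{k}]-[\frac{i-1}{k}]$, so $\delta_i\in\{-(n-1),\dots,n-1\}$ and, by admissibility, $\delta_i\equiv\alpha_i\pmod n$; hence $\delta_i=\alpha_i-nw_i$ for a unique integer $w_i\ge0$, and a short case distinction on the sign of $\delta_i$ shows $w_i=[\alpha_i/n]$ when $\sigma(i)\ge i$ and $w_i=[\alpha_i/n]+\delta_n(\alpha_i)$ when $\sigma(i)<i$. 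Summing over $i$ gives $\sum_iw_i=\sum_i[\alpha_i/n]+\Card\{i:\sigma(i)<i,\ \alpha_i\not\equiv0\pmod n\}$, which is exactly the exponent of $x$ in the theorem. On the other hand $\sum_i\delta_i=0$, since $\sigma$ permutes $N_{nk}$ and each block index $[\frac{\cdot-1}{k}]+1$ is attained exactly $k$ times; so $\sum_i\alpha_i=n\sum_iw_i$, whence that exponent equals $\tfrac1n\sum_i\alpha_i=r$ and $0\le r\le\tfrac1n(nk\cdot t)=kt$.

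Now I would set $S_m=\sum_{(\sigma,\alpha)\text{ admissible},\ \sum_i\alpha_i=m}\prod_i a_{\alpha_i}$, noting that $S_m=0$ unless $n\mid m$ (because $\sum_i\delta_i=0$ forces $\sum_i\alpha_i\equiv0\pmod n$). By the preceding two paragraphs the left-hand side of the theorem is $\sum_{m}S_m\,x^{m/n}$, while Theorem~3's last formula with $r=0$ reads $\sum_{m}S_m\,x^{m}=\sum_{l=0}^{kt}\Tr\bigl((\Pi_{l,t}^{[k]}(a_0,\dots,a_t))^n\bigr)x^{ln}$. Comparing the coefficients of $x^{ln}$ yields $S_{ln}=\Tr\bigl((\Pi_{l,t}^{[k]}(a_0,\dots,a_t))^n\bigr)$, and substituting this into $\sum_{m}S_m\,x^{m/n}=\sum_{l=0}^{kt}S_{ln}\,x^{l}$ gives $\sum_{l=0}^{kt}\Tr\bigl((\Pi_{l,t}^{[k]}(a_0,\dots,a_t))^n\bigr)x^{l}$, which is the right-hand side of the theorem.

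The only step that is not pure bookkeeping is the sign analysis producing $w_i=[\alpha_i/n]+\delta_n(\alpha_i)$ on the set where $\sigma(i)<i$; after that the result is immediate. If one prefers not to quote Theorem~3 — for instance if the present statement is wanted en route to it — the same conclusion can instead be obtained by running the transfer-matrix (block-elimination) argument for $\bigl(\sum_{j=0}^{t}a_j(P_n(x))^j\bigr)\otimes J_k$ directly: process the $nk$ columns in $n$ consecutive blocks of $k$, the state after $m$ blocks recording how many of the $k$ rows of each of the next $t$ row-blocks are already used (an element of $\bigcup_{r}G_{r,t}^{[k]}$); Theorems~12 and~13 make the ``dead'' states contribute $0$, each block-step contributes the matrix $\Pi_{r,t}^{[k]}(a_0,\dots,a_t)$ (whose $k!$ and binomial factors absorb the choices of which rows and columns inside a block get paired) times one factor $x$ per wraparound, and cyclicity turns the whole into $\sum_{r}\Tr\bigl((\Pi_{r,t}^{[k]})^n\bigr)x^{r}$; the delicate point there is again that along any closed walk the number of wraparounds equals the common value of the sum of the components of the state, namely $r$.
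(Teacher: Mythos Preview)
Your proof is correct and follows essentially the same route as the paper's: both establish, via a sign/case analysis on $\delta_i=[\frac{\sigma(i)-1}{k}]-[\frac{i-1}{k}]$, that the exponent of $x$ on the left equals $\tfrac1n\sum_i\alpha_i$, and then invoke the last formula of Theorem~3 (with its parameter $r=0$) to identify the coefficient of each $x^{ln}$ with $\Tr\bigl((\Pi_{l,t}^{[k]})^n\bigr)$. Your $w_i$ bookkeeping is a slightly cleaner packaging of the paper's four cases, and the closing remarks about a direct transfer-matrix derivation are superfluous but harmless.
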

\begin{proof}
Let $\sigma \in \SSym(N_{nk})$, let $i \in N_{nk}$, let $i=ks_i +
p_i$, let $0 \leqslant s_i \leqslant n-1$, let $1 \leqslant p_i
\leqslant k$, let $\bigl[ \frac{\sigma(i) -1}{k}\bigr] =
\bigl[\frac{i-1}{k} \bigr] \equiv \alpha_i(\mod n)$, let $0
\leqslant \alpha_i \leqslant t$. Then there exists $l_i$, $1
\leqslant l_i \leqslant k$, such that $\sigma(i) = k[(s_i +
\alpha_i)_0 \mod n] + l_i,$ where $[(s)_0 \mod n]$ is the least
nonnegative residue of the integer $s \in \bbZ$ modulo $n$. Let
$\sigma(i) > i.$ Then we have
\begin{itemize}
\item[Case 1.] $[(s_i + \alpha_i)_0 \mod n] = s_i, \, l_i >p_i$.
\item[Case 2.] $[(s_i + \alpha_i)_0 \mod n] > s_i.$
\end{itemize}
In the case 1 $\alpha_i \equiv 0(\mod n)$ and $\sigma(i) = k s_i +
l_i.$ In the case $[(s_i \alpha_i)_0 \mod n]= s_i + \alpha_i -
\bigl[ \frac{\alpha_i}{n} \bigr] n$ and $\sigma(i) = k (s_i +
\alpha_i - \bigl[ \frac{\alpha_i}{n} \bigr] n) + l_i.$ Let
$\sigma(i) \leqslant i.$ Then we have
\begin{itemize}
\item[Case 3.] $[(s_i + \alpha_i)_0 \mod n] = s_i$, \, $l_i \leqslant
p_i$.
\item[Case 4.] $[(s_i + \alpha_i)_0 \mod n] < s_i.$
\end{itemize}
In the case 3 $\alpha_i \equiv 0 (\mod n)$ and $\sigma(i) = ks_i +
l_i.$ In the case 4 $[(s_i + \alpha_i)_0 \mod n] = s_i + \alpha_i -
n - \break -\bigl[ \frac{\alpha_i}{n} \bigr] n = s_i + \alpha_i -
(\delta_n(\alpha_i) + \bigl[\frac{\alpha_i}{n} \bigr])n$ and
$\sigma(i) = k(s_i + \alpha_i - (\delta_n(\alpha_i) + \bigl[
\frac{\alpha_i}{n} \bigr])n ) + l_i.$ Let $A_1 = \{ i \in N_{nk}
\mid \alpha_i \equiv 0 (\mod n), l_i > p_i \},$ $A_2 = \{ i \in
N_{nk} \mid [(s_i + \alpha_i)_0 \mod n] > s_i \},$ $A_3 = \{ i \in
N_{nk} \mid \alpha_i \equiv 0 (\mod n), l_i \leqslant p_i \},$ $A_4
= \{ i \in N_{nk} \mid [(s_i + \alpha_i)_0 \mod n] < s_i\}$. Then
\begin{multline*}
\sum_{i=1}^{nk} \Bigl[ \frac{\sigma(i)-1}{k} \Bigr]= \sum_{i \in A_1
\cup A_3} s_i + \sum_{i \in A_2} s_i + \alpha_i - \Bigl[
\frac{\alpha_i}{n} \Bigr] n + \sum_{i \in A_4} s_i + \alpha_i -
\Bigl(\delta_n(\alpha_i) + \Bigl[ \frac{\alpha_i}{n} \Bigr] \Bigr)n
=\\= \sum_{i \in A_1 \cup A_3} s_i + \alpha_i - \alpha_i + \sum_{i
\in A_2} s_i + \alpha_i - \Bigl[ \frac{\alpha_i}{n} \Bigr] n +
\sum_{i \in A_4} s_i + \alpha_i - (\delta_n (\alpha_i) +
\Bigl[\frac{\alpha_i}{n} \Bigr])n = \\ =\sum_{i=1}^{nk} s_i +
\sum_{i=1}^{nk} \alpha_i - \sum_{i \in A_1 \cup A_3} \Bigl[
\frac{\alpha_i}{n} \Bigr] n - \sum_{i \in A_2}
\Bigl[\frac{\alpha_i}{n} \Bigl] n - \sum_{i \in A_4} \biggl[
\frac{\alpha_i}{n} \biggr] n - \sum_{i \in A_4} \delta_n(\alpha_i)n
= \\ = \sum_{i=1}^{nk} s_i + \sum_{i=1}^{nk} \alpha_i -
\sum_{i=1}^{nk} \biggl[ \frac{\alpha_i}{n} \biggr] n -
\sum_{\sigma(i) < i, \alpha_i \not\equiv 0 (\mod n)}
\delta_n(\alpha_i) n = \\=  \sum_{i=1}^{nk} s_i + \sum_{i=1}^{nk}
\alpha_i - \biggl( \biggl( \sum_{i=1}^{nk} \biggl[
\frac{\alpha_i}{n} \biggr] \biggr) + \Card \{ i \in N_{nk} \mid
\sigma(i) < i, \alpha_i \not\equiv 0 (\mod n) \} \biggr) n,
\end{multline*}
$\sum\limits_{i=1}^{nk} \bigl[\frac{i-1}{k} \bigr] =
\sum\limits_{i=1}^{nk} s_i.$ But $\sum\limits_{i=1}^{nk} \bigl[
\frac{\sigma(i)-1}{k} \bigr]= \sum\limits_{i=1}^{nk} \bigl[
\frac{i-1}{k} \bigr],$ and therefore
$$
\sum_{i=1}^{nk} \alpha_i= (\Card \{ i \in N_{nk} \mid \sigma(i) < i,
\alpha_i \not\equiv 0 (\mod n)\} +
\sum_{i=1}^{nk}\bigl[\frac{\alpha_i}{n} \bigr] )n.
$$
Since
$$
\sum_{\sigma \in
\SSym(N_{nk})}\sum_{\substack{{(\alpha_1,\alpha_2,\dots,\alpha_{nk})}
\in \{ 0,1,\dots,t \}^{nk} \\ \left[ \frac{\sigma(i)-1}{k} \right] -
\left[ \frac{i-1}{k} \right] \equiv \alpha_i (\mod n) }} \biggl(
\prod_{i=1}^{nk} a_{\alpha_i} \biggr) = \per \biggl( \biggl(
\sum_{i=0}^t a_i P_n^i \biggr) \otimes J_k \biggr)
$$
for all $n \geqslant 1$, then from the theorem 3 it follows that
$$
\sum_{\sigma \in \SSym(N_{nk})}
\sum_{\substack{{(\alpha_1,\alpha_2,\dots,\alpha_{nk})} \in \{
0,1,\dots,t \}^{nk} \\ \left[ \frac{\sigma(i)-1}{k} \right] - \left[
\frac{i-1}{k} \right] \equiv \alpha_i (\mod n) }} \biggl(
\prod_{i=1}^{nk} a_{\alpha_i} x^{\alpha_i} \biggr) = \sum_{r=0}^{kt}
(\Tr(\Pi_{r,t}^{[k]}(a_0,a_1,\dots,a_t))^n)x^{rn}, \; n \geqslant 1.
$$
Hence and from the equality $\sum\limits_{i=1}^{nk} \alpha_i =
\bigl(\Card \{ i \in N_{nk} \mid \sigma(i)< i, \alpha_i \not\equiv 0
(\mod n) \} + \sum\limits_{i=1}^{nk} \bigl[\frac{\alpha_i}{n} \bigr]
\bigr)n$ it follows that for all $n \geqslant 1$
\begin{multline*}
\sum_{\sigma \in \SSym(N_{nk})}
\sum_{\substack{{(\alpha_1,\alpha_2,\dots,\alpha_{nk})} \in \{
0,1,\dots,t \}^{nk} \\ \left[ \frac{\sigma(i)-1}{k} \right] - \left[
\frac{i-1}{k} \right] \equiv \alpha_i (\mod n) }} \biggl(
\prod_{i=1}^{nk} a_{\alpha_i} \biggr) \cdot x^{\Card \{ i \in N_{nk}
\mid \sigma(i)< i, \alpha_i \not\equiv 0 (\mod n) \} +
\sum\limits_{i=1}^{nk} \left[ \frac{\alpha_i}{n} \right]} = \\ =
\sum_{r=0}^{kt}(\Tr( \Pi_{r,t}^{[k]}(a_0,a_1,\dots,a_t))^n ) x^r.
\end{multline*}
\end{proof}
\begin{theorem}
Let $k \geqslant 1$ and $t \geqslant 0$, let $a_0,a_1,\dots,a_t$ be
elements in a commutative ring with unity. Then for all $n \geqslant
t+1$
\begin{multline}
\sum_{\sigma \in \SSym(N_{nk})} \sum_{r=0}^{kt} x^r
\sum_{\substack{{(\alpha_1,\alpha_2,\dots,\alpha_{nk})} \in  \{
0,1,\dots,t \}^{nk} \\ \left[ \frac{\sigma(i)-1}{k} \right] - \left[
\frac{i-1}{k} \right] \equiv \alpha_i (\mod n) }} \biggl(
\prod_{i=1}^{nk} a_{\alpha_i} \biggr) \cdot x^{\Card \{ i \in N_{nk}
\mid \sigma(i)< i, \alpha_i \ne 0 \}} = \\ = \sum_{r=0}^{kt}(\Tr((
\Pi_{r,t}^{[k]}(a_0,a_1,\dots,a_t))^n )) x^r.
\end{multline}
\end{theorem}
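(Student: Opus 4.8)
The plan is to derive this identity from Theorem 15. That theorem gives the very same identity, for independent variables $a_0,\dots,a_t$ and all $n\geqslant 1$, except that there the exponent of $x$ on the left-hand side is $\Card\{i\in N_{nk}\mid\sigma(i)<i,\ \alpha_i\not\equiv 0\ (\mod n)\}+\sum_{i=1}^{nk}[\alpha_i/n]$; moreover the right-hand sides of Theorems 15 and 16 are word for word the same expression $\sum_{r=0}^{kt}\bigl(\Tr((\Pi_{r,t}^{[k]}(a_0,\dots,a_t))^n)\bigr)x^r$. Hence the whole statement reduces to two things: showing that, once $n\geqslant t+1$, that exponent equals $\Card\{i\in N_{nk}\mid\sigma(i)<i,\ \alpha_i\neq 0\}$, and then a routine specialization from independent variables to elements of an arbitrary commutative ring. (The summation over $r$ with weight $x^r$ on the left-hand side of Theorem 16 is to be understood as grouping the admissible tuples according to the common value $r$ of that exponent, not as an additional weight.)

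Next I would record the elementary observation, valid precisely when $n\geqslant t+1$, that controls everything. Every tuple $(\alpha_1,\dots,\alpha_{nk})$ entering either sum lies in $\{0,1,\dots,t\}^{nk}$, so $0\leqslant\alpha_i\leqslant t<n$ for each $i$; therefore $[\alpha_i/n]=0$, whence $\sum_{i=1}^{nk}[\alpha_i/n]=0$, and the condition $\alpha_i\not\equiv 0\ (\mod n)$ is equivalent to $\alpha_i\neq 0$. Consequently, for every fixed $\sigma\in\SSym(N_{nk})$ and every admissible tuple, the exponent of $x$ appearing in Theorem 15 coincides term by term with $\Card\{i\in N_{nk}\mid\sigma(i)<i,\ \alpha_i\neq 0\}$. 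In fact this common value equals $\tfrac1n\sum_{i=1}^{nk}\alpha_i$, an integer lying in $\{0,1,\dots,kt\}$, by the identity $\sum_{i=1}^{nk}\alpha_i=\bigl(\Card\{i\mid\sigma(i)<i,\ \alpha_i\not\equiv 0\ (\mod n)\}+\sum_{i=1}^{nk}[\alpha_i/n]\bigr)n$ established inside the proof of Theorem 15, which is exactly what legitimises the grouping by $r$. Substituting the simplified exponent into Theorem 15 then proves the identity over the polynomial ring $\bbZ[a_0,\dots,a_t]$.

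It then remains to allow $a_0,\dots,a_t$ to be arbitrary elements of a commutative ring $A$ with unity. Coefficient by coefficient in $x$, both sides of the asserted identity are polynomials in $a_0,\dots,a_t$ with integer coefficients: the left-hand side manifestly, and the right-hand side because the entries of $\Pi_{r,t}^{[k]}(a_0,\dots,a_t)$ are monomials in the $a_j$ with non-negative integer coefficients, so each $\Tr((\Pi_{r,t}^{[k]}(a_0,\dots,a_t))^n)$ is again such a polynomial. Applying the evaluation homomorphism $\bbZ[a_0,\dots,a_t]\to A$ that sends each variable to the prescribed element of $A$ then yields the identity over $A$.

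The one point that requires care, being a matter of bookkeeping rather than a real obstacle, is to check that the left-hand sums of Theorems 15 and 16 are indexed by exactly the same set of pairs $\bigl(\sigma,(\alpha_1,\dots,\alpha_{nk})\bigr)$, namely $\sigma\in\SSym(N_{nk})$ together with $(\alpha_1,\dots,\alpha_{nk})\in\{0,\dots,t\}^{nk}$ subject to $[\tfrac{\sigma(i)-1}{k}]-[\tfrac{i-1}{k}]\equiv\alpha_i\ (\mod n)$ for all $i$. Granting this, the term-by-term identification of the exponents of $x$ obtained above makes the two left-hand sides equal, which completes the proof.
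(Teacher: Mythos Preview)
Your argument is correct and is exactly the paper's own approach: reduce to the preceding theorem by observing that for $n\geqslant t+1$ one has $0\leqslant[\alpha_i/n]\leqslant[t/n]=0$ and that $\alpha_i\not\equiv 0\ (\mod n)$ becomes $\alpha_i\neq 0$. The paper's proof consists of precisely these two sentences; your additional remarks on the evaluation homomorphism $\bbZ[a_0,\dots,a_t]\to A$ and on reading the extra $\sum_{r}x^{r}$ on the left-hand side as a grouping rather than a genuine weight are sound clarifications the paper leaves implicit. (Note a harmless off-by-one in your cross-references: what you call Theorem~15 is the paper's Theorem~16, and the statement under discussion is Theorem~17.)
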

\begin{proof}
Since $0 \leqslant \bigl[\frac{\alpha_i}{n} \bigr] \leqslant \bigl[
\frac tn \bigr] =0$, then $\bigl[\frac{\alpha_i}{n} \bigr] =0$ for
all $i \in N_{nk}$. If $t<n$ and $\alpha_i \equiv 0 (\mod n)$, then
$\alpha_i = 0$.
\end{proof}
\begin{theorem}
Let $k \geqslant 1$ and $t \geqslant 0$, let $a_0,a_1, \dots, a_t$
be elements in a commutative ring with unity. Then for all $n
\geqslant 1$
\begin{equation}
\per \Bigl( \Bigl( \sum_{i=0}^t a_i (P_n(x))^i \Bigr) \otimes J_k
\Bigr) = \sum_{r=0}^{kt} ( \Tr (( \Pi_{r,t}^{[k]} (a_0, a_1, \dots,
a_t))^n ) x^r.
\end{equation}
\end{theorem}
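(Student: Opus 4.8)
\medskip
\noindent\emph{Proof sketch.} The plan is to prove the identity first in the case where $a_0,\dots,a_t$ are independent variables, by computing the left-hand permanent directly and recognising it as the double sum occurring in equation~(15), and then to pass to an arbitrary commutative ring with unity by specialisation.

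Write $M=\bigl(\sum_{i=0}^{t}a_i(P_n(x))^i\bigr)\otimes J_k$, an $nk\times nk$ matrix; since every entry of $J_k$ equals $1$, for $1\leqslant i,j\leqslant nk$ we have $M_{i,j}=\bigl(\sum_{d=0}^{t}a_d(P_n(x))^d\bigr)_{[(i-1)/k]+1,\;[(j-1)/k]+1}$, where $[\,\cdot\,]$ is the integer part. From the Lemma --- in particular from $(P_n(x))^n=xI_n$, which gives $(P_n(x))^d=x^{[d/n]}(P_n(x))^{d-n[d/n]}$ for all $d\geqslant0$ --- one reads off that, for $a,b\in N_n$, the entry $(P_n(x))^d_{a,b}$ vanishes unless $d\equiv b-a\ (\mod n)$, in which case it equals $x^{(d-(b-a))/n}$ (a nonnegative integer power, since $b-a\in\{1-n,\dots,n-1\}$ and $d\geqslant0$). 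Hence, putting $\gamma_i=[(\sigma(i)-1)/k]-[(i-1)/k]\in\{1-n,\dots,n-1\}$, the entry $M_{i,\sigma(i)}$ equals $\sum a_{\alpha_i}x^{(\alpha_i-\gamma_i)/n}$, summed over those $\alpha_i$ with $0\leqslant\alpha_i\leqslant t$ and $\alpha_i\equiv\gamma_i\ (\mod n)$. Multiplying these out in $\per M=\sum_{\sigma\in\SSym(N_{nk})}\prod_{i=1}^{nk}M_{i,\sigma(i)}$ gives
$$
\per M=\sum_{\sigma\in\SSym(N_{nk})}\;\sum_{\substack{(\alpha_1,\dots,\alpha_{nk})\in\{0,\dots,t\}^{nk}\\ \alpha_i\equiv\gamma_i\,(\mod n)}}\Bigl(\prod_{i=1}^{nk}a_{\alpha_i}\Bigr)\,x^{\sum_{i=1}^{nk}(\alpha_i-\gamma_i)/n}.
$$

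The next step is to check that, for every $\sigma$ and every admissible $(\alpha_i)$, the exponent $\sum_i(\alpha_i-\gamma_i)/n$ coincides with the exponent $\Card\{i\in N_{nk}\mid\sigma(i)<i,\ \alpha_i\not\equiv0\ (\mod n)\}+\sum_{i=1}^{nk}[\alpha_i/n]$ of equation~(15). Since $\sigma$ is a bijection, $\sum_i[(\sigma(i)-1)/k]=\sum_i[(i-1)/k]$, so $\sum_i\gamma_i=0$ and the exponent is $\tfrac1n\sum_i\alpha_i=\sum_i[\alpha_i/n]+\tfrac1n\sum_i\bigl(\alpha_i-n[\alpha_i/n]\bigr)$. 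Now $\alpha_i-n[\alpha_i/n]$ is the least nonnegative residue of $\alpha_i$, which equals that of $\gamma_i$; since $\gamma_i\in\{1-n,\dots,n-1\}$ this residue is $\gamma_i$ when $\gamma_i\geqslant0$ and $\gamma_i+n$ when $\gamma_i<0$, whence $\sum_i\bigl(\alpha_i-n[\alpha_i/n]\bigr)=\sum_i\gamma_i+n\,\Card\{i\mid\gamma_i<0\}=n\,\Card\{i\mid\gamma_i<0\}$. Finally, the blocks $\{kp+1,\dots,kp+k\}$ partition $N_{nk}$ into consecutive intervals, so $\gamma_i<0$ implies $\sigma(i)<i$, $\gamma_i>0$ implies $\sigma(i)>i$, and $\gamma_i=0\iff\alpha_i\equiv0\ (\mod n)$; therefore $\{i\mid\gamma_i<0\}=\{i\mid\sigma(i)<i,\ \alpha_i\not\equiv0\ (\mod n)\}$, and the two exponents agree. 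Consequently $\per M$ equals the left-hand side of equation~(15), and that theorem yields $\per M=\sum_{r=0}^{kt}\bigl(\Tr((\Pi_{r,t}^{[k]}(a_0,\dots,a_t))^n)\bigr)x^r$ when $a_0,\dots,a_t$ are independent variables.

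Both sides of this identity are polynomials in $a_0,\dots,a_t$ with coefficients in $\bbZ[x]$, so it remains valid after substituting arbitrary elements of a commutative ring with unity; this proves the theorem for every $n\geqslant1$. The only genuinely delicate part is the exponent bookkeeping in the two middle steps --- the integer parts, the reduction of powers of $P_n(x)$ modulo $n$, and the identification of $\tfrac1n\sum_i\bigl(\alpha_i-n[\alpha_i/n]\bigr)$ with $\Card\{i\mid\sigma(i)<i,\ \alpha_i\not\equiv0\ (\mod n)\}$ --- but each of these is an elementary (if somewhat fiddly) verification once the entries of $(P_n(x))^d$ have been written down correctly.
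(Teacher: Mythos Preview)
Your proof is correct and follows the route the paper clearly intends. The paper does not write out a separate proof of this theorem; it is stated immediately after Theorem~16 (equation~(15)) and its corollary Theorem~17, and is meant to follow from those together with Lemma~1 on the entries of $(P_n(x))^d$. Your argument --- expand $\per M$ entrywise using Lemma~1, recognise the resulting double sum as the left-hand side of~(15), check the exponent identity, and apply Theorem~16 --- is exactly this intended derivation. The exponent bookkeeping you flag as ``fiddly'' is indeed the only substantive step, and your verification that $\{i\mid\gamma_i<0\}=\{i\mid\sigma(i)<i,\ \alpha_i\not\equiv0\ (\mod n)\}$ (using that $|\gamma_i|\leqslant n-1$ and $\alpha_i\equiv\gamma_i\ (\mod n)$, so $\gamma_i=0\iff\alpha_i\equiv0$) is correct.
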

\begin{lemma}
Let $k,t \geqslant 1$. Then $|G_{r,t}^{[k]}| = \binom{t+r-1}{r}$ if
$0 \leqslant r \leqslant k,$ and $G_{r,t}^{[k]} = \sum\limits_{i=0}^k
|G_{r-i, t-1}^{[k]} |$ if $k \leqslant r \leqslant kt.$
$|G_{r,t}^{[k]}| = |G_{kt-r,t}^{[k]}|$, $|G_{r,t}^{[k]}| =
\sum\limits_{i=0}^{\min(k,r)} |G_{r-i, t-1}^{[k]}|$, $0 \leqslant r
\leqslant kt$.
\end{lemma}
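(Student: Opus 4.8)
The plan is to reduce the entire statement to the generating-function identity already recorded in the excerpt, namely $|G_{r,t}^{[k]}| = \Coef_{x^r}\bigl((1+x+\dots+x^k)^t\bigr)$, and then to read off each of the four assertions by elementary manipulations of the polynomial $f_{k,t}(x) = (1+x+\dots+x^k)^t$.

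For the closed form in the range $0 \leqslant r \leqslant k$, I would write $1+x+\dots+x^k = (1-x^{k+1})/(1-x)$, so that $f_{k,t}(x) = (1-x^{k+1})^t (1-x)^{-t}$ as a formal power series. The factor $(1-x^{k+1})^t$ is $\equiv 1 \pmod{x^{k+1}}$, so for $r \leqslant k$ we get $|G_{r,t}^{[k]}| = \Coef_{x^r}\bigl((1-x)^{-t}\bigr) = \binom{t+r-1}{r}$.

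For the recursion I would use the trivial factorization $f_{k,t}(x) = (1+x+\dots+x^k)\,f_{k,t-1}(x)$ and compare coefficients of $x^r$, obtaining $|G_{r,t}^{[k]}| = \sum_{i=0}^{k}\Coef_{x^{r-i}}\bigl(f_{k,t-1}(x)\bigr)$. With the convention, forced by the definition since no such vectors exist, that $|G_{m,t-1}^{[k]}| = 0$ for $m < 0$ or $m > k(t-1)$, this is precisely $|G_{r,t}^{[k]}| = \sum_{i=0}^{k}|G_{r-i,t-1}^{[k]}|$; when $k \leqslant r$ every index $r-i$ is already nonnegative, and in general discarding the vanishing terms with $r-i<0$ yields $|G_{r,t}^{[k]}| = \sum_{i=0}^{\min(k,r)}|G_{r-i,t-1}^{[k]}|$ for every $0 \leqslant r \leqslant kt$. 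I would also note the bijective version: fixing $l_1 = i$ and sending $(1^{\langle l_1\rangle},\dots,t^{\langle l_t\rangle})$ to $(1^{\langle l_2\rangle},\dots,(t-1)^{\langle l_t\rangle})$ is a bijection from $\{\overline\alpha \in G_{r,t}^{[k]} : l_1 = i\}$ onto $G_{r-i,t-1}^{[k]}$.

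For the symmetry the quickest argument is that $f_{k,t}$ is palindromic of degree $kt$: since $x^k(1 + x^{-1} + \dots + x^{-k}) = 1 + x + \dots + x^k$, raising to the $t$-th power gives $x^{kt}f_{k,t}(1/x) = f_{k,t}(x)$, so $\Coef_{x^r}(f_{k,t}) = \Coef_{x^{kt-r}}(f_{k,t})$, i.e.\ $|G_{r,t}^{[k]}| = |G_{kt-r,t}^{[k]}|$. Equivalently, the involution $(1^{\langle l_1\rangle},\dots,t^{\langle l_t\rangle}) \mapsto (1^{\langle k-l_1\rangle},\dots,t^{\langle k-l_t\rangle})$ carries $G_{r,t}^{[k]}$ bijectively onto $G_{kt-r,t}^{[k]}$, as $\sum_i(k-l_i) = kt - r$. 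I do not expect any real obstacle here: the lemma is bookkeeping around the stated identity, and the only point deserving a word of care is the boundary convention that legitimizes passing from $\sum_{i=0}^{k}$ to $\sum_{i=0}^{\min(k,r)}$, together with the observation that for $r$ near $kt$ the terms $|G_{r-i,t-1}^{[k]}|$ with $r-i > k(t-1)$ vanish.
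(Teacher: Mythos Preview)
Your proof is correct. The paper itself states this lemma without proof; it simply records the generating-function identity
\[
|G_{r,t}^{[k]}| = \Coef_{x^r}\bigl((1+x+\dots+x^k)^t\bigr)
\]
earlier in the text and then asserts Lemma~2 later with no argument given. Your derivation of all four claims from this identity --- the truncation argument via $(1-x^{k+1})^t(1-x)^{-t}$ for the range $0\leqslant r\leqslant k$, the factorization $f_{k,t}=(1+x+\dots+x^k)f_{k,t-1}$ for the recursion, and the palindromicity $x^{kt}f_{k,t}(1/x)=f_{k,t}(x)$ for the symmetry --- is clean and complete, and your remarks on the boundary convention $|G_{m,t-1}^{[k]}|=0$ for $m<0$ or $m>k(t-1)$ address exactly the point needed to reconcile the two forms of the recursion. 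The parallel bijective arguments you sketch are also valid and would serve equally well as stand-alone proofs.
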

\begin{theorem}
Let $k \geqslant 1$ and $t \geqslant 0$, let $0 \leqslant r
\leqslant kt$, let $a_0, a_1, \dots, a_t$ be elements in a
commutative ring with unity. Then
\begin{equation}
\det(x I_{|G_{r,t}^{[k]}|} - \Pi_{r,t}^{[k]}(a_0,a_1, \dots, a_t)) =
\det( x I_{|G_{kt-r,t}^{[k]}|} - \Pi_{kt-r,t}^{[k]}(a_t,a_{t-1},
\dots, a_0)).
\end{equation}
\end{theorem}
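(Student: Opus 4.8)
The plan is to produce an explicit conjugacy (up to transposition, which leaves characteristic polynomials unchanged) between $\Pi_{r,t}^{[k]}(a_0,\dots,a_t)$ and $\Pi_{kt-r,t}^{[k]}(a_t,\dots,a_0)$. Both sides of the asserted identity are polynomials in $x$ with coefficients in $\bbZ[a_0,\dots,a_t]$, so it suffices to treat $a_0,\dots,a_t$ as independent variables and argue over $\mathbb{Q}(a_0,\dots,a_t)$; the general statement then follows by applying the evaluation homomorphism $\bbZ[a_0,\dots,a_t]\to R$. On index sets, introduce
\[
\phi\colon G_{r,t}^{[k]}\to G_{kt-r,t}^{[k]},\qquad
\phi\bigl(1^{\langle l_1\rangle},2^{\langle l_2\rangle},\dots,t^{\langle l_t\rangle}\bigr)=\bigl(1^{\langle k-l_t\rangle},2^{\langle k-l_{t-1}\rangle},\dots,t^{\langle k-l_1\rangle}\bigr),
\]
i.e. $m_{\phi(\overline\gamma)}(i)=k-m_{\overline\gamma}(t+1-i)$. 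Since $\sum_i(k-l_i)=kt-r$ this is a well-defined bijection (recovering $|G_{r,t}^{[k]}|=|G_{kt-r,t}^{[k]}|$ of the preceding Lemma), and the analogous map $G_{kt-r,t}^{[k]}\to G_{r,t}^{[k]}$ is its inverse.

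The heart of the argument is the entrywise identity
\begin{multline*}
\Bigl(\textstyle\prod_{i=1}^{t}\binom{k}{m_{\overline\alpha}(i)}\Bigr)\,\Pi_{r,t}^{[k]}(a_0,\dots,a_t)[\mathbf L(\overline\alpha)\mid\mathbf L(\overline\beta)]=\\
=\Bigl(\textstyle\prod_{i=1}^{t}\binom{k}{m_{\overline\beta}(i)}\Bigr)\,\Pi_{kt-r,t}^{[k]}(a_t,\dots,a_0)[\mathbf L(\phi(\overline\beta))\mid\mathbf L(\phi(\overline\alpha))]
\end{multline*}
for all $\overline\alpha,\overline\beta\in G_{r,t}^{[k]}$, which I would verify in three steps. (i) \emph{Supports coincide.} Reading off the definition of $\Pi_{r,t}^{[k]}$, a nonzero entry $\Pi_{r,t}^{[k]}(a_0,\dots,a_t)[\mathbf L(\overline\alpha)\mid\mathbf L(\overline\beta)]$ forces $p_i=m_{\overline\alpha}(i)-m_{\overline\beta}(i+1)$ for $1\leqslant i\leqslant t-1$, and—using $\sum_{i=1}^{t-1}p_i=m_{\overline\beta}(1)-m_{\overline\alpha}(t)$ together with $m_{\overline\beta}(1)\leqslant k$—the defining constraints $0\leqslant p_i\leqslant m_{\overline\alpha}(i)$, $\sum p_i\leqslant k-m_{\overline\alpha}(t)$ reduce to the single family $m_{\overline\beta}(i+1)\leqslant m_{\overline\alpha}(i)$, $1\leqslant i\leqslant t-1$. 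The same computation for $\Pi_{kt-r,t}^{[k]}(a_t,\dots,a_0)$ at $(\phi(\overline\beta),\phi(\overline\alpha))$ gives $m_{\phi(\overline\alpha)}(i+1)\leqslant m_{\phi(\overline\beta)}(i)$, which by $m_{\phi(\overline\gamma)}(j)=k-m_{\overline\gamma}(t+1-j)$ unwinds to the identical family of inequalities; hence off this common support both sides vanish. (ii) \emph{Monomials coincide.} On the support a direct substitution shows each of the two entries equals $k!$ times
\[
a_0^{\,k-m_{\overline\beta}(1)}\Bigl(\prod_{i=1}^{t-1}a_i^{\,m_{\overline\alpha}(i)-m_{\overline\beta}(i+1)}\Bigr)a_t^{\,m_{\overline\alpha}(t)}
\]
multiplied by a scalar binomial factor. (iii) \emph{Scalar factors match.} After cancelling $k!$, the common monomial, and the factors $\binom{k}{m_{\overline\beta}(1)}$ (occurring on the left and inside $\prod_i\binom{k}{m_{\overline\beta}(i)}$ on the right) and $\binom{k}{m_{\overline\alpha}(t)}$ (occurring on the right and inside $\prod_i\binom{k}{m_{\overline\alpha}(i)}$ on the left), what remains is
\[
\prod_{i=1}^{t-1}\binom{k}{m_{\overline\alpha}(i)}\binom{m_{\overline\alpha}(i)}{m_{\overline\beta}(i+1)}=\prod_{i=1}^{t-1}\binom{k}{m_{\overline\beta}(i+1)}\binom{k-m_{\overline\beta}(i+1)}{m_{\overline\alpha}(i)-m_{\overline\beta}(i+1)},
\]
which holds term by term by the elementary identity $\binom ab\binom bc=\binom ac\binom{a-c}{a-b}$ (employed already in the proof of Theorem~7) with $a=k$, $b=m_{\overline\alpha}(i)$, $c=m_{\overline\beta}(i+1)$.

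To finish, set $C=\mathrm{diag}\bigl(\prod_{i=1}^{t}\binom{k}{m_{\overline\gamma}(i)}\bigr)_{\overline\gamma\in G_{r,t}^{[k]}}$, a diagonal matrix with positive-integer diagonal, hence invertible over $\mathbb{Q}(a_0,\dots,a_t)$, and let $\widetilde\Pi$ denote $\Pi_{kt-r,t}^{[k]}(a_t,\dots,a_0)$ with its rows and columns relabelled along $\phi$ (a conjugation by a permutation matrix, so $\det(xI-\widetilde\Pi)=\det(xI-\Pi_{kt-r,t}^{[k]}(a_t,\dots,a_0))$). The entrywise identity says precisely $\bigl(\Pi_{r,t}^{[k]}(a_0,\dots,a_t)\bigr)^{T}C=C\widetilde\Pi$, i.e. $\widetilde\Pi=C^{-1}\bigl(\Pi_{r,t}^{[k]}(a_0,\dots,a_t)\bigr)^{T}C$, whence
\begin{multline*}
\det\bigl(xI-\Pi_{kt-r,t}^{[k]}(a_t,\dots,a_0)\bigr)=\det\bigl(xI-\widetilde\Pi\bigr)=\det\bigl(xI-(\Pi_{r,t}^{[k]}(a_0,\dots,a_t))^{T}\bigr)=\\
=\det\bigl(xI-\Pi_{r,t}^{[k]}(a_0,\dots,a_t)\bigr),
\end{multline*}
which is the claim (with $I$ of the common order $|G_{r,t}^{[k]}|=|G_{kt-r,t}^{[k]}|$). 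I expect the only real work to lie in step (i)—carrying the two nested index reversals of $\phi$ through the support conditions correctly—and in the cancellation arithmetic of step (iii) that isolates the clean binomial identity; everything else is formal. The degenerate cases are immediate: for $r\in\{0,kt\}$ both sides reduce to $\det(x-k!a_0^k)$ since $\Pi_{0,t}^{[k]}(a_0,\dots,a_t)=k!a_0^k=\Pi_{kt,t}^{[k]}(a_t,\dots,a_0)$, and for $t=1$ one has $\Pi_{r,1}^{[k]}(a_0,a_1)=k!\binom kr a_0^{k-r}a_1^r=\Pi_{k-r,1}^{[k]}(a_1,a_0)$.
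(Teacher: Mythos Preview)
Your argument is correct. The involution $\phi(1^{\langle l_1\rangle},\dots,t^{\langle l_t\rangle})=(1^{\langle k-l_t\rangle},\dots,t^{\langle k-l_1\rangle})$ does exactly what is needed: your computation of the support, the monomial, and the binomial factor all check out, and the diagonal conjugation $(\Pi_{r,t}^{[k]})^T C=C\widetilde\Pi$ with $C=\mathrm{diag}\bigl(\prod_i\binom{k}{m_{\overline\gamma}(i)}\bigr)$ follows precisely from the entrywise identity you state. The reduction to $\bbZ[a_0,\dots,a_t]$ (so that $C$ is invertible) and then specialization to an arbitrary commutative ring is the right way to handle the scalars.

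The paper states this theorem without proof, so there is nothing to compare against in detail. Your approach---an explicit similarity via complementation of multiplicities combined with reversal---is the natural one and is in the same spirit as the paper's explicit combinatorial manipulations of $\Pi_{r,t}^{[k]}$ elsewhere (e.g.\ Theorems~8--13). One small remark: in step~(i) you might mention explicitly that the residual condition $m_{\overline\beta}(1)\geqslant m_{\overline\alpha}(t)$ (needed for $\sum p_i\geqslant 0$) is automatic once $\overline\alpha,\overline\beta\in G_{r,t}^{[k]}$ and the inequalities $m_{\overline\beta}(i+1)\leqslant m_{\overline\alpha}(i)$ hold, since $\sum_i m_{\overline\alpha}(i)=\sum_i m_{\overline\beta}(i)=r$; this is implicit in your derivation but worth a sentence.
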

\begin{theorem} Let $k\geqslant 1$, let $a_0$, $a_1$,~\dots,~$a_t$ be
independent variables
over a field $F$ of characteristic~$0$. Then the polynomials
$$\det(I_{|G_{\smash{r,t}}^{[k]}|}-x\Pi_{\smash{r,t}}^{[k]}(a_0,a_1,\dots,a_t)),
\; 0\leqslant r\leqslant kt \mbox{ and }
\det(I_{(k+1)^t}-x
K_t^{\smash{[k]}}(a_0,a_1,\dots,a_t))
$$
are irreducible over the field of rational functions
$F(a_0,a_1,\dots,a_t)$.
\end{theorem}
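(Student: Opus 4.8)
The plan is to pass to the characteristic polynomials, exploit the two homogeneities of these matrices so that any factor is again bihomogeneous, and then obstruct a proper factorization by reading off the valuations of the eigenvalues from the $a_0$-adic and $a_t$-adic Newton polygons. First one checks that $\det\Pi_{r,t}^{[k]}(a_0,\dots,a_t)$ and $\det K_t^{[k]}(a_0,\dots,a_t)$ are nonzero elements of $F[a_0,\dots,a_t]$ — e.g. from the monomial shape of the entries of $\Pi_{r,t}^{[k]}$ together with the existence of a cycle cover in the oriented graph $\Gamma_{r,t}^{[k]}$, which is part of the reachability behind Theorems 2 and 3. Hence $\det(I-xM)$ has degree $N:=|G_{r,t}^{[k]}|$ (resp. $(k+1)^t$) in $x$ and constant term $1$, so it is primitive over $F[a_0,\dots,a_t]$; by Gauss's lemma its irreducibility over $F(a_0,\dots,a_t)$ is equivalent to that over $F[a_0,\dots,a_t]$, and via the degree-preserving involution $f(x)\mapsto x^{\deg f}f(1/x)$ it is equivalent to the irreducibility of the characteristic polynomial $\chi_M(x)=\det(xI-M)$.

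By the ordinary homogeneity of the entries ($\Pi_{r,t}^{[k]}(\lambda a)=\lambda^k\Pi_{r,t}^{[k]}(a)$, likewise for $K$) and by the weighted homogeneity of Theorem 15 / eq. (7) (the substitution $a_i\mapsto a_i x^i$ multiplies every eigenvalue of $\Pi_{r,t}^{[k]}$ by $x^r$), the coefficient of $x^{N-j}$ in $\chi_M$ is bihomogeneous, of ordinary degree $jk$ and (in the $\Pi$ case) weighted degree $jr$. Since a polynomial ring is a bigraded domain, any monic factor of $\chi_M$ taken in $F[a_0,\dots,a_t][x]$ is itself bihomogeneous with the proportional bidegrees. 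Next I would compute the $a_0$-adic and $a_t$-adic Newton polygons of $\chi_M$. The key observation is that the $a_0$-adic valuation of the $(\overline\alpha,\overline\beta)$-entry of $\Pi_{r,t}^{[k]}$ (and of $K_t^{[k]}$) equals $k-m_{\overline\beta}(1)$, depending only on the column, while for $\Pi_{r,t}^{[k]}$ the $a_t$-adic valuation of the $(\overline\alpha,\overline\beta)$-entry equals $m_{\overline\alpha}(t)$, depending only on the row. Hence every $j\times j$ principal minor of $M$ on an index set $S$ has a well-defined $a_0$-adic valuation $\sum_{\overline\beta\in S}(k-m_{\overline\beta}(1))$ — provided the corresponding minor of the $a_0$-leading matrix does not vanish, which again reduces to the existence of a cycle cover of $S$ in $\Gamma_{r,t}^{[k]}$ and uses that $F$ has characteristic $0$ — and similarly $a_t$-adically. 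Feeding these into the coefficients of $\chi_M$ shows that the eigenvalues of $\Pi_{r,t}^{[k]}(a)$ have $a_0$-adic valuations exactly the multiset $\{\,k-m_{\overline\beta}(1):\overline\beta\in G_{r,t}^{[k]}\,\}$ and $a_t$-adic valuations exactly $\{\,m_{\overline\alpha}(t):\overline\alpha\in G_{r,t}^{[k]}\,\}$. The strong connectivity of $\Gamma_{r,t}^{[k]}$ (and the presence of loops), which guarantees the needed principal minors are nonzero, I would extract from the combinatorial identities underlying Theorems 2, 3 and the reduction Theorems 12, 13.

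Finally, suppose $\chi_M=gh$ with $g$ monic of degree $d$, $1\le d\le N-1$. Being bihomogeneous, $g$ has constant term a monomial $c\prod_i a_i^{g_i}$ with $\sum_i g_i=dk$ and $\sum_i i\,g_i=dr$; on the other hand $g_0$ is the sum of $d$ of the numbers $k-m_{\overline\beta}(1)$ and $g_t$ the sum of $d$ of the numbers $m_{\overline\alpha}(t)$ — those attached to the roots of $g$ — and the remaining coefficients of $g$ are similarly pinned down by the two Newton polygons and the bidegree. One must then show that no proper sub-multiset of the eigenvalue-valuations can be simultaneously $a_0$-balanced, $a_t$-balanced and compatible with weighted degree $dr$; together with the explicit enumeration of $G_{r,t}^{[k]}$ this forces $d\in\{0,N\}$, a contradiction. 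The polynomial $\det(I-xK_t^{[k]})$ is treated likewise (here the $a_t$-adic valuations of the entries are all $0$, so one uses a second valuation in place of the $a_t$-adic one, or reduces to the $\Pi$ case via the rook-number/permanent identity $\per(A+yJ_N)=\sum_j(N-j)!\,r_j(A)\,y^{N-j}$ combined with Theorem 3). \emph{The main obstacle is precisely this last step}: a single Newton polygon only yields divisibility constraints that admit proper solutions in general, so the argument must genuinely interlock the two Newton polygons, the weighted homogeneity, and the fine combinatorics of $G_{r,t}^{[k]}$; establishing the non-vanishing of the requisite principal minors (the strong-connectivity statements) is a second, more routine, point.
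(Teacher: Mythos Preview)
The paper does not contain an explicit proof of this theorem; it is one of several results stated in Part~I without proof. The machinery the paper does develop, however, points toward a quite different route from yours: Theorems~10, 11 and~14 establish the differential identities $\partial_{a_0}\Tr(M^n)=\partial_{a_n}\Tr(M^n)$ for $1\le n\le t$ (with $M=K_t^{[k]}$ or $M=\Pi_{r,t}^{[k]}$), Theorem~31 shows that an element annihilated by all derivations $D_{x_i}$ is algebraic over the base field, and Lemmas~3--4 describe power series whose partial derivatives coincide. Together with the Galois-theoretic Theorems~22--23, this strongly suggests the intended argument proceeds through derivations acting on symmetric functions of the eigenvalues, not through Newton polygons.

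Your proposal, as you yourself acknowledge, is not a proof: the step you label ``the main obstacle'' is a genuine gap, and I do not see how to close it along the lines you indicate. Concretely, the $a_0$-adic valuation multiset $\{\,k-m_{\overline\beta}(1):\overline\beta\in G_{r,t}^{[k]}\,\}$ consists of the integers $0,1,\dots,\min(k,r)$ with multiplicities $|G_{r-i,t-1}^{[k]}|$ (Lemma~2), and likewise for the $a_t$-adic side; such highly symmetric multisets typically admit many proper sub-multisets satisfying any fixed linear ``balance'' constraints. You give no combinatorial mechanism specific to $G_{r,t}^{[k]}$ that would exclude all of them, and the two valuations together with the weighted homogeneity still impose only finitely many linear conditions on the exponents $g_0,\dots,g_t$ of the constant term of a putative factor.

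A second, more technical, gap concerns the Newton polygon itself. To read off the eigenvalue valuations you need, for each $j$, that the minimum $a_0$-valuation among $j\times j$ principal minors is actually attained, i.e.\ that the $a_0$-leading terms do not cancel. Strong connectivity of the full graph $\Gamma_{r,t}^{[k]}$ neither guarantees that every principal subgraph has a cycle cover nor that different cycle covers do not cancel. Theorem~21 of the paper gives $\det\Pi_{r,t}^{[k]}$ explicitly (hence nonzero), confirming your degree claim for the full polynomial, but says nothing about smaller principal minors. Finally, for $K_t^{[k]}$ the $a_t$-adic polygon is indeed trivial, and neither of your two suggested workarounds---an unspecified ``second valuation'', or transferring irreducibility from $\Pi$ via the rook-polynomial identity---is worked out; the latter in particular does not obviously preserve irreducibility.
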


\begin{theorem}
On the set $G_{\smash{r,t}}^{[k]}$, $1\leqslant r\leqslant kt$, we
define the injective mapping~$\phi_{\smash{r,t}}$ as follows way: if
$\overline\alpha=( 1^{\langle l_1\rangle}, 2^{\langle
l_2\rangle},\dots, t^{\langle l_t\rangle})\in
G_{\smash{r,t}}^{[k]}$, then $\phi_{\smash{r,t}}(\overline\alpha)=(
1^{\langle l_t\rangle}, 2^{\langle l_1\rangle}, 3^{\langle
l_2\rangle},\dots,t^{\langle l_{t-1}\rangle})$. Then
$\phi_{\smash{r,t}}$ is the substitution of the set
$G_{\smash{r,t}}^{[k]}$ and
\begin{gather}
\det(\Pi_{\smash{r,t}}^{[k]}(a_0,a_1,\dots,a_t))=(\sign
\phi_{\smash{r,t}})\prod\limits_{i=0}^{\min(k,r)}\left( k!\binom
kia_0^{k-i}a_t^i\right)^{|G_{\smash{r-i,t-1}}^{[k]}|},\\
\det(K_{\smash{t}}^{[k]}(a_0,a_1,\dots,a_t)) =
\left(\prod\limits_{r=1}^{kt}\sign\phi_{\smash{r,t}}\right)
\prod\limits_{i=0}^{k}\left( k!\binom
kia_0^{k-i}a_t^i\right)^{(k+1)^{t-1}},\\
\per(\Pi_{\smash{r,t}}^{[k]}(a_0,a_1,\dots,a_t))=
\prod\limits_{i=0}^{\min(k,r)}\left(
k!\binom kia_0^{k-i}a_t^i\right)^{|G_{\smash{r-i,t-1}}^{[k]}|}.
\end{gather}
\end{theorem}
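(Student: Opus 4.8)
The plan is to prove all three identities together, after reducing (since they are polynomial identities in the $a_i$) to the case where $a_0,\dots,a_t$ are independent variables. First observe that $\phi_{r,t}$ replaces the multiplicity vector $(m_{\overline{\alpha}}(1),\dots,m_{\overline{\alpha}}(t))=(l_1,\dots,l_t)$ of $\overline{\alpha}$ by its cyclic shift $(l_t,l_1,\dots,l_{t-1})$, which again has entries in $\{0,\dots,k\}$ and sum $r$; since the shift is invertible, $\phi_{r,t}$ is a permutation of $G_{r,t}^{[k]}$. The same cyclic shift on multiplicity vectors defines a permutation $\rho$ of $G_t^{[k]}=\bigcup_{r=0}^{kt}G_{r,t}^{[k]}$ whose restriction to each block $G_{r,t}^{[k]}$ is $\phi_{r,t}$, so $\sign\rho=\prod_{r=1}^{kt}\sign\phi_{r,t}$ (the block $G_{0,t}^{[k]}$ being a singleton).

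For $\Pi_{r,t}^{[k]}$ I would show that only the single permutation $\phi_{r,t}$ contributes. From the definition, a nonzero entry of $\Pi_{r,t}^{[k]}$ in row $\overline{\alpha}$ lies in a column $\overline{\beta}$ with $m_{\overline{\beta}}(1)=m_{\overline{\alpha}}(t)+\sum_{i=1}^{t-1}p_i\ge m_{\overline{\alpha}}(t)=m_{\phi_{r,t}(\overline{\alpha})}(1)$, with equality exactly when all $p_i=0$, that is, when $\overline{\beta}=\phi_{r,t}(\overline{\alpha})$. Hence any permutation $\pi$ of $G_{r,t}^{[k]}$ for which $\prod_{\overline{\alpha}}\Pi_{r,t}^{[k]}[\mathbf{L}(\overline{\alpha})\mid\mathbf{L}(\pi(\overline{\alpha}))]\neq0$ satisfies $m_{\pi(\overline{\alpha})}(1)\ge m_{\phi_{r,t}(\overline{\alpha})}(1)$ for every $\overline{\alpha}$; summing over $\overline{\alpha}$, both sides equal $\sum_{\overline{\gamma}\in G_{r,t}^{[k]}}m_{\overline{\gamma}}(1)$, forcing equality termwise, hence $\pi=\phi_{r,t}$. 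Therefore $\det\Pi_{r,t}^{[k]}=(\sign\phi_{r,t})\prod_{\overline{\alpha}}\Pi_{r,t}^{[k]}[\mathbf{L}(\overline{\alpha})\mid\mathbf{L}(\phi_{r,t}(\overline{\alpha}))]$ while $\per\Pi_{r,t}^{[k]}$ is the same product without the sign. Each factor equals $k!\binom{k}{m_{\overline{\alpha}}(t)}a_0^{\,k-m_{\overline{\alpha}}(t)}a_t^{\,m_{\overline{\alpha}}(t)}$, and grouping the vectors $\overline{\alpha}\in G_{r,t}^{[k]}$ by the value $i=m_{\overline{\alpha}}(t)$ — there being $|G_{r-i,t-1}^{[k]}|$ of them — gives $\prod_{i=0}^{\min(k,r)}(k!\binom{k}{i}a_0^{k-i}a_t^i)^{|G_{r-i,t-1}^{[k]}|}$, which is the first and the third identity.

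For $\det K_t^{[k]}$ the argument has two reduction steps followed by a small determinant computation. First, for a fixed pair $(\overline{\alpha},\overline{\beta})$ the parameters $p_1,\dots,p_{t-1},v$ in the definition of $K_t^{[k]}$ are uniquely determined ($p_i=m_{\overline{\alpha}}(i)-m_{\overline{\beta}}(i+1)$, and $v$ from $m_{\overline{\beta}}(1)$; only the summation index $d$ is free), so the substitution $a_i\mapsto a_ix_i$ $(1\le i\le t-1)$ multiplies that entry by $\prod_{i=1}^{t-1}x_i^{\,m_{\overline{\alpha}}(i)-m_{\overline{\beta}}(i+1)}$. For any permutation $\pi$ of $G_t^{[k]}$ one has $\sum_{\overline{\alpha}}(m_{\overline{\alpha}}(i)-m_{\pi(\overline{\alpha})}(i+1))=\sum_{\overline{\gamma}\in G_t^{[k]}}m_{\overline{\gamma}}(i)-\sum_{\overline{\gamma}\in G_t^{[k]}}m_{\overline{\gamma}}(i+1)=0$, since $G_t^{[k]}=\{0,\dots,k\}^t$ is symmetric under permuting coordinates; thus every term of the Leibniz expansion of $\det K_t^{[k]}(a_0,a_1x_1,\dots,a_{t-1}x_{t-1},a_t)$ is free of $x_1,\dots,x_{t-1}$, and setting $x_i=0$ yields $\det K_t^{[k]}(a_0,\dots,a_t)=\det K_t^{[k]}(a_0,0,\dots,0,a_t)$. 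In this last matrix only entries with all $p_i=0$ survive, so the row of $\overline{\alpha}$ (multiplicity vector $(l_1,\dots,l_t)$) is supported on the columns with multiplicity vector $(m,l_1,\dots,l_{t-1})$, $0\le m\le k$, where the entry is $f(m,l_t):=\binom{k}{k-m}a_0^{k-m}\sum_{d=0}^{\min(l_t,m)}(d+k-m)!\binom{k}{d+k-m}\binom{l_t}{d}a_t^d$. A contributing permutation therefore has the form $\pi(l_1,\dots,l_t)=(\tau_{(l_1,\dots,l_{t-1})}(l_t),l_1,\dots,l_{t-1})$ for a family $(\tau_{\mathbf{u}})_{\mathbf{u}\in\{0,\dots,k\}^{t-1}}$ of permutations of $\{0,\dots,k\}$; writing $\pi=\rho\circ\delta$, where $\delta$ fixes the first $t-1$ coordinates and acts on the last one by $\tau_{\mathbf{u}}$ on the fibre over $\mathbf{u}$, gives $\sign\pi=\sign\rho\cdot\prod_{\mathbf{u}}\sign\tau_{\mathbf{u}}$ and hence, on summing over the $\tau_{\mathbf u}$, $\det K_t^{[k]}(a_0,0,\dots,0,a_t)=\sign\rho\cdot(\det F)^{(k+1)^{t-1}}$ with $F=(f(m,l))_{0\le m,l\le k}$. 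Finally I would factor $F=VW$, where $V$ has entry $\binom{k}{k-m}a_0^{k-m}(d+k-m)!\binom{k}{d+k-m}$ in position $(m,d)$ and $W$ has entry $\binom{l}{d}a_t^d$ in position $(d,l)$; since $\binom{k}{d+k-m}=0$ for $d>m$, the matrix $V$ is lower triangular with diagonal $k!\binom{k}{m}a_0^{k-m}$ and $W$ is upper triangular with diagonal $a_t^l$, so $\det F=\det V\cdot\det W=\prod_{i=0}^{k}k!\binom{k}{i}a_0^{k-i}a_t^i$; together with $\sign\rho=\prod_{r=1}^{kt}\sign\phi_{r,t}$ this is the second identity.

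The collapse argument for $\Pi_{r,t}^{[k]}$ and the triangular evaluation of $\det F$ are routine. The main obstacle is the pair of reductions for $K_t^{[k]}$: that its determinant does not depend on $a_1,\dots,a_{t-1}$ — which relies on the unique determination of $(p_1,\dots,p_{t-1},v)$ by $(\overline{\alpha},\overline{\beta})$ and on the telescoping identity $\sum_{\overline{\gamma}}m_{\overline{\gamma}}(i)=\sum_{\overline{\gamma}}m_{\overline{\gamma}}(i+1)$ on $G_t^{[k]}$ — and, once those variables are killed, recognizing the ``forget-and-shift'' block structure that lets the determinant factor as a $(k+1)^{t-1}$-th power of $\det F$ up to $\sign\rho$. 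Along the way one must also check the bookkeeping in the exponent $|G_{r-i,t-1}^{[k]}|$ and verify that the trivial factors with $|G_{r-i,t-1}^{[k]}|=0$ do not disturb the stated range $0\le i\le\min(k,r)$.
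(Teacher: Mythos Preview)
The paper states this theorem but does not include a proof of it (Theorem~21 is listed among the results without an accompanying argument in the text provided), so there is nothing to compare your approach against directly.

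That said, your argument is correct and well organized. For $\Pi_{r,t}^{[k]}$, the ``equality-from-summing'' trick showing that $\phi_{r,t}$ is the only permutation contributing to both the determinant and the permanent is clean and exactly the right observation; the grouping of rows by $i=m_{\overline\alpha}(t)$ then gives the stated product with exponents $|G_{r-i,t-1}^{[k]}|$. For $K_t^{[k]}$, your two reductions are the crux and both hold: the exponent of each auxiliary $x_i$ in every nonvanishing Leibniz term is $\sum_{\overline\alpha}(m_{\overline\alpha}(i)-m_{\pi(\overline\alpha)}(i+1))=0$ because $G_t^{[k]}\cong\{0,\dots,k\}^t$ is coordinate-symmetric, so $\det K_t^{[k]}$ is independent of $a_1,\dots,a_{t-1}$; after specializing, the fibrewise decomposition $\pi=\rho\circ\delta$ and the factorization $F=VW$ with $V$ lower and $W$ upper triangular give $\det F=\prod_{i=0}^k k!\binom{k}{i}a_0^{k-i}a_t^{i}$ and hence the claimed power. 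The identification $\sign\rho=\prod_{r=1}^{kt}\sign\phi_{r,t}$ via the block decomposition $G_t^{[k]}=\bigsqcup_r G_{r,t}^{[k]}$ is also correct.

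Two small points you already flagged but should state explicitly in a final write-up: first, in the $\Pi_{r,t}^{[k]}$ product the factors with $r-i>k(t-1)$ have exponent $|G_{r-i,t-1}^{[k]}|=0$ and so contribute $1$, which is why the index range $0\le i\le\min(k,r)$ is harmless; second, in the $K_t^{[k]}$ reduction the ``multiplied by $\prod x_i^{p_i}$'' statement only makes sense for nonzero entries, but zero entries remain zero under the substitution, so the conclusion that each Leibniz term is degree~$0$ in every $x_i$ still follows.
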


\begin{theorem}
Let $0 \leqslant r \leqslant t$, let $a_{-r}$, $a_{-r+1}$, \dots,
$a_{-r+t}$ be elements in a commutative ring with unity. Then
\begin{multline}
 1+ \sum_{n=1}^{\infty}\ls R \ls x;\ls \sum\limits_{i=0}^{t} a_{-r+i}
T_n^{(-r+i)} \rs \otimes J_k \rs \rs y^n = (-1)^{ \mathbf{ L }
(1^{<k>}, \dots, r^{<k>})} \times \\ \times \biggl[ \! \! \!\!\!
\sum_{\hbox to
40pt{$ \substack{\ogam \in \bigcup_{l=rk}^{kt} G_{l,t}^{[k]} \\
\{\ogam\} \supset \{ 1^{\langle k \rangle}, 2^{\langle k \rangle},
\ldots, r^{\langle k \rangle} \} }$}} \! \!
\!(-1)^{\mathbf{L}(\overline \gamma) }\det \bigl((I_{(k+1)^t}-y
K_t^{[k]}( a_{-r}x, a_{-r+1}x, \ldots, a_{-r+t}x)) \times \\ \times
( \mathbf{L} (\ogam)|  \mathbf{ L } (1^{<k>}, 2^{<k>}, \ldots,
r^{<k>})) \bigr)\biggr] \cdot (\det(I_{(k+1)^t}-y K_t^{[k]}(
a_{-r}x,a_{-r+1}x, \ldots, a_{-r+t}x)))^{-1},
\end{multline}
\begin{multline}
\sum_{n=1}^{\infty} \ls R \ls x;\ls \sum_{i=0}^{t} a_{-r+i}
P_n^{-r+i} \rs \otimes J_k \rs \rs y^n =
\\ =-y \biggl( \frac{\dif}{\dif y} \det(I_{(k+1)^t}-y K_t^{[k]}(
a_{-r}x, a_{-r+1}x, \ldots, a_{-r+t}x)) \biggr) \times
\\ \times \bigl(\det(I_{(k+1)^t}-y K_t^{[k]}( a_{-r}x, a_{-r+1}x, \ldots,
a_{-r+t}x)) \bigr)^{-1},
\end{multline}
\begin{multline}
 1+ \sum_{n=1}^{\infty} \ls
\per \ls \ls
\sum_{i=0}^{t} a_{-r+i} T_n^{(-r+i)} \rs \otimes J_k \rs \rs x^n = \\
 =\biggl[ \det \Bigl( \Bigl( I_{| G_{rk,t}^{[k]}|} - x\Pi_{rk,t}^{[k]}( a_{-r},
a_{-r+1}, \ldots, a_{-r+t} ) \Bigr) \bigl( \mathbf{ L } (1^{<k>}, \ldots, r^{<k>})|
\mathbf{ L } (1^{<k>}, \ldots, r^{<k>}) \bigr) \Bigr) \biggr] \times \\
 \times \biggl(\det \Bigl( I_{| G_{rk,t}^{[k]}|} - x
\Pi_{rk,t}^{[k]}(a_{-r}, a_{-r+1}, \ldots, a_{-r+t}) \Bigl)
\biggr)^{-1},
\end{multline}
\begin{multline}
 \shoveleft{ \suml{n=1}{\infty} \left( \per \ls \left(
\sum_{i=0}^{t} a_{-r+i} P_n^{-r+i} \right) \otimes J_k \rs \right) x^n = }\\
= -x  \sum_{l=0}^{kt} \biggl[ \frac{\dif}{\dif x} \Bigl( \det \Bigl(
I_{| G_{l,t}^{[k]}|} -x\Pi_{l,t}^{[k]} (a_{-r},
a_{-r+1}, \ldots, a_{-r+t}) \Bigr) \Bigr) \biggr] \times \\
\times \Bigl(\det \Bigl( I_{| G_{l,t}^{[k]}|} - x
\Pi_{l,t}^{[k]}(a_{-r}, a_{-r+1}, \ldots, a_{-r+t}) \Bigl)
\Bigr)^{-1}.
\end{multline}
\end{theorem}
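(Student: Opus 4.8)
The plan is to deduce the four identities of the theorem from Theorem~3 by combining it with two elementary identities for the powers of a square matrix over a commutative ring $R$ with unity. Recall that Theorem~3 presents, for each fixed $n\geqslant1$, the four left-hand sides respectively as: the sum $\sum_{\ogam}(K_t^{[k]}(a_{-r}x,\dots,a_{-r+t}x))^n[\mathbf{L}(1^{\langle k\rangle},\dots,r^{\langle k\rangle})\mid\mathbf{L}(\ogam)]$ over the $\ogam$ indicated there; the single diagonal entry $(\Pi_{rk,t}^{[k]}(a_{-r},\dots,a_{-r+t}))^n[\mathbf{L}(1^{\langle k\rangle},\dots,r^{\langle k\rangle})\mid\mathbf{L}(1^{\langle k\rangle},\dots,r^{\langle k\rangle})]$; the trace $\Tr((K_t^{[k]}(a_{-r}x,\dots,a_{-r+t}x))^n)$; and --- after setting $x=1$ in the fourth identity of Theorem~3, which is a polynomial identity in $x$ --- the sum of traces $\sum_{l=0}^{kt}\Tr((\Pi_{l,t}^{[k]}(a_{-r},\dots,a_{-r+t}))^n)$. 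Thus everything reduces to summing these expressions against the appropriate power of the generating variable.

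The first auxiliary identity is the resolvent (adjugate) expansion. If $A$ is an $N\times N$ matrix over $R$, then $\det(I_N-yA)$ has constant term $1$ and so is a unit of $R[[y]]$; hence $(I_N-yA)^{-1}=\suml{n=0}{\infty}A^n y^n$ holds in $R[[y]]$, and comparing this with $(I_N-yA)^{-1}=\det(I_N-yA)^{-1}\cdot\mathrm{adj}(I_N-yA)$ gives, for all row and column indices $u,w$,
$$\suml{n=0}{\infty}(A^n)[\mathbf{L}(u)\mid\mathbf{L}(w)]\,y^n=(-1)^{\mathbf{L}(u)+\mathbf{L}(w)}\det\bigl((I_N-yA)(\mathbf{L}(w)\mid\mathbf{L}(u))\bigr)\cdot\bigl(\det(I_N-yA)\bigr)^{-1}.$$
The second is the trace--logarithmic-derivative identity $\suml{n=1}{\infty}\Tr(A^n)y^n=-y\bigl(\tfrac{\dif}{\dif y}\det(I_N-yA)\bigr)\cdot\bigl(\det(I_N-yA)\bigr)^{-1}$; both sides are $\bbZ$-polynomial expressions in the entries of $A$ and in $y$, so it suffices to verify the identity over $\C$, where it is immediate from $\det(I_N-yA)=\prod_{i=1}^N(1-\lambda_i y)$ together with $\suml{n=1}{\infty}\lambda_i^n y^n=\lambda_i y/(1-\lambda_i y)$.

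To prove the first identity of the theorem: by Theorem~3 the coefficient of $y^n$ equals the displayed sum over $\ogam$ for every $n\geqslant1$, while the same sum at $n=0$ equals $1$ (since $(K_t^{[k]}(\dots))^0=I_{(k+1)^t}$, only the term $\ogam=(1^{\langle k\rangle},\dots,r^{\langle k\rangle})$ survives, and that vector does lie in the summation range because $0\leqslant r\leqslant t$). Hence the left side equals $\sum_{\ogam}\suml{n=0}{\infty}(K_t^{[k]}(\dots))^n[\mathbf{L}(1^{\langle k\rangle},\dots,r^{\langle k\rangle})\mid\mathbf{L}(\ogam)]\,y^n$; applying the adjugate expansion to each inner series and pulling out the common sign $(-1)^{\mathbf{L}(1^{\langle k\rangle},\dots,r^{\langle k\rangle})}$ and the common denominator $\det(I_{(k+1)^t}-yK_t^{[k]}(a_{-r}x,\dots,a_{-r+t}x))$ produces precisely the stated right-hand side. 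The third identity of the theorem is the very same computation applied to Theorem~3's second identity with a diagonal index pair: here the sign is $+1$ and there is a single summand, and the $n=0$ contribution accounts for the term $1$ on the left. The second identity of the theorem is obtained by feeding Theorem~3's third identity into the trace--logarithmic-derivative identity with $A=K_t^{[k]}(a_{-r}x,\dots,a_{-r+t}x)$. Finally, for the fourth identity one first specialises $x=1$ in Theorem~3's fourth identity, obtaining $\per((\sum_{i=0}^t a_{-r+i}P_n^{-r+i})\otimes J_k)=\sum_{l=0}^{kt}\Tr((\Pi_{l,t}^{[k]}(a_{-r},\dots,a_{-r+t}))^n)$ for $n\geqslant1$, then sums this against $x^n$ (with $x$ now playing the role of the generating variable) and applies the trace--logarithmic-derivative identity to each $\Pi_{l,t}^{[k]}(a_{-r},\dots,a_{-r+t})$ separately.

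No step here is genuinely hard once Theorem~3 is available; the actual work is the $\mathbf{L}$-index and sign bookkeeping in the first identity, and keeping the rook-polynomial variable $x$ (which enters the matrix entries) carefully distinct from the generating variable in the last two identities. The only point that requires real care is the legitimacy of the two auxiliary identities over an arbitrary commutative ring with unity --- in particular the trace--logarithmic-derivative identity, which one must establish via the universal-polynomial-identity reduction indicated above rather than through Newton's identities, since $R$ need not contain the rationals.
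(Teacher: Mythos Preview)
Your proof is correct. The paper states this theorem without proof, presumably because it views it as a routine consequence of Theorem~3; your argument supplies exactly that routine consequence in the expected way --- summing the closed forms of Theorem~3 against the generating variable and invoking the adjugate identity $\sum_{n\geqslant0}(A^n)[u\mid w]\,y^n=(-1)^{u+w}\det\bigl((I-yA)(w\mid u)\bigr)/\det(I-yA)$ for the entrywise sums and the identity $\sum_{n\geqslant1}\Tr(A^n)y^n=-y\,\frac{\dif}{\dif y}\det(I-yA)/\det(I-yA)$ for the traces. Your handling of the $n=0$ term in the first and third identities, of the sign and minor-index bookkeeping in the first identity, and of the change of role of~$x$ in the fourth identity is all accurate.
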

\begin{theorem}
Let $k \geqslant  2,$ let $ 1 \leqslant t_1 \leqslant t_2 \leqslant
\ldots \leqslant t_k = t,$ let greatest common divisor of $t_1, t_2,
\ldots, t_k$ is equal to 1. Let $a_{t_1}, a_{t_2}, \ldots, a_{t_k}$
be independent variables over a field $F$ of characteristic $0$,
$f(x) = x^t + \suml{i=1}{k} a_{t_i}x^{t-t_i}.$ Then the Galois group
of the polynomial $f(x)$ over the rational function field
$F(a_{t_1}, a_{t_2}, \ldots, a_{t_k})$ is isomorphic to the
symmetric group $\SSym(t)$ of degree $t$.
\end{theorem}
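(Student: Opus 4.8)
The plan is to show that $G:=\Gal(f/K)$, $K=F(a_{t_1},\dots,a_{t_k})$, acting on the $t$ roots $\rho_1,\dots,\rho_t$ of $f$, is doubly transitive and contains a transposition; these two facts force $G=\SSym(t)$, since in a $2$-transitive group every conjugate of a transposition is a transposition and the transpositions generate $\SSym(t)$. Transitivity is immediate: as $t_k=t$, the term $a_{t_k}x^{t-t_k}$ is the constant $a_{t_k}$, so $f$ is monic of degree $1$ in $a_{t_k}$ over $F[a_{t_1},\dots,a_{t_{k-1}},x]$, hence prime in $F[a_{t_1},\dots,a_{t_k},x]$; being monic in $x$ as well, it is irreducible over $K$ by Gauss's lemma.

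For double transitivity fix $\rho:=\rho_1$ and use $f(\rho)=0$ to eliminate $a_{t_k}$, obtaining
$$f(x)=(x^t-\rho^t)+\sum_{i=1}^{k-1}a_{t_i}\bigl(x^{t-t_i}-\rho^{t-t_i}\bigr),\qquad q(x):=\frac{f(x)}{x-\rho}=\frac{x^t-\rho^t}{x-\rho}+\sum_{i=1}^{k-1}a_{t_i}\,\frac{x^{t-t_i}-\rho^{t-t_i}}{x-\rho}.$$
Here $K(\rho)=F(a_{t_1},\dots,a_{t_{k-1}},\rho)$ with $a_{t_1},\dots,a_{t_{k-1}},\rho$ algebraically independent over $F$ (they are $k$ generators of a field of transcendence degree $k$), the stabilizer $G_\rho$ permutes the $t-1$ roots of $q$, and $G$ is $2$-transitive exactly when $q$ is irreducible over $K(\rho)$. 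Since $k\ge2$, $q$ is monic in $x$ and linear in $a_{t_1}$, with coefficients $A=(x^{t-t_1}-\rho^{t-t_1})/(x-\rho)\neq0$ and $B=(x^t-\rho^t)/(x-\rho)+\sum_{i=2}^{k-1}a_{t_i}(x^{t-t_i}-\rho^{t-t_i})/(x-\rho)$; by Gauss it suffices that $\GCD(A,B)=1$ in $F[a_{t_2},\dots,a_{t_{k-1}},\rho,x]$. As $a_{t_2},\dots,a_{t_{k-1}}$ are independent variables, a common prime factor of $A$ and $B$ must divide the greatest common divisor in $F[\rho,x]$ of the polynomials $(x^m-\rho^m)/(x-\rho)$, $m\in\{t-t_1,t,t-t_2,\dots,t-t_{k-1}\}$; writing $x^m-\rho^m=\prod_{d\mid m}\Phi_d(x,\rho)$ with $\Phi_d$ the homogeneous $d$-th cyclotomic polynomial, that gcd is $\prod_{d\mid g,\ d>1}\Phi_d(x,\rho)$ with $g=\GCD(t-t_1,t,t-t_2,\dots,t-t_{k-1})=\GCD(t,t_1,\dots,t_{k-1})=\GCD(t_1,\dots,t_k)=1$, hence equals $1$. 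So $q$ is irreducible and $G$ is $2$-transitive. This is where the coprimality hypothesis enters (if $d=\GCD(t_1,\dots,t_k)>1$ then $f(x)=g(x^d)$ is imprimitive), and it makes any separate analysis of block systems unnecessary.

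It remains to exhibit a transposition in $G$. It is enough to find $b_{t_1},\dots,b_{t_k}$ over $\overline F$ for which $\overline f=x^t+\sum_ib_{t_i}x^{t-t_i}$ has exactly one double root $\alpha\neq0$ and $t-2$ simple roots: then, for a generic direction $(c_{t_i})$, the $x$-discriminant of $f(x;\,b_{t_i}+uc_{t_i})$ vanishes to order $1$ at $u=0$, so over $\overline F((u))$ Hensel's lemma splits $f$ into $t-2$ unramified linear factors and one tamely ramified quadratic factor, and the inertia subgroup at a place of the splitting field above $u=0$ is generated by the transposition swapping the two roots of that factor and lies in $G$. Such $b$ exist by a dimension count on the incidence variety $Z=\{(b,\alpha)\in\mathbb A^{k}\times(\mathbb A^{1}\setminus\{0\}):\overline f_b(\alpha)=\overline f_b'(\alpha)=0\}$: for each $\alpha\neq0$ the two equations are independent linear conditions on $b$ (their coefficient vectors $(\alpha^{t-t_i})_i$ and $((t-t_i)\alpha^{t-t_i-1})_i$ are not proportional, the exponents $t-t_i$ being distinct), so $Z$ is irreducible of dimension $k-1$, and $Z\to\mathbb A^{k}$ is generically injective — a generic $b$ with $\alpha$ a double root has no second double and no triple root — where for $k=2$ injectivity of $\alpha\mapsto(b_{t_1}(\alpha),b_{t_2}(\alpha))$ again uses $\GCD(t_1,t)=1$. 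Thus the closure of the image of $Z$ is a component of the discriminant hypersurface whose generic member is of the desired type. Finally, with $G$ $2$-transitive and containing a transposition $(\rho_i\rho_j)$, conjugating it by an element of $G$ carrying $(i,j)$ to an arbitrary ordered pair produces an arbitrary transposition, so $G=\SSym(t)$.

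The conceptual core is the $2$-transitivity step, which is purely computational once the identity $f(x)=(x^t-\rho^t)+\sum a_{t_i}(x^{t-t_i}-\rho^{t-t_i})$ is observed. I expect the main obstacle to be the last step: making rigorous the assertion that the family $\{x^t+\sum a_{t_i}x^{t-t_i}\}$ contains a polynomial with exactly one double root (the dimension count on $Z$, and for $k=2$ the injectivity argument) and that the chosen first-order deformation crosses the discriminant hypersurface transversally, so that the quadratic Hensel factor is genuinely ramified and contributes a transposition to $G$.
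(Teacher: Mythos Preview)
The paper does not include a proof of this theorem; it is stated without argument (the text passes directly to the characteristic-$2$ analogue and then to the definition of the matrices $D_r$), presumably deferring the proof to another part of the series or to the references. So there is no paper proof to compare against, and I can only assess your argument on its own merits.

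Your $2$-transitivity step is clean and complete. Eliminating $a_{t_k}$ via $f(\rho)=0$, writing $q(x)=f(x)/(x-\rho)$ as a polynomial linear in $a_{t_1}$, and reducing the coprimality of its $a_{t_1}$-coefficients to $\gcd\bigl(\{t\}\cup\{t-t_i:1\le i\le k-1\}\bigr)=\gcd(t_1,\dots,t_k)=1$ via the cyclotomic factorisation of $x^m-\rho^m$ is exactly right. This is the place where the hypothesis is used, and the argument is rigorous as written.

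The transposition step is correct in outline but, as you anticipate, is where the work lies. The one-parameter family $a_{t_i}=b_{t_i}+uc_{t_i}$ over $\overline F((u))$ obscures the crucial point that the resulting transposition must lie in $G=\Gal(f/K)$ and not merely in the Galois group over some auxiliary base. The clean formulation is to take the discrete valuation $v$ on $K=F(a_{t_1},\dots,a_{t_k})$ given by the generic point of the component $D=\overline{\mathrm{im}\,Z}$ of the discriminant locus. Your dimension count shows $Z$ is an affine-space bundle over $\mathbb A^1\setminus\{0\}$, hence irreducible of dimension $k-1$; the generic fibre of $Z\to\mathbb A^k$ is a single point (two double roots, or a triple root, impose one further condition, and for $k=2$ the explicit formulas $b_{t_1}=-\frac{t}{t-t_1}\alpha^{t_1}$, $b_{t_k}=\frac{t_1}{t-t_1}\alpha^{t}$ together with $\gcd(t_1,t)=1$ give injectivity). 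Hence the generic member of $D$ has exactly one double root and $t-2$ simple roots, which forces $v(\mathrm{disc}_x f)=1$. Over the completion $\widehat K_v$, Hensel splits off $t-2$ unramified roots and leaves a quadratic $g$ with $v(\mathrm{disc}\,g)=1$, so $g$ is irreducible and the inertia group at a place of the splitting field above $v$ is the transposition swapping its roots; this inertia group is a subgroup of $G$. With $G$ $2$-transitive and containing a transposition, $G=\SSym(t)$ follows. So your proof is sound once the specialization is phrased through the valuation on $K$ itself rather than through an auxiliary $\overline F((u))$.
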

\begin{theorem}
Let $k \leqslant 2$, let $1 \leqslant t_1 \leqslant t_2 \leqslant
\ldots \leqslant t_k = t,$ let greatest common divisor of $t_1, t_2,
\ldots, t_k$ equals 1. Assume that $t$ is even, $t_i$ is odd for all
$i, \, 1 \leqslant i \leqslant k-1$, Let $a_{t_1}, a_{t_2}, \ldots,
a_{t_k}$ be independent variables over a field $F$ of characteristic
$2$, $f(x) = x^t + \suml{i=1}{k} a_{t_i}x^{t-t_i}.$ Then the Galois
group of the polynomial $f(x)$ over the rational function field
$F(a_{t_1}, a_{t_2}, \ldots, a_{t_k})$ is isomorphic to the
symmetric group $\SSym(t)$ of degree $t$.
\end{theorem}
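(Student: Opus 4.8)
The plan is to prove that $G:=\Gal(f/K)$, with $K:=F(a_{t_1},\dots,a_{t_k})$, is a primitive subgroup of $\SSym(t)$ that contains a transposition, and then to conclude by Jordan's theorem. \textbf{Step 1 (separability and transitivity).} Since $t_k=t$, the coefficient $a_t$ occurs in $f$ only in the constant term and only linearly, so viewing $f$ as a polynomial in $a_t$ over $F(a_{t_1},\dots,a_{t_{k-1}})[x]$ it is linear with unit content, hence irreducible, and by Gauss's lemma $f$ is irreducible over $K$. Over a field of characteristic $2$ an irreducible polynomial is inseparable only if it is a polynomial in $x^2$, which would force every exponent occurring in $f$ --- namely $t$, the exponents $t-t_i$ $(1\leqslant i\leqslant k-1)$ and $0$ --- to be even; but $t$ is even and each $t_i$ is odd, so $t-t_i$ is odd, and the monomial $a_{t_1}x^{t-t_1}$ really occurs --- a contradiction. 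Hence $f$ is separable and $G$ is a transitive subgroup of $\SSym(t)$; this is the first use of the parity hypotheses.

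\textbf{Step 2 ($2$-transitivity).} Fix a root $\theta$ of $f$. From $f(\theta)=0$ one gets $a_t=\theta^t+\sum_{i<k}a_{t_i}\theta^{t-t_i}$, so $K(\theta)=F(a_{t_1},\dots,a_{t_{k-1}},\theta)$ with $a_{t_1},\dots,a_{t_{k-1}},\theta$ algebraically independent over $F$; writing $R:=F(a_{t_2},\dots,a_{t_{k-1}})(\theta)$ we have $K(\theta)=R(a_{t_1})$ with $a_{t_1}$ transcendental over $R$ and $\theta$ transcendental over $\overline F(a_{t_2},\dots,a_{t_{k-1}})$. Substituting for $a_t$ gives $f(x)=(x^t-\theta^t)+\sum_{i<k}a_{t_i}(x^{t-t_i}-\theta^{t-t_i})$, hence $f(x)/(x-\theta)=c_0(x)+c_1(x)\,a_{t_1}\in R[x]$ with $c_1=(x^{t-t_1}-\theta^{t-t_1})/(x-\theta)$ and $c_0=(x^t-\theta^t)/(x-\theta)+\sum_{i=2}^{k-1}a_{t_i}(x^{t-t_i}-\theta^{t-t_i})/(x-\theta)$. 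I would then check that $c_0,c_1$ are coprime in $R[x]$: a common root $\rho$ must satisfy $\rho\neq\theta$ (otherwise $(x-\theta)^2\mid f$, contradicting Step 1), so $\zeta:=\rho/\theta\neq1$ has $\zeta^{t-t_1}=1$; substituting $\rho=\zeta\theta$ into $c_0(\rho)=0$ and using that $\theta$ is transcendental over $\overline F(a_{t_2},\dots,a_{t_{k-1}})$ forces $\zeta^t=1$ and $\zeta^{t-t_i}=1$ $(2\leqslant i\leqslant k-1)$, hence $\zeta^{\gcd(t,\,t-t_1,\dots,\,t-t_{k-1})}=1$; since $\gcd(t,t-t_1,\dots,t-t_{k-1})=\gcd(t_1,\dots,t_{k-1},t_k)=1$ we get $\zeta=1$, a contradiction. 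As $f(x)/(x-\theta)$ is linear in $a_{t_1}$ over $R[x]$ with coprime coefficients it is irreducible over $R[x]$, hence over $R(a_{t_1})=K(\theta)$ by Gauss; thus the point stabiliser in $G$ is transitive on the remaining roots, i.e.\ $G$ is $2$-transitive and in particular primitive. This step uses $\gcd(t_1,\dots,t_k)=1$.

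\textbf{Step 3 (a transposition --- the hard part).} Here I would produce a discrete valuation $v$ of $K$ over which $f$ factors, in the completion $\widehat K_v$, as $q\cdot h$ with $q=x^2+bx+c$ a separable quadratic of Artin--Schreier type, i.e.\ $c/b^2\notin\wp(\widehat K_v)$ where $\wp(y)=y^2+y$ (equivalently, the Newton polygon of $f$ at $v$ has a width-$2$ segment whose slope has denominator exactly $2$), while $h$ has degree $t-2$ and splits over an unramified extension of $\widehat K_v$. Then the ramified quadratic extension cut out by $q$ and the unramified splitting field of $h$ are linearly disjoint over $\widehat K_v$, so $\Gal(f/\widehat K_v)\cong\mathbb{Z}/2\times(\text{unramified part})$; the generator of the $\mathbb{Z}/2$ factor, read inside the decomposition group $D_v\leqslant G$, interchanges the two roots of $q$ and fixes the $t-2$ roots of $h$ --- a transposition in $G$. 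The valuation $v$ is obtained by degenerating the $a_{t_i}$ so that $f$ acquires a root of multiplicity exactly $2$; the main obstacle will be to do this uniformly and to verify that the ramification of $q$ is exactly $\mathbb{Z}/2$ and not larger --- it is necessarily wild, since $2\mid 2$ --- and this is precisely where "$t$ even, $t_i$ odd" is indispensable, as it pins down $f'=\sum_{i<k}a_{t_i}x^{t-t_i-1}$ and the shape of the relevant Newton polygon; I expect that the residues of $t$ and of the $t_i$ modulo $4$ will have to be treated separately.

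\textbf{Step 4 (conclusion).} Being a primitive subgroup of $\SSym(t)$ that contains a transposition, $G$ equals $\SSym(t)$ by Jordan's theorem, which is the assertion.
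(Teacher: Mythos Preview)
The paper states this theorem without proof (it appears among a sequence of announced results, presumably to be proved in a later instalment of the series indicated by the ``I.'' in the title), so there is no argument in the paper to compare your proposal against.

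On the merits of your outline: Steps~1 and~2 are sound and use the hypotheses in the right places. In Step~2 you should note that the argument tacitly assumes the $t_i$ are pairwise distinct, so that the powers $\theta^{t-1},\theta^{t-t_2-1},\dots,\theta^{t-t_{k-1}-1}$ are genuinely different; this is harmless since repeated $t_i$ can be merged without changing the polynomial or the hypothesis $\gcd(t_1,\dots,t_k)=1$. Step~3, however, is not a proof but a plan: you have not exhibited the required valuation, nor verified the shape of the Newton polygon, nor checked that the wild quadratic piece really contributes a transposition rather than something larger. In characteristic~$2$ the ramification of a quadratic factor is \emph{always} wild, and controlling it (via the Artin--Schreier invariant $c/b^2$ you mention) is exactly the delicate point; your own remark that ``the residues of $t$ and of the $t_i$ modulo $4$ will have to be treated separately'' is an admission that the argument is not yet in hand. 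Until Step~3 is carried out, the proposal is a plausible strategy but not a proof.
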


For convenience of the reader we shall remind [3, p.80] definition
of the matrix
$$
D_r(a_0, a_1, \ldots, a_t) = (a_{\oalp, \obet})_{\oalp, \obet \in
\Q_{r,t}}, \, 0 \leqslant r \leqslant t. \quad D_0(a_0)=a_0.
$$
Let $1 \leqslant r \leqslant t, \:  \oalp = (i_1, i_2, \ldots i_r)
\in \Q_{r,t}.$ If $i_r < t$ then
\begin{equation}
a_{\oalp, \obet} =
\begin{cases}
a_0, &\text {if $\obet = (i_1+1, i_2+1, \ldots, i_r+1)$,} \\
(-1)^k a_{i_k}, &\text{if $\obet = (1,i_1+1, \ldots, \widehat{i_k+1},
\ldots, i_r+1 ), \, 1 \leqslant k \leqslant r$,} \\
0, &\text{otherwise.}
\end{cases}
\end{equation}
If $i_r=t,$ then
\begin{equation}
a_{\oalp, \obet} = \begin{cases}
 (-1)^r a_t, &\text{ if $\obet = (1,i_1+1,i_2+1, \ldots, i_{r-1}+1 ),$ } \\
0, &\hbox{if $\obet \ne (1,i_1+1,i_2+1, \ldots, i_{r-1}+1 ).$}
\end{cases}
\end{equation}
It follows that
\begin{equation}
D_1(a_0,a_1, \ldots, a_t) = \Pi_{1,t}^{[1]}(a_0, -a_1, \ldots, -a_t
).
\end{equation}

\begin{theorem}
Let $k \! \geqslant \! 2,$ let $1 \! \leqslant \! t_1 \! \leqslant
\! t_2 \! \leqslant \! \ldots \! \leqslant \!t_k = t,$ let greatest
common divisor of $t_1, t_2, \ldots, t_k$ is equal to 1. Let
$a_0,a_{t_1}, a_{t_2}, \ldots, a_{t_k}$ be independent variables
over a field $F$ of characteristic $0$, let $a_i=0$ for all $i \in
N_{t-1} \setminus \{ t_1,t_2, \ldots, t_{k-1}\}$. Then the
polynomial $\det(xI_{\binom{t}{r}} - D_r(a_0, a_1, \ldots, a_t))$ is
irreducible over the rational function field $F(a_0,a_{t_1},
a_{t_2}, \ldots, a_{t_k}).$
\end{theorem}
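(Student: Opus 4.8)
The plan is to realise $D_r:=D_r(a_0,a_1,\dots,a_t)$ as a fixed scalar multiple of the $r$-th compound (exterior power) matrix of the nearly companion matrix $D_1$, and then to read off its characteristic polynomial from the roots of the polynomial whose Galois group was computed in Theorem~23. Under the present specialisation $D_1=D_1(a_0,a_1,\dots,a_t)$ is the $t\times t$ matrix with $-a_j$ in the $(j,1)$ entry for $1\leqslant j\leqslant t$, with $a_0$ in the $(j,j+1)$ entry for $1\leqslant j\leqslant t-1$, and zeros elsewhere; expanding $\det(xI_t-D_1)$ along the first column gives $\det(xI_t-D_1)=x^t+\sum_{i=1}^{k}a_0^{t_i-1}a_{t_i}x^{t-t_i}$, and the substitution $x=a_0y$ turns this into $\det(xI_t-D_1)=a_0^{t}\,g(x/a_0)$, where $g(y)=y^t+\sum_{i=1}^{k}b_{t_i}y^{t-t_i}$ and $b_{t_i}:=a_{t_i}/a_0$. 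Since $a_0,a_{t_1},\dots,a_{t_k}$ are algebraically independent over $F$, so are $a_0,b_{t_1},\dots,b_{t_k}$, and $K:=F(a_0,a_{t_1},\dots,a_{t_k})=F(a_0)(b_{t_1},\dots,b_{t_k})$; applying Theorem~23 over the characteristic-zero base field $F(a_0)$, the Galois group of $g$ over $K$ is isomorphic to $\SSym(t)$ and acts on the root set $\{\omega_1,\dots,\omega_t\}$ of $g$ as the full symmetric group, so $g$ is separable, its $t$ roots are pairwise distinct, and they are nonzero since $g(0)=b_{t_k}\ne0$. In particular $D_1$ has the $t$ pairwise distinct eigenvalues $\theta_j=a_0\omega_j$, lying in the splitting field $L$ of $g$ over $K$.

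Next I identify the characteristic polynomial of $D_r$. A direct exterior-algebra computation from the definition of $D_r$ --- expand $D_1^{\mathsf T}e_{i_1}\wedge\dots\wedge D_1^{\mathsf T}e_{i_r}$ for an increasing tuple $\overline\alpha=(i_1,\dots,i_r)\in\Q_{r,t}$ and track the sign incurred when the single surviving $e_1$-factor is carried to the front --- shows that $\bigwedge\nolimits^{r}(D_1^{\mathsf T})=a_0^{r-1}(D_r^{\mathsf T})$ for all $0\leqslant r\leqslant t$: the factor $(-1)^{k}$ in the definition of $D_r$ is exactly that reordering sign, and $a_0^{r-1}$ absorbs the degree discrepancy between the entries of $\bigwedge\nolimits^{r}(D_1^{\mathsf T})$ (homogeneous of degree $r$) and those of $D_r$ (homogeneous of degree $1$). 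By the standard property of compound matrices, $\bigwedge\nolimits^{r}(D_1^{\mathsf T})$ has characteristic polynomial $\prod_{S}\bigl(x-\prod_{j\in S}\theta_j\bigr)$, the product running over the $r$-element subsets $S$ of $\{1,\dots,t\}$; since $D_r$ and $D_r^{\mathsf T}=a_0^{1-r}\bigwedge\nolimits^{r}(D_1^{\mathsf T})$ have the same characteristic polynomial and $a_0^{1-r}\prod_{j\in S}\theta_j=a_0\prod_{j\in S}\omega_j$, we obtain
$$
\det\bigl(xI_{\binom{t}{r}}-D_r\bigr)=\prod_{\substack{S\subseteq\{1,\dots,t\}\\|S|=r}}\Bigl(x-a_0\prod\nolimits_{j\in S}\omega_j\Bigr)\in K[x],
$$
a polynomial with $\binom{t}{r}$ roots in $L$.

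Finally I deduce irreducibility. If $r=0$ or $r=t$ this polynomial is linear, hence irreducible, so assume $1\leqslant r\leqslant t-1$. It suffices to prove the map $S\mapsto\prod_{j\in S}\omega_j$ is injective on the $r$-element subsets of $\{1,\dots,t\}$: granting this, the $\binom{t}{r}$ roots displayed above are pairwise distinct, $\Gal(L/K)\cong\SSym(t)$ permutes them according to its transitive action on those subsets, and a separable polynomial whose roots form a single $\Gal(L/K)$-orbit is irreducible over $K$. For the injectivity, suppose $\prod_{j\in S}\omega_j=\prod_{j\in S'}\omega_j$ with $S\ne S'$. If $r=1$ this reads $\omega_i=\omega_{i'}$ for two distinct indices, contradicting separability. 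If $r\geqslant2$, then $t\geqslant3$; picking $i\in S\setminus S'$ and $i'\in S'\setminus S$ and applying the transposition $(i\ i')\in\Gal(L/K)$ to both sides --- legitimate since every $\omega_j\ne0$ --- gives $\omega_i^{2}=\omega_{i'}^{2}$, hence $\omega_i=-\omega_{i'}$, the alternative $\omega_i=\omega_{i'}$ being excluded by separability; then picking $i''\notin\{i,i'\}$ and $\sigma\in\Gal(L/K)$ fixing $i$ with $\sigma(i')=i''$ and applying $\sigma$ gives $\omega_i=-\omega_{i''}$, so $\omega_{i'}=\omega_{i''}$, again contradicting separability. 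This establishes the injectivity, and with it the theorem.

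The step I expect to be the main obstacle is the compound-matrix identification $\bigwedge\nolimits^{r}(D_1^{\mathsf T})=a_0^{r-1}(D_r^{\mathsf T})$: one must check carefully that the explicit sign pattern $(-1)^{k}$ in the definition of $D_r$ is exactly the permutation sign arising in the wedge expansion, and that the exceptional column --- the one carrying the image of $e_1$, indexed by the tuples that begin with $1$ --- is placed correctly. Once this structural fact is in hand, Theorem~23 supplies all the arithmetic, and the remainder is a short exercise in Galois theory.
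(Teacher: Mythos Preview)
The paper does not include a proof of this theorem; it is stated immediately after Theorems~23 and~24 on the Galois group of $x^t+\sum_i a_{t_i}x^{t-t_i}$ and is evidently meant to be deduced from them, but no argument is written out. Your route --- realise $D_r$ as $a_0^{1-r}$ times the $r$-th compound of $D_1^{\mathsf T}$, compute $\det(xI_t-D_1)$ explicitly, invoke Theorem~23 over $F(a_0)$ to get the full symmetric Galois action on the roots $\omega_j$, and deduce irreducibility from transitivity on $r$-subsets plus distinctness of the products $\prod_{j\in S}\omega_j$ --- is precisely the natural one, and the placement of the theorem in the paper confirms that Theorem~23 is the intended input.

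I checked the two points you flagged. The identity $\bigwedge^r(D_1^{\mathsf T})=a_0^{r-1}D_r^{\mathsf T}$ holds: writing $D_1^{\mathsf T}e_j=-a_je_1+a_0e_{j+1}$ (with the convention $e_{t+1}=0$), expanding the wedge, and moving the unique surviving $e_1$ to the front produces exactly the sign $(-1)^s$ and the tuple $(1,i_1{+}1,\dots,\widehat{i_s{+}1},\dots,i_r{+}1)$ in the paper's definition, and the boundary case $i_r=t$ gives $(-1)^r a_t$ on the nose. Your characteristic polynomial $\det(xI_t-D_1)=x^t+\sum_j a_0^{\,j-1}a_jx^{t-j}$ is correct (each first-column minor $M_{j,1}$ has a single nonzero term $(-a_0)^{j-1}x^{t-j}$). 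Finally, the distinctness argument is sound: once $\omega_i=-\omega_{i'}$ is established it is an equality in $L$, so applying any $\sigma\in\Gal(L/K)$ to it is legitimate and yields $\omega_i=-\omega_{i''}$, forcing the contradiction $\omega_{i'}=\omega_{i''}$.

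In short, your proof is correct, and there is nothing in the paper to compare it against beyond the evident intent that Theorem~23 should drive the result.
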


\begin{theorem}
Let $t \! \geqslant \!  2,$ let $1 \! \leqslant \! t_1 \! \leqslant
\! t_2 \! \leqslant \! \ldots \! \leqslant \! t_k = \!t,$ let
greatest common divisor of \, $t_1, t_2, \ldots, t_k$ is equal to 1.
Assume that $t$ is even, $t_i$ is odd for all $i, \, 1 \leqslant i
\leqslant k-1.$ Let $0 \leqslant r \leqslant t,$ let $a_0, a_{t_1},
a_{t_2}, \ldots, a_{t_k}$ be independent variables over a field $F$
of characteristic $0$, let $a_i=0$ for all $i \in N_{t-1} \setminus
\{ t_1,t_2, \ldots, t_{k-1}\}.$ Then the polynomial
$\det(xI_{\binom{t}{r}} - \Pi_{r,t}^{[1]}(a_0, a_1, \ldots, a_t))$
is irreducible over the rational function field $F(a_0,a_{t_1},
a_{t_2}, \ldots, a_{t_k}).$
\end{theorem}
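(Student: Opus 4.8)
The plan is to reduce the problem modulo $2$ --- where the signs in the definition of $D_r$ collapse and $\Pi_{r,t}^{[1]}$ literally becomes $D_r$ --- and then to run the argument of the preceding theorem with the Galois group theorem in characteristic $2$ in place of its characteristic $0$ analogue. Concretely: for $k=1$ every factor $k!$ and every binomial coefficient in the description of $\Pi_{r,t}^{[1]}(a_0,a_1,\dots,a_t)$ equals $1$, so its nonzero entries are single letters $a_i$; reading off the definition one checks that $\Pi_{r,t}^{[1]}$ and $D_r(a_0,a_1,\dots,a_t)$ --- both $\binom tr\times\binom tr$ matrices naturally indexed by the $r$-subsets of $\{1,\dots,t\}$ --- have the same support and the same entries, except for the signs $(-1)^k,(-1)^r$ occurring in $D_r$. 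Hence $\Pi_{r,t}^{[1]}(a_0,\dots,a_t)\equiv D_r(a_0,\dots,a_t)\pmod 2$ as matrices over $\mathbb F_2[a_0,\dots,a_t]$, so the monic polynomials $\det(xI_{\binom tr}-\Pi_{r,t}^{[1]})$ and $\det(xI_{\binom tr}-D_r)$ in $\mathbb Z[a_0,\dots,a_t][x]$ are congruent modulo $2$, and the congruence survives the specialization $a_i=0$, $i\in N_{t-1}\setminus\{t_1,\dots,t_{k-1}\}$.

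Next I would prove the characteristic $2$ counterpart of the preceding theorem: under the present hypotheses, and after that specialization, $\det(xI_{\binom tr}-D_r(a_0,a_1,\dots,a_t))$ is irreducible over $\Omega(a_0,a_{t_1},\dots,a_{t_k})$ for every field $\Omega$ of characteristic $2$. One reuses the method proving the preceding theorem: $a_0^{r-1}D_r$ is similar to the $r$-th compound matrix of $D_1$, and $D_1$ is a companion-type matrix with characteristic polynomial $y^t+\sum_{i=1}^t a_ia_0^{i-1}y^{t-i}$, which under the specialization is $f_{a_0}(y):=y^t+\sum_{j=1}^k a_{t_j}a_0^{t_j-1}y^{t-t_j}$; thus the $\binom tr$ eigenvalues of $D_r$ are the scaled products $a_0^{1-r}\prod_{i\in S}\lambda_i$ over the $r$-subsets $S$, where $\lambda_1,\dots,\lambda_t$ are the roots of $f_{a_0}$. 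Writing $b_j=a_{t_j}a_0^{t_j-1}$ we have $\Omega(a_0,a_{t_1},\dots,a_{t_k})=\Omega(a_0)(b_1,\dots,b_k)$ and $f_{a_0}(y)=y^t+\sum_j b_jy^{t-t_j}$ with $b_1,\dots,b_k$ algebraically independent over the characteristic $2$ field $\Omega(a_0)$; the Galois group theorem in characteristic $2$ stated above (applied with base field $\Omega(a_0)$ --- its hypotheses ``$t$ even, $t_i$ odd for $i<k$, $\gcd=1$'' are exactly those assumed here) gives $\Gal\cong\SSym(t)$. Since $\SSym(t)$ is transitive on $r$-subsets, the $\binom tr$ eigenvalues of $D_r$ form a single Galois orbit, and they are pairwise distinct: if $\prod_{i\in S}\lambda_i=\prod_{i\in S'}\lambda_i$ with $S\ne S'$, cancelling common factors gives disjoint nonempty $U,V$ with $\prod_{i\in U}\lambda_i=\prod_{i\in V}\lambda_i$, and then either a transposition fixing $V$ and carrying one index of $U$ outside $U\cup V$ (available when $2|U|<t$), or, when $V$ is the complement of $U$, the equivariance of complementation under $\SSym(t)$ (which forces $\bigl(\prod_{i\in W}\lambda_i\bigr)^2=\prod_{i=1}^t\lambda_i$ for every $W$ with $|W|=|U|$), forces $\lambda_{j'}=\lambda_j$ for some $j\ne j'$ --- in characteristic $2$ no residual $\pm1$ ambiguity arises --- contradicting the separability of $f_{a_0}$. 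Therefore $\det(xI_{\binom tr}-D_r)$, splitting into $\binom tr$ distinct linear factors over the splitting field of $f_{a_0}$ on whose roots $\Gal$ acts transitively, is the minimal polynomial over $\Omega(a_0,a_{t_1},\dots,a_{t_k})$ of any one of its roots, hence irreducible.

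Finally I would descend to characteristic $0$. Put $g(x)=\det(xI_{\binom tr}-\Pi_{r,t}^{[1]})$ after the specialization; it is monic in $x$ with coefficients in the UFD $R=\mathbb Z[a_0,a_{t_1},\dots,a_{t_k}]$, and by the first two steps (with $\Omega=\overline{\mathbb F_2}$) the reduction $g\bmod 2=\det(xI_{\binom tr}-D_r)\bmod 2$ is irreducible over $\overline{\mathbb F_2}(a_0,a_{t_1},\dots,a_{t_k})$. Were $g$ reducible over $\overline{\mathbb Q}(a_0,a_{t_1},\dots,a_{t_k})$, then by Gauss's lemma $g=g_1g_2$ with $g_1,g_2$ monic in $x$ of positive $x$-degree; their coefficients are integral over $R$, hence lie in $\overline{\mathbb Z}[a_0,a_{t_1},\dots,a_{t_k}]$, and reduction modulo a prime of $\overline{\mathbb Z}$ above $2$ (residue field $\overline{\mathbb F_2}$) would yield a nontrivial factorization of $g\bmod 2$ over $\overline{\mathbb F_2}(a_0,a_{t_1},\dots,a_{t_k})$ --- impossible. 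So $g$ is geometrically irreducible over $\mathbb Q$, and therefore irreducible over $F(a_0,a_{t_1},\dots,a_{t_k})$ for every field $F$ of characteristic $0$, which is the assertion.

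The hard part is the characteristic $2$ detour: one must recognize that reducing modulo $2$ is exactly what converts $\Pi_{r,t}^{[1]}$ into $D_r$, thereby unlocking both the compound-matrix description and the characteristic $2$ Galois theorem (whose parity hypotheses are precisely why the present theorem carries them), and then one must carry the conclusion back up --- for this what is needed is \emph{geometric} irreducibility of $g\bmod 2$, which is why the second step has to be run over $\overline{\mathbb F_2}$ rather than merely over $\mathbb F_2$. The purely combinatorial ingredients --- that $\Pi_{r,t}^{[1]}$ and $D_r$ agree modulo $2$, and that $a_0^{r-1}D_r$ is the $r$-th compound matrix of $D_1$ --- are routine but must be verified with care about the indexing of rows and columns by $r$-subsets.
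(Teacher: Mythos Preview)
The paper as given does not include a proof of this theorem (Theorems~23--26 are stated without proof), so there is no line-by-line comparison to make. That said, the architecture of the paper makes the intended argument transparent: Theorem~24 supplies $\Gal\cong\SSym(t)$ in characteristic~$2$ under exactly the parity hypotheses imposed here, and Theorem~25 is the analogous irreducibility statement for $D_r$ in characteristic~$0$ proved via Theorem~23. Your plan---reduce mod~$2$ so that $\Pi_{r,t}^{[1]}$ and $D_r$ coincide, invoke Theorem~24 to get $\SSym(t)$ acting on the roots, deduce geometric irreducibility of the reduction, then lift---is precisely the route the paper is set up to take. In that sense your proposal matches the paper's evident intent.

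On correctness, the three steps are sound. The congruence $\Pi_{r,t}^{[1]}\equiv D_r\pmod 2$ follows from the explicit descriptions (and the $r=1$ case is literally equation~(29)). The identity $a_0^{r-1}D_r=C_r(D_1)$ (not merely similarity) is the key structural input; it is part of the theory in reference~[3] and can also be checked directly from the definitions---you should state it as an equality and either cite it or sketch the verification, since it is what reduces the eigenvalue problem to products of roots of $f_{a_0}$. Your distinctness argument for the $\binom{t}{r}$ products is correct, including the char~$2$ handling of the complementary case via injectivity of Frobenius.

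One point to tighten: the descent from $\overline{\mathbb F_2}$ to characteristic~$0$. Rather than invoking ``$\overline{\mathbb Z}$'' and a prime above~$2$ in it, pass to a number field $K$ containing the finitely many algebraic coefficients of a putative factorization; integrality of the coefficients over $\mathbb Z[a_0,a_{t_1},\dots,a_{t_k}]$ together with Gauss's lemma in the UFDs $(\mathcal O_K)_{\mathfrak p}[a_0,a_{t_1},\dots,a_{t_k}]$ forces the factors into $\mathcal O_K[a_0,a_{t_1},\dots,a_{t_k}][x]$, and reduction modulo a prime $\mathfrak p\mid 2$ gives the contradiction cleanly. With that adjustment the argument is complete.
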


\begin{theorem}
Let $t \geqslant 1$, let $0 \leqslant s \leqslant t-1$ and $1
\leqslant n \leqslant t$, let $a_0,a_1, \dots, a_t$ be independent
variables. Then
\begin{multline}
\Tr \Bigl( \Bigl( \frac{\partial}{\partial a_0} D_s(a_0, a_1, \dots,
a_t ) \Bigr) \cdot (D_s (a_0, a_1, \dots, a_t))^{n-1} \Bigr) = \\ =-
\Tr \Bigl( \Bigl( \frac{\partial}{\partial a_n} D_{s+1}(a_0, a_1,
\dots, a_t) \Bigr) \cdot (D_{s+1} (a_0, a_1, \dots, a_t))^{n-1}
\Bigr)
\end{multline}
\end{theorem}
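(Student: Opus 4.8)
The plan is to carry the method of Theorem~10 (which proves the analogous identity for $K_t^{[k]}$) over to the matrices $D_s$, paying attention to the extra signs $(-1)^k$ of (26)--(27). By (26)--(27) every nonzero entry of $D_s(a_0,\dots,a_t)$ is either $a_0$, which occurs precisely for a \emph{shift} $\oalp\mapsto\oalp+1$ (possible only when $\max\{\oalp\}<t$), or $(-1)^ka_{i_k}$, which occurs for the \emph{recycling of the $k$-th component} $(i_1,\dots,i_r)\mapsto(1,i_1+1,\dots,\widehat{i_k+1},\dots,i_r+1)$ (all $k$ available when $\max\{\oalp\}<t$, only $k=r$ when $\max\{\oalp\}=t$). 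Hence $\frac{\partial}{\partial a_0}D_s$ is the $0$--$1$ matrix marking the shift edges and $\frac{\partial}{\partial a_n}D_{s+1}$ is the $0,\pm1$ matrix marking the edges that recycle the component~$n$, so, expanding the trace, the left side of the identity to be proved is $\sum\prod_{i=2}^{n}D_s[\mathbf L(\oalp_i)\mid\mathbf L(\oalp_{i+1})]$ over closed walks $\oalp_1\to\dots\to\oalp_n\to\oalp_1$ in $D_s$ whose first step is a shift, and the right side, up to the global minus sign, is $\sum(-1)^{k_0}\prod_{i=2}^{n}D_{s+1}[\mathbf L(\obet_i)\mid\mathbf L(\obet_{i+1})]$ over closed walks in $D_{s+1}$ whose first step recycles~$n$, with $k_0$ the position of~$n$ in $\obet_1$.

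Next I would identify these two families of walks. A closed walk of length $n$ in $D_s$ whose first step is a shift necessarily has $n\notin\{\oalp_1\}$: tracing back a component of value~$n$ at time $n+1$ (such a component exists, since $\{\oalp_{n+1}\}=\{\oalp_1\}$) forces it to have been born at step~$1$, but a shift creates nothing, a contradiction. Dually, in a closed walk of length~$n$ in $D_{s+1}$ whose first step recycles~$n$, the same trace-back shows that the component created by that first recycling is never recycled again and returns to value~$n$ at time $n+1$ (any component at value~$n$ at time $n+1$ must have been created at step~$1$, and the only step-$1$ creation is that one). So the two families correspond: insert into $\oalp_1$ a carried component of value~$n$ (legitimately, since $n\notin\{\oalp_1\}$), whose first step recycles it down to value~$1$ and which is thereafter carried along by shifts only, so that it has value~$n$ at time~$1$ and value $i-1$ at each time~$i$, $2\le i\le n+1$; conversely, delete that carried component. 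The reverse of these two constructions recover one another, and the degenerate cases --- $s=0$, $n=1$, single-diagonal-entry walks --- fall under the same description.

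Under the bijection the two walks produce the same monomial, so it remains only to compare signs. Write $a_1=\Card\{x\in\{\oalp_1\}\mid x<n\}$; then the position of~$n$ in the enlarged initial vector is $k_0=a_1+1$, so the marked step contributes $1$ on the left and $(-1)^{a_1+1}$ on the right. A non-marked shift step contributes $+1$ on both sides; a non-marked step recycling the component $m_i$ at time~$i$ contributes, on the right, an extra sign $(-1)^{[m_i\ge i]}$, because the carried component then has value $i-1$ and precedes $m_i$ exactly when $m_i\ge i$ (note $m_i\ne i-1$). Thus the right-hand term equals $(-1)^{\,1+a_1+c}$ times the left-hand term, where $c=\Card\{i\in\{2,\dots,n\}\mid\text{step }i\text{ recycles }m_i\text{ with }m_i\ge i\}$, and the theorem follows provided $a_1=c$, for then the exponent is $1+2a_1$, the two terms differ exactly by $-1$, and summation over all walks yields the identity. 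The equality $a_1=c$ is the linchpin, and I would get it by tracking $b_i=\Card\{x\in\{\oalp_i\}\mid x<i-1\}$ (the number of original components below the carried one at time~$i$): here $b_2=0$ since $\{\oalp_2\}=\{\oalp_1\}+1$ lies above~$1$, $b_{n+1}=a_1$ since $\{\oalp_{n+1}\}=\{\oalp_1\}$, a shift step leaves $b_i$ fixed, and a recycle step changes it by exactly $[m_i\ge i]$; summing over $i=2,\dots,n$ gives $a_1=b_{n+1}-b_2=c$.

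The main obstacle is not any single deep step but the bookkeeping: one must juggle three things along a walk --- the produced monomial, the accumulated sign, and the position of the carried component --- and in particular establish the clean identity $a_1=c$ above; once the structural facts of the second paragraph (which are exactly the injectivity/surjectivity content appearing in the proof of Theorem~10, here simplified by $k=1$) are in hand, the sign count of the third paragraph finishes the proof.
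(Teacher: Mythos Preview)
Your proof is correct. The paper does not actually supply a proof of this theorem (Theorem~27 is stated and then used in Theorem~28 without being proved), so there is nothing to compare against directly; but your argument is precisely the natural adaptation of the paper's proof of Theorem~10 to the matrices $D_s$, which is surely the intended route. The bijection ``insert a carried component of value $n$ into $\overline\alpha_1$ and let it ride along as the value $i-1$ at time $i$'' is exactly the $k=1$ specialisation of the $\overline\alpha_i\mapsto\overline\beta_i$ map in Theorem~10, and your verification that $i-1\notin\{\overline\alpha_i\}$ and that the inverse construction recovers the original walk is clean.

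The genuinely new content over Theorem~10 is the sign bookkeeping, and you handle it correctly: the position of the recycled component $m_i$ in $\overline\beta_i$ exceeds its position in $\overline\alpha_i$ by exactly $[m_i\ge i]$ (using $m_i\ne i-1$), so the sign discrepancy between the two walks is $(-1)^{k_0+c}$ with $k_0=a_1+1$; your telescoping argument via $b_i=\Card\{x\in\{\overline\alpha_i\}:x<i-1\}$ giving $b_{i+1}-b_i=[m_i\ge i]$ on recycle steps and $0$ on shift steps, whence $a_1=b_{n+1}-b_2=c$, is the right way to close the gap. The boundary cases ($s=0$, $n=1$, $n=t$, and steps forced by $\max\{\overline\alpha_i\}=t$) all fall under your general description without exception, as one checks directly.
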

\begin{theorem}
Let $t \geqslant 1$ and $1 \leqslant n \leqslant t$, let $a_0, a_1,
\dots, a_t$ be independent variables. Then
\begin{multline}
\sum_{s=0}^t (-1)^s \Tr \Bigl( \Bigl( \frac{\partial}{\partial a_0}
D_s(a_0, a_1, \dots, a_t ) \Bigr) \cdot (D_s (a_0, a_1, \dots,
a_t))^{n-1} \Bigr) = \\ = \sum_{s=0}^t(-1)^s \Tr \Bigl( \Bigl(
\frac{\partial}{\partial a_n} D_s(a_0, a_1, \dots, a_t) \Bigr) \cdot
(D_s (a_0, a_1, \dots, a_t))^{n-1} \Bigr).
\end{multline}
\end{theorem}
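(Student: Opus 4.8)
The plan is to follow the template of the earlier telescoping identity for the matrices $\Pi_{r,t}^{[k]}$: chain the shift relation of the preceding theorem, $\Tr\bigl((\tfrac{\partial}{\partial a_0}D_s)(D_s)^{n-1}\bigr)=-\Tr\bigl((\tfrac{\partial}{\partial a_n}D_{s+1})(D_{s+1})^{n-1}\bigr)$ valid for $0\leqslant s\leqslant t-1$, with the two boundary values of $D_s$. First I would record those boundary values. Since $D_0(a_0,a_1,\dots,a_t)=a_0$ depends only on $a_0$, we have $\tfrac{\partial}{\partial a_n}D_0=0$ for every $n\geqslant1$. At the other extreme, $\Q_{t,t}$ is the singleton $\{(1,2,\dots,t)\}$, so $D_t$ is a $1\times1$ matrix; its single (diagonal) entry, computed from the $i_r=t$ case of the definition with $\overline\alpha=(1,2,\dots,t)$ and $r=t$, equals $(-1)^t a_t$. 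Hence $D_t(a_0,a_1,\dots,a_t)=\bigl((-1)^t a_t\bigr)$ depends only on $a_t$, and $\tfrac{\partial}{\partial a_0}D_t=0$.

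These two facts let me truncate both sums. On the left-hand side the $s=t$ summand is $(-1)^t\Tr\bigl((\tfrac{\partial}{\partial a_0}D_t)(D_t)^{n-1}\bigr)=0$, so the left-hand side equals $\sum_{s=0}^{t-1}(-1)^s\Tr\bigl((\tfrac{\partial}{\partial a_0}D_s)(D_s)^{n-1}\bigr)$; on the right-hand side the $s=0$ summand is $\Tr\bigl((\tfrac{\partial}{\partial a_n}D_0)(D_0)^{n-1}\bigr)=0$, so the right-hand side equals $\sum_{s=1}^{t}(-1)^s\Tr\bigl((\tfrac{\partial}{\partial a_n}D_s)(D_s)^{n-1}\bigr)$. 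Now I would apply the shift relation to each term of the truncated left-hand sum: for $0\leqslant s\leqslant t-1$ it gives $(-1)^s\Tr\bigl((\tfrac{\partial}{\partial a_0}D_s)(D_s)^{n-1}\bigr)=(-1)^{s+1}\Tr\bigl((\tfrac{\partial}{\partial a_n}D_{s+1})(D_{s+1})^{n-1}\bigr)$, the extra sign absorbing the minus sign in the shift relation. Summing over $s=0,1,\dots,t-1$ and substituting $u=s+1$ turns this into $\sum_{u=1}^{t}(-1)^{u}\Tr\bigl((\tfrac{\partial}{\partial a_n}D_u)(D_u)^{n-1}\bigr)$, which is exactly the truncated right-hand side. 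Chaining the three equalities yields the assertion.

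The only genuinely new verification here is the boundary value $D_t=(-1)^t a_t$, which is immediate from the definition once one notes that $\Q_{t,t}$ has a single element; everything else reproduces the bookkeeping of the earlier $\Pi_{r,t}^{[k]}$ telescoping argument, with the shift relation of the preceding theorem assumed. The one point demanding care is the interplay between the alternating sign $(-1)^s$ present on both sides of this statement (absent in the $\Pi$ version, and exactly what is needed to compensate the minus sign in the $D$-shift relation) and the accompanying index shift $s\mapsto s+1$; I would therefore write that reindexing step out explicitly rather than leave it implicit, since that is where a sign error would most easily creep in.
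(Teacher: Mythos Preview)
Your proof is correct and follows essentially the same route as the paper: both compute the boundary values $D_0=a_0$ and $D_t=(-1)^t a_t$ to kill the extremal summands, then apply the shift relation of the preceding theorem term by term and reindex $s\mapsto s+1$, with the alternating sign $(-1)^s$ absorbing the minus sign from the shift. Your write-up is in fact slightly more careful than the paper's in explaining why $D_t=(-1)^t a_t$ and in flagging the sign bookkeeping.
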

\begin{proof}
Since $D_0(a_0, a_1, \dots, a_t) = a_0$, $D_t(a_0, a_1, \dots, a_t)
= (-1)^t a_t$, then $\frac{\partial}{\partial a_0 } D_t (a_0, a_1,
\dots, a_t)=0$, $\frac{\partial}{\partial a_n} D_0(a_0, a_1, \dots,
a_t) = 0$ and from the theorem 27 it follows, that
\begin{multline*}
\sum_{s=0}^t (-1)^s \Tr \Bigl( \Bigl( \frac{\partial}{\partial a_0}
D_s(a_0, a_1, \dots, a_t ) \Bigr) \cdot (D_s (a_0, a_1, \dots,
a_t))^{n-1} \Bigr) = \\
=\sum_{s=0}^{t-1} (-1)^s \Tr \Bigl( \Bigl( \frac{\partial}{\partial
a_0} D_s(a_0, a_1, \dots, a_t ) \Bigr) \cdot (D_s (a_0, a_1, \dots,
a_t))^{n-1} \Bigr) = \\
=\sum_{s=0}^{t-1} (-1)^{s+1} \Tr \Bigl( \Bigl(
\frac{\partial}{\partial a_n} D_{s+1}(a_0, a_1, \dots, a_t ) \Bigr)
\cdot (D_{s+1} (a_0, a_1, \dots, a_t))^{n-1} \Bigr) = \\
=\sum_{s=1}^t (-1)^s \Tr \Bigl( \Bigl( \frac{\partial}{\partial a_n}
D_s(a_0, a_1, \dots, a_t ) \Bigr) \cdot (D_s (a_0, a_1, \dots,
a_t))^{n-1} \Bigr) = \\
=\sum_{s=0}^t (-1)^s \Tr \Bigl( \Bigl( \frac{\partial}{\partial a_n}
D_s(a_0, a_1, \dots, a_t ) \Bigr) \cdot (D_s (a_0, a_1, \dots,
a_t))^{n-1} \Bigr)
\end{multline*}
\end{proof}

\begin{theorem}
Let $t \geqslant 2$, let $a_0, a_1, \dots, a_t$ be elements in a
commutative ring, let $1 \leqslant r \leqslant t-1$, let
$\overline{\alpha}, \overline{\beta} \in Q_{r,t}$, let $1 \leqslant
n \leqslant t-1$. Assume that $\max \{ \overline{\beta} \} \geqslant
n+1$, $\overline{\gamma} \in Q_{l,t}$, $ \{\overline{\gamma} \}
\subseteq \{ \overline{\beta} \}$, $\min\{ \overline{\gamma} \}
\geqslant n+1$ and $\{ \overline{\alpha} \} \not\supseteq \{
\overline{\gamma} -n \}$. Then
\begin{equation}
\bigl( D_r (a_0, a_1,  \dots a_t ) \bigr)^n [
\mathbf{L}_{r,t}(\overline{\alpha}) \mid
\mathbf{L}_{r,t}(\overline{\beta})] = 0.
\end{equation}
\end{theorem}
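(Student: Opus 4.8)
The plan is to imitate the proof of Theorem 6 (which is exactly the statement (22) with $K_t^{[k]}$ in place of $D_r$), and also of its analogue Theorem 12, proceeding by induction on $n$. Throughout I will identify a tuple $\oalp = (i_1, \dots, i_r) \in Q_{r,t}$ with $i_1 < \dots < i_r$ with the $r$-element subset $\{i_1, \dots, i_r\}$ of $N_t$, and for a set $S$ of integers I write $S+1$, $S-m$ for the translates. First I note that $\ogam$ may be assumed nonempty: if $\ogam = \emptyset$ then $\{\oalp\} \supseteq \emptyset = \{\ogam - n\}$, contradicting the hypothesis $\{\oalp\} \not\supseteq \{\ogam - n\}$.

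Next I would dispatch the base case $n = 1$, which is the only place where the detailed shape of $D_r$ is used. Assume $D_r(a_0, a_1, \dots, a_t)[\mathbf{L}_{r,t}(\oalp) \mid \mathbf{L}_{r,t}(\obet)] \ne 0$. Reading off (26)--(27), the column index $\obet$ is forced into one of the following forms: if $i_r < t$, either $\obet = (i_1+1, \dots, i_r+1)$ or $\obet = (1, i_1+1, \dots, \widehat{i_k+1}, \dots, i_r+1)$ for some $k$ with $1 \leqslant k \leqslant r$; and if $i_r = t$, then $\obet = (1, i_1+1, \dots, i_{r-1}+1)$. In every one of these cases $\{\obet\} \setminus \{1\} \subseteq \{\oalp\} + 1$. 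Since $\min\{\ogam\} \geqslant n+1 = 2$ we have $1 \notin \{\ogam\}$, so $\{\ogam\} \subseteq \{\obet\} \setminus \{1\} \subseteq \{\oalp\} + 1$, i.e. $\{\ogam - 1\} \subseteq \{\oalp\}$, contradicting $\{\oalp\} \not\supseteq \{\ogam - 1\}$. Hence $D_r[\mathbf{L}_{r,t}(\oalp) \mid \mathbf{L}_{r,t}(\obet)] = 0$, which is (22) for $n = 1$.

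For the inductive step I would take $2 \leqslant n \leqslant t-1$, assume (22) for $n-1$ (for arbitrary data meeting its hypotheses), and expand
$$
\bigl(D_r(a_0, a_1, \dots, a_t)\bigr)^n[\mathbf{L}_{r,t}(\oalp) \mid \mathbf{L}_{r,t}(\obet)] = \sum_{\overline\varepsilon \in Q_{r,t}} D_r[\mathbf{L}_{r,t}(\oalp) \mid \mathbf{L}_{r,t}(\overline\varepsilon)] \cdot \bigl(D_r\bigr)^{n-1}[\mathbf{L}_{r,t}(\overline\varepsilon) \mid \mathbf{L}_{r,t}(\obet)],
$$
and show each summand is $0$. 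Fix $\overline\varepsilon$ with $(D_r)^{n-1}[\mathbf{L}_{r,t}(\overline\varepsilon) \mid \mathbf{L}_{r,t}(\obet)] \ne 0$ (otherwise the term already vanishes). Since $\max\{\obet\} \geqslant n+1 > (n-1)+1$, $\{\ogam\} \subseteq \{\obet\}$, and $\min\{\ogam\} \geqslant n+1 > (n-1)+1$, the contrapositive of (22) for $n-1$, applied to the pair $(\overline\varepsilon, \obet)$ with the same $\ogam$, forces $\{\overline\varepsilon\} \supseteq \{\ogam - (n-1)\}$. Put $\ogam' = \ogam - (n-1)$: it is strictly increasing with entries in $\{2, \dots, t-(n-1)\} \subseteq N_t$, so $\ogam' \in Q_{l,t}$; moreover $\{\ogam'\} \subseteq \{\overline\varepsilon\}$, $\min\{\ogam'\} = \min\{\ogam\} - (n-1) \geqslant 2$, $\max\{\overline\varepsilon\} \geqslant \max\{\ogam'\} \geqslant 2$, and $\{\oalp\} \not\supseteq \{\ogam' - 1\} = \{\ogam - n\}$ by hypothesis. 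Applying the already-established case $n = 1$ to the pair $(\oalp, \overline\varepsilon)$ with $\ogam'$ in the role of $\ogam$ gives $D_r[\mathbf{L}_{r,t}(\oalp) \mid \mathbf{L}_{r,t}(\overline\varepsilon)] = 0$. Thus every summand vanishes, and (22) follows for $n$.

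I expect the only delicate point to be the base case: one must check that each admissible nonzero pattern of $D_r$ — the two regimes $i_r < t$ and $i_r = t$, and within the first the plain shift together with the $r$ patterns obtained by deleting one entry of $\oalp+1$ and prepending $1$ (the signs $(-1)^k$ play no role here) — collapses to the single inclusion $\{\obet\} \setminus \{1\} \subseteq \{\oalp\} + 1$. Once that structural fact is isolated, the induction runs word for word as in the proofs of Theorems 6 and 12.
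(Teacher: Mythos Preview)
Your proof is correct and follows exactly the inductive template the paper uses for the analogous vanishing result for $K_t^{[k]}$ (Theorem~8; the paper does not write out a separate proof in the $D_r$ case). Your isolation of the key structural fact for the base case --- that every nonzero column pattern of $D_r$ satisfies $\{\obet\}\setminus\{1\}\subseteq\{\oalp\}+1$ --- is exactly what is needed, and the inductive step matches the paper's argument verbatim.
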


\begin{theorem}
Let $t \geqslant 2$, let $a_0, a_1, \dots, a_t$ be elements in a
commutative ring, let $1 \leqslant r \leqslant t$,
$\overline{\alpha}, \overline{\beta} \in Q_{r,t}$, let $1 \leqslant
n \leqslant t-1$. Assume, that $\max \{ \overline{\beta} \}
\geqslant n+1$, $\overline{\gamma} \in Q_{l,t}$, $
\{\overline{\gamma} \} \subseteq \{ \overline{\beta} \}$, $\min\{
\overline{\gamma} \} \geqslant n+1$ and $\{ \overline{\alpha} \}
\supseteq \{ \overline{\gamma} -n \}$. Then
\begin{multline}
( D_r (a_0, a_1,\dots,a_t ))^n [ \mathbf{L}_{r,t}(\overline{\alpha})
\mid \mathbf{L}_{r,t}(\overline{\beta})] =(-1)^{\Card \{ (i,j) \in
\{ \overline{\gamma}-n \} \times ( \{ \overline{\alpha} \} \setminus
( \{ \overline{\beta} - n \} \cap \{\overline{\alpha}\}) )\mid i < j
\} } \times \\ \times (D_{r-l}(a_0, a_1, \dots, a_t))^n
[\mathbf{L}_{r-l,t}(\overline{\alpha} \setminus (\overline{\gamma}
-n)) \mid
\mathbf{L}_{r-l,t}(\overline{\beta}\setminus\overline{\gamma})]
\end{multline}
\end{theorem}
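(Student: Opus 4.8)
The plan is to argue by induction on $n$, following the scheme of the proof of Theorem 9 (the reduction for $K_t^{[k]}$; the statement for $\Pi_{r,t}^{[k]}$, Theorem 13, is the intermediate analogue), with the vanishing result Theorem 29 playing the role that Theorems 8 and 12 play there, but now carrying through the signs $(-1)^k$ from the defining formulas (18)--(19) of $D_r$. Write $G=\{\overline\gamma-n\}$, so $\{\overline\gamma-(n-1)\}=G+1$; the hypotheses give $G\subseteq\{\overline\alpha\}$, $\{\overline\gamma\}\subseteq\{\overline\beta\}$, $\min\{\overline\gamma\}\geqslant n+1$, and $G=\{\overline\gamma-n\}\subseteq\{\overline\beta-n\}$ (so $G$ is disjoint from $\{\overline\alpha\}\setminus\{\overline\beta-n\}$); abbreviate the target exponent by $\sigma=\Card\{(i,j)\in G\times(\{\overline\alpha\}\setminus\{\overline\beta-n\})\mid i<j\}$, intersecting $\{\overline\beta-n\}$ with $\{\overline\alpha\}$ before removing it being irrelevant.

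For the base case $n=1$ I would read off from (18)--(19) the three shapes of a nonzero entry $D_r[\mathbf{L}_{r,t}(\overline\alpha)\mid\mathbf{L}_{r,t}(\overline\beta)]$ with $\overline\alpha=(i_1,\dots,i_r)$: (a) $\overline\beta=(i_1+1,\dots,i_r+1)$ with value $a_0$; (b) $i_r<t$ and $\overline\beta=(1,i_1+1,\dots,\widehat{i_k+1},\dots,i_r+1)$ with value $(-1)^ka_{i_k}$; (c) $i_r=t$ and $\overline\beta=(1,i_1+1,\dots,i_{r-1}+1)$ with value $(-1)^ra_t$. In each case $\{\overline\beta-1\}$ is explicit — it equals $\{\overline\alpha\}$ in (a) and $\{0\}\cup(\{\overline\alpha\}\setminus\{i_k\})$ in (b),(c) (with $i_k=t$ in (c)) — so $\{\overline\alpha\}\setminus\{\overline\beta-1\}$ is empty in (a) and $\{i_k\}$ in (b),(c). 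Since $\min\{\overline\gamma\}\geqslant 2$ we get $\{\overline\gamma-1\}\subseteq\{\overline\alpha\}\setminus\{i_k\}$ in (b),(c), so $i_k$ survives in $\overline\alpha':=\overline\alpha\setminus(\overline\gamma-1)$; a short check then shows $\overline\beta':=\overline\beta\setminus\overline\gamma$ arises from $\overline\alpha'$ by the same rule, with $D_{r-l}[\mathbf{L}_{r-l,t}(\overline\alpha')\mid\mathbf{L}_{r-l,t}(\overline\beta')]=(-1)^{k'}a_{i_k}$, where $k'$ is the position of $i_k$ in $\overline\alpha'$. As exactly the elements of $\{\overline\gamma-1\}$ below $i_k$ were deleted, $k'=k-\Card\{i\in\{\overline\gamma-1\}\mid i<i_k\}$, so the two entries differ by the factor $(-1)^{\Card\{i\in\{\overline\gamma-1\}\,\mid\,i<i_k\}}$, which — since $\{\overline\alpha\}\setminus\{\overline\beta-1\}=\{i_k\}$ — is exactly $(-1)^\sigma$; case (a) is the trivial instance $\sigma=0$. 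Finally, if $D_r[\mathbf{L}_{r,t}(\overline\alpha)\mid\mathbf{L}_{r,t}(\overline\beta)]=0$ one checks that none of (a),(b),(c) can hold for $(\overline\alpha',\overline\beta')$ either, so $D_{r-l}[\mathbf{L}_{r-l,t}(\overline\alpha')\mid\mathbf{L}_{r-l,t}(\overline\beta')]=0$ as well.

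For the inductive step ($2\leqslant n\leqslant t-1$) I would expand
$$(D_r(a_0,\dots,a_t))^n[\mathbf{L}_{r,t}(\overline\alpha)\mid\mathbf{L}_{r,t}(\overline\beta)]=\sum_{\overline\varepsilon\in Q_{r,t}}D_r[\mathbf{L}_{r,t}(\overline\alpha)\mid\mathbf{L}_{r,t}(\overline\varepsilon)]\cdot(D_r)^{n-1}[\mathbf{L}_{r,t}(\overline\varepsilon)\mid\mathbf{L}_{r,t}(\overline\beta)]$$
and use Theorem 29 with $n-1$ in place of $n$ (valid since $\max\{\overline\beta\}\geqslant n+1>(n-1)+1$ and $\min\{\overline\gamma\}\geqslant n+1>(n-1)+1$) to discard every term with $\{\overline\varepsilon\}\not\supseteq G+1$. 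To a surviving term I apply the base case to $D_r[\mathbf{L}_{r,t}(\overline\alpha)\mid\mathbf{L}_{r,t}(\overline\varepsilon)]$, with $\overline\gamma-(n-1)\in Q_{l,t}$ (whose minimum is $\geqslant 2$, which lies in $\{\overline\varepsilon\}$, and which satisfies $\{\overline\alpha\}\supseteq\{(\overline\gamma-(n-1))-1\}=G$) in the role of $\overline\gamma$, and the induction hypothesis to $(D_r)^{n-1}[\mathbf{L}_{r,t}(\overline\varepsilon)\mid\mathbf{L}_{r,t}(\overline\beta)]$ with $\overline\gamma$ itself; this produces signs $(-1)^{\sigma_1(\overline\varepsilon)}$, $(-1)^{\sigma_2(\overline\varepsilon)}$, reduced entries $D_{r-l}[\mathbf{L}_{r-l,t}(\overline\alpha\setminus G)\mid\mathbf{L}_{r-l,t}(\overline\varepsilon\setminus(G+1))]$ and $(D_{r-l})^{n-1}[\mathbf{L}_{r-l,t}(\overline\varepsilon\setminus(G+1))\mid\mathbf{L}_{r-l,t}(\overline\beta\setminus\overline\gamma)]$, and the common middle index $\overline\varepsilon\setminus(G+1)$. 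As in the proofs of Theorems 9 and 13 I would then establish the auxiliary vanishing lemma: if $\overline\rho\in Q_{r-l,t}$ has $\{\overline\rho\}\cap(G+1)\neq\varnothing$, pick $j\in\{\overline\rho\}\cap(G+1)$ and apply Theorem 29 with $n=1$ to the singleton $(j)\in Q_{1,t}$ (using $j\geqslant 2$ and $j-1\in G$, so $j-1\notin\{\overline\alpha\}\setminus G$) to conclude $D_{r-l}[\mathbf{L}_{r-l,t}(\overline\alpha\setminus G)\mid\mathbf{L}_{r-l,t}(\overline\rho)]=0$. Hence $\overline\rho=\overline\varepsilon\setminus(G+1)$ is a bijection from $\{\overline\varepsilon:\{\overline\varepsilon\}\supseteq G+1\}$ onto $\{\overline\rho:\{\overline\rho\}\cap(G+1)=\varnothing\}$ leaving $D_{r-l}[\mathbf{L}_{r-l,t}(\overline\alpha\setminus G)\mid\mathbf{L}_{r-l,t}(\cdot)]$ unchanged, the sum extends harmlessly over all of $Q_{r-l,t}$, and — provided $(-1)^{\sigma_1(\overline\varepsilon)+\sigma_2(\overline\varepsilon)}$ is constant and equal to $(-1)^\sigma$ — it collapses to $(-1)^\sigma(D_{r-l}(a_0,\dots,a_t))^n[\mathbf{L}_{r-l,t}(\overline\alpha\setminus G)\mid\mathbf{L}_{r-l,t}(\overline\beta\setminus\overline\gamma)]$, as claimed.

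The delicate point is the identity $\sigma_1(\overline\varepsilon)+\sigma_2(\overline\varepsilon)=\sigma$ for every surviving $\overline\varepsilon$ (in particular, that this does not depend on $\overline\varepsilon$). I expect it to come out cleanly by splitting on the first step $\overline\alpha\to\overline\varepsilon$, which for a surviving term is of type (a), (b) or (c). If $\overline\varepsilon=\overline\alpha+1$ then $\{\overline\alpha\}\setminus\{\overline\varepsilon-1\}=\varnothing$ so $\sigma_1(\overline\varepsilon)=0$, while $\{\overline\varepsilon\}\setminus\{\overline\beta-(n-1)\}=(\{\overline\alpha\}\setminus\{\overline\beta-n\})+1$ and shifting all indices down by one gives $\sigma_2(\overline\varepsilon)=\sigma$. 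If $\overline\varepsilon=\{1\}\cup((\{\overline\alpha\}\setminus\{p\})+1)$ for some $p\in\{\overline\alpha\}$ (necessarily $p\notin G$, forced by $G+1\subseteq\{\overline\varepsilon\}$), then $\{\overline\alpha\}\setminus\{\overline\varepsilon-1\}=\{p\}$ so $\sigma_1(\overline\varepsilon)=\Card\{i\in G\mid i<p\}$; setting $A'=\{x\in\{\overline\alpha\}\setminus\{p\}\mid x+1\in\{\overline\varepsilon\}\setminus\{\overline\beta-(n-1)\}\}$ one has $(\{\overline\varepsilon\}\setminus\{\overline\beta-(n-1)\})\setminus\{1\}=A'+1$ and $\{\overline\alpha\}\setminus\{\overline\beta-n\}=\{p\}\sqcup A'$, and because the elements of $\{\overline\gamma-(n-1)\}$ are all $\geqslant 2$ the value $1$ contributes no inversion, so after shifting down by one $\sigma_2(\overline\varepsilon)=\Card\{(i,j)\in G\times A'\mid i<j\}$; adding the two, $\sigma_1(\overline\varepsilon)+\sigma_2(\overline\varepsilon)=\Card\{(i,j)\in G\times(\{p\}\sqcup A')\mid i<j\}=\sigma$. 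The identity $\{\overline\alpha\}\setminus\{\overline\beta-n\}=\{p\}\sqcup A'$ — and more generally the path-independence of the set of ``removed'' coordinates — rests on the elementary token-tracking remark that along any nonzero path $\overline\alpha\to\cdots\to\overline\beta$ a coordinate $x$ of $\overline\alpha$ never hit by rule (b)/(c) ends at value $x+n\geqslant n+1$ in $\overline\beta$, whereas a removed one ends at a value $\leqslant n$. This sign bookkeeping, together with the base-case splitting, is where the care goes; the rest is a transcription of the proofs already given for $K_t^{[k]}$ and $\Pi_{r,t}^{[k]}$.
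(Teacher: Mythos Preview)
The paper states Theorem~30 without proof (it is one of several results --- Theorems~11, 12, 13, 27, 29, 30 --- whose proofs are suppressed because they run parallel to the proofs written out in full for $K_t^{[k]}$ in Theorems~8--10). So there is no ``paper's own proof'' to compare against directly; the implicit expectation is precisely what you do: transcribe the induction scheme of Theorem~9, with Theorem~29 replacing Theorem~8, and track the extra signs coming from the $(-1)^k$ in definitions~(27)--(28).

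Your argument is correct. The base case analysis --- reducing to the three shapes (a), (b), (c), observing that $\{\overline\gamma-1\}$ misses the ``removed'' coordinate $i_k$, and computing the position shift $k-k'=\Card\{i\in G\mid i<i_k\}$ --- is exactly right, as is the zero case. In the inductive step the one point that deserves a sharper justification than the informal ``token-tracking remark'' is the claim $p\notin\{\overline\beta-n\}$ (equivalently $p+n\notin\{\overline\beta\}$), which you need for $\{\overline\alpha\}\setminus\{\overline\beta-n\}=\{p\}\sqcup A'$. You can get this for free from Theorem~29 itself, in the same spirit as your auxiliary vanishing lemma: if $p+n\in\{\overline\beta\}$, set $\overline\gamma'=(p+n)\in Q_{1,t}$; then $\min\{\overline\gamma'\}=p+n\geqslant n+1>(n-1)+1$ and $\{\overline\gamma'-(n-1)\}=\{p+1\}\not\subseteq\{\overline\varepsilon\}$ (since $p$ was removed at step~1, $p+1\notin\{\overline\varepsilon\}$), so Theorem~29 gives $(D_r)^{n-1}[\mathbf L_{r,t}(\overline\varepsilon)\mid\mathbf L_{r,t}(\overline\beta)]=0$, whence by the induction hypothesis $(D_{r-l})^{n-1}[\mathbf L_{r-l,t}(\overline\varepsilon')\mid\mathbf L_{r-l,t}(\overline\beta')]=0$ as well, and the term drops out regardless of sign. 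With this in hand your sign identity $\sigma_1(\overline\varepsilon)+\sigma_2(\overline\varepsilon)=\sigma$ is established for every contributing $\overline\varepsilon$, and the rest of the collapse goes through exactly as in Theorem~9.
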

\begin{theorem}
Let $K$ be a field of characteristic 0, let $x_1,x_2, \dots, x_m$ be
algebraically independent variables over $K$ and $E$ be a algebraic
extension of the field $K(x_1, x_2, \dots, x_m)$. Let $D_{x_1},
D_{x_2}, \dots, D_{x_m}$ be the extensions, respectively, of the
differentiations $\frac{\partial}{\partial x_1},
\frac{\partial}{\partial x_2}, \dots, \frac{\partial}{\partial x_m}$
in the field of rational functions $K(x_1, x_2, \dots, x_m)$ up to
the differentiations in the field $E$. Assume that for some $y \in
E$ will be $D_{x_1}(y)=D_{x_2}(y)=\dots=D_{x_m}(y)=0$. Then the
element $y$ is algebraic over the field $K$.
\end{theorem}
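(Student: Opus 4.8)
The plan is to reduce everything to the minimal polynomial of $y$ over the rational function field $L = K(x_1, \dots, x_m)$ together with the elementary fact that the only elements of $L$ annihilated simultaneously by $\partial/\partial x_1, \dots, \partial/\partial x_m$ are the elements of $K$ itself. Note that the hypotheses already supply the derivations $D_{x_j}$ on $E$ and the fact that $D_{x_j}$ restricts to $\partial/\partial x_j$ on $L$, so no discussion of existence or uniqueness of these extensions is needed.

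First I would record the auxiliary lemma: if $f \in L$ satisfies $\partial f/\partial x_j = 0$ for $j = 1, \dots, m$, then $f \in K$. This is proved by induction on $m$. The case $m = 1$ is the classical statement that over a field $k$ of characteristic $0$ a rational function $f \in k(x)$ with $f' = 0$ lies in $k$: writing $f = g/h$ in lowest terms, $f' = 0$ gives $g'h = gh'$, so $h \mid gh'$ and, since $\gcd(g,h) = 1$, $h \mid h'$; comparing degrees forces $h' = 0$, whence $h \in k$ (here characteristic $0$ is essential), and then $g' = 0$ forces $g \in k$ as well. For the inductive step, view $L = k(x_m)$ with $k = K(x_1, \dots, x_{m-1})$; the hypothesis $\partial f/\partial x_m = 0$ and the one-variable case put $f \in k = K(x_1, \dots, x_{m-1})$, and the remaining hypotheses, with the induction hypothesis, give $f \in K$.

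Next, given $y \in E$ with $D_{x_1}(y) = \dots = D_{x_m}(y) = 0$, I would take the minimal polynomial $p(T) = T^n + c_{n-1}T^{n-1} + \dots + c_0 \in L[T]$ of $y$ over $L$ (it exists since $E/L$ is algebraic) and apply each $D_{x_j}$ to the relation $y^n + c_{n-1} y^{n-1} + \dots + c_0 = 0$. Because $D_{x_j}$ is a derivation of $E$ extending $\partial/\partial x_j$ on $L$ and $D_{x_j}(y) = 0$, each term $D_{x_j}(c_i y^i) = (\partial c_i/\partial x_j)\, y^i + c_i\, i\, y^{i-1} D_{x_j}(y)$ reduces to $(\partial c_i/\partial x_j)\, y^i$, and $D_{x_j}(y^n)$ vanishes likewise, leaving $\sum_{i=0}^{n-1}\bigl(\partial c_i/\partial x_j\bigr)\, y^{\,i} = 0$. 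This is a polynomial relation for $y$ over $L$ of degree at most $n - 1$, so by minimality of $p$ all its coefficients vanish: $\partial c_i/\partial x_j = 0$ for every $i$ and every $j$. By the lemma each $c_i$ then lies in $K$, hence $p(T) \in K[T]$, and $y$ is algebraic over $K$.

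The routine parts are the chain-rule computation and the single-variable lemma; the only place needing a little care is the reduction in the lemma, namely transporting the classical one-variable argument to the coefficient field $K(x_1, \dots, x_{m-1})$ and using that $D_{x_j}$ genuinely restricts to $\partial/\partial x_j$ on $L$. I do not expect a real obstacle here: the crux is simply the observation that $D_{x_j}(y) = 0$ collapses the differentiated minimal polynomial to a relation of strictly smaller degree, which then forces its coefficients to be constants.
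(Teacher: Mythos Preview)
Your proof is correct and follows essentially the same route as the paper: differentiate the minimal polynomial of $y$ over $K(x_1,\dots,x_m)$, use $D_{x_j}(y)=0$ to collapse the result to a relation of degree strictly less than the minimal degree, conclude by minimality that all $\partial c_i/\partial x_j$ vanish, and hence that each $c_i\in K$. The only difference is that you spell out and prove the auxiliary lemma (an element of $K(x_1,\dots,x_m)$ killed by every $\partial/\partial x_j$ lies in $K$), whereas the paper simply asserts this in its final sentence; so your argument is, if anything, more complete.
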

\begin{proof}
Let $f(x)$ be the minimal polynomial of the element $y$ over the
field $K(x_1, x_2, \dots, x_m)$, $f(x)=x^k + \sum\limits_{i=1}^k a_i
x^{k-i}$, $a_i \in K(x_1, x_2, \dots, x_m)$, $1 \leqslant i
\leqslant k$. Then $y^k + \sum\limits_{i=1}^k a_i y^{k-i}=0$ and
applying differentiation $D_{x_j}$, $1 \leqslant j \leqslant m$, we
obtain that $k y^{k-1} D_{x_j}(y) + \sum\limits_{i=1}^k a_i (k-i)
y^{k-i-1} D_{x_j}(y) + \break + \sum\limits_{i=1}^k(D_{x^j}(a_i))
y^{k-i}=0$. Since by condition $D_{x_j} (y) = 0$, $1 \leqslant j
\leqslant m$, then $\sum\limits_{i=1}^k (D_{x_j}(a_i)) y^{k-i}=0$.
But $D_{x_j}(a_j) = \frac{\partial}{\partial x_j} a_i \in K(x_1,
x_2, \dots, x_m)$ and therefore the element $y$ vanish by the
polynomial $\sum\limits_{i=1}^k \bigl( \frac{\partial}{\partial x_j}
a_i \bigr) x^{k-i}$ over the field $K(x_1, x_2, \dots, x_m)$ and
$\deg \bigl( \sum\limits_{i=1}^k \bigl( \frac{\partial}{\partial
x_j} a_i \bigr) x^{k-i} \bigr) \leqslant k-1$. If
$\frac{\partial}{\partial x_j} a_i \ne 0$ for some $i$, $ 1
\leqslant i \leqslant k$, then $\sum\limits_{i=1}^k \bigl(
\frac{\partial}{\partial x_j} a_i \bigr) x^{k-i}$ is nonzero
polynomial of degree $ \leqslant k-1$, that contradict assumption,
that $k$ is degree of the minimal polynomial of the element $y$ over
the field $K(x_1, x_2, \dots, x_m)$. Therefore
$\frac{\partial}{\partial x_j} a_i =0$ for all $i,j$, $1 \leqslant i
\leqslant k$, $1 \leqslant j \leqslant m$. Hence $a_i \in K$ for all
$i$, $1 \leqslant i \leqslant k$.
\end{proof}
\begin{lemma}
Let $K$~ be a field of characteristic 0, $f(x,y) \in K[[x,y]],$
$f(x,y) = \sum\limits_{i,j\geqslant 0,\ i,j\in\bbZ} a_{i,j} x^i
y^j$. Assume that $\frac{\partial f(x,y)}{\partial x} =
\frac{\partial f(x,y)}{\partial y}$. Then
\begin{gather}
a_{i,j} = \binom{i+j}i a_{0, i+j},\\
a_{i,j} = a_{j,i},\\
f(x,y) = \sum\limits_{k=0}^\infty a_{0,k}(x+y)^k
\end{gather}
\end{lemma}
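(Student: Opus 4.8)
The plan is to reduce the functional equation $\partial f/\partial x = \partial f/\partial y$ to a two-term recurrence among the coefficients $a_{i,j}$, and then derive the three identities by a short induction. First I would write both partial derivatives as formal power series and shift the summation index by one, obtaining $\partial f/\partial x = \sum_{i,j\geqslant 0}(i+1)a_{i+1,j}x^iy^j$ and $\partial f/\partial y = \sum_{i,j\geqslant 0}(j+1)a_{i,j+1}x^iy^j$. Comparing the coefficient of $x^iy^j$ on the two sides gives $(i+1)a_{i+1,j}=(j+1)a_{i,j+1}$ for all $i,j\geqslant 0$; since $\ch K=0$ we may divide by $i+1$, so $a_{i+1,j}=\frac{j+1}{i+1}a_{i,j+1}$.

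Next I would prove the first identity $a_{i,j}=\binom{i+j}{i}a_{0,i+j}$ by induction on $i$, keeping the total degree $k=i+j$ fixed (equivalently, induction along the anti-diagonals of the coefficient array). The case $i=0$ is trivial. For the inductive step, the recurrence just obtained gives $a_{i+1,j}=\frac{j+1}{i+1}a_{i,j+1}=\frac{j+1}{i+1}\binom{i+j+1}{i}a_{0,i+j+1}$, and the elementary identity $\frac{j+1}{i+1}\binom{i+j+1}{i}=\binom{i+j+1}{i+1}$ finishes it. The symmetry $a_{i,j}=a_{j,i}$ is then immediate from $\binom{i+j}{i}=\binom{i+j}{j}$. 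For the closed form I would expand by the binomial theorem: $\sum_{k\geqslant 0}a_{0,k}(x+y)^k=\sum_{k\geqslant 0}a_{0,k}\sum_{i=0}^k\binom{k}{i}x^iy^{k-i}=\sum_{i,j\geqslant 0}\binom{i+j}{i}a_{0,i+j}x^iy^j$, which by the first identity is exactly $f(x,y)$; the rearrangement is legitimate in $K[[x,y]]$ because only the single term with $k=i+j$ contributes to the coefficient of $x^iy^j$.

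I do not expect a real obstacle here: the argument is entirely formal. The only points needing attention are the bookkeeping of the index shift in the first step, organizing the induction along lines of constant $i+j$ (rather than, say, lexicographically in $(i,j)$), and the use of $\ch K=0$ to invert the integer $i+1$ — without which the recurrence cannot be solved and the statement genuinely fails.
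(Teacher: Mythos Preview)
Your proposal is correct and follows essentially the same route as the paper: both extract the recurrence $(i+1)a_{i+1,j}=(j+1)a_{i,j+1}$ (the paper writes it as $ia_{i,j}=(j+1)a_{i-1,j+1}$) by comparing coefficients of the two partial derivatives, then iterate along the anti-diagonal $i+j=\const$ to obtain $a_{i,j}=\binom{i+j}{i}a_{0,i+j}$, and finally read off the symmetry and the closed form via the binomial expansion of $(x+y)^k$. The only cosmetic difference is that the paper unrolls the recurrence as a telescoping product while you phrase it as an induction on $i$.
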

\begin{proof}
Since $\frac{\partial f(x,y)}{\partial x} = \sum\limits_{\substack{i\geqslant 1,
j\geqslant 0 \\ i,j\in \bbZ}} i a_{i,j}
x^{i-1} y^j$, $\frac{\partial f(x,y)}{\partial y} =
\sum\limits_{\substack{i\geqslant 0, j\geqslant 1\\ i,j\in \bbZ}} j a_{i,j} x^{i} y^{j-1}$,
 then provided that
$\frac{\partial f(x,y)}{\partial x} =\break = \frac{\partial
f(x,y)}{\partial y}$ it follows that $i a_{i,j} = (j+1)a_{i-1,
j+1},$ $ i \geqslant 1$. Hence $a_{i,j} = \frac{j+1}i a_{i-1, j+1} =
\frac{j+1}i \cdot \frac{j+2}{i-1} a_{i-2,j+2} =\break= \frac{j+1}i
\cdot \frac{j+2}{i-1} \cdot \frac{j+3}{i-2} a_{i-3, j+3} =
\frac{j+1}i \cdot \frac{j+2}{i-1} \cdot \frac{j+3}{i-2} \ldots
\frac{j+i}{i-(i-1)}a_{i-i, j+i} = \break =\frac{(j+i)(j+i-1) \ldots
(j+1)}{i!} a_{0,j+i} = \binom{j+i}{i}a_{0,j+i}$. By the equality
(33) it follows that $a_{j,i} = \binom{j+i}{j} a_{0,j+i} = \break
=\binom{i+j}{i} a_{0,i+j} = a_{i,j},$ $f(x,y) =
\sum\limits_{\substack{i,j \in \bbZ\\
\, i,j \geqslant 0 }} a_{0,i+j} \binom{i+j}{i} x^i y^j =
\sum\limits_{k=0}^\infty a_{0,k} \sum\limits_{\substack{i+j=k \\ i
\geqslant 0, \, j \geqslant 0, \, i,j \in \bbZ }} \binom{i+j}{i} x^i
y^j = \break = \sum\limits_{k=0}^\infty a_{0,k} \sum\limits_{i=0}^k
 \binom{k}{i} x^i y^{k-i} = \sum\limits_{k=0}^\infty a_{0,k} (x+y)^k.$

Lemma 3 is proved.
\end{proof}

\begin{lemma}
Let $K$~be a field of characteristic  0, $f(x_1,x_2, \dots, x_m) \in
K[[x_1,x_2, \dots, x_m]],$
$$
f(x_1, x_2, \dots, x_m) = \sum\limits_{i_1,i_2,\ldots,i_m\geqslant
0; i_1,i_2, \ldots, i_m \in \bbZ} a_{i_1,i_2, \dots, i_m}
x_1^{i_1},x_2^{i_2}, \dots, x_m^{i_m}, \; m \geqslant 2.
$$
Assume that $\frac{\partial f}{\partial x_1} = \frac{\partial
f}{\partial x_2} = \ldots = \frac{\partial f}{\partial x_m}$. Then
\begin{gather}
a_{i_1,i_2, \dots, i_m} = \frac{(i_1+i_2 +\dots + i_m)!}{i_1! i_2! \dots i_m!} a_{0,0, \ldots, 0, i_1+i_2+ \dots + i_m}, \\ 
a_{i_1,i_2, \dots, i_m} = a_{i_{\sigma(1)},i_{\sigma(2)}, \dots, i_{\sigma(m)}}, \; \sigma \in \SSym(N_m) \\
f(x_1,x_2, \dots, x_m) = \sum\limits_{k=0}^\infty a_{0,0, \ldots,
0,k}(x_1 +x_2 + \dots + x_m)^k
\end{gather}
\end{lemma}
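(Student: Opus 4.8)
The plan is to generalize the coefficient computation in the proof of Lemma~3 from two variables to $m$ variables; the argument is almost entirely formal. First I would extract from the hypothesis the basic recurrence among Taylor coefficients. For each $j$ with $1\leqslant j\leqslant m-1$ the identity $\frac{\partial f}{\partial x_j}=\frac{\partial f}{\partial x_m}$ holds, and comparing the coefficient of $x_1^{k_1}\dots x_m^{k_m}$ on the two sides (which equals $(k_j+1)a_{k_1,\dots,k_j+1,\dots,k_m}$ on the left and $(k_m+1)a_{k_1,\dots,k_{m-1},k_m+1}$ on the right) yields
$$(k_j+1)\,a_{k_1,\dots,k_j+1,\dots,k_m}=(k_m+1)\,a_{k_1,\dots,k_j,\dots,k_m+1}$$
for all $(k_1,\dots,k_m)\in\bbZ_{\geqslant 0}^m$ and all $j<m$. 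This is the $m$-variable analogue of the relation $i\,a_{i,j}=(j+1)a_{i-1,j+1}$ used in Lemma~3; informally it lets one transfer one unit of exponent from any of the first $m-1$ positions into the last one at the cost of an explicit rational factor.

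Next I would prove (36) by induction on $\sigma:=i_1+i_2+\dots+i_{m-1}$, with the total $S:=i_1+\dots+i_m$ held fixed. If $\sigma=0$ the claim is trivial, since then $i_1=\dots=i_{m-1}=0$ and the multinomial coefficient $\frac{S!}{i_1!\dots i_m!}$ equals $1$. If $\sigma\geqslant 1$, fix the least index $j<m$ with $i_j\geqslant 1$ and apply the recurrence with $k_j=i_j-1$ and $k_l=i_l$ for $l\ne j$; this gives $a_{i_1,\dots,i_m}=\frac{i_m+1}{i_j}\,a_{i_1,\dots,i_j-1,\dots,i_m+1}$. The multi-index on the right has the same total $S$ but $\sigma$ decreased by one, so the induction hypothesis together with the elementary identity $\frac{i_m+1}{i_j}\cdot\frac{S!}{i_1!\dots(i_j-1)!\dots(i_m+1)!}=\frac{S!}{i_1!\dots i_m!}$ completes the induction.

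The remaining two assertions follow immediately. The right-hand side of (36) depends on $(i_1,\dots,i_m)$ only through the value $S$ and the multinomial coefficient, both of which are invariant under permuting the indices, which gives (37). For (38) I would substitute (36) into $f=\sum_{i_1,\dots,i_m\geqslant 0}a_{i_1,\dots,i_m}x_1^{i_1}\dots x_m^{i_m}$, collect the terms with $i_1+\dots+i_m=k$, and invoke the multinomial theorem $\sum_{i_1+\dots+i_m=k}\frac{k!}{i_1!\dots i_m!}x_1^{i_1}\dots x_m^{i_m}=(x_1+\dots+x_m)^k$ to obtain $f=\sum_{k\geqslant 0}a_{0,\dots,0,k}(x_1+\dots+x_m)^k$.

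The only step requiring any care is the induction establishing (36): one must keep track of exactly which exponent is being moved and verify the rational bookkeeping, precisely as in the two-variable computation of Lemma~3, so I do not expect a genuine obstacle. As an alternative route one could instead run an induction on $m$, applying Lemma~3 to the pair $(x_1,x_m)$ over the coefficient ring $K[[x_2,\dots,x_{m-1}]]$ and iterating, but the direct coefficient computation above is shorter and avoids changing the ground field.
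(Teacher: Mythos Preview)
Your argument is correct, but it proceeds differently from the paper's. You prove (36) first by a direct coefficient recursion---extracting $(k_j+1)a_{\dots,k_j+1,\dots,k_m}=(k_m+1)a_{\dots,k_m+1}$ from $\partial f/\partial x_j=\partial f/\partial x_m$ and inducting on $i_1+\dots+i_{m-1}$---and then obtain (37) and (38) as corollaries via the multinomial theorem. The paper instead inducts on $m$: it views $K[[x_1,\dots,x_m]]$ as $(K[[x_m]])[[x_1,\dots,x_{m-1}]]$, applies the induction hypothesis to collapse the first $m-1$ variables into $y=x_1+\dots+x_{m-1}$, and then applies Lemma~3 to the pair $(y,x_m)$ to get (38); equations (36) and (37) are then read off from (38). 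Your route is more self-contained (it does not require the change of variable $y$ or the observation that $y,x_m$ are algebraically independent) and gives (36) directly, whereas the paper's route is structurally cleaner in that it reuses Lemma~3 as a black box. Amusingly, the alternative you mention at the end---inducting on $m$ and invoking Lemma~3 over a larger coefficient ring---is essentially the paper's approach.
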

\begin{proof}
Case $m=2$ is proved in the lemma 3. We use induction on $m
\geqslant 2$. Since
$$
 K[[x_1, x_2, \dots, x_m]] =(K[[x_m]])[[x_1, x_2,
\dots, x_{m-1}]],
$$
then
$$
f(x_1, x_2, \dots, x_m) = \sum\limits_{\substack
{j_1,j_2,\ldots,j_{m-1} \geqslant 0; \\ j_1, j_2, \ldots, j_{m-1}
\in \bbZ}} b_{j_1, j_2, \ldots, j_{m-1}} x_1^{j_1} x_2^{j_2} \ldots
x_{m-1}^{j_{m-1}},
$$
where $b_{j_1,j_2, \dots, j_{m-1}} \in K[x_m].$ Let $m \geqslant 3$.
By the induction hypothesis for $m-1$ it follows that
\begin{equation}
f(x_1, x_2, \dots, x_m) = \sum_{k=0}^{\infty} b_{0,0, \dots,0,k}
(x_1 + x_2 + \dots + x_{m-1})^k
\end{equation}
Let $y = x_1 + x_2 + \dots + x_{m-1}.$ Then $y$ and $x_m$~are
algebraically independent elements over the field $K$ and from
equality (39) it follows that $f(x_1, x_2, \dots, x_m) =
\sum\limits_{i,j \geqslant 0; \,  i,j \in \bbZ } c_{i,j} y^i x_m^j,$
$c_{i,j} \in K.$ Since $\frac{\partial y}{\partial x_1} = 1,$
$\frac{\partial f}{\partial x_1} = \frac{\partial f}{\partial y}
\cdot \frac{\partial y}{\partial x_1} = \frac{\partial f}{\partial
y},$ $\frac{\partial f}{\partial x_1} = \frac{\partial f}{\partial
x_m}$, then$\frac{\partial f}{\partial y} = \frac{\partial
f}{\partial x_m}$. By the lemma 3 it follows that $f(x_1, x_2,
\ldots, x_m) = \sum\limits_{k=0}^{\infty} c_{0,k} (y+x_m)^k =
\sum\limits_{k=0}^{\infty} c_{0,k} (x_1+x_2+ \ldots +x_m)^k$ and
therefore $c_{0,k} = a_{0,0,\ldots,0,k}$. The equality (36) follows
directly from(38). The equality (37) follows directly from (36).

Lemma 4 is proved.
\end{proof}

\begin{lemma}
    Let $A$~be an $m \times n$ rectangular matrix over a commutative
    ring and let
     $m \leqslant n$. Let $(H_1, H_2,
    \ldots, H_s))$~be a fixed ordered partition of the set
    $N_m$. Then
    \begin{gather}
\per(A) = \sum_{\substack{(K_1,K_2,\ldots,K_s), \, K_i \subseteq N_n,\\
|K_i|=|H_i|, \, 1 \leqslant i \leqslant s \\ K_i \cap K_j =
\varnothing, \, 1 \leqslant i < j \leqslant s}}
\prod_{i=1}^s \per(A[H_i \mid K_i]), \label{40}\\
R(x;A)= \sum_{\substack{(K_1,K_2,\ldots,K_s), \, K_i \subseteq N_n,\\
|K_i| \leqslant |H_i|, \, 1 \leqslant i \leqslant s \\ K_i \cap K_j
= \varnothing, \, 1 \leqslant i < j \leqslant s}} \biggl(
\prod_{i=1}^s \per(A[H_i \mid K_i])
\biggr)x^{\sum\limits_{i=1}^s|K_i|} \label{41}
\end{gather}
\label{lem:5}
\end{lemma}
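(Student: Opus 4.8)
The plan is to prove (\ref{40}) by decomposing every injection $\sigma\in\Inj(N_m,N_n)$ along the partition $(H_1,\dots,H_s)$, and then to obtain (\ref{41}) as an easy consequence. First I would record that, since $\per(A)=\per(1;A)=\sum_{\sigma\in\Inj(N_m,N_n)}\prod_{i\in N_m}a_{i,\sigma(i)}$, the restriction map $\sigma\mapsto(\sigma|_{H_1},\dots,\sigma|_{H_s})$ is a bijection from $\Inj(N_m,N_n)$ onto the set of tuples $(\sigma_1,\dots,\sigma_s)$ with $\sigma_j\in\Inj(H_j,N_n)$ and $\Image\sigma_1,\dots,\Image\sigma_s$ pairwise disjoint, the inverse being concatenation: injectivity of $\sigma$ on $N_m=H_1\cup\dots\cup H_s$ is exactly the disjointness of the images. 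Since $\prod_{i\in N_m}a_{i,\sigma(i)}=\prod_{j=1}^{s}\prod_{i\in H_j}a_{i,\sigma_j(i)}$, this rewrites $\per(A)$ as a sum over such tuples; grouping the tuples according to the images $K_j=\Image\sigma_j$, the tuples $(K_1,\dots,K_s)$ that can occur are precisely those with $K_j\subseteq N_n$, $|K_j|=|H_j|$ and $K_i\cap K_j=\varnothing$ for $i<j$, and for a fixed such tuple each $\sigma_j$ ranges independently over the bijections $H_j\to K_j$. Hence the sum factors, and because $|H_j|=|K_j|$ the $j$-th factor $\sum_{\sigma_j\colon H_j\to K_j}\prod_{i\in H_j}a_{i,\sigma_j(i)}$ is exactly $\per(A[H_j\mid K_j])$; this is (\ref{40}).

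For (\ref{41}) I would first rewrite the rook polynomial as $R(x;A)=\sum_{S\subseteq N_m}x^{|S|}\per(A[S\mid N_n])$ (the term $S=\varnothing$ contributing the constant $1$), and then apply (\ref{40}) to each $A[S\mid N_n]$ using the ordered partition of $S$ induced by $(H_1,\dots,H_s)$, namely $(S\cap H_1,\dots,S\cap H_s)$; parts that are empty force the corresponding $K_j$ to be empty and contribute a factor $1$. After interchanging the summations so that the outer variable is a pairwise disjoint tuple $(K_1,\dots,K_s)$ with $K_j\subseteq N_n$, the remaining sum runs over all $S$ with $|S\cap H_j|=|K_j|$ for every $j$; the sets $S\cap H_j$ may then be chosen independently inside the $H_j$, so the sum factors, the $j$-th factor being $\sum_{S_j\subseteq H_j,\,|S_j|=|K_j|}\ \sum_{\sigma_j\colon S_j\to K_j}\prod_{i\in S_j}a_{i,\sigma_j(i)}=\per(A[H_j\mid K_j])$. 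Finally $|S|=\sum_j|S\cap H_j|=\sum_j|K_j|$ is the same for every $S$ occurring, which both produces the weight $x^{\sum_i|K_i|}$ and forces $|K_j|\leqslant|H_j|$, giving (\ref{41}). Equivalently, (\ref{41}) can be proved from scratch by the argument used for (\ref{40}), with $N_m$ replaced by a variable subset $S$ and a final summation over $S$ carrying the weight $x^{|S|}$.

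There is no essential difficulty; the only points that need care are bookkeeping ones: checking that $\sigma\mapsto(\sigma|_{H_i})_i$ is genuinely a bijection onto tuples with pairwise disjoint images, that regrouping by the tuple of images decouples the inner sums so that the product $\prod_i\per(A[H_i\mid K_i])$ appears, that in (\ref{41}) the exponent of $x$ is correctly identified with $\sum_i|K_i|=|S|$, and that the degenerate submatrices arising when some $H_i$ or $K_i$ is empty are assigned permanent $1$. For the non-square blocks $A[H_i\mid K_i]$ occurring in (\ref{41}) (where $|K_i|\leqslant|H_i|$) one uses, as usual, that the permanent of a rectangular matrix is the sum over injections from the smaller index set, here from the column set $K_i$ into the row set $H_i$.
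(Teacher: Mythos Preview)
The paper states Lemma~5 without proof, so there is nothing to compare against; your argument is correct and is exactly the standard direct proof---decompose each injection $\sigma\in\Inj(N_m,N_n)$ along the row partition, regroup by the tuple of images, and for the rook polynomial sum first over the ``support'' $S\subseteq N_m$. The bookkeeping points you flag (bijection between $\sigma$ and tuples $(\sigma_j)$ with disjoint images, the identification $\sum_{S_j\subseteq H_j,\,|S_j|=|K_j|}\per(A[S_j\mid K_j])=\per(A[H_j\mid K_j])$ via injections $K_j\hookrightarrow H_j$, and the handling of empty blocks) are exactly the ones that matter, and you have them right.
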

    Proof of the theorem~4, independent of general theory.

    Let $A = (a_0 I_n + a_1 P_n) \otimes J_k$, $H_i = (i-1)k + N_k$,
    $i \leqslant i \leqslant n$. Then from lemma 5 it follows that
    \begin{multline}
        \per(A) = \sum_{\substack{(K_1,K_2, \ldots,K_n), \, K_i
        \subset (i-1)k + N_{2k}, \, 1 \leqslant i \leqslant n-1, \\
        K_n \subset ( (n-1)k + N_k) \cup N_k, \, K_i\cap K_j =
        \varnothing, \, 1 \leqslant i < j \leqslant n, \, |K_i| = k,
        \, 1 \leqslant i \leqslant n }}
        \prod_{i=1}^{n} \per(A[H_i | K_i]) = \\
        \sum_{\substack{(K_1^{(1)},K_1^{(2)}, K_2^{(1)}, K_2^{(2)},
        \ldots,K_n^{(1)}, K_n^{(2)}), \, K_i^{(1)} \subseteq (i-1)k +
        N_k, \, 1 \leqslant i \leqslant n,  \\ K_i^{(2)} \subseteq ik +
        N_k, \, 1 \leqslant i \leqslant n-1, \, K_n^{(2)}\subseteq N_k,
        \\ (K_i^{(1)} \cap K_i^{(2)}) \cap (K_j^{(1)} \cup
        K_j^{(2)}) =  \varnothing, \, 1 \leqslant i <j \leqslant n,
        |K_i^{(1)}| + |K_i^{(2)} | = k, \, 1 \leqslant i \leqslant n }}
        \prod_{i=1}^{n} \per(A[H_i | K_i^{(1)} \cup K_i^{(2)}])
    \end{multline}

    We show that if $K_i^{(1)} \subseteq (i-1) k + N_k$, \, $1 \leqslant i
    \leqslant n$, \, $K_i^{(2)} \subseteq ik + N_k$, $1 \leqslant i
    \leqslant n-1,$ \, $K_n^{(2)} \subseteq N_k$, \, $(K_i^{(1)}\cup
    K_i^{(2)}) \cap (K_j^{(1)}\cup K_j^{(2)}) = \varnothing$, $1 \leqslant i
    <j \leqslant n$, $|K_i^{(1)}| + |K_i^{(2)}| = k,$\ $1\leqslant i
    \leqslant n$, then
    $|K_1^{(1)}|=|K_2^{(1)}|=\ldots=|K_n^{(1)}|$. Let $1\leqslant
    i\leqslant n-1$. Then $|K_i^{(1)}|+|K_i^{(2)}|=k$,
    $K_{i+1}^{(1)} \subseteq ik + N_k,$ $K_i^{(2)} \subseteq ik + N_k$, \,
    $K_{i+1}^{(1)} \cap K_i^{(2)} = \varnothing $ and therefore
    $K_{i+1}^{(1)} \subseteq (ik + N_k) \setminus K_i^{(2)}$. It
    follows that
    $|K_{i+1}^{(1)}| \leqslant k - |K_i^{(2)}| = |K_i^{(1)}|$, i.~d.
    $|K_i^{(1)}| \geqslant |K_{i+1}^{(1)}|$ for all $i$, \, $1 \leqslant i
    \leqslant n-1$. So $|K_1^{(1)}| \geqslant |K_2^{(1)}| \geqslant
    \ldots \geqslant |K_n^{(1)}|$. Further, $|K_n^{(1)}| + |K_n^{(2)}| = k,$
    \, $K_1^{(1)} \subseteq N_k$, $K_n^{(2)} \subseteq N_k$, \,
    $K_{n}^{(1)} \cap K_n^{(2)} = \varnothing$, and therefore
    $K_n^{(2)}\subseteq N_k\setminus K_1^{(1)}$. Therefore $|K_n^{(2)}|
    \leqslant k-|K_1^{(1)}|$ and it follows that,
    $k -
    |K_n^{(1)}| = |K_n^{(2)}| \leqslant k - |K_1^{(1)}|  $, i.~d. $k -
    |K_n^{(1)}| \leqslant k - |K_1^{(1)}|$. Therefore, $|K_n^{(1)}|
    \geqslant |K_1^{(1)}|$. Hence and from inequalities $|K_1^{(1)}| \geqslant
    |K_2^{(1)}| \geqslant \ldots \geqslant |K_n^{(1)}|$ it follows
    that
    $|K_1^{1}|= |K_2^{(1)}| = \ldots = |K_n^{(1)}|$. We show that
    sets
    $K_1^{(2)}, K_2^{(2)}, \ldots, K_n^{(2)} $ be defined uniquely by choice
     of sets $K_1^{(1)}, K_2^{(1)}, \ldots, K_n^{(1)}$.
    Really, from inclusions $K_{i+1}^{(1)} \subseteq ik+ N_k$,
    $K_i^{(2)} \subseteq ik + N_k$ and equalities $|K_i^{(1)}| +
    |K_i^{(2)}| = k$, $|K_{i+1}^{(1)}| = |K_i^{(1)}|$ it follows
    that
    $K_i^{(2)} = (ik + N_k) \setminus K_{i+1}^{(1)}$ for all $i$,
    $1 \leqslant i \leqslant n-1$. From inclusions  $K_{1}^{(1)} \subseteq N_k$,
    $K_n^{(2)} \subseteq N_k$ and equalities $|K_n^{(1)}| +
    |K_n^{(2)}| = k$, $|K_1^{(1)}| = |K_n^{(1)}|$ it follows that
    $K_n^{(2)} =  N_k \setminus K_1^{(1)}$. From multilinear of the
    permanent of square matrix as function of columns it follows
    that
    \begin{equation}
        \per(A[H_i | K_i^{(1)} \cup K_i^{(2)}]) = a_0^{|K_i^{(1)}|}
        a_1^{K_i^{(2)}} \per (J_k) = k! a_0^{|K_i^{(1)}|} a_1^{k -
        |K_i^{(1)}|}
    \end{equation}
    Therefore
        \begin{multline}
        \per(A) = \sum_{\substack{(K_1^{(1)},K_2^{(1)}, \ldots,K_n^{(1)}), \,
        K_i^{(1)}
        \subseteq (i-1)k + N_{k}, \, 1 \leqslant i \leqslant n, \\
        |K_1^{(1)}| = |K_2^{(1)}| = \ldots =|K_n^{(1)}| }}
        \prod_{i=1}^{n} k! a_0^{|K_i^{(1)}|} a_1^{k- |K_i^{(1)}|} = \\
        =\sum_{l=0}^{k} \sum_{\substack{(K_1^{(1)}, K_2^{(1)},
        \ldots,K_n^{(1)}), \, K_i^{(1)} \subseteq (i-1)k +
        N_k, \, 1 \leqslant i \leqslant n,  \\
        |K_1^{(1)}| = |K_2^{(1)} | = \ldots = |K_n^{(1)}|  }}
        (k! a_0^l a_1^{k-l})^n  = \\
    =\sum_{l=0}^{k} \Card \{(K_1^{(1)}, K_2^{(1)},
        \ldots,K_n^{(1)} \mid \, K_i^{(1)} \subseteq (i-1)k +
        N_k, \, 1 \leqslant i \leqslant n,  \\
        |K_1^{(1)}| = |K_2^{(1)} | = \ldots = |K_n^{(1)}| = l \} \cdot
        (k! a_0^l a_1^{k-l})^n  = \\
        =\sum_{l=0}^k \binom{k}{l}^n (k! a_0^l a_1^{k-l})^n =
        \sum_{l=0}^k \binom{k}{l}^n (k! a_0^{k-l} a_1^l)^n.
    \end{multline}

    \begin{theorem}
        Let $n \geqslant 1$ and $t \geqslant 1$, let $a_0, a_t$~---
        be elements in a commutative ring with unity and let $d =
        \GCD(n,t)$. Then
        \begin{equation}
            \per  ( (a_0 I_n + a_t P_n^t) \otimes J_k) =
            [(k!)^{\frac{n}{d}} \sum_{l=0}^k
            \binom{k}{l}^{\frac{n}{d}}(a_0^{k-l}
            a_t^l)^{\frac{n}{d}} ]^d
        \end{equation}
    \end{theorem}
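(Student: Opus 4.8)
The plan is to reduce the statement to Theorem~4 by exploiting the cycle structure of the permutation that underlies $P_n^t$. First I would observe that $P_n$ is the permutation matrix of the cyclic shift $i\mapsto i+1\pmod n$, so $P_n^t$ is the permutation matrix of $\rho\colon i\mapsto i+t\pmod n$. The orbits of $\rho$ on $\bbZ/n\bbZ$ are the cosets of the subgroup $\langle t\rangle=d\,\bbZ/n\bbZ$, where $d=\GCD(n,t)$; there are exactly $d$ of them, each of cardinality $n/d$, and on each orbit $\rho$ acts as a single $(n/d)$-cycle. Consequently there is a permutation matrix $Q$ of order $n$ with $Q^{-1}P_n^tQ=\bigoplus_{j=1}^d P_{n/d}$: identifying the $j$-th orbit, traversed in the order dictated by $\rho$, with $\{1,\dots,n/d\}$ turns the restriction of $\rho$ into the cyclic shift on $n/d$ points. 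Since $Q^{-1}I_nQ=I_n$, this gives $Q^{-1}(a_0I_n+a_tP_n^t)Q=\bigoplus_{j=1}^d(a_0I_{n/d}+a_tP_{n/d})$.

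Next I would pass to the Kronecker product. Because $(Q^{-1}MQ)\otimes J_k=(Q\otimes I_k)^{-1}\bigl(M\otimes J_k\bigr)(Q\otimes I_k)$ and $Q\otimes I_k$ is again a permutation matrix, the matrix $(a_0I_n+a_tP_n^t)\otimes J_k$ is conjugate, via a permutation matrix, to $\bigl(\bigoplus_{j=1}^d(a_0I_{n/d}+a_tP_{n/d})\bigr)\otimes J_k$, which with the standard ordering of Kronecker indices is literally the block-diagonal matrix $\bigoplus_{j=1}^d\bigl((a_0I_{n/d}+a_tP_{n/d})\otimes J_k\bigr)$ (a zero block of $M=\bigoplus_jM_j$ stays zero after tensoring, and the block indexed by $j$ is $M_j\otimes J_k$).

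Then I would invoke two elementary facts: the permanent is unchanged under independent permutations of the rows and of the columns, hence under conjugation by a permutation matrix; and the permanent of a block-diagonal matrix is the product of the permanents of its diagonal blocks. Combining these,
$$
\per\bigl((a_0I_n+a_tP_n^t)\otimes J_k\bigr)=\prod_{j=1}^d\per\bigl((a_0I_{n/d}+a_tP_{n/d})\otimes J_k\bigr)=\Bigl(\per\bigl((a_0I_{n/d}+a_tP_{n/d})\otimes J_k\bigr)\Bigr)^{d}.
$$
Since $n/d\geqslant1$, Theorem~4 (applied with $n$ replaced by $n/d$ and $a_1$ replaced by $a_t$) evaluates the inner permanent as $(k!)^{n/d}\sum_{l=0}^k\binom kl^{n/d}(a_0^{k-l}a_t^l)^{n/d}$, and raising to the $d$-th power produces exactly the asserted formula.

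The main obstacle is the bookkeeping in the first step: verifying carefully that $\rho$ splits into precisely $d$ cycles of common length $n/d$, and that the conjugating permutation $Q$ can be chosen so that $Q^{-1}P_n^tQ$ is a genuine direct sum of copies of $P_{n/d}$ rather than of some other circulant — this forces one to order each orbit along the action of $\rho$, not by the natural order of the integers it contains. Everything after that is formal: the compatibility of Kronecker products with direct sums, and the permutation-invariance and multiplicativity of the permanent, are routine, and the final appeal to Theorem~4 is immediate.
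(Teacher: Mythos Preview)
Your proof is correct and follows essentially the same route as the paper: the paper invokes a cited lemma to assert that $a_0I_n+a_tP_n^t$ is permutation equivalent to the direct sum of $d$ copies of $a_0I_{n/d}+a_tP_{n/d}$, tensors with $J_k$, and then applies Theorem~4, exactly as you do. The only difference is that you supply the cycle-structure argument for the decomposition explicitly, whereas the paper outsources it to an external reference.
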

    \begin{proof}
        From [1, lemma~3, p.~12] it follows that the matrix $a_0 I_n + a_t
        P_n^t$ is permutation equivalent to direct sum of $d$
        matrices
        $a_0 I_{n/d} + a_t P_{n/d}$. Therefore
        \begin{multline*}
        \per( (a_0 I_n + a_t
        P_n^t) \otimes J_k) = [\per ( (a_0 I_{n/d} + a_t P_{n/d})
        \otimes J_k)]^d = [(k!)^{n/d} \sum_{l=0}^k
        \binom{k}{l}^{n/d} ( a_0^{k-l}a_t^l)^{n/d}]^d.
        \end{multline*}
    \end{proof}
    \begin{theorem}
\label{33} Let $a_0,a_1$~be elements in a commutative ring with
unity. Then for all $n \geqslant 1$ and $k \geqslant 1$
\begin{multline}
R(x;(a_0 I_n + a_1 P_n)\otimes J_k)= \\ =
\sum_{\substack{(l_1^{(1)},l_2^{(1)},\ldots,l_n^{(1)}), \,
(l_1^{(2)},l_2^{(2)},\ldots,l_n^{(2)})\in
\{0,1,\ldots,k\}^n \\
l_i^{(2)} \leqslant \min (k-l_i^{(1)}, k- l_{(i+1)\mod n}^{(1)}),\,
1 \leqslant i \leqslant n}} \biggl( \prod_{i=1}^n
\binom{k}{l_i^{(1)}} \binom{k - l_{(i+1)\mod n}^{(1)}}{l_i^{(2)}}
\binom{k}{l_i^{(1)}+ l_i^{(2)}} \times \\ \times \bigl( \bigl(
l_i^{(1)}+l_i^{(2)} \bigr)! \bigr) a_0^{\sum\limits_{i=1}^n
l_i^{(1)}} a_1^{\sum\limits_{i=1}^n l_i^{(2)}}
x^{\sum\limits_{i=1}^n (l_i^{(1)}+ l_i^{(2)})} \label{44}
\end{multline}
\end{theorem}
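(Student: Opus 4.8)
\noindent\textit{Proof proposal.}
The plan is to evaluate $R(x;A)$ for $A=(a_0I_n+a_1P_n)\otimes J_k$ directly from lemma~5, following the self-contained computation given above for theorem~4 but retaining the rook variable $x$ and allowing $|K_i|<k$. I would apply the rook-polynomial formula \eqref{41} of lemma~5 with $m=nk$, $s=n$, and the block-row partition $H_i=(i-1)k+N_k$, $1\leqslant i\leqslant n$, so that $R(x;A)=\sum\bigl(\prod_{i=1}^n\per(A[H_i\mid K_i])\bigr)x^{\sum_i|K_i|}$, the sum running over ordered tuples $(K_1,\dots,K_n)$ of pairwise disjoint subsets of $N_{nk}$ with $|K_i|\leqslant k$.

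First I would note that in block row $i$ the matrix $A$ has nonzero entries only in block column $i$ (where they all equal $a_0$) and in block column $i+1\mod n$ (where they all equal $a_1$); hence $\per(A[H_i\mid K_i])=0$ unless $K_i$ is a disjoint union $K_i=K_i^{(1)}\cup K_i^{(2)}$ with $K_i^{(1)}\subseteq(i-1)k+N_k$ and $K_i^{(2)}\subseteq ik+N_k$ for $i<n$, $K_n^{(2)}\subseteq N_k$. Write $l_i^{(1)}=|K_i^{(1)}|$ and $l_i^{(2)}=|K_i^{(2)}|$. Because every entry of $J_k$ equals $1$, the submatrix $A[H_i\mid K_i]$ has all of its $K_i^{(1)}$-columns constant equal to $a_0$, all of its $K_i^{(2)}$-columns constant equal to $a_1$, and exactly $k$ rows; evaluating this rectangular permanent by counting the injections of its column set into its $k$-element row set gives $\per(A[H_i\mid K_i])=\binom{k}{l_i^{(1)}+l_i^{(2)}}(l_i^{(1)}+l_i^{(2)})!\,a_0^{l_i^{(1)}}a_1^{l_i^{(2)}}$, which in particular forces $l_i^{(1)}+l_i^{(2)}\leqslant k$.

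The key step is to reorganize the global disjointness condition block column by block column. For $1\leqslant m\leqslant n$, reading indices cyclically, the only pieces lying inside block column $m$ are $K_m^{(1)}$ and $K_{m-1}^{(2)}$ (with $K_0^{(2)}:=K_n^{(2)}$), so the pairwise disjointness of $K_1,\dots,K_n$ is equivalent to: $K_m^{(1)}$ and $K_{m-1}^{(2)}$ are disjoint subsets of the $k$-element set $(m-1)k+N_k$ for every $m$. Hence, once the size vectors $(l_i^{(1)}),(l_i^{(2)})\in\{0,1,\dots,k\}^n$ are fixed, the number of admissible configurations $(K_1^{(1)},K_1^{(2)},\dots,K_n^{(1)},K_n^{(2)})$ equals $\prod_{m=1}^n\binom{k}{l_m^{(1)}}\binom{k-l_m^{(1)}}{l_{m-1}^{(2)}}$, which is nonzero precisely when $l_{m-1}^{(2)}\leqslant k-l_m^{(1)}$ for all $m$. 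Reindexing $i=m-1$ in the second family of factors (so that the subscript becomes $(i+1)\mod n$, with $l_{n+1}^{(1)}:=l_1^{(1)}$) rewrites $\prod_m\binom{k-l_m^{(1)}}{l_{m-1}^{(2)}}$ as $\prod_i\binom{k-l_{(i+1)\mod n}^{(1)}}{l_i^{(2)}}$, and combines the two ranges $l_i^{(2)}\leqslant k-l_i^{(1)}$ and $l_i^{(2)}\leqslant k-l_{(i+1)\mod n}^{(1)}$ into the single inequality $l_i^{(2)}\leqslant\min(k-l_i^{(1)},k-l_{(i+1)\mod n}^{(1)})$ of \eqref{44}. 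Multiplying in $\prod_{i=1}^n\per(A[H_i\mid K_i])$ then supplies the remaining factors $\prod_i\binom{k}{l_i^{(1)}+l_i^{(2)}}(l_i^{(1)}+l_i^{(2)})!$ together with the monomial $a_0^{\sum_i l_i^{(1)}}a_1^{\sum_i l_i^{(2)}}$, while $x^{\sum_i|K_i|}=x^{\sum_i(l_i^{(1)}+l_i^{(2)})}$; summing over the size vectors yields exactly \eqref{44}.

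I expect the only delicate point to be this cyclic bookkeeping: one must correctly see that block column $m$ is fed by block row $m$ through $K_m^{(1)}$ (with weight $a_0$) and by block row $m-1$ through $K_{m-1}^{(2)}$ (with weight $a_1$), with the wrap-around piece $K_n^{(2)}\subseteq N_k$ competing with $K_1^{(1)}$, and then track the resulting index shift so that the disjointness inequalities assemble into precisely the binomial product and the range appearing in the theorem. Everything else — evaluating a rectangular permanent of a constant-entry matrix and accounting for the exponents of $a_0$, $a_1$ and $x$ — is routine, and is the exact analogue, now keeping $x$, of the computation already carried out above for theorem~4.
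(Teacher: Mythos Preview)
Your proposal is correct and follows essentially the same route as the paper's proof: both apply lemma~\ref{lem:5} (equation~\eqref{41}) with the block-row partition $H_i=(i-1)k+N_k$, split each $K_i$ as $K_i^{(1)}\cup K_i^{(2)}$ according to the two nonzero block columns, evaluate $\per(A[H_i\mid K_i])$ by multilinearity as $\binom{k}{l_i^{(1)}+l_i^{(2)}}(l_i^{(1)}+l_i^{(2)})!\,a_0^{l_i^{(1)}}a_1^{l_i^{(2)}}$, and count configurations via the same product of binomials. The only cosmetic difference is that the paper counts by first choosing all $K_i^{(1)}$ and then all $K_i^{(2)}$, whereas you count block-column by block-column; the resulting products coincide after the cyclic reindexing you describe.
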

\begin{proof}
Let $A = (a_0 I_n + a_1 P_n) \otimes J_k$, $H_i = (i-1)k + N_k$, $1
\leqslant i \leqslant n$. Then from the lemma~\ref{lem:5} it follows
that
\begin{multline}
R(x;A)= \sum_{\substack{(K_1,K_2,\ldots,K_s), \, K_i \subset
(i-1)k+N_{2k},\,
 1 \leqslant i \leqslant n-1 \\ K_n \subset ((n-1)k + N_k)\cup N_k, \, K_i \cap K_j =
\varnothing, \, 1 \leqslant i<j \leqslant n, \, |K_i| \leqslant k,
\, 1 \leqslant i \leqslant n }} \biggl( \prod_{i=1}^s \per(A[H_i
\mid K_i]) \biggr)x^{\sum\limits_{i=1}^s|K_i|} = \\ =
\sum_{\substack{ (K_1^{(1)}, K_1^{(2)}, K_2^{(1)}, K_2^{(2)},\ldots,K_n^{(1)}, K_n^{(2)}), \\
K_i^{(1)} \subseteq (i-1)k + N_k, \, 1 \leqslant i \leqslant n, \\
K_i^{(2)} \subseteq ik + N_k, \, 1 \leqslant i \leqslant n-1, \, K_n^{(2)} \subseteq N_k \\
(K_i^{(1)} \cup K_i^{(2)})\cap (K_j^{(1)}\cup K_j^{(2)}) =
\varnothing, 1 \leqslant i < j \leqslant n,\\
|K_i^{(1)}|+|K_i^{(2)}|\leqslant k, \, 1 \leqslant i \leqslant n.}}
\biggl( \prod_{i=1}^n \per(A[H_i \mid K_i^{(1)} \cup
K_i^{(2)}])\biggr) x^{\sum\limits_{i=1}^n (|K_i^{(1)}|+
|K_i^{(2)}|)} \label{45}
\end{multline}
Let $l_i^{(1)} = |K_i^{(1)}|$, $l_i^{(2)} = |K_i^{(2)}|$, $1
\leqslant i \leqslant n$. Then from the inclusions $K_i^{(2)}
\subseteq ik+ N_k$, $K_{i+1}^{(1)} \subseteq ik + \\ +N_k$, $1
\leqslant i \leqslant n-1$, $K_1^{(1)} \subseteq N_k$, $K_n^{(2)}
\subseteq N_k$ it follows that $K_i^{(2)} \subseteq (ik+N_k)
\setminus K_{i+1}^{(1)}$, $1 \leqslant i \le n-1$,
$K_n^{(2)}\subseteq N_k \setminus K_1^{(1)}$ and therefore
$l_i^{(2)} \leqslant k - l_{(i+1) \mod n}^{(1)}$. Since the choice
of the sets $K_i^{(1)}$, $1 \leqslant i \leqslant n$, realized
independent from each other, then after selected sets
$K_1^{(1)},\ldots, K_n^{(1)}$, the sets $K_1^{(2)},\ldots,
K_n^{(2)}$ also choose independent from each other with regard
inclusions $K_i^{(2)}\subseteq (ik+ N_k) \setminus K_{i+1}^{(1)}$,
$1 \leqslant i \leqslant n-1$, $K_n^{(2)} \subseteq N_k \setminus
K_1^{(1)}$. From multilinear of the permanent of an $k \times l$,
rectangular matrix if $k \geqslant l$, as function of columns it
follows that
$$
\per (A[H_i \mid K_i^{(1)} \cup K_i^{(2)}]) = \binom{k}{|K_i^{(1)}|+
|K_i^{(2)}|} ((|K_i^{(1)}|+ |K_i^{(2)}|)!)\cdot a_0^{|K_i^{(1)}|}
a_1^{(K_i^{(2)})}.
$$
Therefore
\begin{multline}
R(x;A)= \sum_{\substack{(l_1^{(1)},l_2^{(1)},\ldots,l_n^{(1)}), \,
(l_1^{(2)},l_2^{(2)},\ldots,l_n^{(2)})\in
\{0,1,\ldots,k\}^n \\
l_i^{(2)} \leqslant \min (k-l_i^{(1)}, k- l_{(i+1)\mod n}^{(1)}),\,
1 \leqslant i \leqslant n}} \biggl( \prod_{i=1}^n
\binom{k}{l_i^{(1)}} \cdot \biggl( \prod_{i=1}^n \binom{k -
l_{(i+1)\mod n}^{(1)}}{l_i^{(2)}} \biggr) \times \\ \shoveright{
\times \biggl( \prod_{i=1}^n \binom{k}{l_i^{(1)}+ l_i^{(2)}} \bigl(
\bigl( l_i^{(1)}+l_i^{(2)} \bigr)! \bigr) a_0^{ l_i^{(1)}}
a_1^{l_i^{(2)}} \biggr) x^{\sum\limits_{i=1}^n (l_i^{(1)}+
l_i^{(2)})} =} \\ \shoveleft{=
\sum_{\substack{(l_1^{(1)},l_2^{(1)},\ldots,l_n^{(1)}), \,
(l_1^{(2)},l_2^{(2)},\ldots,l_n^{(2)})\in
\{0,1,\ldots,k\}^n \\
l_i^{(2)} \leqslant \min (k-l_i^{(1)}, k- l_{(i+1)\mod n}^{(1)}),\,
1 \leqslant i \leqslant n}} \biggl( \prod_{i=1}^n
\binom{k}{l_i^{(1)}} \binom{k - l_{(i+1)\mod n}^{(1)}}{l_i^{(2)}}
\binom{k}{l_i^{(1)}+ l_i^{(2)}} \times} \\ \shoveright{ \times
\bigl( \bigl( l_i^{(1)}+l_i^{(2)} \bigr)! \bigr)
a_0^{\sum\limits_{i=1}^n l_i^{(1)}} a_1^{\sum\limits_{i=1}^n
l_i^{(2)}} x^{\sum\limits_{i=1}^n (l_i^{(1)}+ l_i^{(2)})} =} \\
\shoveleft{= \sum_{\substack{(l_1^{(1)},l_2^{(1)},\ldots,l_n^{(1)}),
\, (l_1^{(2)},l_2^{(2)},\ldots,l_n^{(2)})\in \{0,1,\ldots,k\}^n }}
\biggl( \prod_{i=1}^n \binom{k}{l_i^{(1)}} \binom{k - l_{(i+1)\mod
n}^{(1)}}{l_i^{(2)}} \binom{k}{l_i^{(1)}+ l_i^{(2)}} \times} \\
\times \bigl( \bigl( l_i^{(1)}+l_i^{(2)} \bigr)! \bigr)
a_0^{\sum\limits_{i=1}^n l_i^{(1)}} a_1^{\sum\limits_{i=1}^n
l_i^{(2)}} x^{\sum\limits_{i=1}^n (l_i^{(1)}+ l_i^{(2)})},
\label{46}
\end{multline}
Since $l_i^{(2)} > \min (k-l_i^{(1)}, k - l_{(i+1)\mod n}^{(1)})$,
then
$$
\binom{k-l_{(i+1) \mod n}}{l_i^{(2)}} \binom{k}{l_i^{(1)}+
l_i^{(2)}}=0
$$
\end{proof}
\begin{lemma}
Let~$A$~be an~$m \times n$ rectangular matrix over a commutative
ring with unity and $m>n$. Let $(H_1,H_2,\ldots,H_s)$~be a fixed
ordered partition of the set~$N_m$. Then
\begin{gather}
\per(A) = \sum_{\substack{(K_1,K_2,\ldots,K_s), \, K_i \subseteq N_n,\\
|K_i| \leqslant |H_i|, \, 1 \leqslant i \leqslant s \\
|K_1|+|K_2|+\ldots+|K_s|=n, \, K_i \cap K_j = \varnothing, \, 1
\leqslant i < j \leqslant s}}
\prod_{i=1}^s \per(A[H_i \mid K_i]), \label{47}\\
R(x;A)= \sum_{\substack{(K_1,K_2,\ldots,K_s), \, K_i \subseteq N_n,\\
|K_i| \leqslant |H_i|, \, 1 \leqslant i \leqslant s \\ K_i \cap K_j
= \varnothing, \, 1 \leqslant i < j \leqslant s}} \biggl(
\prod_{i=1}^s \per(A[H_i \mid K_i])
\biggr)x^{\sum\limits_{i=1}^s|K_i|} \label{48}
\end{gather}
\label{lem:6}
\end{lemma}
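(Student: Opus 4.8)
\medskip\noindent{\it Proof proposal.}\quad The plan is to argue exactly as in the proof of Lemma~\ref{lem:5}, interchanging the roles of rows and columns, since $m>n$ now makes the column index set $N_n$ the ``small'' side of $A$. I will use that for a tall matrix the permanent is $\per(A)=\sum_{\tau\in\Inj(N_n,N_m)}\prod_{j\in N_n}a_{\tau(j),j}$ (equivalently $\per(A)=\per(A^{\top})$) and that the ordinary rook polynomial is $R(x;A)=\sum_{T\subseteq N_n}x^{|T|}\sum_{\sigma\in\Inj(T,N_m)}\prod_{j\in T}a_{\sigma(j),j}$; in particular $\per(A[H_i\mid K_i])=\sum_{\sigma_i\in\Inj(K_i,H_i)}\prod_{j\in K_i}a_{\sigma_i(j),j}$ whenever $|K_i|\leqslant|H_i|$, and this is the identity I will substitute at the last step.

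For \eqref{47} I would first set up the correspondence. Given $\tau\in\Inj(N_n,N_m)$, put $K_i=\{\,j\in N_n\mid\tau(j)\in H_i\,\}$ for $1\leqslant i\leqslant s$. Since $(H_1,\dots,H_s)$ is a partition of $N_m$, the $K_i$ are pairwise disjoint and $K_1\cup\dots\cup K_s=N_n$, so $|K_1|+\dots+|K_s|=n$, and since $\tau$ restricts to an injection $K_i\to H_i$ we get $|K_i|\leqslant|H_i|$. Conversely, a tuple $(K_1,\dots,K_s)$ of pairwise disjoint subsets of $N_n$ with $|K_i|\leqslant|H_i|$ and $|K_1|+\dots+|K_s|=n$, together with a choice of injections $\tau_i\in\Inj(K_i,H_i)$, reassembles into the unique $\tau\in\Inj(N_n,N_m)$ with $\tau|_{K_i}=\tau_i$ for every $i$. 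Under this correspondence $\prod_{j\in N_n}a_{\tau(j),j}=\prod_{i=1}^s\prod_{j\in K_i}a_{\tau_i(j),j}$; summing first over the $\tau_i$ and then over the admissible tuples $(K_1,\dots,K_s)$ converts $\sum_\tau\prod_j a_{\tau(j),j}$ into $\sum_{(K_1,\dots,K_s)}\prod_{i=1}^s\per(A[H_i\mid K_i])$, which is \eqref{47}.

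For \eqref{48} the same bookkeeping works with partial injections. A degree-$k$ term of $R(x;A)$ is indexed by a subset $T\subseteq N_n$ with $|T|=k$ and a $\sigma\in\Inj(T,N_m)$; the map $\sigma\mapsto(K_1,\dots,K_s;\sigma_1,\dots,\sigma_s)$ with $K_i=\{\,j\in T\mid\sigma(j)\in H_i\,\}$ and $\sigma_i=\sigma|_{K_i}$ is a bijection onto the data of pairwise disjoint $K_i\subseteq N_n$ with $|K_i|\leqslant|H_i|$ (where now $K_1\cup\dots\cup K_s=T$ need not exhaust $N_n$, which is precisely why the equality $\sum_i|K_i|=n$ disappears) together with $\sigma_i\in\Inj(K_i,H_i)$. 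Since $|T|=\sum_i|K_i|$ and the entry product factors over the blocks, regrouping yields $R(x;A)=\sum_{(K_1,\dots,K_s)}\bigl(\prod_{i=1}^s\per(A[H_i\mid K_i])\bigr)x^{\sum_i|K_i|}$; the empty blocks contribute the factor $\per(A[H_i\mid\varnothing])=1$ and $T=\varnothing$ gives the constant term $1$.

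I do not expect a real obstacle: the whole argument is the proof of Lemma~\ref{lem:5} read with ``rows'' and ``columns'' exchanged. The only point needing care is the bookkeeping of cardinalities: in Lemma~\ref{lem:5} the hypothesis $m\leqslant n$ saturates the rows and forces $|K_i|=|H_i|$, whereas here $m>n$ saturates the columns, which is what leaves the looser condition $|K_i|\leqslant|H_i|$ together with the global constraint $|K_1|+\dots+|K_s|=n$ in the permanent formula and no global constraint in the rook-polynomial formula.
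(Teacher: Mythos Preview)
Your argument is correct. The paper does not actually supply a proof of this lemma (nor of Lemma~\ref{lem:5}); both are simply stated, so there is no ``paper's proof'' to compare against. Your direct bijective decomposition --- writing $\per(A)=\sum_{\tau\in\Inj(N_n,N_m)}\prod_j a_{\tau(j),j}$ and splitting each injection according to the blocks $K_i=\tau^{-1}(H_i)$ --- is the standard argument and exactly what one would expect the author to have in mind.

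One small framing issue: you refer to ``the proof of Lemma~\ref{lem:5}'', but since that lemma is also stated without proof in the paper, the reference is vacuous. This does not affect correctness, because your proof is self-contained; you may simply wish to drop that cross-reference and present the bijection on its own merits. Your remark that the rook polynomial is insensitive to transposition, so that $R(x;A)=\sum_{T\subseteq N_n}x^{|T|}\sum_{\sigma\in\Inj(T,N_m)}\prod_{j\in T}a_{\sigma(j),j}$ for $m>n$, is the one place where a reader might want an extra word of justification, since the paper's definition~(1) literally assumes $m\leqslant n$; but this is a triviality.
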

\begin{lemma}
\label{lem:7} Let~$A$~be an~$m \times n$ rectangular matrix over a
commutative ring with unity and let~$H$~be a subset of the
set~$N_m$. Then
\begin{equation}
R(x;A) = \sum_{k \subseteq N_n, \, |K| \leqslant |H|} \per(A[H \mid
K]) x^{|K|} R(x ; A[N_m \setminus H \mid N_n \setminus K])
\label{49}
\end{equation}
\end{lemma}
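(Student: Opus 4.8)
The plan is to prove \eqref{49} by a weight-preserving bijection between the partial injections counted by $R(x;A)$ and the triples appearing on the right-hand side. Recall that, by definition, $R(x;A)=\sum_{(S,\sigma)}x^{|S|}\prod_{i\in S}a_{i,\sigma(i)}$, where the sum runs over all pairs $(S,\sigma)$ with $S\subseteq N_m$ and $\sigma\in\Inj(S,N_n)$, the empty pair $(\varnothing,\varnothing)$ contributing the constant term $1$. (No cycle-counting variable enters, since $R(x;A)=R(x;1;A)$.)

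First I would set up the decomposition along the partition $N_m=H\sqcup(N_m\setminus H)$. Given a pair $(S,\sigma)$, put $S_1=S\cap H$, $S_2=S\setminus H$, and $K=\sigma(S_1)\subseteq N_n$. Since $\sigma$ is injective, $\sigma(S_2)\cap K=\varnothing$, so $\sigma|_{S_2}\in\Inj(S_2,N_n\setminus K)$, while $\sigma|_{S_1}$ is a bijection of $S_1$ onto $K$; in particular $|K|=|S_1|\leqslant|H|$. Conversely, a choice of $K\subseteq N_n$ with $|K|\leqslant|H|$, a pair $(S_1,\sigma_1)$ with $S_1\subseteq H$ and $\sigma_1$ a bijection $S_1\to K$, and a pair $(S_2,\sigma_2)$ with $S_2\subseteq N_m\setminus H$ and $\sigma_2\in\Inj(S_2,N_n\setminus K)$, reassembles (by $S=S_1\cup S_2$, $\sigma=\sigma_1\cup\sigma_2$) into a unique pair $(S,\sigma)$ with $\sigma(S\cap H)=K$. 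This correspondence is a bijection and multiplies weights, since $|S|=|S_1|+|S_2|=|K|+|S_2|$, so that $x^{|S|}\prod_{i\in S}a_{i,\sigma(i)}=\bigl(x^{|K|}\prod_{i\in S_1}a_{i,\sigma_1(i)}\bigr)\bigl(x^{|S_2|}\prod_{i\in S_2}a_{i,\sigma_2(i)}\bigr)$.

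Next I would identify the two factors obtained after grouping the sum over $(S,\sigma)$ by the value of $K$. Summing $\prod_{i\in S_1}a_{i,\sigma_1(i)}$ over all $(S_1,\sigma_1)$ with $S_1\subseteq H$ and $\sigma_1$ a bijection onto the fixed set $K$ is, after reparametrizing by $\sigma_1^{-1}\colon K\hookrightarrow H$, exactly $\per(A[H\mid K])$ in the convention (used throughout Lemmas~\ref{lem:5} and~\ref{lem:6}) for the permanent of a rectangular matrix with at least as many rows as columns. Summing $x^{|S_2|}\prod_{i\in S_2}a_{i,\sigma_2(i)}$ over all admissible $(S_2,\sigma_2)$ is $R\bigl(x;A[N_m\setminus H\mid N_n\setminus K]\bigr)$ straight from the definition. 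Hence
\[
R(x;A)=\sum_{\substack{K\subseteq N_n\\ |K|\leqslant|H|}}\per(A[H\mid K])\,x^{|K|}\,R\bigl(x;A[N_m\setminus H\mid N_n\setminus K]\bigr),
\]
which is \eqref{49}.

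The only point needing care — and the closest thing to an obstacle — is the bookkeeping of the permanent convention: one must verify that the set of partial injections of $H$ with image exactly $K$ is the same data as an injection $K\hookrightarrow H$, i.e. that forcing the image to be all of $K$ automatically makes $|S_1|=|K|$ and $\sigma_1$ a bijection, so that the generating sum over these objects is precisely $\per(A[H\mid K])$ and not a larger sum over partial matchings. Everything else is the routine remark that a partial injection splits uniquely along the given partition of its domain and that its image splits as $K\sqcup(\text{image of }S_2)$ inside $N_n$; no induction is required, and the identity is immediate once the bijection is in place. One may also observe that this lemma is the ``single-block'' specialization of Lemmas~\ref{lem:5}–\ref{lem:6}, obtained by taking the ordered partition $(H,\,N_m\setminus H)$ and resumming the second block's contribution into a rook polynomial.
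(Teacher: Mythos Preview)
Your proof is correct. The paper's own argument is precisely the observation you make in your final sentence: it applies Lemmas~\ref{lem:5} and~\ref{lem:6} with the two-block ordered partition $(H,\,N_m\setminus H)$ to obtain
\[
R(x;A)=\sum_{\substack{K\subseteq N_n\\ |K|\leqslant|H|}}\per(A[H\mid K])\sum_{\substack{L\subseteq N_n\setminus K\\ |L|\leqslant|N_m\setminus H|}}\per(A[N_m\setminus H\mid L])\,x^{|K|+|L|},
\]
and then recognizes the inner sum as $R(x;A[N_m\setminus H\mid N_n\setminus K])$. Your direct bijective decomposition of each partial injection along $H\sqcup(N_m\setminus H)$ is simply the unpacking of that citation---it is what a proof of Lemmas~\ref{lem:5}--\ref{lem:6} themselves would do in the two-block case---so the two arguments are the same in substance, with yours self-contained and the paper's a one-line corollary of the earlier lemmas.
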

\begin{proof}
From the lemmas~\ref{lem:5} and~\ref{lem:6} it follows that
\begin{multline}
R(x;A) = \sum_{k \subseteq N_n, \, |K| \leqslant |H|} \per(A[H \mid
K]) \sum_{L \subseteq N_n \setminus K, \, |L| \leqslant |N_m
\setminus H|} \per(A[N_m \setminus H \mid L])x^{|K|+|L|} = \\ =
\sum_{k \subseteq N_n, \, |K| \leqslant |H|} \per \bigl(A[H \mid
K]\bigr)  x^{|K|} \biggl( \sum_{L \subseteq N_n \setminus K, \, |L|
\leqslant |N_m \setminus H|} \per(A[N_m \setminus H \mid L])x^{|L|}
\biggr)= \\ = \sum_{k \subseteq N_n, \, |K| \leqslant |H|} \per (A[H
\mid K]) x^{|K|} R(x; A[N_m \setminus H \mid N_n \setminus K]).
\end{multline}
\end{proof}

Let $0 \leqslant r \leqslant kt$ and let $k \geqslant 1$, $t
\geqslant 0$. On the set $G_t^{[k]}$ we define a square matrix
$$
A_{r,t}^{[k]}(a_0,a_1,\ldots,a_t) =
\bigl(a_{\overline{\alpha},\overline{\beta}})_{\overline{\alpha},\overline{\beta}
\in G_t^{[k]}})
$$
as follows: $A_{0,0}^{[k]}(a_0) = k! a_0^k$. if $t \geqslant 1$ and
$\overline{\alpha} = (1^{\langle l_1 \rangle},2^{\langle l_2
\rangle}, \ldots, t^{\langle l_t \rangle}) \in G_{s,t}^{[k]}$. If $s
< r$, then
\begin{equation}
a_{\overline{\alpha},\overline{\beta}} =
\begin{cases}
(l_t + v)! \binom{k}{l_t+v} \binom{l_1}{p_1} \binom{l_2}{p_2}\ldots
\binom{l_{t-1}}{p_{t-1}} \binom{k}{v - \sum\limits_{i=1}^{t-1} p_i}
a_0^{v - \sum\limits_{i=1}^{t-1}p_i}
a_1^{p_1} a_2^{p_2} a_{t-1}^{p_{t-1}} a_t^{l_t}, \\
\text{if } \overline{\beta} = \biggl( 1^{\langle k- v +
\sum\limits_{i=1}^{t-1} p_i \rangle}, 2^{\langle l_1 - p_1 \rangle},
3^{\langle l_2 - p_2 \rangle}, \ldots, t^{\langle l_{t-1} - p_{t-1}
\rangle} \biggr), \,
0 \leqslant p_i \leqslant l_i \\
1 \leqslant i \leqslant t-1, \; \max\biggl(\sum\limits_{i=1}^{t-1}p_i, s-r+k- l_t\biggr) \leqslant v \leqslant k-l_t, \\
0, \quad \text{ in all other cases.}
\end{cases}
\label{50}
\end{equation}
If $s \geqslant r$, then
\begin{equation}
a_{\overline{\alpha},\overline{\beta}} =
\begin{cases}
k! \binom{l_1}{p_1} \binom{l_2}{p_2}\ldots \binom{l_{t-1}}{p_{t-1}}
\binom{k}{v - \sum\limits_{i=1}^{t-1} p_i} a_0^{v -
\sum\limits_{i=1}^{t-1}p_i}
a_1^{p_1} a_2^{p_2} a_{t-1}^{p_{t-1}} a_t^{k-v}, \\
\text{if } \overline{\beta} = \biggl( 1^{\langle k- v +
\sum\limits_{i=1}^{t-1} p_i \rangle},
2^{\langle l_1 - p_1 \rangle}, 3^{\langle l_2 - p_2 \rangle}, \ldots, t^{\langle l_{t-1} - p_{t-1} \rangle} \biggr), \\
1 \leqslant i \leqslant t-1, \; \max\biggl(\sum\limits_{i=1}^{t-1}p_i, k- l_t\biggr) \leqslant v \leqslant \min(k, s-r+k-l_t), \\
0, \quad \text{ in all other cases.}
\end{cases}
\label{51}
\end{equation}
\begin{theorem}
Let $t \geqslant 0$ and let $a_0,a_1,\ldots,a_t$~be an elements in a
commutative ring with unity, let $n \geqslant 1$, $k \geqslant 1$,
let $0 \leqslant r \leqslant k \min (n,t)$, let $\overline{\beta}
\in G_{r, \min(n,t)}^{[k]}$, $m = \min(n,t)$. Then for all
$\overline{\alpha} \in G_t^{[k]}$
\begin{multline}
\per \biggl(\biggl(\biggl( \sum_{i=0}^t a_i T_{n+t}^{(i)} \biggr)
\otimes J_k \biggr) \bigl[ N_{nk} \mid k \bigl(1^{\langle k -
m_{\overline{\beta}(1)} \rangle},
2^{\langle k - m_{\overline{\beta}(2)} \rangle}, \ldots, m^{\langle k - m_{\overline{\beta}(m)} \rangle}, \\
\shoveright{ mk+1, mk+2, \ldots, nk, nk + k \overline{\alpha}
\bigr)\bigr] \biggr) =} \\ = \biggl( \prod_{i=1}^{\min(n,t)}
\binom{k}{m_{\overline{\beta}}(i)}^{-1} \biggr)
\sum_{\substack{\overline{\gamma} \in G_t^{[k]} \\ \{
\overline{\gamma} \} \supseteq \{ \overline{\beta}\} }}
\prod_{i=1}^{\min(n,t)}
\binom{m_{\overline{\gamma}}(i)}{m_{\overline{\beta}}(i)} \cdot
\bigl(A_{r,t}^{[k]}(a_0,a_1,\ldots,a_t) \bigr)^n
[L(\overline{\alpha})\mid L (\overline{\gamma})] \label{52}
\end{multline}
\end{theorem}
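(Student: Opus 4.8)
The plan is to follow the pattern of the proof of Theorem~2, the structural differences being that here $\overline\alpha$ is allowed to run over all of $G_t^{[k]}$ rather than over the single graded piece $G_{r,t}^{[k]}$, and correspondingly the transfer matrix $A_{r,t}^{[k]}(a_0,\dots,a_t)$ lives on all of $G_t^{[k]}$. First I would expand the permanent block by block: putting $H_i=(i-1)k+N_k$ for $1\le i\le n$ and applying Lemma~\ref{lem:5} or Lemma~\ref{lem:6} — according as the number of selected columns is at least $nk$ or less than $nk$ — to the partition $(H_1,\dots,H_n)$ of the row set, one writes the permanent of the submatrix $M'$ in question as a sum over ordered $n$-tuples $(K_1,\dots,K_n)$ of pairwise disjoint column subsets, $K_i$ matched to the row block $H_i$, of $\prod_{i=1}^n\per(M'[H_i\mid K_i])$.

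Next I would use the band structure of $M=\sum_{i=0}^t a_iT_{n+t}^{(i)}$, which is upper triangular of band width $t$: the block $M'[H_i\mid K_i]$ is nonzero only when $K_i$ lies in the column blocks numbered $i,i+1,\dots,i+t$, so the information that must be carried from step $i$ to step $i+1$ is precisely the record of which columns of blocks $i+1,\dots,i+t$ are already occupied, and this record is a vector $\overline\gamma_i\in G_t^{[k]}$. Using multilinearity of the permanent over the all-ones blocks $J_k$, the weighted transition $\overline\gamma_{i-1}\to\overline\gamma_i$ obtained by summing $\per(M'[H_i\mid K_i])$ over all admissible $K_i$ is exactly the entry $A_{r,t}^{[k]}(a_0,\dots,a_t)[\mathbf L(\overline\gamma_{i-1})\mid\mathbf L(\overline\gamma_i)]$; the two branches of the definition of $A_{r,t}^{[k]}$ are the two regimes ``a column deficiency still has to be absorbed'' and ``the current block is already saturated'', with threshold $r=\sum_{i=1}^{\min(n,t)}m_{\overline\beta}(i)$, the total deficiency produced by keeping only $k-m_{\overline\beta}(i)$ columns in block $i$. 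Telescoping the product then recognises $\sum_{\overline\gamma_1,\dots,\overline\gamma_{n-1}}\prod_i A_{r,t}^{[k]}[\mathbf L(\overline\gamma_{i-1})\mid\mathbf L(\overline\gamma_i)]$ as one entry of $(A_{r,t}^{[k]})^n$.

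It then remains to identify the two boundary states and the outer combinatorial weight. The last column block of $M'$ supplies the columns $nk+k\overline\alpha$, so the terminal state is $\mathbf L(\overline\alpha)$; the restricted blocks $1,\dots,\min(n,t)$ force the initial state to be some $\overline\gamma\in G_t^{[k]}$ with $\{\overline\gamma\}\supseteq\{\overline\beta\}$, and for a given such $\overline\gamma$ the number of ways in which the block-wise column choices can be made compatible both with the prescribed selection (which discards $m_{\overline\beta}(i)$ columns of block $i$) and with the state $\overline\gamma$ equals $\prod_{i=1}^{\min(n,t)}\binom{m_{\overline\gamma}(i)}{m_{\overline\beta}(i)}$; dividing by the number $\prod_{i=1}^{\min(n,t)}\binom{k}{m_{\overline\beta}(i)}$ of ways those discarded columns could have been placed accounts for the normalisation $\prod_{i=1}^{\min(n,t)}\binom{k}{m_{\overline\beta}(i)}^{-1}$, and summing over $\overline\gamma$ gives the asserted right-hand side. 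A shorter alternative, once Theorem~2 is in hand, is to note that $(A_{r,t}^{[k]})^n$ is block-triangular for the grading $G_t^{[k]}=\bigcup_s G_{s,t}^{[k]}$ and that its restrictions to graded pieces are powers of the $\Pi_{s,t}^{[k]}$; the stated right-hand side then collapses into a sum of right-hand sides of Theorem~2, i.e.\ a sum of permanents of sub-blocks, which Lemma~\ref{lem:5} reassembles into $\per(M')$.

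I expect the principal obstacle to be the second step: checking term by term, against the explicit formulas defining $A_{r,t}^{[k]}$, that the block-to-block transfer is really that matrix, with the correct threshold behaviour. The delicate points are locating exactly the step at which the running column deficiency has been fully discharged — this controls the passage between the two branches of the definition — and accounting for the disappearance of the inner summation $\sum_d(d+v)!\binom{k}{d+v}\binom{l_t}{d}a_t^d$ present in $K_t^{[k]}$ but absent from $A_{r,t}^{[k]}$, which must reappear as the binomial weights $\binom{m_{\overline\gamma}(i)}{m_{\overline\beta}(i)}$ of the outer sum.
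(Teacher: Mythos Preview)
The paper does not give a detailed proof of this theorem; it states the result and immediately records Lemma~8 (the common form of Lemmas~5 and~6) as the intended tool. Your main plan---expand the permanent row-block by row-block via Lemma~5/6/8, use the band structure to see that the data carried between consecutive blocks is an element of $G_t^{[k]}$, and identify the weighted transition with the entry of $A_{r,t}^{[k]}$---is exactly the scheme the paper sets up, and it parallels the direct arguments the paper does write out for Theorems~4 and~33.

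Two points need correcting. First, the direction of the walk. In the paper's convention the row index of $A_{r,t}^{[k]}$ is the ``before'' state and the column index the ``after'' state, and inspection of the shift $\overline\beta=(1^{\langle\cdots\rangle},2^{\langle l_1-p_1\rangle},\dots,t^{\langle l_{t-1}-p_{t-1}\rangle})$ shows that one step of the transfer corresponds to moving from row block $i$ to row block $i-1$. Thus in $(A_{r,t}^{[k]})^n[\mathbf L(\overline\alpha)\mid\mathbf L(\overline\gamma)]$ the state $\overline\alpha$, which records the extra columns $nk+k\overline\alpha$ at the far right, is the \emph{initial} state, and $\overline\gamma$, which records what remains in the first $\min(n,t)$ column blocks, is the \emph{terminal} state; you have these roles reversed.

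Second, your ``shorter alternative'' does not go through: $A_{r,t}^{[k]}$ is not block-triangular for the grading $G_t^{[k]}=\bigcup_s G_{s,t}^{[k]}$. From the two branches of the definition one computes that if $\overline\alpha\in G_{s,t}^{[k]}$ then the target level is $s'=k-v+s-l_t$; in the branch $s<r$ one has $v\le k-l_t$, hence $s'\ge s$, while in the branch $s\ge r$ one has $v\ge k-l_t$, hence $s'\le s$. So the level is only preserved at $s=r$ (where the block does coincide with $\Pi_{r,t}^{[k]}$), and in general the graded pieces interact nontrivially; one cannot reduce the statement to Theorem~2 this way.
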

From lemmas~\ref{lem:5} and~\ref{lem:6} it follows that
\begin{lemma}
\label{lem:8} Let~$A$~be an~$m \times n$ rectangular matrix over a
commutative ring with unity. Let $(H_1,H_2,\ldots,H_s)$~be a fixed
ordered partition of the set~$N_m$. Then
\begin{gather}
\per(A) = \sum_{\substack{(K_1,K_2,\ldots,K_s), \, K_i \subseteq N_n,\\
|K_i| \leqslant |H_i|, \, 1 \leqslant i \leqslant s \\ K_i \cap K_j
= \varnothing, \, 1 \leqslant i < j \leqslant s, \,
\sum\limits_{i=1}^s |K_i| = \min(m,n)}}
\prod_{i=1}^s \per(A[H_i \mid K_i]), \label{53}\\
R(x;A)= \sum_{\substack{(K_1,K_2,\ldots,K_s), \, K_i \subseteq N_n,\\
|K_i| \leqslant |H_i|, \, 1 \leqslant i \leqslant s \\ K_i \cap K_j
= \varnothing, \, 1 \leqslant i < j \leqslant s}} \biggl(
\prod_{i=1}^s \per(A[H_i \mid K_i])
\biggr)x^{\sum\limits_{i=1}^s|K_i|} \label{54}
\end{gather}
\end{lemma}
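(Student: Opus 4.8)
The plan is to deduce Lemma~\ref{lem:8} from Lemmas~\ref{lem:5} and~\ref{lem:6} by a case distinction on the sign of $m-n$, so that essentially no new computation is needed.

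First I would treat the case $m\leqslant n$, where $\min(m,n)=m$. Here the rook-polynomial identity~\eqref{54} is word-for-word the second identity~\eqref{41} of Lemma~\ref{lem:5}, so there is nothing to prove for it. For the permanent identity~\eqref{53} the only point is that, because $(H_1,\dots,H_s)$ is an ordered partition of $N_m$, one has $\sum_{i=1}^s|H_i|=m$; hence the constraints ``$|K_i|\leqslant|H_i|$ for all $i$'' together with ``$\sum_{i=1}^s|K_i|=\min(m,n)=m$'' force $|K_i|=|H_i|$ for every $i$, and conversely these termwise equalities imply the sum condition. Thus the index set of the sum in~\eqref{53} coincides with that of the permanent identity~\eqref{40} of Lemma~\ref{lem:5}, and~\eqref{53} follows.

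Next I would treat the case $m>n$, where $\min(m,n)=n$. Then the side condition $\sum_{i=1}^s|K_i|=\min(m,n)$ in~\eqref{53} is exactly the condition $|K_1|+\dots+|K_s|=n$ appearing in the permanent identity~\eqref{47} of Lemma~\ref{lem:6}, so~\eqref{53} is that identity verbatim; and~\eqref{54} is likewise word-for-word the rook-polynomial identity~\eqref{48} of Lemma~\ref{lem:6}. Since the two cases together cover all $m,n\geqslant1$, Lemma~\ref{lem:8} is established. I expect no genuine obstacle here: the whole argument amounts to matching summation ranges, and the one step that needs a sentence of justification is the bookkeeping observation made in the first case, namely that for an ordered partition $(H_1,\dots,H_s)$ of $N_m$ the requirement $\sum_{i=1}^s|K_i|=m$ is equivalent to the termwise equalities $|K_i|=|H_i|$.
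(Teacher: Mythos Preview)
Your proposal is correct and matches the paper's own approach: the paper simply states that Lemma~\ref{lem:8} follows from Lemmas~\ref{lem:5} and~\ref{lem:6}, and your case distinction on $m\leqslant n$ versus $m>n$ is exactly the intended way to read off~\eqref{53} and~\eqref{54} from~\eqref{40}--\eqref{41} and~\eqref{47}--\eqref{48}. The only nontrivial observation---that for an ordered partition of $N_m$ the constraint $\sum_i|K_i|=m$ together with $|K_i|\leqslant|H_i|$ forces $|K_i|=|H_i|$---is precisely the bookkeeping step you identified.
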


\end{document}